\def\[#1\]{\begin{equation}#1\end{equation}}
\def\beq{%
   \relax\ifmmode
      \@badmath
   \else
      \ifvmode
         \nointerlineskip
         \makebox[.6\linewidth]%
      \fi
      $$
   \fi
}
\def\eeq{%
   \relax\ifmmode
      \ifinner
         \@badmath
      \else
         $$
      \fi
   \else
      \@badmath
   \fi
   \ignorespaces
}
\def\enddisplaymath{\eeq\global\@ignoretrue}
\newtheorem{thm}{Theorem}
\newtheorem{cor}[thm]{Corollary}
\newtheorem{lem}[thm]{Lemma}
\newtheorem{prop}[thm]{Proposition}
\theoremstyle{remark}
\newtheorem*{rem}{Remark}
\newtheorem{rems}{Remark}[thm]
\newtheorem{eg}{Example}
\theoremstyle{definition}
\numberwithin{equation}{section}
\numberwithin{thm}{section}
\numberwithin{eg}{section}
\numberwithin{defn}{section}
\newcommand{\C}{\mathbb C}
\newcommand{\G}{\mathbb G}
\newcommand{\Z}{\mathbb Z}
\newcommand{\A}{\mathbb A}
\newcommand{\Q}{\mathbb Q}
\newcommand{\R}{\mathbb R}
\newcommand{\F}{\mathbb F}
\renewcommand{\H}{\mathbb H}
\renewcommand{\P}{\mathbb P}
\DeclareMathOperator{\AGL}{AGL}
\DeclareMathOperator{\Aut}{Aut}
\DeclareMathOperator{\Lin}{Lin}
\DeclareMathOperator{\im}{im}
\DeclareMathOperator{\Pic}{Pic}
\DeclareMathOperator{\NS}{NS}
\DeclareMathOperator{\Spec}{Spec}
\DeclareMathOperator{\End}{End}
\DeclareMathOperator{\Mat}{Mat}
\DeclareMathOperator{\Res}{Res}
\DeclareMathOperator{\Ind}{Ind}
\DeclareMathOperator{\GL}{GL}
\DeclareMathOperator{\SL}{SL}
\DeclareMathOperator{\Hom}{Hom}
\DeclareMathOperator{\lcm}{lcm}
\DeclareMathOperator{\ch}{char}
\DeclareMathOperator{\Tr}{Tr}
\DeclareMathOperator{\ord}{ord}
\DeclareMathOperator{\Sym}{Sym}
\DeclareMathOperator{\Alt}{Alt}
\DeclareMathOperator{\Proj}{Proj}
\DeclareMathOperator{\PGL}{PGL}
\DeclareMathOperator{\GO}{O}
\DeclareMathOperator{\PSL}{PSL}
\DeclareMathOperator{\PGU}{PU}
\DeclareMathOperator{\SU}{SU}
\DeclareMathOperator{\Dih}{Dih}
\DeclareMathOperator{\Sp}{Sp}
\DeclareMathOperator{\SO}{SO}
\DeclareMathOperator{\GU}{U}
\DeclareMathOperator{\Gal}{Gal}
\DeclareMathOperator{\Lie}{Lie}
\DeclareMathOperator{\red}{red}
\newcommand{\sO}{\mathcal O}
\newcommand{\surj}{\twoheadrightarrow}
\newcommand{\normal}{\trianglelefteq}
\newcommand{\ratto}{\dashrightarrow}
\let\div\relax
\DeclareMathOperator{\div}{div}
\begin{document}

\title{Quotients of abelian varieties by reflection groups}
  \author{
Eric M. Rains\\Department of Mathematics, California
  Institute of Technology}

\date{January 20, 2024}
\maketitle

\begin{abstract}
We prove (by a case-by-case analysis) a conjecture of Bernstein/Schwarzman
to the effect that quotients of abelian varieties by suitable actions
of (complex) reflection groups are weighted projective spaces, and
show that this remains true after reduction to finite characteristic
(including characteristics dividing the order of the group!).  We
also show that an analogous statement holds (with five explicitly
enumerated exceptions) for actions of {\em quaternionic} reflection
groups on supersingular abelian varieties.

\end{abstract}

\tableofcontents

\section{Introduction}

One of the more striking results of invariant theory is the fact
\cite{ShephardGC/ToddJA:1954} that if the finite group $G\subset \GL_n(\C)$
is generated by ``complex reflections'' (i.e., elements such that
$\dim\ker(g-1)=1$), then its ring of polynomial invariants is a free
polynomial ring.  This was originally shown in
\cite{ShephardGC/ToddJA:1954} by a case-by-case analysis, with Chevalley
(unpublished) giving a classification-free argument that extends to
``good'' characteristic, i.e., reflection groups over fields where $|G|\ne
0$.

In \cite{LooijengaE:1976}, quotients of the form $\Lambda_W\otimes E/W$ for
$E$ a complex elliptic curve, $W$ a finite irreducible Weyl group, and
$\Lambda_W$ a suitable lattice were considered, and it was shown that the
Hilbert series of the invariant ring was that of a free polynomial algebra.
This, of course, suggests that the quotient is in fact a weighted
projective space, which was shown in \cite{KacVG/PetersonDH:1984} (for
classical groups) and \cite{BernsteinJ/SchwarzmanO:2006} (uniformly, with
one classical exception).  Together with the Shephard-Todd-Chevalley
result, this suggests that one should consider actions of {\em complex}
reflection groups on abelian varieties.  This was also considered in
\cite{BernsteinJ/SchwarzmanO:2006}, which gave necessary conditions for the
quotient to be a weighted projective space and conjectured that those
conditions were sufficient.  (They also gave analogous conditions for the
invariant ring associated to an equivariant line bundle to be a free
polynomial ring.)

Returning to the real (i.e., Weyl group) case, the author in \cite{elldaha}
needed to understand certain twisted versions of Reynolds operators for
such actions in order to derive flatness for ``spherical elliptic DAHAs''.
In good characteristic, this is straightforward (the usual Reynolds
operator suffices), but it turned out that slightly more complicated
operators worked in bad characteristic as well.  Just as the usual Reynolds
operator proves that invariant rings are flat in good characteristic (i.e.,
that the Hilbert series is independent of the characteristic), these
twisted Reynolds operators show that this remains true for quotients
$\Lambda_W\otimes E/W$ even in bad characteristic!  In particular, it was
shown there that in arbitrary characteristic, $\Lambda_W\otimes E/W$ has
the same Hilbert series as a weighted projective space, suggesting as a
natural conjecture that it {\em is} a weighted projective space in any
characteristic.  (Note that this is {\em not} a logical consequence of
flatness of the Hilbert series; the (homogeneous) quotient
$\Z[w,x,y,z]/(2w-y^2+xz)$ is a polynomial ring over $\Z[1/2]$ and is flat
at $2$, but is not a polynomial ring mod 2.  We will see this phenomenon
arise naturally below.)

Combining this with the conjecture of \cite{BernsteinJ/SchwarzmanO:2006}
leads one to consider actions of complex reflection groups in arbitrary
characteristic.  Although one could simply consider reductions to primes
where the abelian variety has good reduction, there is a somewhat more
intrinsic approach.  Indeed, the notion of a ``reflection'' can be made
purely geometric (i.e., a reflection of an abelian variety is an
automorphism that fixes a hypersurface pointwise), so that one can ask
about reflection group actions more generally, which for supersingular
abelian varieties adds an additional possibility that the group may be a
{\em quaternionic} reflection group.

Note that already in the real characteristic 0 case, it is not true that an
{\em arbitrary} action by a reflection group has weighted projective space
quotient, as there are topological obstructions to this that are not
preserved by equivariant isogeny.  (There is also the obvious caveat that a
reducible reflection group acting on a product can at best have quotient a
{\em product} of weighted projective spaces.)  It turns out that this
topological condition (the necessary condition of
\cite{BernsteinJ/SchwarzmanO:2006}) can be rephrased in algebro-geometric
terms, giving a notion of a {\em crystallographic} reflection group in
arbitrary characteristic.

With that in mind, the main result of this paper (Theorem
\ref{thm:main_theorem} below) is that the quotient of an abelian variety by
a crystallographic reflection group is essentially always a weighted
projective space.  More precisely, we explicitly enumerate the five
(quaternionic) cases where this claim fails, and prove in all other cases
that it holds.  (In particular, since the counterexamples are all
quaternionic, this not only proves the main conjecture of
\cite{BernsteinJ/SchwarzmanO:2006} but extends it to arbitrary
characteristic.)  We also prove a more refined version (again following
\cite{BernsteinJ/SchwarzmanO:2006}) showing that for any ``strongly''
crystallographic equivariant line bundle (apart from the corresponding five
counterexamples), the corresponding ring of invariants is a polynomial
ring.

The facts that the results remain true in bad characteristic and the
presence of counterexamples are significant obstructions to more abstract
arguments \`a la Chevalley, and thus we make significant use of the
classification of such actions.  For the real characteristic 0 case, such
actions were classified in \cite{SaitoK:1985}, and this was extended to the
general characteristic 0 case in \cite{PopovVL:1982}.  For the quaternionic
case, however, the classification itself is new (though relying heavily on
the classification of {\em linear} quaternionic reflection groups in
\cite{CohenAM:1980}), and even in the real and complex cases it is a priori
possible for there to be sporadic actions.  (In fact, there is {\em one}
sporadic action: $ST_5$ has a crystallographic action on a supersingular
abelian surface in characteristic 2 that does not lift to characteristic
0.)  As a result, even in the real and complex cases, we are forced to
rederive the classifications in the process of extending them to finite
characteristic.

We then use a number of different methods to determine the invariant ring
and thus prove that it is polynomial.  For the infinite families of
imprimitive groups, there is an equivariant isogeny of the form $E^n\to A$
which one can use to explicitly compute the invariants (as the quotient of
$E^n$ by a suitable imprimitive group {\em scheme}).  (In particular, the
group $G$ is always normal in a wreath product with quotient
$\Sym^n(\P^1)\cong \P^n$, letting us describe the quotient by $G$ as an
abelian cover of $\P^n$.)  There are also five (or six, depending on one's
conventions) sporadic imprimitive cases, but those all reduce to an action
on $E^2$ and can thus be dealt with essentially by hand.

The other main {\em conceptual} method is to represent the desired quotient
as a moduli space that one can show more directly is a weighted projective
space.  For instance, in the $A_n$ case, the quotient is naturally
identified with a linear system in $E$, and thus with $\P^n$; this also
enables one to compute quotients in cases (e.g., $G_2$) for which $A_n$ is
a normal subgroup.  Less obviously, the well-known combinatorics of the 27
lines on a cubic surface gives rise to an interpretation of $E^6/W(E_6)$ as
the moduli space of affine cubic surfaces with a copy of $E$ at infinity.
The moduli space can be computed more directly as classifying the ways to
add lower-degree terms to the (cubic) equation of $E$, from which it is
easy to see that it is a weighted projective space.  Something similar
holds for $E_7$ and $E_8$, and imposing suitable additional structures on
the surface allows one to deduce analogous statements for a number of
additional cases (including some which are complex or quaternionic).

For the cases (all of which are complex or quaternionic) not covered by the
above constructions, we use a more computational approach based on
explicitly computing invariants.  (We should note that although this
approach is indeed computational, none of the computations are particularly
long: the full verification takes only a handful of CPU minutes!)  The main
difficulty here is even writing down $A$; although $A$ is isomorphic to
$E^n$ as an abelian variety, it is not isomorphic as a {\em polarized}
abelian variety, making it difficult even to write down sections of line
bundles on $A$.  We can get around this in two ways.  First, if $G$ has a
normal subgroup $N$ for which varieties of the form $A/N$ are
well-understood, then we can hope to find the unique variety of this form
with an action of $G/N$ and use that to compute invariants.  In particular,
in small cases, we understand enough about the structure of {\em Kummer}
varieties (e.g., the fact that a Kummer surface is a quartic hypersurface)
to pull this off when $N=[\pm 1]$, while in a couple of other cases, $A/N$
is nice enough (a weighted projective space or a hypersurface in a weighted
projective space) to let us find the desired action.

For cases without a suitable normal subgroup, we can instead work with a
non-normal subgroup.  In particular, if $H\subset G$ is a reflection
subgroup and $B\to A$ is an $H$-equivariant isogeny such that $H$ acts
crystallographically on $B$, then there is an induced morphism $B/H\to A/G$
and we inductively know that $B/H$ is a weighted projective space (and in
nice cases can evaluate the generators).  As a result, we can hope to
represent the invariant ring of $A/G$ as a subring of the invariant ring of
$B/H$, which greatly simplifies the resulting expressions (since $B/H$
tends to have much smaller Hilbert function than $A$ itself).  Moreover, it
turns out to be relatively straightforward to generate (random) equations
satisfied by the pullbacks to $B/H$ of the invariants of $A/G$.  In any
given degree, we only need finitely many such equations to cut out the true
space of pullbacks, and can detect that we have done so given a suitable
lower bound on the dimension.  (Although the full Hilbert series is
difficult to compute directly even in characteristic 0, it turns out that
there are infinitely many degrees for which we can either compute the
precise dimension (for complex groups) or a lower bound (for quaternionic
groups) that turns out in retrospect to be tight.)

In either case, once one has found enough invariants of low degree by
explicit calculation, it is straightforward to test whether they generate:
if the polynomial ring in generators of those degrees has Hilbert function
with the correct growth rate, then they generate iff they have no common
zero on $A$.  This is a Gr\"{o}bner basis calculation, and is thus
straightforward to do in any given characteristic, with some additional
work needed to identify (and then check) the finitely many primes where the
characteristic 0 calculation might fail.

There are a number of open problems left below.  The biggest, of course, is
to come up with more conceptual arguments for the cases dealt with in
Sections 6 and 7 below (e.g., can one interpret $E_0^6/ST_{34}$ as a
natural moduli space?), or even better to give a classification-free proof.
The primary obstruction to the latter is two-fold: first, that we do not
understand {\em why} the counterexamples fail to have polynomial
invariants, and in particular do not see any way to exclude them without
excluding cases that {\em do} work.  The second is that for the stronger
version of the conjecture regarding equivariant line bundles with
polynomial invariants, the condition is not only not quite sufficient, it
is also not quite necessary: there are equivariant line bundles in finite
characteristic for which the invariants are polynomial but the natural
analogue of the condition of \cite{BernsteinJ/SchwarzmanO:2006} fails.
Also, in the real case, one can refine the statement to include a natural
description of the degrees in terms of the standard labels of an associated
affine Dynkin diagram; is there anything analogous for the
complex/quaternionic cases?  Finally, another less significant question is
whether for complex groups there is a direct way to prove polynomiality in
good characteristic from polynomiality in characteristic 0, thus avoiding
the step in several of our computations where we need to compute the
characteristic 0 invariants in order to set up and perform the relevant
Gr\"obner basis calculation.

The plan of the paper is as follows.  Section 2 discusses generalities
about reflection group actions on abelian varieties, with Subsection 2.1 in
particular discussing the algebro-geometric analogue of the necessary
conditions of \cite{BernsteinJ/SchwarzmanO:2006} and stating the main
theorems.  Subsection 2.2 gives some lemmas for checking when a set of
$n+1$ elements generates the invariant ring, and Subsection 2.3 discusses
how one extends the classification of {\em linear} reflection groups to a
classification of actions on abelian varieties (including actions not
fixing the identity).  Finally, Subsection 2.4 discusses when a quotient of
an abelian variety by a group has Cohen-Macaulay invariant ring (which is
nontrivial even in characteristic 0, since the homogeneous coordinate ring
of $A$ itself is almost never Cohen-Macualay!), and in particular shows
that this holds for the derived subgroup of a reflection group.  (We also
discuss conditions under which the quotient scheme is Calabi-Yau.)

Section 3 covers imprimitive groups (as well as actions on elliptic
curves), with the first three subsections classifying the crystallographic
actions and the last three computing invariants.  Section 4 gives the
classification for {\em primitive} groups and gives tables of the different
cases and the degrees of their invariants.  (Subsection 4.4 gives the
analogous tables for the imprimitive cases.)

Section 5 gives the arguments via representations as moduli spaces, with
Subsection 5.1 discussing the linear system (i.e., $A_n$) case, Subsection
5.2 discussing the $E_6$, $E_7$, $E_8$ cases (via Del Pezzo surfaces),
Subsection 5.3 discussing the fact that fixed subschemes of suitable
automorphisms are again weighted projective spaces, and the remaining
subsections applying this to give further cases of the main theorem (for
$F_4$ and certain complex and quaternionic cases).  (These constructions
were, in fact, the main impetus for the author to embark on this project;
related moduli spaces appeared in \cite{fildefs}.  Although that paper only
considered characteristic 0 and worked in the opposite direction, it was
clear that one could use such arguments in reverse and in finite
characteristic.)

Section 6 discusses the calculations via quotients by normal subgroups.
Here one of the cases (discussed in Subsection 6.2) is still conceptual (we
dealt with one subcase in Section 5, which turns out to be enough to
identify the action of $G/N$ as one with classically known invariants),
while the others are dealt with via computers.  This section also includes
three of the five counterexamples to the main theorem, coming from
quaternionic reflection groups in odd characteristic that do not have a
global fixed point.  Section 7 then discusses the method of bootstrapping
from reflection subgroups, with Subsection 7.3 in particular proving the
requisite lower bounds on dimensions, and Subsections 7.5 and 7.7
discussing the two remaining counterexamples (certain quaternionic groups
in characteristic 2 that {\em do} fix the identity).

Finally, Section 8 discusses several cases in which quotients by related
groups turn out to be Calabi-Yau.  In characteristic 0, these come from any
group contained in the kernel of the determinant and containing the derived
subgroup (defined in a suitably crystallographic way).  We give an explicit
description of the invariant ring of the quotient by the derived subgroup,
and we show that in that case and the kernel of determinant case, the
quotient remains Calabi-Yau (and has the same complete intersection
structure) in arbitrary finite characteristic.  (Here there is no analogous
statement for quaternionic groups, several of which are actually perfect!)

{\bf Notation}.  The groups (fixing the identity) acting on an elliptic
curve in characteristic 0 are cyclic of order 2,3,4,6.  We denote the
corresponding generators by $[-1]$, $[\zeta_3]$, $[i]$, $[-\zeta_3]$ and
the groups they generate by $[\mu_d]$, $d\in \{2,3,4,6\}$.  (In good
characteristic, the identification with roots of unity can be made via the
action on holomorphic differentials.)  We similarly define an endomorphism
$[\sqrt{-3}]$ for $j=0$.

For abstract groups, since we are dealing with Weyl groups below, we will
use $Z_d$ to denote the cyclic group of order $d$ and $\Dih_d$ to denote
the dihedral group of order $2d$.  We also recall the standard notation
$p^{1+2m}$ for the extraspecial $p$-group of order $p^{2m+1}$ (which is
familiar from an abelian variety perspective as the Heisenberg group
$\mu_p.A[p]$ for $A$ principally polarized); for $p=2$, this splits into
two cases $2_+^{1+2m}$ and $2_-^{1+2m}$ (corresponding to the two parities
of theta characteristics) with an additional group $2^{2+2m}$ as the common
extension with center $Z_4$.  We also follow standard group-theory
conventions by writing $H.G$ for an extension of $G$ by $H$ that is not
necessarily split.  (If we {\em know} that it is split, we will of course
instead write $H\rtimes G$.)

Finally, $\P^{[d_0,d_1,d_2,\dots,d_n]}$ denotes the weighted projective
space with generators of degrees $d_0,\dots,d_n$; by mild abuse of
notation, this may represent either a scheme or a stack, depending on
context, but in either case is the quotient of $\A^{n+1}\setminus 0$ by the
action of $\G_m$ with weights $d_0,\dots,d_n$.

{\bf Acknowledgements}.  The author would like to thank R. Guralnick for
helpful discussions, and in particular for pointers to the literature on
Weil representations (used in Subsection 7.3).

\section{Actions of reflection groups on abelian varieties}

\subsection{Crystallographic reflection groups}

In \cite{BernsteinJ/SchwarzmanO:2006}, two conjectures were formulated (and
proved in the real case) for invariants of ``crystallographic'' complex
reflection groups.  These are subgroups of $\AGL_n(\C)$ that are generated
by ``reflections''--elements that fix a hyperplane pointwise--and have
compact quotient.  The main conjecture is that the quotient of $\C^n$ by
such a group is a weighted projective space (with the second conjecture
specifying which line bundles give polynomial rings of invariants).  The
quotient of $\C^n$ by the subgroup of translations is an abelian variety,
and thus one can rephrase this in terms of certain quotients of abelian
varieties.  We will be proving this conjecture, but since we are also
interested in degenerations to finite characteristic, it will be helpful to
reformulate the problem in more algebraic terms.

Let $X$ be either a normal analytic variety or an integral, normal scheme
over an algebraically closed field $\bar{k}$ (possibly of finite
characteristic).  A {\em reflection} of $X$ is an automorphism $g\in
\Aut(X)$ that fixes some hypersurface pointwise, and a discrete subgroup of
$\Aut(X)$ is a {\em reflection group} if it is generated by reflections.

\begin{rem}
  One could also consider reflections for which the fixed locus is
  everywhere codimension 1, but in the cases we consider, this additional
  condition is automatic.
\end{rem}

\begin{lem}
  Let $g\in \Aut(X)$ be a reflection and let $H\subset \Aut(X)$ be a
  discrete subgroup normalized by but not containing $g$.  Then the image
  of $g$ in $\Aut(X/H)$ is a reflection.
\end{lem}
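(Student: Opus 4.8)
The plan is to exhibit an explicit fixed hypersurface for the induced automorphism $\bar g$ and then verify that $\bar g$ is genuinely nontrivial. First I would record that $\bar g$ is well defined: since $g$ normalizes $H$ it permutes the $H$-orbits, so the quotient map $\pi\colon X\to X/H$ satisfies $\pi\circ g=\bar g\circ\pi$ for a unique $\bar g\in\Aut(X/H)$ (with inverse induced by $g^{-1}$).

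Let $D\subset X$ be a hypersurface fixed pointwise by $g$. The natural candidate for the fixed hypersurface of $\bar g$ is its image $\pi(D)$. That $\bar g$ fixes $\pi(D)$ pointwise is immediate: for $x\in D$ we have $g(x)=x$, hence $\bar g(\pi(x))=\pi(g(x))=\pi(x)$. It remains to check that $\pi(D)$ really is a hypersurface. It is closed because $\pi^{-1}(\pi(D))$ equals the $H$-orbit $HD=\bigcup_{h\in H}hD$, a union of closed sets that is locally finite by discreteness of $H$; and its dimension is correct because the fibers of $\pi$ are discrete, so $\dim\pi(D)=\dim D=\dim X-1=\dim(X/H)-1$, giving pure codimension one.

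The only real content is showing $\bar g\neq\mathrm{id}$, so that $\bar g$ is an honest (nontrivial) reflection. Suppose instead $\bar g=\mathrm{id}_{X/H}$, i.e.\ $\pi\circ g=\pi$. Then $g$ preserves every fiber of $\pi$ and so is a deck transformation of the cover $X\to X/H$. Let $U\subseteq X$ be the dense open locus where $H$ acts freely, namely the complement of the (countable, by discreteness of $H$) union of the proper closed fixed loci of the nonidentity elements of $H$; since $g$ normalizes $H$ it carries $\mathrm{Fix}(h)$ to $\mathrm{Fix}(ghg^{-1})$ and hence preserves $U$. On $U$ the map $\pi|_U\colon U\to U/H$ is a connected Galois cover with group $H$, whose deck group is exactly $H$; therefore $g|_U\in H$, and by irreducibility $g$ agrees with this element on all of $X$, forcing $g\in H$ and contradicting the hypothesis. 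In the scheme case one may argue purely algebraically instead: $k(X)/k(X/H)$ is Galois with group $H$, and an automorphism of $k(X)$ fixing $k(X/H)$ pointwise lies in $\Gal(k(X)/k(X/H))=H$.

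I expect this final step, ruling out $\bar g=\mathrm{id}$, to be the main obstacle, since it is the only place where the hypothesis $g\notin H$ (as opposed to merely $g$ normalizing $H$) enters, and where one must use that $\pi$ is generically a Galois $H$-cover so that its deck group is precisely $H$. The remaining ingredients---well-definedness of $\bar g$ and the verification that $\pi(D)$ is a pointwise-fixed hypersurface---are formal.
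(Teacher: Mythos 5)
Your proposal is correct and follows essentially the same route as the paper, whose entire proof is the one-line observation that the image of $X^g$ in $X/H$ remains fixed by $g$ and that the image of a hypersurface is a hypersurface. The only addition is your deck-transformation/Galois argument that the induced automorphism is not the identity (the one place the hypothesis $g\notin H$ enters), a point the paper leaves implicit.
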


\begin{proof}
  Indeed, the image of $X^g$ in $X/H$ remains fixed by $g$ and the image of
  a hypersurface is a hypersurface.
\end{proof}

Let $A$ be an abelian variety over an algebraically closed field $\bar{k}$.
There are two natural notions of automorphism group of $A$, depending on
whether we take automorphisms as a {\em variety} or as a {\em group
  scheme}.  Both of these form proper group schemes, which we denote
respectively by $\Aut(A)$ and $\Aut_0(A)$---the latter is the
stabilizer of the identity in the former---and that there is a natural split
short exact sequence
\[
0\to A\to \Aut(A)\to \Aut_0(A)\to 1.
\]
with the map $\Lin:\Aut(A)\to\Aut_0(A)$ given by $\Lin(g)x:= g\cdot
x-g\cdot 0$.

Now, suppose $g\in \Aut_0(A)$ is a reflection.  Then the fixed subscheme is
a subgroup and thus, having codimension 1 at the identity, has codimension
1 everywhere.  Since $\dim(A)=\dim(\ker(g-1))+\dim(\im(g-1))$, we conclude
that $\im(g-1)$ is a 1-dimensional abelian subvariety, so is in particular
an elliptic curve $E_g$, called the {\em root curve} of $g$.  Let $G\subset
\Aut_0(A)$ be a reflection group, and consider the associated collection
$E_r$ of root curves (where we remove duplicates).  Then there is a natural
map $\prod_r E_r\to A$.  More generally, even if $G\subset \Aut(A)$ does
not fix the identity, we may still define root curves using its image
$\Lin(G)$ in $\Aut_0(A)$.  This gives rise to a natural
$\Lin(G)$-equivariant isogeny: $\prod_r E_r$ has an induced action of
$\Lin(G)$ (the obvious analogue of an induced representation) and thus so
does the kernel of $\prod_r E_r\to A$ and its identity component, letting
us define an abelian variety $A^+$ as the quotient of $\prod_r E_r$ by the
reduced identity component of the kernel of $\prod_r E_r\to A$.

\begin{lem}
  An element $g\in \Aut_0(A)$ is a reflection iff $\Lin(g)$ is a reflection
  and $g\cdot 0$ is in the root curve of $\Lin(g)$.  In particular, the
  reflections form a closed subscheme of $\Aut(A)$.
\end{lem}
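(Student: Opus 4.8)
The plan is to reduce everything to a dimension count on the fixed locus of $g$, via the splitting $g\cdot x=\Lin(g)x+g\cdot 0$ supplied by the exact sequence above. Write $\phi:=\Lin(g)\in\Aut_0(A)$ and $t:=g\cdot 0\in A$, so that $g\cdot x=\phi(x)+t$; I read the statement as concerning a general $g\in\Aut(A)$, the case $g\in\Aut_0(A)$ (where $t=0$) being vacuous. Because an automorphism fixing the identity is a homomorphism, $\phi-1$ is an endomorphism of $A$, and the fixed subscheme $A^g$ is cut out by $(\phi-1)(x)=-t$; that is, $A^g$ is the fiber of $\phi-1$ over $-t$.

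First I would record the two properties of this fiber that I need. It is nonempty exactly when $-t\in\im(\phi-1)$, equivalently when $t\in\im(\phi-1)$ since the image is a subgroup; and when nonempty it is a translate of $\ker(\phi-1)=A^\phi$, hence pure-dimensional of dimension $\dim\ker(\phi-1)$, independent of any non-reducedness. I would then recall from the discussion preceding the lemma that for $\phi\in\Aut_0(A)$ the conditions ``$\phi$ is a reflection'', ``$\dim\ker(\phi-1)=\dim A-1$'', and ``$\im(\phi-1)$ is the root curve $E_\phi$'' are equivalent, using $\dim A=\dim\ker(\phi-1)+\dim\im(\phi-1)$.

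The equivalence then drops out. Since $g$ is a reflection precisely when $A^g$ contains a hypersurface, and since $A^g$ (when nonempty) is pure of dimension $\dim\ker(\phi-1)\le\dim A-1$ with equality forcing $A^g$ to be a hypersurface, $g$ is a reflection iff $A^g\neq\emptyset$ and $\dim\ker(\phi-1)=\dim A-1$. By the previous step the dimension condition says exactly that $\phi=\Lin(g)$ is a reflection; granting this, $\im(\phi-1)=E_\phi$, so the nonemptiness condition $t\in\im(\phi-1)$ becomes $g\cdot 0=t\in E_{\Lin(g)}$, which is the asserted characterization. For the closedness claim I would use that $\Aut_0(A)$ is étale over $\bar{k}$, so $\Aut(A)$ is the disjoint union of the fibers of $\Lin$, each identified with $A$ via $g\mapsto g\cdot 0$; on the fiber over a non-reflection $\phi$ there are no reflections, and on the fiber over a reflection $\phi$ the reflection locus is the closed subvariety $E_\phi\subset A$. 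Being closed in each component, the reflection locus is closed in $\Aut(A)$.

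The one point that needs genuine care, and the closest thing to an obstacle, is keeping the dimension bookkeeping valid in arbitrary characteristic. One must argue that $A^g$ is a (possibly non-reduced) translate of $\ker(\phi-1)$, so that ``fixes a hypersurface pointwise'' is truly \emph{equivalent} to the numerical condition $\dim\ker(\phi-1)=\dim A-1$ rather than merely implied by it. Since dimension ignores nilpotents and a translate of a group subscheme is equidimensional, this is fine, but it is exactly where one should be explicit instead of leaning on the characteristic-$0$ picture.
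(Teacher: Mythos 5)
Your proof is correct and follows essentially the same route as the paper's: both rest on writing $g\cdot x=\Lin(g)x+g\cdot 0$, observing that the fixed locus is a coset of $\ker(\Lin(g)-1)$ which is nonempty exactly when $g\cdot 0\in\im(\Lin(g)-1)$ (the root curve), and using the discreteness of $\Aut_0(A)$ to get closedness. The only cosmetic differences are that the paper phrases the reduction to $\Lin(g)$ via conjugation by a translation to a fixed point and invokes properness of $A$ for the closedness of the fixed-point condition, whereas you work throughout with the explicit description of the fixed subscheme as a fiber of $\Lin(g)-1$.
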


\begin{proof}
  If $g:x\mapsto g_0x+t_0$ has a fixed point, then we can conjugate by the
  corresponding translation to make $t_0=0$ and thus conclude that $g_0$
  must be a reflection.  Since $\Aut_0(A)$ is discrete, we see that the
  condition that $g_0$ be a reflection is closed, as (since $A$ is proper)
  is the condition that $x\mapsto g_0x+t_0$ have a fixed point, so that the
  reflections naturally form a closed subscheme.  Moreover, an automorphism
  with linear part $g_0$ fixes $t$ iff it has the form $x\mapsto
  g_0x+(1-g_0)t$, and thus the closed subscheme of $g_0x+A$ corresponding
  to elements with fixed points is nothing other than the image of $1-g_0$,
  a.k.a.~the root curve.
\end{proof}

\begin{rem}
  The argument via properness shows that for any family $S\to \Aut(A)$ of
  automorphisms, there is a closed subscheme of $S$ for which the image is
  a reflection, but this construction could a priori fail to respect base
  change.  (Indeed, we will see this happen for a mild variation of this
  question!)  However, the explicit description in terms of root curves
  shows that this does not happen here.
\end{rem}

This lets one talk about reflection group {\em schemes} in the case of
abelian varieties: a group subscheme $G\subset \Aut(A)$ is a reflection
group iff it is generated by the closed subscheme of reflections.  (Of
course, this is only a relevant distinction in finite characteristic when
finite group schemes can fail to be discrete.)  We follow standard group
theory notation in writing ``.'' to denote a possibly non-split extension.

\begin{prop}
  Let $A$ be a complex abelian variety of dimension $n$, and let
  $G\subset\Aut(A)$.  Then the following are equivalent:
  \begin{itemize}
  \item[(a)]
    The subgroup $\pi_1(A).G\subset \AGL_n(\C)$ is a reflection
    group.
  \item[(b)] For any $\Lin(G)$-equivariant isogeny $A'\to A$, the group
    $\ker(A'\to A).G$ acts on $A'$ as a reflection group.
  \item[(c)]
    The group $\ker(A^+\to A).G$ acts on $A^+$ as a reflection group.
  \end{itemize}
\end{prop}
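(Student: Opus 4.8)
The plan is to prove the equivalence by showing the cyclic chain of implications $(b)\Rightarrow(c)\Rightarrow(a)\Rightarrow(b)$, exploiting the fact that $A^+$ (as defined just before the statement) is a particular instance of a $\Lin(G)$-equivariant isogeny, so that $(b)\Rightarrow(c)$ is immediate by specialization. The heart of the matter is a dictionary between the topological/analytic statement (a) and the algebro-geometric reflection condition. Recall that for a complex abelian variety $A=\C^n/\pi_1(A)$, the group $\pi_1(A).G$ sits inside $\AGL_n(\C)$ as the preimage of $G$ under the natural map, and an element of this preimage is a reflection (in the linear-algebra sense, fixing a complex hyperplane pointwise) precisely when it has a fixed point in $\C^n$ and its linear part fixes a hyperplane. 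The key observation I would establish first is that a reflection $g$ of the \emph{abelian variety} $A^+$ (or $A'$), lifting to an affine transformation of $\C^n$, is a reflection of the variety $A^+$ in the sense of this paper exactly when the corresponding affine map has a genuine fixed point upstairs; this is where the earlier lemma characterizing reflections in $\Aut_0(A)$ (via the condition that $g\cdot 0$ lie on the root curve of $\Lin(g)$) gets used.

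\medskip

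First I would set up the correspondence for $(a)\Leftrightarrow$ the reflection condition on a cover. Fix a $\Lin(G)$-equivariant isogeny $A'\to A$, write $A'=\C^n/L'$ with $L'=\pi_1(A')$, and note that $\ker(A'\to A)=L/L'$ where $L=\pi_1(A)$. The group $\ker(A'\to A).G$ acting on $A'$ is precisely the image of $\pi_1(A).G$ in $\Aut(A')$, i.e.\ the quotient $(\pi_1(A).G)/L'$, since $L'$ acts trivially on $A'$ by translation. An element of $\pi_1(A).G\subset\AGL_n(\C)$ is a reflection in $\AGL_n(\C)$ iff (by the standard structure of affine reflections) it has a fixed point and its linear part is a complex reflection; I would check that such an element descends to a reflection of the variety $A'$ precisely because its fixed hyperplane in $\C^n$ descends to a fixed hypersurface (the image of the root curve translate) in $A'$. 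Conversely, any reflection of $A'$ lifts to an affine reflection of $\C^n$ with a fixed point, since $A'$ is a complex torus and the fixed hypersurface is a subtorus translate whose preimage is an affine hyperplane. This gives a clean equality between the set of reflections in $\pi_1(A).G$ and the set of elements of $\ker(A'\to A).G$ acting as reflections on $A'$, and since being generated by reflections is preserved under the surjection $\pi_1(A).G\twoheadrightarrow\ker(A'\to A).G$ (the reflections map onto reflections and generate), I obtain $(a)\Rightarrow(b)$.

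\medskip

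The remaining implication $(c)\Rightarrow(a)$ is the one I expect to require the most care, because it runs in the harder direction: from reflection-group-ness on the specific cover $A^+$ I must recover it for the full fundamental-group extension $\pi_1(A).G$. The point is that $A^+$ is constructed from the root curves $E_r$ of $\Lin(G)$, so it is ``large enough'' that every reflection in $\pi_1(A).G$ already acts as a reflection on $A^+$: the fixed hyperplane of such a reflection is governed by its root curve, which is one of the $E_r$ building $A^+$, so no collapsing of the fixed locus occurs when we pass from $\C^n$ down to $A^+$. I would make this precise by showing that the surjection $\pi_1(A).G\twoheadrightarrow\ker(A^+\to A).G$ restricts to a \emph{bijection} on reflections (not merely a surjection), using the root-curve description from the preceding lemma to verify that a lift of a reflection of $A^+$ automatically has a fixed point in $\C^n$. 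Granting this bijection, if $\ker(A^+\to A).G$ is generated by reflections then those reflections lift to reflections generating $\pi_1(A).G$ modulo $\ker(A^+\to A)$, and since $\ker(A^+\to A)\subset\pi_1(A)$ consists of translations (which are limits of—or products involving—reflections along the $E_r$, hence lie in the subgroup generated by reflections), the full group $\pi_1(A).G$ is generated by reflections. The main obstacle is thus verifying that the translation part $\ker(A^+\to A)$ is captured by the reflection subgroup, i.e.\ that the root curves $E_r$ span enough of $\pi_1(A)\otimes\Q$; I would handle this by invoking the surjectivity of $\prod_r E_r\to A$ at the level of tangent spaces, which follows from $G$ (equivalently $\Lin(G)$) acting irreducibly enough that the root curves generate the ambient space, together with the definition of $A^+$ as a quotient of $\prod_r E_r$.
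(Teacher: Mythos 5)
Your overall architecture — the cycle $(b)\Rightarrow(c)\Rightarrow(a)\Rightarrow(b)$, with $(b)\Rightarrow(c)$ by specialization and $(a)\Rightarrow(b)$ by descending reflections to quotients — matches the paper, and those two implications are fine. The gap is in $(c)\Rightarrow(a)$, and it is twofold. First, your claimed \emph{bijection} on reflections is false: over a reflection $x\mapsto g_0x+t_0$ of $A^+$, the lifts in $\pi_1(A).G$ that are affine reflections are exactly those whose translation part lies in the root line $\tilde E_{g_0}=\im(1-g_0)$ (an arbitrary lift differs by an element of $\pi_1(A^+)$ and in general has \emph{no} fixed point, so your parenthetical claim that lifts automatically have fixed points is also wrong); these good lifts form a torsor under the rank-two lattice $\pi_1(E_{g_0})$, so the map on reflections is a surjection with infinite fibers. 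More importantly, this non-uniqueness is not a defect to be argued away — it is the engine of the proof. The paper puts the translation lattice into the reflection subgroup precisely by comparing two distinct reflection lifts of the same reflection: their quotient is translation by an arbitrary element of $\pi_1(E_r)$, so the reflection subgroup contains every $\pi_1(E_r)$. If your bijection held, the reflection subgroup would contain no nontrivial translations at all, and the implication could not be completed.

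Second, your mechanism for showing the translations are captured — ``surjectivity of $\prod_r E_r\to A$ at the level of tangent spaces,'' i.e.\ that the root curves span $\pi_1(A)\otimes\Q$ — is too weak. Rational spanning only gives a finite-index sublattice of $\pi_1(A^+)$ inside the reflection subgroup. What is needed is \emph{integral} surjectivity of $\sum_r\pi_1(E_r)\to\pi_1(A^+)$, and this is exactly what the definition of $A^+$ provides: the kernel $K$ of $\prod_r E_r\to A^+$ is connected (one quotients by the reduced identity component), so $\pi_0(K)=0$ and the homotopy exact sequence
\[
0\to \pi_1(K)\to \prod_r\pi_1(E_r)\to\pi_1(A^+)\to \pi_0(K)=0
\]
is exact on the right. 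Note that tangent-space surjectivity is isogeny-invariant, so your argument, if valid, would prove (a) for \emph{every} abelian variety isogenous to $A^+$ on which $G$ acts by reflections — but that is false, and indeed the whole point of the crystallographic condition (as the introduction stresses, there are topological obstructions not preserved by equivariant isogeny) is that (a) can fail after isogeny. A smaller slip in the same part of the argument: you conflate $\ker(A^+\to A)$, which is a finite group of translations of $A^+$ (the quotient $\pi_1(A)/\pi_1(A^+)$), with the kernel of the surjection $\pi_1(A).G\twoheadrightarrow\ker(A^+\to A).G$, which is $\pi_1(A^+)$; it is the latter lattice that must be shown to lie in the reflection subgroup.
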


\begin{proof}
  That (b) implies (c) is obvious; that (a) implies (b) follows from the
  fact that reflections induce reflections on quotients.   So it remains
  only to show that (c) implies (a).  Thus assume (c), and note that we may
  assume WLOG that $A^+=A$, since otherwise we may simply replace $G$ with
  $\ker(A^+\to A).G$.  In particular, we have a short exact sequence
  \[
  0\to K\to \prod_r E_r\to A\to 0
  \]
  where $K$ is an abelian subvariety, which in turn induces a short exact
  sequence
  \[
  0\to \pi_1(K)\to \prod_r \pi_1(E_r)\to \pi_1(A)\to 0.
  \]
  (Here $\pi_2(A)=0$ since $A$ has contractible universal cover and
  $\pi_0(K)=\{0\}$ since $K$ is connected, so that the long exact sequence
  of homotopy groups is indeed short exact.)
  
  Consider the lift $\pi_1(A).G$ to $\AGL_n(\C)$.  Every reflection
  $x\mapsto g_0x+t_0$ in $G$ lifts to a reflection in $\pi_1(A).G$ by
  choosing $t_0$ to lie in the copy of $\tilde{E}_{g_0}$ inside
  $\tilde{A}$.  In particular, the reflection subgroup of $\pi_1(A).G$
  surjects on $G$ (since its image contains every reflection).  Moreover,
  since the choice of lift of $t_0$ is not unique, we also see that the
  reflection subgroup contains $\pi_1(E_r)$ for all $r$, and thus contains
  $\pi_1(A)$ by surjectivity.
\end{proof}

The key observation here is that (b) and (c) make sense in arbitrary
characteristic, and the nontrivial implication carries over.

\begin{prop}
  Let $A$ be an abelian variety of dimension $n$ over an algebraically
  closed field, and let $G\subset\Aut(A)$ be a reflection group scheme such
  that the kernel of the morphism from the product of root curves is
  integral (i.e., reduced and connected).  Then for any
  $\Lin(G)$-equivariant isogeny $A'\to A$, the group scheme $\ker(A'\to
  A).G$ acts as a reflection group on $A'$.
\end{prop}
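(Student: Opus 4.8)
The plan is to show directly that the subgroup scheme $R\subseteq \ker(A'\to A).G$ generated by the reflections of $A'$ is all of $\ker(A'\to A).G$; write $\phi:A'\to A$ for the isogeny and $N:=\ker\phi$. First I would record the basic structure. Let $\{E_r\}$ be the (distinct) root curves of $G$ on $A$, with $E_r=\im(\Lin(g_r)-1:A\to A)$, and set $E'_r:=\im(\Lin(g_r)-1:A'\to A')$. Since $\phi$ intertwines the $\Lin(G)$-actions we have $\phi(E'_r)=E_r$, and because $N$ is finite the map $\phi|_{E'_r}:E'_r\to E_r$ is an isogeny; in particular $\dim E'_r=1$, so each $\Lin(g_r)$ is again a reflection on $A'$ with root curve $E'_r$. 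The same finiteness shows that $\beta':\prod_r E'_r\to A'$ is surjective: its image is an abelian subvariety whose image under $\phi$ is $\beta(\prod_r E_r)=A$, hence of dimension $n$, hence all of $A'$.

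Next I would lift reflections. By the criterion characterizing reflections in $\Aut_0$ established above, for each reflection $g\in G$ and each $t'\in E'_g$ lying over $g\cdot 0\in E_g$, the automorphism $x\mapsto \Lin(g)x+t'$ is a reflection of $A'$ lying over $g$; such $t'$ exist since $E'_g\to E_g$ is surjective. Consequently $R$ surjects onto $G$, so $\ker(A'\to A).G=R\cdot N$ and it suffices to prove $N\subseteq R$. Now the reflections of $A'$ lying over a fixed reflection of $G$ form a torsor under the finite group scheme $N_r:=E'_r\cap N=\ker(E'_r\to E_r)$, and a ratio of two such lifts is a product of two reflections; since $R$ is a group scheme containing the whole reflection subscheme, it contains $N_r$ for every $r$. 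Thus it remains only to show that $\bigcup_r N_r$ generates $N$.

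For this I would compare the two presentations by root curves. With $P=\prod_r E_r$, $P'=\prod_r E'_r$, $K=\ker(P\to A)$, $K'=\ker(P'\to A')$, the isogenies $\phi$ and $\psi:=\prod_r(E'_r\to E_r)$ assemble into a commutative diagram of short exact sequences with surjective vertical maps. The snake lemma, taken in the abelian category of fppf abelian sheaves, then gives an exact sequence
\[
\prod_r N_r\to N\to \operatorname{coker}(K'\to K)\to 0,
\]
whose first map is the restriction of $\beta'$ and so has image exactly the subgroup scheme generated by $\bigcup_r N_r=\bigcup_r(E'_r\cap N)$. I am therefore reduced to proving that $K'\to K$ is surjective.

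This last step is the main obstacle, and it is exactly where the hypothesis that $K$ is integral is used (it is the algebro-geometric replacement for the vanishing of $\pi_0(K)$ that drove the complex Proposition). Since every $E'_r$ and $E_r$ is an elliptic curve, $\dim P'=\dim P$, so $\dim K'=\dim P'-n=\dim P-n=\dim K$; moreover $\ker(K'\to K)\subseteq\ker\psi=\prod_r N_r$ is finite, so the scheme-theoretic image of $K'$ in $K$ is a closed subgroup scheme of dimension $\dim K$. As $K$ is reduced and connected, this forces the image to equal $K$, whence $\operatorname{coker}(K'\to K)=0$. Thus $\bigcup_r N_r$ generates $N$, giving $N\subseteq R$ and $R=\ker(A'\to A).G$. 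The only real care required is to run the snake-lemma and dimension arguments at the level of (possibly non-reduced) group schemes rather than on $\bar k$-points; integrality of $K$ is precisely what makes the full-dimensional image fill out $K$ scheme-theoretically.
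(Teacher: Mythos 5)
Your proof is correct, and its core coincides with the paper's: lift each reflection of $G$ to reflections of $A'$ whose translation part lies in the root curve upstairs, note that the lifts over a fixed reflection form a torsor under $N_r=E'_r\cap\ker\phi$ (so the reflection subgroup contains every $N_r$ and surjects onto $G$), and reduce to showing that the $N_r$ generate $\ker\phi$ --- the one place where integrality of $K$ enters. (Like the paper, you implicitly use that $\prod_r E_r\to A$ is surjective; this is needed for the statement to be meaningful and is assumed in the paper's displayed sequence as well.) Where you genuinely diverge is in how that generation statement is proved. The paper first reduces to $A'=A$ and $\phi=[N]$, since any equivariant isogeny is a factor of some $[N]$ and reflections descend to quotients by the first lemma of the section; in that special case your diagram degenerates ($P'=P$, $K'=K$, $\kappa=[N]|_K$), and the required surjectivity is just divisibility of the abelian variety $K$, which is exactly the paper's sequence $0\to K[N]\to\prod_r E_r[N]\to A[N]\to 0$. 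You instead treat an arbitrary isogeny in one step, comparing the two root-curve presentations via the snake lemma in fppf abelian sheaves and proving $K'\to K$ surjective by a dimension count against the integral $K$ (a full-dimensional closed subgroup scheme of an integral group scheme is everything). Your route is a clean generalization that makes the reduction step unnecessary and isolates precisely where the hypothesis on $K$ is used; its cost is heavier formalism --- representability of images of homomorphisms of group schemes over a field, faithful flatness of a surjection onto the smooth $K$, and the fppf snake lemma --- where the paper gets by with the elementary fact that abelian varieties are divisible.
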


\begin{proof}
  Any $G$-equivariant isogeny $A'\to A$ is a factor of some $A[N]\to A$,
  and thus it suffices to consider the latter.  Taking $N$-torsion gives an
  exact sequence
  \[
  0\to K[N]\to \prod_r E_r[N]\to A[N]\to 0
  \]
  (since $K$ is an abelian variety) and the analogous argument (lifting so that
  $t_0$ still lies in the root curve) shows that the reflection subgroup of
  $A[N].G$ contains $A[N]$ and surjects on $G$.
\end{proof}

\begin{rems}
  We of course also see that $G$ satisfies the conclusion iff it satisfies
  it for {\em any} $A'$, since then it satisfies it for the quotient $A$
  and thus the original hypothesis.
\end{rems}

\begin{rems}
  Note that we include non-\'etale covers of $A$ here, so this is no
  longer a statement about fundamental groups.
\end{rems}

\begin{rems}
  In general, we will refer to an abelian variety $G$-equivariantly
  isogenous to $A$ as a ``lattice'' for $G$.  This terminology is natural
  in characteristic 0, where it corresponds to a $G$-invariant lattice in
  $\pi_1(A)\otimes_\Z\Q$.
\end{rems}

With this in mind, we say that a reflection group scheme $G\subset \Aut(A)$
is ``crystallographic'' if the induced action of $\ker(A^+\to A).G$ on
$A^+$ is a reflection group scheme.  It is then easy to see that $G$ is
crystallographic iff the discrete group $G/(G\cap A)$ is crystallographic
(acting on $A/(G\cap A)$).

Note that if $G$ contains translations, then we can quotient by those
translations before passing to the cover $A^+$, and thus reduce to the case
that $\ker(A^+\to A)$ meets every root curve in the identity.  Since
$\ker(A^+\to A)$ is $G$-equivariant, it follows that if $t\in \ker(A^+\to
A)$, then for any reflection $x\mapsto g_0x+t_0$ of $G$, $(1-g_0)t\in
\ker(A^+\to A)$ is an element of the intersection of the kernel and the
root curve, and thus $\ker(A^+\to A)$ must be fixed by $g_0$ and thus by
the group $\Lin(G)$ generated by those reflections.  

In particular, any crystallographic reflection group $G$ on $A$ without
translations induces a {\em pointed} crystallographic reflection group
$G_0$ on $A^+$, with $G$ a twist of $G_0$ by some class in $H^1(G_0;A)$.
Thus the non-pointed crystallographic reflection groups lying over a given
pointed crystallographic reflection group $(G_0,A)$ are classified by (a)
subgroups $T\subset A^{G_0}$ meeting every root curve trivially and (b) classes
in $H^1(G_0;A/T)$ such that the induced action of $T.G_0$ on $A$ is a
reflection group.  Note that these can be classified by first computing
$H^1(G_0;A/A^{G_0})$ and then determining the reflection subgroups of
$A^{G_0}.G_0$ for each such cocycle.

Our main theorem (the proof of which will be the bulk of the paper) is then
the following.  We say that the reflection group $G$ is {\em irreducible}
if there is an orbit of root curves such that $\prod_{r\in O} E_r\to A$ is
already surjective.

\begin{thm}\label{thm:main_theorem}
  Let $G\subset \Aut(A)$ be an irreducible crystallographic reflection
  group which is not one of five counterexamples in finite characteristic.
  Then $A/G$ is a weighted projective space.
\end{thm}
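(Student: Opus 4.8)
The plan is to prove the theorem by reducing it, through the classification of irreducible crystallographic reflection group actions, to a finite list of cases and then exhibiting an explicit polynomial ring of invariants in each. The uniform engine is the criterion of Subsection 2.2: having chosen an ample $G$-equivariant line bundle, so that the invariants form a graded ring $R=\bigoplus_m H^0(A,L^{\otimes m})^G$, it suffices to produce $n+1$ homogeneous invariants $f_0,\dots,f_n$ of degrees $d_0,\dots,d_n$ for which the polynomial ring $\bar k[f_0,\dots,f_n]$ has Hilbert function of the correct growth rate; then the $f_i$ generate $R$ (so that $A/G\cong\P^{[d_0,\dots,d_n]}$) if and only if they have no common zero on $A$. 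The ``correct growth rate'' is precisely the Hilbert series of $\P^{[d_0,\dots,d_n]}$, which in characteristic $0$ is computed by the twisted Reynolds operators of \cite{elldaha} and whose Cohen--Macaulay underpinning is supplied by Subsection 2.4; the degrees $d_i$ are read off from the classification.

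First I would assemble the classification. The linear groups are known (\cite{ShephardGC/ToddJA:1954} in the complex case, \cite{CohenAM:1980} in the quaternionic case), and the pointed characteristic-$0$ crystallographic actions come from \cite{PopovVL:1982} and \cite{SaitoK:1985}; following the recipe of Subsection 2.1, each pointed action $(G_0,A^+)$ is promoted to its non-pointed twists by classes in $H^1(G_0;A^+/T)$. The real work is extending this over $\Z$ and into the quaternionic (supersingular) setting, where a priori sporadic actions appear --- for instance the characteristic-$2$ action of $ST_5$ on a supersingular surface that does not lift to characteristic $0$. Splitting into imprimitive and primitive groups, the imprimitive families all admit a $\Lin(G)$-equivariant isogeny $E^n\to A$ with $G$ normal in a wreath product whose quotient is $\Sym^n(\P^1)\cong\P^n$, so $A/G$ is an abelian cover of $\P^n$ and its invariants can be computed directly from the group scheme acting on $E^n$; the few sporadic imprimitive cases reduce to an action on $E^2$ and are handled by hand.

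For the primitive groups I would use two complementary strategies. Where a moduli interpretation is available --- linear systems on $E$ for the $A_n$ family, Del Pezzo and cubic-surface geometry for $E_6,E_7,E_8$ (and, through normal subgroups containing these, for $F_4$ and related complex and quaternionic groups), and fixed subschemes of automorphisms of already-known weighted projective spaces --- the quotient is identified conceptually with a weighted projective space. For the remaining primitive complex and quaternionic groups, where even writing $A$ as a \emph{polarized} abelian variety is awkward, I would compute explicitly: either pass to $A/N$ for a normal subgroup $N$ (e.g.\ $N=[\pm1]$, using that a Kummer surface is a quartic) where the geometry is understood, or bootstrap from a non-normal reflection subgroup $H$ with equivariant isogeny $B\to A$, realizing the invariant ring of $A/G$ as a subring of the inductively-known weighted-projective invariant ring of $B/H$, which has a much smaller Hilbert function. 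Enough low-degree invariants are found by generating random relations on $B/H$ and comparing against the lower bounds on dimensions of Subsection 7.3.

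The main obstacle is the passage to finite characteristic together with the genuine counterexamples. Polynomiality is \emph{not} a formal consequence of flatness of the Hilbert series --- the toy quotient $\Z[w,x,y,z]/(2w-y^2+xz)$ shows this --- so in each case one must actually verify that the candidate generators have no common zero modulo \emph{every} prime. This is a Gr\"obner basis computation over $\Z$: it succeeds away from a computable finite set of bad primes, each of which is then checked individually. The deepest point is that the no-common-zero test genuinely fails for exactly five quaternionic actions (three in odd characteristic with no global fixed point, two in characteristic $2$ fixing the identity), and there is no known a priori reason excluding them that does not also exclude cases which \emph{do} work; isolating these five and confirming that every other case passes is the crux of the argument.
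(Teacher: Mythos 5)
Your proposal is correct and follows essentially the same route as the paper: the criterion of Subsection 2.2 (degree product matching $|G|$ plus no common zero), the classification of crystallographic actions extended to finite and quaternionic characteristic, moduli-space identifications for the $A_n$/$E_6$/$E_7$/$E_8$/$F_4$ and related cases, normal-subgroup (Kummer) and non-normal bootstrapping computations with the dimension bounds of Subsection 7.3, Gr\"obner verification over $\Z$ with bad primes checked separately, and the isolation of the five quaternionic counterexamples. The only slight inaccuracy is attributing the characteristic-$0$ Hilbert series uniformly to the twisted Reynolds operators of \cite{elldaha} (those handle the real case; the complex and quaternionic degrees come out of the case analysis itself), but this does not affect the architecture of the argument.
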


\begin{rem}
  For the degrees of the generators, see the tables in Section 4 below.
\end{rem}

In characteristic 0, this was conjectured in
\cite{BernsteinJ/SchwarzmanO:2006}.  Note that the tautological line bundle
$\sO_{A/G}(1)$ on the weighted projective space (i.e., the unique ample
generator of the Picard group) pulls back to a line bundle ${\cal L}$ on
$A$, and one has
\[
\bigoplus_{k\ge 0} \Gamma(A;{\cal L}^k)^G
\cong
\bigoplus_{k\ge 0} \Gamma(A/G;\sO_{A/G}(k)),
\]
so that the ring of invariant sections of powers of ${\cal L}$ is a
polynomial ring.  In some cases, however, there are other line bundles on
$A$ that have polynomial invariants, and \cite{BernsteinJ/SchwarzmanO:2006}
made a refined conjecture about which line bundles have this property.
This is equivalent to asking that the corresponding $\G_m$-torsor $T$ have
quotient a punctured affine space, and they observe that this implies for
topological reasons that the group $\pi_1(T).G$ be a (nonlinear!)
reflection group on $\tilde{T}\cong \C^{n+1}$, and conjecture that this
necessary condition is also sufficient.

Again, we will want to translate this to a condition more suitable for
algebraic geometry.  Note that $\pi_1(T)$ is a central extension of
$\Lambda=\pi_1(A)$ by $\Z$ so that $\pi_1(T).G$ is a central extension of
$\Lambda.G$.  If $G$ is a crystallographic reflection group on $A$, then
the reflection subgroup of $\pi_1(T).G$ surjects on $\Lambda.G$ and thus in
particular contains the commutator subgroup $H:=(\pi_1(T),\pi_1(T).G)$ of
$\pi_1(T)$, which is normal in $\pi_1(T).G$.  This contains the derived
subgroup $\pi_1(T)'\cong \Z$ (finite index in the center) and surjects onto
the commutator subgroup $(\Lambda,\Lambda.G)$, which is itself finite index
in $\Lambda$ since $G$ is irreducible.  We thus conclude that $H$ has
finite index in $\pi_1(T)$, with $\pi_1(T)/H$ central in $(\pi_1(T)/H).G$.
It follows that $\tilde{T}/H$ is a finite abelian (and $G$-central) cover
of $T$.  To identify this cover, note first that the isogeny $A^{++}\to A$
corresponding to $(\Lambda,\Lambda.G)$ has kernel
$\Lambda/(\Lambda,G.\Lambda)\cong H_0(G;\Lambda)$, which by the Weil
pairing is dual to $H^0(G;\Pic^0(\hat{A}))$.  In other words, $A^{++}$ is
nothing other than the unramified cover associated to the subgroup
$\Pic^0(\hat{A})^G\subset \Pic^0(\hat{A})$ of line bundles, universal among
all isogenies for which the pullback homomorphism annihilates
$\Pic^0(\hat{A})^G$.  We then see that $\tilde{T}/H$ is a finite cover of
$T\times_A A^{++}\cong \tilde{T}/(H\Z)$ and thus corresponds to a line
bundle ${\cal L}^{++}$ on $A^{++}$ such that the pullback of ${\cal L}$ is
a positive power of ${\cal L}^{++}$.  Since $\pi_1(T)/H$ is central, we
further see that ${\cal L}^{++}$ must be the pullback of a $G$-invariant
line bundle on $A$.  But by construction of $A^{++}$, the pullback of a
$G$-invariant line bundle depends only on its class in the N\'eron-Severi
group.  Since $G$ is irreducible, $\NS(A)^G\cong \Z$ and thus the group of
pulled back $G$-invariant line bundles is also isomorphic to $\Z$, so that
${\cal L}^{++}$ must be the ample generator of that group.

We thus see that $\pi_1(T).G$ acts as a reflection group on $\tilde{T}$ iff
its image acts as a reflection group on the $\G_m$-torsor corresponding to
$(A^{++},{\cal L}^{++})$.  Note that in general a $\G_m$-equivariant
automorphism of a $\G_m$-torsor $Y\to X$ is a reflection iff it acts as a
reflection on $X$ and acts trivially on the fiber over some codimension 1
fixed point.  (In general, the action on the fiber over a fixed point is
multiplication by a scalar, and thus either the entire fiber is fixed or no
point of the fiber is fixed.)  This, of course, is a purely algebraic
condition on the corresponding line bundle: an equivariant line bundle
induces a 1-dimensional character of the stabilizer at any point, and the
reflections on the $\G_m$-torsor are the elements that stabilize a
codimension 1 point with eigenvalue 1.

\begin{prop}
  Let $G$ act on the pair $(A,{\cal L})$, with associated $\G_m$-torsor
  $T$, and let $(A^{++},{\cal L}^{++})$ be the corresponding cover, with
  associated $\G_m$-torsor $T^{++}$.  Then $\pi_1(T).G$ acts as a reflection
  group on $\tilde{T}$ iff the group $\mu_d.\ker(A^{++}\to A).G$ acts as a
  reflection group on $T^{++}$, where $d$ is such that ${\cal L}^{++}$ is a
  $d$-th root of the pullback of ${\cal L}$.
\end{prop}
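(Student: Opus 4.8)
The plan is to exploit the equivalence already extracted in the discussion immediately preceding the statement, which reduces the condition ``$\pi_1(T).G$ acts as a reflection group on $\tilde T$'' to the condition that the \emph{image} of $\pi_1(T).G$ acts as a reflection group on the $\G_m$-torsor attached to $(A^{++},{\cal L}^{++})$, i.e.\ on $T^{++}$. Granting that reduction, the entire remaining content of the proposition is the explicit identification of that image group as $\mu_d.\ker(A^{++}\to A).G$, which is a bookkeeping computation with the relevant central extensions. So first I would record that identification, and then indicate how to make the cited ``iff'' rigorous in both directions.

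For the identification, I would recall from the preceding paragraph that $T^{++}=\tilde T/H$ with $H=(\pi_1(T),\pi_1(T).G)$, so $H=\pi_1(T^{++})$ is exactly the deck group of $\tilde T\to T^{++}$. An element of $\pi_1(T).G$ descends to the identity on $T^{++}$ iff it acts on $\tilde T$ as such a deck transformation, i.e.\ iff it lies in $H$; since $H$ is moreover normal in $\pi_1(T).G$, the image in $\Aut(T^{++})$ is the quotient $(\pi_1(T).G)/H$. I would then evaluate this quotient as an iterated extension: it is an extension of $G$ by $\pi_1(T)/H$, and $\pi_1(T)/H$ is itself an extension of $\Lambda/(\Lambda,\Lambda.G)\cong\ker(A^{++}\to A)$ by the central subgroup $\Z/(\Z\cap H)$ coming from the $\G_m$-fiber direction. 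This yields $(\pi_1(T).G)/H\cong \mu_d.\ker(A^{++}\to A).G$ once one checks that $\Z/(\Z\cap H)\cong\mu_d$.

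To justify the cited equivalence itself I would argue exactly as in the analogous Proposition above. For the forward implication, each reflection of $\pi_1(T).G$ descends to a reflection (or to the identity) on the quotient $\tilde T/H=T^{++}$ by the Lemma on reflections inducing reflections on quotients, so the image is generated by reflections. For the converse, I would lift each reflection of $\mu_d.\ker(A^{++}\to A).G$ on $T^{++}$ to a reflection of $\pi_1(T).G$ on $\tilde T\cong\C^{n+1}$, choosing the translation part to lie in the appropriate lifted root curve; the resulting reflection subgroup surjects onto the image and, because the lift along the fixed locus is non-unique, also contains the $\pi_1$ of each lifted root curve together with the central fiber, hence all of $H$, so it is everything.

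The step I expect to be the main obstacle is pinning down the central $\mu_d$ factor: one must verify that the index $[\Z:\Z\cap H]$ in the fiber direction is precisely the integer $d$ for which ${\cal L}^{++}$ is a $d$-th root of the pullback of ${\cal L}$, and that the resulting cyclic quotient is \emph{canonically} the group $\mu_d\subset\G_m$ of $d$-th roots of unity acting on the fibers of $T^{++}$ (equivalently, that the cover $T^{++}\to T\times_A A^{++}$ is the $\mu_d$-cover given by the $d$-th power map). Everything else is either a direct transcription of the preceding paragraph or a verbatim repetition of the earlier lifting argument.
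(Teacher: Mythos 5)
Your forward direction coincides with the paper's (reflections descend to quotients), and the extension bookkeeping you propose first—identifying the image of $\pi_1(T).G$ on $T^{++}=\tilde{T}/H$ with $\mu_d.\ker(A^{++}\to A).G$ and checking $\Z/(\Z\cap H)\cong\mu_d$—is material the paper establishes in the discussion \emph{preceding} the proposition, so redoing it is harmless. The real divergence is in the converse. The paper never lifts anything there: it descends further, from $T^{++}$ to $A^{++}$ (quotient by $\G_m$), then from $A^{++}$ to $A^+$ (using that $A^{++}$ is the universal $G$-central cover, so the isogeny factors through $A^+$), concludes that $G$ is crystallographic, and only then invokes the preceding analysis. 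You instead transplant the lifting argument from the earlier proposition about $\pi_1(A).G$, and this is where your proof breaks.

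The gap is the claim that non-uniqueness of reflection-lifts yields ``the $\pi_1$ of each lifted root curve together with the central fiber, hence all of $H$.'' The central fiber part can never arise this way. Every reflection hypersurface $D\subset T^{++}$ of an element of $\mu_d.\ker(A^{++}\to A).G$ is $\G_m$-invariant, hence is a $\G_m$-torsor over a hypersurface in $A^{++}$; consequently the image of $\pi_1(D)\to\pi_1(T^{++})=H$ already contains the full fiber subgroup $\Z\cap H$, so $\Z\cap H$ stabilizes \emph{every} component of the preimage of $D$ in $\tilde{T}$. Two reflection-lifts of the same element of the image group differ by a deck transformation carrying one such component to another, so their ``difference'' is never a nontrivial element of $\Z\cap H$ (indeed two reflection-lifts fixing the same component pointwise are equal). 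Thus your mechanism produces, at best, elements of $H$ covering root-curve directions of $(\Lambda,\Lambda.G)$, and the reflection subgroup you build is only shown to generate $\pi_1(T).G$ modulo the central direction. This is not a removable technicality: the central direction is precisely what separates ``crystallographic'' from ``strongly crystallographic,'' i.e.\ it is the actual content of the proposition, and capturing it requires either the paper's descent route (reduce to the crystallographic condition on $A^+$ and then cite the structural analysis of $H$ and ${\cal L}^{++}$) or an explicit argument with the commutator pairing on $\pi_1(T)$, where the relation between $d$, $c_1({\cal L})$ and $c_1({\cal L}^{++})$ enters. As written, your converse also never uses the hypothesis in the form the paper needs it (crystallographicity of $G$ on the abelian variety), which is a signal that the appeal to the earlier proposition's proof is not a faithful transplant: there the fixed loci are linear hyperplanes whose lifts sweep out a full $\pi_1(E_r)$-torsor, while here the $\G_m$-invariance of the fixed loci removes exactly one direction of freedom.
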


\begin{proof}
  If $\pi_1(T).G$ acts as a reflection group, then so does the quotient
  $\mu_d.\ker(A^{++}\to A).G$.  Conversely, if $\mu_d.\ker(A^{++}\to A).G$ is a
  reflection group, then so is $\ker(A^{++}\to A).G$ on $A^{++}$.  Since $A^{++}$
  is the universal $G$-central cover of $A$, the isogeny $A^{++}\to A$ factors
  through $A^+$ and thus we conclude that $\ker(A^+\to A).G$ acts as a
  reflection group on $A^+$, so that $G$ is crystallographic.  That
  $\pi_1(T).G$ is a reflection group then follows from the argument above.
\end{proof}

As before, the condition in terms of a finite cover makes sense in
arbitrary characteristic, so we say that $G$ is {\em strongly
  crystallographic} for $(A,{\cal L})$ if $\mu_d.\ker(A^{++}\to A).G$ is a
reflection group on the $\G_m$-torsor associated to $(A^{++},{\cal L}^{++})$.

One important caveat is that unlike reflections on $A$, being a reflection
on the $\G_m$-torsor is {\em not} a closed condition.  To be precise, given
a family $S\to \Aut(A,{\cal L})$ lying over a given pointed reflection,
there is indeed an induced closed subscheme of strong reflections (which
generates a well-defined strong reflection subgroup): the fixed locus in
the $\G_m$-torsor is invariant under $\G_m$, so we may first quotient by
$\G_m$ to get a closed subscheme of $A\times S$ and then apply the proper
projection to $S$.  However, this construction does not respect base change
in general!  In addition, the closed subgroup generated by a closed
subscheme can also fail to behave well under base change, so in particular
reduction to finite characteristic.

We can now state our second main theorem.

\begin{thm}
  Let $G\subset \Aut(A,{\cal L})$ (with ${\cal L}$ ample) be an irreducible
  strongly crystallographic reflection group which is not one of five
  counterexamples in finite characteristic.  Then
  $\bigoplus_{d\ge 0} \Gamma(A;{\cal L}^d)^G$
  is a free polynomial ring.
\end{thm}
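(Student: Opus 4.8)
The plan is to reduce the second theorem to the first. The key conceptual observation, already established in the excerpt, is that $G$ acting as a strongly crystallographic reflection group on $(A,{\cal L})$ is equivalent to $\mu_d.\ker(A^{++}\to A).G$ acting as a reflection group on the $\G_m$-torsor $T^{++}$ associated to $(A^{++},{\cal L}^{++})$. Since $T^{++}$ is a $\G_m$-torsor over the abelian variety $A^{++}$, the total space is quasi-affine rather than proper, so the first theorem does not apply directly; however, the natural compactification is the total space of the line bundle's projectivization, and the right object to analyze is really the graded ring $\bigoplus_{d\ge 0}\Gamma(A;{\cal L}^d)^G$ itself, whose $\Proj$ is $A/G$.

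\medskip

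\noindent First I would observe that by the discussion preceding the theorem, the ring $R:=\bigoplus_{d\ge 0}\Gamma(A;{\cal L}^d)^G$ is precisely the invariant ring computing the quotient of the affine cone, and that by Theorem \ref{thm:main_theorem} (applicable since $G$ is in particular crystallographic whenever it is strongly crystallographic, as shown in the proof of the preceding proposition) the quotient $A/G=\Proj R$ is a weighted projective space $\P^{[d_0,\dots,d_n]}$. The content to be extracted is thus that $R$ is a \emph{polynomial} ring on generators of degrees $d_0,\dots,d_n$, not merely that its $\Proj$ is the weighted projective space. The bridge between these is the tautological line bundle: because $\Proj R\cong \P^{[d_0,\dots,d_n]}$, the ample generator $\sO_{A/G}(1)$ pulls back to some line bundle, and strong crystallographicity is exactly the condition guaranteeing that our chosen ${\cal L}$ realizes this generator (rather than a proper power of it). I would make this precise by comparing ${\cal L}$ with the line bundle ${\cal L}^{++}$ and the integer $d$ appearing in the proposition.

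\medskip

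\noindent The main step, then, is to show that the strong reflection condition forces the graded ring to be generated in the minimal possible way. Concretely, I would argue that strong crystallographicity on the $\G_m$-torsor guarantees that ${\cal L}$ itself (and not just a power) descends appropriately, so that the Veronese-type subtleties that would produce a non-polynomial ring with polynomial $\Proj$ cannot occur. This is the algebraic analogue of the $\C^{n+1}$ reflection-group picture: $\pi_1(T).G$ acting as a reflection group on $\tilde T\cong\C^{n+1}$ means the full affine cone is, after passing to invariants, a polynomial ring by Shephard--Todd--Chevalley in characteristic $0$, and the reduction-to-the-proper-case machinery of the first theorem handles finite characteristic. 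I would then invoke the earlier Cohen--Macaulayness results (Subsection 2.4) to ensure the invariant ring is Cohen--Macaulay, so that a ring whose $\Proj$ is $\P^{[d_0,\dots,d_n]}$ and which is generated in degrees $d_0,\dots,d_n$ with the correct Hilbert series is forced to be the free polynomial ring on those generators.

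\medskip

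\noindent The hard part will be handling the finite-characteristic cases where the passage from ``polynomial $\Proj$'' to ``polynomial ring'' can genuinely fail, exactly the $\Z[w,x,y,z]/(2w-y^2+xz)$ phenomenon flagged in the introduction. There, a ring can have polynomial generic fiber and flat Hilbert series yet fail to be polynomial in bad characteristic. The strong crystallographic condition is designed precisely to rule this out, but since (as noted in the excerpt) being a strong reflection is not a closed condition and need not respect base change, one cannot simply specialize from characteristic $0$. I expect the resolution to mirror the proof of the first theorem: a case-by-case verification using the classification, where in each case one exhibits explicit generators of degrees $d_0,\dots,d_n$ and checks via a Gr\"obner-basis computation (as described in the introduction) that they generate freely, identifying and separately treating the finitely many bad primes, with the same five quaternionic counterexamples excluded.
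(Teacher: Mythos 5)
Your proposed reduction of this theorem to Theorem \ref{thm:main_theorem} rests on a false bridge. You claim that strong crystallographicity ``is exactly the condition guaranteeing that our chosen ${\cal L}$ realizes the ample generator $\sO_{A/G}(1)$ (rather than a proper power of it).'' This is not so: for a given crystallographic $G$ there are in general \emph{several} strongly crystallographic equivariant line bundles, with genuinely different degree sequences, and the theorem asserts polynomiality for every one of them. For example, $ST_8$ admits two strongly crystallographic bundles, with invariants of degrees $2,3,4$ and $2,4,6$ respectively; the rank-one group $[\mu_6]$ admits strongly crystallographic structures on ${\cal L}([0])$, ${\cal L}(2[0])$, ${\cal L}(3[0])$, and ${\cal L}(6[0])$; and the imprimitive tables contain many rows per group, distinguished not only by the underlying bundle but by the equivariant structure (a character twist), which changes the invariant ring. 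Thus the present theorem is strictly stronger than Theorem \ref{thm:main_theorem} (it amounts to a statement about the quotient \emph{stack} $[A/G]$, not just the coarse space $A/G$), and no formal argument starting from ``$\Proj$ of the invariant ring is a weighted projective space'' can recover it: one must, in each case of the classification, enumerate all strongly crystallographic bundles and verify polynomiality for each, which is how the actual proof is organized (the rank-one analysis, the classification of strongly crystallographic structures for imprimitive and primitive groups, and then the case-by-case computations in the later sections, using the criterion that $n+1$ invariants with $|G|=\deg({\cal L})\,n!\,d_0\cdots d_n$ and no common zero generate).

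Your characteristic-zero step is also unsound. The group $\pi_1(T).G$ acting on $\tilde T\cong\C^{n+1}$ is an \emph{infinite discrete} group acting \emph{nonlinearly} (the text emphasizes this), so Shephard--Todd--Chevalley does not apply; polynomiality of its invariants in characteristic $0$ is precisely the Bernstein--Schwarzman conjecture whose proof is the point of the paper, not an available input. Likewise the Cohen--Macaulay property cannot be invoked for free: cones over abelian varieties are essentially never Cohen--Macaulay, and the standard transfer of CM-ness to invariant subrings fails in bad characteristic, which is exactly where the difficulty lies. Your final paragraph, conceding a case-by-case verification mirroring Theorem \ref{thm:main_theorem}, does point at the correct structure of the argument, but under your framing the cases would be indexed by groups and would only ever check the tautological bundle; the real work must be indexed by (group, lattice, torsor class, strongly crystallographic equivariant bundle), and the classification of those bundles is itself a nontrivial part of the proof.
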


\begin{rem}
  Note that for any crystallographic lattice $A$ for $G$, there is {\em
    some} line bundle for which $G$ is strongly crystallographic.  Indeed,
  the reflection subgroup $R\subset\G_m.\ker(A^{++}\to A).G$ surjects on
  $\ker(A^{++}\to A).G$ and we may take ${\cal L}$ to be the sheaf of $R\cap
  \G_m.\ker(A^{++}\to A)$-invariant sections of ${\cal L}^{++}$.
\end{rem}

\begin{rem}
  Since the $\G_m$-torsor $T$ is the complement of the origin in the cone
  $\Spec(\bigoplus_d \Gamma(A;{\cal L}^d))$, the conclusion is equivalent
  to there being an isomorphism $T/G\cong \A^{n+1}\setminus\{0\}$; the
  action of $\G_m$ on $T$ induces an action on the affine space and thus a
  grading on the corresponding polynomial ring.
\end{rem}

\begin{rem}
  In characteristic 2, this condition is stronger than necessary, and
  indeed there are several examples where the pullback of the tautological
  bundle on the weighted projective space does not satisfy the hypothesis!
  Indeed, it follows by rank 1 considerations that in characteristic $p$,
  the reflection subgroup of $\G_m.\ker(A^{++}\to A).G$ never contains
  $\mu_{p^2}$, while there are cases where the group associated to the
  tautological bundle contains $\mu_4$ or even $\mu_8$.  This is not a
  major issue, as it only arises when ${\cal L}$ is a power of a bundle
  which {\em is} strongly crystallographic, and it is easy to tell when a
  Veronese of a graded polynomial ring is a graded polynomial ring.
\end{rem}

This is equivalent to a stronger version of the identification of the
quotient: there is a morphism of {\em stacks} $[A/G]\to X$ where $X$ is the
{\em stack} version of a weighted projective space and the morphism induces
an isomorphism of coarse moduli spaces, namely the composition
\[
  [A/G]\cong [T_{\cal L}/G_m\times G]\to [(T_{\cal L}/G)/\G_m]\cong
  [(\A^{n+1}\setminus 0)/\G_m].
\]
Conversely, given a morphism $[A/G]\to X$ which induces an isomorphism of
coarse moduli spaces, we can pull back the tautological $\G_m$-torsor
$\A^{n+1}\setminus 0$ over $X$ to obtain a $\G_m$-torsor over $[A/G]$,
which may in turn be represented as $[T/G]$ where $T$ is the pulled back
$G$-equivariant $\G_m$-torsor over $A$.  We thus get a morphism $[T/G]\to
\A^{n+1}\setminus 0$ of $\G_m$-torsors that factors through the coarse
moduli space $T/G$.  Since the quotients by $\G_m$ are isomorphic, the
induced morphism
\[
\Gamma(\A^{n+1}\setminus 0;\sO)\to
\Gamma(T;\sO)^G
\]
is an isomorphism in sufficiently large degree, so that they have the same
function field, and thus (since both algebras are integrally closed) are
isomorphic.

\subsection{Checking polynomial invariants}

In general, it can be difficult to compute the {\em entire} invariant ring
directly, but luckily there is a fairly straightforward way (analogous to
\cite[Prop.~16]{KemperG:1996}) to test whether a given collection of $n+1$
invariants generates the invariant ring.

\begin{lem}
  Let $A$ be an $n$-dimensional abelian variety, let $G\subset \Aut(A)$,
  and let ${\cal L}$ be a $G$-equivariant ample line bundle on $A$.  Then
  \[
  \lim_{d\to\infty} d^{-n}\dim(\Gamma(A;{\cal L}^d)^G) = \frac{\deg({\cal L})}{|G|}.
  \]
\end{lem}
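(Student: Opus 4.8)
The plan is to avoid any averaging (Reynolds) argument, which would break down precisely when $\ch(\bar k)\mid |G|$, and instead deduce the asymptotics of the invariants from those of the \emph{full} section ring by a rank computation that is insensitive to the characteristic. Throughout I take $\deg({\cal L}):=\chi(A;{\cal L})=(c_1{\cal L})^n/n!$, which is the normalization making the assertion correct.

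First I would record the behavior of the full section ring. Set $S:=\bigoplus_{d\ge 0}\Gamma(A;{\cal L}^d)$, a finitely generated graded domain with $\Proj S=A$ (so $\dim S=n+1$), and put $R:=S^G=\bigoplus_{d\ge 0}\Gamma(A;{\cal L}^d)^G$. Since ${\cal L}$ is ample, the index/vanishing theorem for abelian varieties gives $H^i(A;{\cal L}^d)=0$ for $i>0$, $d\ge 1$, in every characteristic; moreover the tangent bundle of $A$ is trivial, so Riemann--Roch reads $\chi(A;{\cal L}^d)=(d\,c_1{\cal L})^n/n!=d^n\deg({\cal L})$. Hence $\dim S_d=d^n\deg({\cal L})$, and in particular $d^{-n}\dim S_d=\deg({\cal L})$ exactly, for all $d\ge 1$.

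Next I would compare $R$ and $S$. As $G$ is finite and acts faithfully, each homogeneous $s\in S$ is a root of the monic polynomial $\prod_{g\in G}(T-g\cdot s)\in R[T]$, so $S$ is integral over $R$; being also a finitely generated $\bar k$-algebra, $S$ is a finite graded $R$-module, and $\dim R=\dim S=n+1$. Both are graded domains, and the generic rank of $S$ over $R$ is $\dim_{\operatorname{Frac}(R)}\!\bigl(S\otimes_R\operatorname{Frac}(R)\bigr)=[\operatorname{Frac}(S):\operatorname{Frac}(R)]=|G|$, the last equality by Artin's lemma together with $\operatorname{Frac}(S)^G=\operatorname{Frac}(S^G)$ (one clears denominators by multiplying by the remaining conjugates). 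A finite graded module over a graded domain has eventually quasi-polynomial Hilbert function of degree equal to one less than its dimension, whose leading coefficient is constant (not merely periodic) and additive on short exact sequences up to lower-dimensional error, hence scales by the rank. Applying this to $S$ over $R$ gives $\lim_d d^{-n}\dim S_d=|G|\cdot\lim_d d^{-n}\dim R_d$, and combining with the previous paragraph yields $\lim_d d^{-n}\dim R_d=\deg({\cal L})/|G|$.

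The step to be careful about is the rank-versus-leading-coefficient comparison in the weighted (non-standard) graded setting: because ${\cal L}$ need not be projectively normal, $R$ and $S$ are genuinely weighted graded rings, so their Hilbert functions are only quasi-polynomial, and one must invoke the fact that the top coefficient of a Hilbert quasi-polynomial is non-periodic (so the stated limits even exist) before scaling it by the rank. The remaining subtlety is the extension to finite group schemes $G$ (relevant in finite characteristic), where $|G|$ means the order (length) of $G$: here the elementary integrality argument is replaced by the finiteness of the quotient morphism $A\to A/G$, whose degree is the order of $G$, and the rank computation goes through unchanged since this degree is exactly the generic rank of $S$ over $R$.
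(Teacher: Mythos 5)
Your skeleton for the reduced case is the same as the paper's: the full section ring satisfies $\dim S_d = d^n\deg({\cal L})$ exactly (vanishing plus Riemann--Roch), and by Galois theory $S$ has generic rank $|G|$ over $R=S^G$, so invariant dimensions should divide by $|G|$ to first order. (Note in passing that $S$ is not the problem child here: its Hilbert function is an honest polynomial whether or not ${\cal L}$ is projectively normal; only $R$ is genuinely weighted.) However, two of your steps have real gaps.

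First, the fact you propose to ``invoke'' --- that the top coefficient of the Hilbert quasi-polynomial of a finite graded module over a graded domain is non-periodic and scales by generic rank --- is false. Take $R=k[x]$ with $\deg x=2$: the Hilbert function is $1,0,1,0,\dots$, so the top coefficient is periodic; and for the free module $M=R\oplus R(-1)$ of rank $2$ one has $\dim M_d=1$ for all $d\ge 0$, so the leading behavior of $M$ is not $2$ times that of $R$. Rank scaling fails exactly when the top coefficient oscillates, and nothing in your argument rules out $\dim R_d$ oscillating with $d$ modulo some period while the rank-$|G|$ module $S$ still has polynomial growth; the existence of the limit for $R$ is precisely the point to be proved. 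It does hold, but for a geometric reason: choose $N$ (e.g.\ $N=|G|$) so that ${\cal L}^N$ descends to an ample bundle $M$ on $X=A/G$; then $R_{eN+r}=\Gamma(X;M^e\otimes F_r)$ with $F_r=(\pi_*{\cal L}^r)^G$, each $F_r$ has generic rank one because the faithful action of the reduced group $G$ is generically free (so $\pi$ is a torsor over a dense open), and asymptotic Riemann--Roch on $X$ gives the same leading constant $(M^n)/(n!\,N^n)=\deg({\cal L})/|G|$ in every residue class $r$. This is what the paper's phrase ``the invariant ring is a bundle of rank $|G|$'' is packaging.

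Second, and more seriously, your treatment of non-reduced $G$: you assert that $A\to A/G$ is finite of degree equal to the order of $G$, so that ``the rank computation goes through unchanged.'' As a general principle about faithful actions of finite group schemes this is false --- it is exactly the pitfall the paper warns about in the remark immediately following this lemma, and which it illustrates elsewhere with the kernel of Frobenius on $\PGL_2$ acting on $\P^1$: that group scheme has order $p^3$, but the quotient map has degree $p$, because a faithful infinitesimal action need not be generically free. What saves the abelian-variety case is structural: $\Aut_0(A)$ is \'etale, so all infinitesimal structure of $G\subset\Aut(A)$ lies in the translation part $G\cap A$, and translations act \emph{freely}. The paper's proof exploits exactly this decomposition: first quotient by $G\cap A$ (a torsor, so the descended bundle has degree $\deg({\cal L})/|G\cap A|$), then apply the reduced case to the faithful action of the reduced group $G/(G\cap A)$ on $A/(G\cap A)$. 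Without that decomposition, your final paragraph asserts the conclusion rather than proving it.
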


\begin{proof}
  Suppose first that $G$ is reduced.  Then we have $\dim\Gamma(A;{\cal
    L}^d)=d^n\deg({\cal L})$, and since $G$ acts faithfully, it to first
  order divides dimensions by $G$.  (By Galois theory, the invariant {\em
    field} is a vector space of dimension $|G|$, and thus the invariant
  {\em ring} is a bundle of rank $|G|$.)

  More generally, we may consider the action of the reduced group $G/(G\cap
  A)$ on $A/(G\cap A)$.  The degree of the induced line bundle on the
  quotient is $\deg({\cal L})/|G\cap A|$, so that the conclusion remains
  valid as stated.
\end{proof}

\begin{rem}
  The reader is cautioned that taking invariants of a faithful action of a
  nonreduced group on a more general scheme does not always divide degrees
  as expected.
\end{rem}

\begin{lem}
  Let $R=k[x_0,\dots,x_n]$ be a graded polynomial ring with generators of
  degrees $d_0$,\dots,$d_n$ with $\gcd(d_0,\dots,d_n)=1$.  Then
  \[
  \lim_{d\to\infty} d^{-n}\dim(R[d]) = \frac{1}{n!d_0d_1\cdots d_n}.
  \]
\end{lem}

\begin{rem}
  We may represent $R$ as a subring $k[y_0^{d_0},\dots,y_n^{d_n}]$ of a
  free polynomial ring $R^+$, a free $R$-module of rank $d_0\cdots d_n$ with
  $\dim(R^+)\sim d^n/n!$.
\end{rem}

\begin{lem}
  Let $R$ be an integrally closed graded ring with $R_0=k$, and suppose
  that the homogeneous elements $x_0,\dots,x_n\in R$ are algebraically
  independent elements such that $R/(x_0,\dots,x_n)$ is finite-dimensional.
  If
  \[
  \lim_{n\to\infty} d^{-n}\dim(R[d]) = \frac{1}{n! \deg(x_0)\cdots
    \deg(x_n)},
  \]
  then $R=k[x_0,\dots,x_n]$.
\end{lem}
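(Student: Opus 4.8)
\emph{Proof proposal.} Let $S:=k[x_0,\dots,x_n]\subseteq R$ be the graded subalgebra generated by the $x_i$. Since the $x_i$ are algebraically independent and homogeneous of positive degrees, $S$ is a weighted polynomial ring, and the assertion is exactly that $R=S$. The plan is to show that $R$ is a finite $S$-module whose generic rank over $S$ is forced to be $1$ by the growth hypothesis, and then to conclude by integral closedness. For the first step I would use graded Nakayama: viewing $R$ as a graded $S$-module, it is bounded below (as $R_d=0$ for $d<0$), and the hypothesis says that $R/S_+R = R/(x_0,\dots,x_n)$ is finite-dimensional over $k$. Lifting a homogeneous $k$-basis of this quotient therefore produces a finite $S$-generating set of $R$, so $R$ is module-finite over $S$; in particular $R$ is a Noetherian $S$-module and is integral over $S$. (Here $R$ is a domain, as is implicit in ``integrally closed'' and as holds in our application, where $R$ is an invariant subring of a section ring of an integral variety.)

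Next I would identify $m:=[\operatorname{Frac}(R):\operatorname{Frac}(S)]=\operatorname{rank}_S(R)$ using the Hilbert function. The Hilbert series of $S$ is $\prod_i(1-t^{\deg(x_i)})^{-1}$, whose top-order pole at $t=1$ gives $\lim_{d\to\infty}d^{-n}\dim(S[d]) = 1/(n!\,\deg(x_0)\cdots\deg(x_n))$ --- this is the content of the lemma above applied to $S$, and the leading coefficient is in fact independent of the $\gcd$ hypothesis used there. For the finite graded $S$-module $R$, choose a homogeneous prime filtration $0=M_0\subset\cdots\subset M_t=R$ with $M_i/M_{i-1}\cong(S/\mathfrak p_i)(a_i)$. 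Summing Hilbert functions, the quotients with $\mathfrak p_i\neq 0$ have support of dimension $\le n$ and contribute $o(d^n)$, while each of the $m$ quotients with $\mathfrak p_i=0$ contributes $\sim\dim(S[d])$ (a degree shift does not change the leading term). Hence $\dim(R[d])\sim m\cdot\dim(S[d])$, and comparison with the hypothesized limit forces $m=1$, i.e.\ $\operatorname{Frac}(R)=\operatorname{Frac}(S)$.

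To finish, note that $R$ is integral over $S$ and contained in $\operatorname{Frac}(R)=\operatorname{Frac}(S)$; since the polynomial ring $S$ is integrally closed in its fraction field, this forces $R\subseteq S$, and therefore $R=S=k[x_0,\dots,x_n]$. The step demanding the most care is the middle one: justifying that the leading coefficient of the Hilbert quasipolynomial of a module-finite graded $S$-module equals its generic rank times that of $S$, by controlling the non-principal quotients and the degree shifts in the prime filtration. The finiteness and integral-closure steps are then routine applications of the graded Nakayama lemma and of the normality of $R$ and $S$.
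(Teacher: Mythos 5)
Your proposal is correct and takes essentially the same route as the paper's proof: module-finiteness of $R$ over $S=k[x_0,\dots,x_n]$ from finite-dimensionality of $R/(x_0,\dots,x_n)$, generic rank $1$ from the growth hypothesis, and then $R\subseteq S$ since $R$ is integral over the normal ring $S$ inside $\operatorname{Frac}(S)$. The only blemish is the parenthetical claim that the limit formula for $\dim(S[d])$ holds without the $\gcd$ hypothesis---when $\gcd(\deg x_0,\dots,\deg x_n)>1$ the pointwise limit does not exist, since $\dim(S[d])$ vanishes off multiples of the $\gcd$---but this is harmless: running your filtration argument residue class by residue class shows that in that case the lemma's hypothesis (existence of the limit for $R$ with that particular value) can never hold, so the conclusion is unaffected.
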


\begin{proof}
  Algebraic independence implies $k[x_0,\dots,x_n]\subset R$ and
  finite-dimensionality of $R/(x_0,\dots,x_n)$ implies that $R$ is a
  finitely generated module over $k[x_0,\dots,x_n]$.  The growth condition
  implies that this module has generic rank 1, and thus that the field of
  fractions of $R$ is $k(x_0,\dots,x_n)$.  It follows that $R$ is the
  integral closure of the integrally closed ring $k[x_0,\dots,x_n]$.
\end{proof}

\begin{lem}
  Let ${\cal L}$ be an ample line bundle on the smooth variety $X$, with
  associated homogeneous coordinate ring $R:=\bigoplus_d \Gamma(X;{\cal
    L}^d)$.  Suppose $x_0,\dots,x_n\in R$ are homogeneous elements of
  positive degree with no common zero on $X$.  Then they are algebraically
  independent.
\end{lem}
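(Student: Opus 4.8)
The plan is to pass to the affine cone and show that the hypothesis forces $x_0,\dots,x_n$ to be a homogeneous system of parameters, whereupon algebraic independence is automatic. Write $C:=\Spec R$ for the affine cone. Since $X$ is an integral variety, $R=\bigoplus_d\Gamma(X;{\cal L}^d)$ is a graded domain, and as ${\cal L}$ is ample (so that $X$ is projective) it is a finitely generated $\bar k$-algebra with $\Proj R\cong X$ and $\dim C=\dim X+1=n+1$ (the relevant case $\dim X=n$, which is in any event forced by the conclusion). The irrelevant ideal $R_+=\bigoplus_{d>0}R_d$ is the unique homogeneous maximal ideal and corresponds to the vertex of the cone.

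The key step is to translate the geometric hypothesis. A point of $\Proj R\cong X$ at which all of the $x_i$ vanish is exactly a common zero on $X$, so ``no common zero'' says precisely that $V_+(x_0,\dots,x_n)=\emptyset$ in $\Proj R$. By the standard $\Proj$--$\Spec$ dictionary this is equivalent to $\sqrt{(x_0,\dots,x_n)}\supseteq R_+$; equivalently, the only homogeneous prime of $R$ containing $x_0,\dots,x_n$ is $R_+$ itself. Since each $x_i$ has positive degree it lies in $R_+$, so $R_+$ does contain the ideal, and we conclude that $\dim R/(x_0,\dots,x_n)=0$.

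Finally, $R$ is a finitely generated graded domain of dimension $n+1$, the $n+1$ elements $x_0,\dots,x_n$ are homogeneous of positive degree, and $R/(x_0,\dots,x_n)$ is finite-dimensional; thus they form a homogeneous system of parameters. By graded Noether normalization (equivalently, graded Nakayama, exactly as in the proof of the previous lemma) $R$ is a finite module over the subring $S:=\bar k[x_0,\dots,x_n]$. Finiteness forces $\dim S=\dim R=n+1$, and a $\bar k$-algebra generated by $n+1$ elements has Krull dimension $n+1$ only when those generators are algebraically independent; hence $x_0,\dots,x_n$ are algebraically independent, as claimed.

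I expect the only real subtlety to be the translation of ``no common zero on $X$'' into the vanishing $\dim R/(x_0,\dots,x_n)=0$ in the cone---in particular keeping track of the vertex via the positive-degree hypothesis and of the implicit equality $\dim X=n$---after which the conclusion is pure dimension theory. Alternatively, one may argue directly via the theorem on dimensions of fibres applied to the dominant morphism $C\to\Spec S$: the fibre over the origin is the vertex, of dimension $0\ge \dim C-\dim S$, again forcing $\dim S\ge n+1$ and hence independence.
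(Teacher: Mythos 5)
Your proof is correct, but it takes a genuinely different route from the paper's. The paper argues locally on $X$: after a Veronese-type reduction to $\deg(x_0)=1$, ampleness (positivity of intersection numbers) produces a common zero $p$ of $x_1,\dots,x_n$, which by hypothesis lies outside $V(x_0)$; the functions $f_i:=x_i/x_0^{\deg(x_i)}$ then cut out a finite-length quotient of the regular local ring ${\cal O}_{X,p}$, hence form a regular sequence, and algebraic independence follows. Your argument is instead global, on the affine cone: ``no common zero'' makes $(x_0,\dots,x_n)$ an $R_+$-primary ideal, so the $x_i$ are a homogeneous system of parameters in the $(n+1)$-dimensional graded domain $R$, whence $R$ is finite over $k[x_0,\dots,x_n]$ and the Krull-dimension count forces independence. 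The trade-offs: your proof never uses smoothness of $X$ (the paper invokes regularity of ${\cal O}_{X,p}$, though Cohen--Macaulayness, or mere analytic independence of a system of parameters, would do) and it reuses the graded-Nakayama finiteness step already present in the preceding lemma, so it is arguably more uniform; on the other hand, it leans on finite generation of the section ring of an ample bundle and the identification $X\cong\Proj R$ (standard but not free), which the paper's local argument avoids entirely. One shared caveat: both arguments, like the statement itself, implicitly assume $n=\dim X$ --- without this the lemma is false (on $\P^1$ with $n=2$, the sections $s$, $t$, $st$ have no common zero but satisfy an obvious relation); you at least flag this, while the paper leaves it implicit, the lemma only ever being applied with $X$ an $n$-dimensional abelian variety.
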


\begin{proof}
  Suppose WLOG that $\deg(x_0)=\min_i\deg(x_i)$.  An algebraic relation
  between $x_0,x_1,\dots,x_n$ induces an algebraic relation between
  $x_0,x_1^{\deg(x_0)},\dots,x_n^{\deg(x_0)}$, so we may as well replace
  ${\cal L}$ by ${\cal L}^{\deg(x_0)}$ and reduce to the case that
  $\deg(x_0)=1$.  Since ${\cal L}$ is ample, the sections $x_1,\dots,x_n$
  have a common zero $p$ (the intersection number is positive) which is not
  a zero of $x_0$ by hypothesis, so that the
  functions $f_i:=x_i/x_0^{\deg(x_i)}$ give elements of the local ring at
  $p$ that cut out a finite-dimensional module.  Since the local ring at $p$
  is regular, this implies that $f_1,\dots,f_n$ are a regular sequence
  and thus algebraically independent.  This in turn implies that
  $x_0$,\dots,$x_n$ are algebraically independent as required.
\end{proof}

Putting this all together gives us the following.

\begin{prop}
  Let $G\subset \Aut(A)$ be a group and ${\cal L}$ an $G$-equivariant ample
  line bundle, and suppose that $x_i\in \Gamma(A;{\cal L}^{d_i})^G$, $0\le
  i\le n$, are invariant sections such that
  \[
  |G| = \deg({\cal L})n! d_0\cdots d_n.
  \]
  If the zero loci of $x_0,\dots,x_n$ have no common intersection, then
  \[
  \bigoplus_d \Gamma(A;{\cal L}^d)^G
  \cong
  k[x_0,\dots,x_n].
  \]
\end{prop}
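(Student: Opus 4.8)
This statement is the capstone of the four preceding lemmas, and my plan is to recognize it as such: I would verify, for the graded invariant ring
\[
R := \bigoplus_{d\ge 0}\Gamma(A;{\cal L}^d)^G,
\]
the four hypotheses of the structural criterion above (the lemma asserting that an integrally closed graded $R$ with $R_0=k$, carrying algebraically independent homogeneous $x_0,\dots,x_n$ with $R/(x_0,\dots,x_n)$ finite-dimensional and with the correct growth, equals $k[x_0,\dots,x_n]$). Throughout I would write $S := \bigoplus_{d\ge 0}\Gamma(A;{\cal L}^d)$ for the full section ring, so that $R=S^G$, and note $\deg(x_i)=d_i$ in the grading where ${\cal L}$ has degree $1$. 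Since ${\cal L}$ is ample and $A$ is a normal (indeed smooth) projective variety, $S$ is a finitely generated normal domain with $\Proj S\cong A$, and $S$ is finite over $R$ (so $R$ is Noetherian), which are the background facts I would invoke repeatedly.

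The four checks go as follows. First, $A$ is proper, so $\Gamma(A;\sO)=k$ and hence $R_0=k$. Second, $R=S^G$ is the invariant subring of a normal domain under a finite group scheme, so the usual argument $R=S\cap\operatorname{Frac}(R)$ shows $R$ is integrally closed (when $G$ is nonreduced I would first pass to the reduced quotient $G/(G\cap A)$ acting on $A/(G\cap A)$, exactly as in the dimension-count lemma above). Third, the hypothesis that $x_0,\dots,x_n$ have no common zero on the smooth variety $A$ lets me apply the algebraic-independence lemma above directly, giving algebraic independence. Fourth, the \emph{same} no-common-zero hypothesis yields finite-dimensionality of $R/(x_0,\dots,x_n)$: it says $V_+(x_0,\dots,x_n)=\emptyset$ in $\Proj S\cong A$, and since the integral extension $R\subseteq S$ induces a finite surjection $\Proj S\to\Proj R$ under which each $x_i$ pulls back to itself, also $V_+(x_0,\dots,x_n)=\emptyset$ in $\Proj R$; hence $(x_0,\dots,x_n)\supseteq R_{\ge N}$ for some $N$, and the quotient, a finitely generated graded module supported only at the irrelevant ideal, is finite-dimensional.

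It then remains only to match the growth rate, which is immediate from the numerics. The dimension-count lemma above gives
\[
\lim_{d\to\infty} d^{-n}\dim\bigl(R[d]\bigr)=\frac{\deg({\cal L})}{|G|},
\]
and substituting the hypothesis $|G|=\deg({\cal L})\,n!\,d_0\cdots d_n$ turns the right-hand side into $\tfrac{1}{n!\,d_0\cdots d_n}=\tfrac{1}{n!\,\deg(x_0)\cdots\deg(x_n)}$, which is exactly the growth demanded by the structural criterion. That criterion now gives $R=k[x_0,\dots,x_n]$, and since the $x_i$ are algebraically independent this subring is a free polynomial ring, which is the asserted isomorphism.

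I expect the only genuine subtlety to lie in the translation of the single geometric input (no common zero on $A$) into the two logically distinct hypotheses of the criterion. It is worth stressing that algebraic independence alone does \emph{not} force finite-dimensionality of the quotient—for instance $x$ and $xy$ are algebraically independent in $k[x,y]$ yet fail to be a system of parameters—so the no-common-zero condition is doing real double duty rather than being redundant with the algebraic-independence lemma. The one place I would be careful is the normality (and Noetherianity) of $R$ in positive characteristic when $G$ is a nonreduced group scheme; there the reduction to the reduced action $G/(G\cap A)$ on $A/(G\cap A)$, together with finiteness of $S$ over $R$ (which is what makes $\Proj S\to\Proj R$ finite surjective and $R$ Noetherian), supplies everything that the four checks require.
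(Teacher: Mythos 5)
Your proposal is correct and follows essentially the same route as the paper's proof: verify integral closedness of the invariant ring, deduce algebraic independence and finite-dimensionality of the quotient from the no-common-zero hypothesis, and then invoke the growth-rate lemma together with the numerical hypothesis to conclude via the structural criterion. The only (immaterial) difference is mechanical—the paper gets finite-dimensionality of $R/(x_0,\dots,x_n)$ from finite-dimensionality of the corresponding quotient of the full section ring, whereas you pass through the finite surjection $\Proj S\to\Proj R$—but both are standard implementations of the same step.
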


\begin{proof}
  The no common intersection condition ensures that $x_0,\dots,x_n$
  are algebraically independent and that the quotient by the corresponding
  ideal of the homogeneous coordinate ring of $A$ is finite-dimensional.
  This immediately implies that the corresponding quotient of the invariant
  ring is finite dimensional.  Since the homogeneous coordinate ring of $A$
  is integrally closed, so is the subring of invariants, and thus the claim
  follows.
\end{proof}
  
\subsection{First steps towards classification}

Given a surjective morphism $\prod_r E_r\to A$, we can always choose
$n=\dim(A)$ of the curves so that the corresponding morphism $\prod_{1\le
  i\le n} E_i\to A$ is still surjective.  (E.g., we may consider the curves
in some order and omit any curve that does not increase the dimension of
the image.)  In particular, if $A$ admits an irreducible reflection group
action, then there is a surjective morphism of this form in which all of
the curves are isomorphic (i.e., from a single orbit of root curves), and
thus $A$ is isogenous to a power $E^n$.  This lets us mostly identify the
endomorphism ring of $A$:
\[
\End(A)\otimes_\Z \Q\cong \Mat_n(\End(E)\otimes_\Z \Q).
\]
By standard results of the theory of elliptic curves, the coefficient ring
$\End(E)\otimes_\Z \Q$ is either $\Q$ (the generic case), an imaginary
quadratic field, or a quaternion algebra $Q_p$ over $\Q$ ramified at a
single finite prime $p$ (when $E$ is supersingular of characteristic $p$).
It follows that the image of $\Q[G]$ inside $\End(A)\otimes_\Z\Q$
(factoring through $\Q[\Lin(G)]$) is $\Mat_n(K)$ for $K\subset
\End(E)\otimes_\Z\Q$, so that $K$ ranges over the same list.  We say that
$G$ is a {\em real} reflection group if $K=\Q$, a {\em complex} reflection
group if $K$ is an imaginary quadratic field, and a {\em quaternionic}
reflection group if $K$ is a quaternion algebra.  This, of course, simply
distinguishes between the possibilities for the image of $\R[G]$ in
$\End(A)\otimes_\Z \R$.

The key point, of course, is that this identification of a quotient of
$\Q[G]$ with a matrix algebra is nothing other than a choice of {\em
  linear} representation of $G$.  Moreover, if $g\in G$ is a reflection,
then $g-1\in \End(A)$ has 1-dimensional image and thus $g-1\in
\End(A)\otimes_\Z \Q$ has rank 1 as a matrix.  In other words, this linear
representation of $G$ exhibits $G$ as a reflection group in the usual
sense!

For the converse, let $K$ be a division algebra over $\Q$, and let
$G\subset \Mat_n(K)$ be a reflection group such that $\Q[G]\surj
\Mat_n(K)$, and thus in particular $\Z[G]$ maps to an order $\rho(\Z[G])$
in $\Mat_n(K)$.  For any reflection $r\in G$, let $R_r\subset
\rho(\Z[G])$ be the saturation of $(1-r)\rho(\Z[G])(1-r)$ (a subring,
albeit with different multiplicative unit).  Over $\Q$, we can conjugate
$r\in \Mat_n(K)$ to be 0 outside the $11$ entry, so that the saturation
over $\Q$ is isomorphic to $K$; it follows that $R$ is an order in $K$.

Let $V_R$ denote the $(\Z[G],R)$-bimodule $\rho(\Z[G])R$.  (This depends
only on the ``root'' of the reflection and not on the reflection itself.)
Let $G_r\subset G$ be the subgroup of elements such that $gR\subset R$, so
that there is an induced morphism $R\to \Res^G_{G_R} V_R$ and thus (by
Frobenius reciprocity) a morphism $\Ind^G_{G_R}R\to V_R$.  This latter
morphism is a surjective morphism from a free $R$-module, and thus the
kernel $K$ is torsion-free.

Now, suppose we are given an elliptic curve $E$ with an action of $R$ as
endomorphisms.  We can tensor $E$ with any right $R$-module (this is clear,
and functorial, for free $R$-modules and extends via free resolutions) and
thus in particular we obtain a short exact sequence
\[
0\to K\otimes_R E\to (\Ind^G_{G_R} R)\otimes_R E\to V_R\otimes_R E\to 0
\]
of abelian varieties with induced pointed actions of $G$.  For any
reflection $r'\in G$, $(1-r')V_R\otimes_R E$ is the tensor product of $E$
with a rank 1 $R$-module, so is itself an elliptic curve, and thus $G$ acts
on $V_R\otimes_R E$ as a reflection group (and is easily seen to be
irreducible).  Moreover, the middle term in the short exact sequence is the
product over an orbit of root curves, and thus the kernel is already
integral even before we include the other orbits, so that $G$ is
crystallographic.

In other words, for any reflection group, any reflection, and any elliptic
curve with the appropriate endomorphism structure, there is an induced
abelian variety on which the group acts as an irreducible crystallographic
reflection group.  Moreover, this induced abelian variety is universal in
the following sense: given any abelian variety $A$ on which $G$ acts as an
irreducible reflection group, there is an induced $G$-equivariant isogeny
$V_R\otimes_R E\to A$ where $E$ is the root curve corresponding to $R$.
(Indeed, the map from $(\Ind^G_{G_R} R)\otimes_R E$ is nothing other than
the standard map from the product of root curves, and necessarily
annihilates $K\otimes_R E$.)

This gives us a way to classify pointed crystallographic actions of
irreducible reflection groups.  The kernel $K_A$ of the isogeny
$V_R\otimes_R E\to A$ must be a $G$-invariant finite subgroup scheme that
meets the original root curve in the identity, as otherwise the induced
map of root curves would be a nontrivial isogeny.  In particular, if $r$
is a reflection with the given root curve, then $(r-1)K_A\subset K_A$ is
contained in the root curve, so must be trivial.  In other words, $K_A$ is
contained in the subgroup of $V_R\otimes_R E$ fixed by $r$, which by
$G$-invariance implies that it is contained in the subscheme fixed by the
normal subgroup generated by all conjugates of such reflections.  Since $G$
is irreducible, this subscheme is finite (indeed, it is contained in the
$|G|$-torsion), which already makes the problem finite.  A further
simplification comes from our desire to be crystallographic, which forces
$K_A$ to be generated by its intersections with the various root curves.

One caution here is that the moduli space of curves with endomorphism ring
containing $R$ might have multiple components.  The obvious case is when
$R$ is an imaginary quadratic order with class number $>1$, but (a) in that
case the different components are related by the Galois group, and (b) this
case doesn't actually arise from reflection groups.  A more serious issue
is when $R$ is a nonmaximal order in an imaginary quadratic field, as then
we have a Galois orbit (in characteristic 0) for each order containing $R$.
(Again, those orbits will be a single point in the cases of interest.)
Similarly, if $R$ is a nonmaximal order in $Q_p$, then we get a point for
each extension to a maximal order.  (This is an issue even when there is a
unique supersingular curve of characteristic $p$, as there can still be
multiple conjugacy classes of actions $R\to \End(E)$.)  This is not too
serious an issue as long as one is careful, though.  One should also note
that we get a different universal action for each orbit of root curves,
each of which may be used for the full classification, and it turns out
that there is only one case in which {\em every} order is nonmaximal, and
in that case the orders extend uniquely.

Given a pointed crystallographic action of $G$, the next problem is to
classify the associated non-pointed crystallographic actions on quotients
of $A$ by $T\subset A^G$, where $T$ is disjoint from every root curve.
Such an action is in particular a non-pointed action of $G$ on $A/T$ by
reflections.  Non-pointed actions are classified (modulo conjugation by
translations) by classes in $H^1(G;A/T)$.  The condition for a reflection
$r\in G$ to be a reflection under the twisted action is that the twisted
action have a fixed point, or equivalently for the restricted class in
$H^1(\langle r\rangle;A/T)$ to be trivial, and thus we need classes which
are trivial on a suitable generating set of reflections.  Since $T$ is
disjoint from every root curve, each reflection in $G$ that lifts to a
reflection in $T.G$ lifts uniquely, and thus the reflection subgroup of
$T.G$ has the form $T'.G$.  Thus, rather than try each $T$ separately and
look for cocycles for which $T'=T$, we may as well consider only maximal
elements in the corresponding poset.  (Note that we will use a more
hands-on argument in infinite families, and thus this will be a finite
further classification.)

There is one uniform construction of such torsors.  If the minimal
polarization of $A$ has odd degree, then the construction of \cite{PR}
gives a natural class in $H^1(G;\hat{A}[2])\cong H^1(G;A[2])$ such that the
polarization is represented by a line bundle on the torsor.  But the
argument of Proposition 3.12(d) op.~cit.~shows that the restriction of this
class to any cyclic subgroup of $G$ is trivial, and thus in particular
every reflection remains a reflection on the torsor.  (Of course, in most
cases, this canonical $G$-equivariant ``theta'' torsor will end up being
trivial!)

In the real (i.e., Coxeter) case, $G$ has at most two conjugacy classes of
reflections, neither of which alone suffices to generate the group, and
thus the cohomology class must be trivial along every reflection.  But then
we can simultaneously trivialize the cohomology class along the {\em
  simple} reflections, and thus see that the class is trivial.  In other
words, any crystallographic action of an irreducible {\em Coxeter} group
has a common fixed point, so is equivalent to a pointed action.  A similar
reduction applies in most of the sporadic complex and quaternionic cases as
well, as most of the time $G$ can be generated by $n$ reflections such that
every conjugacy class of reflections contains a power of one of the
generators.

One useful fact for ruling out torsors is the following trivial fact.

\begin{lem}
  Let $G\subset \Aut(A)$ be a finite subgroup such that $G\cap A=0$.  If
  $G$ has a normal subgroup that fixes a unique closed point of $A$, then
  so does $G$.
\end{lem}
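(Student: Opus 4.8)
The plan is to leverage the normality of the subgroup to force its unique fixed point to be fixed by all of $G$, and then to extract uniqueness for $G$ essentially for free from the containment $N\subseteq G$. Write $p$ for the unique closed point fixed by the normal subgroup $N$, and consider the fixed locus $A^N$. The first step is to check that $A^N$ is $G$-stable. For this I would use the conjugation identity in $\Aut(A)$: for any $g\in G$, any $x$ fixed by $N$, and any $h\in N$, one has $h(g(x))=g\bigl((g^{-1}hg)(x)\bigr)=g(x)$, since $g^{-1}hg\in N$ by normality and hence fixes $x$. Thus $g$ carries $A^N$ into itself. Note this computation is valid for arbitrary variety-automorphisms, so it applies equally whether $g$ is linear or a translation.

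The second step is immediate: since by hypothesis $A^N$ consists of the single closed point $p$, the action of $G$ on the (finite) set of closed points of $A^N$ has nowhere to send $p$ except to itself, so $g(p)=p$ for every $g\in G$ and $G$ fixes $p$. For uniqueness, I would observe that any point fixed by all of $G$ is in particular fixed by $N$ (as $N\subseteq G$), so $A^G\subseteq A^N=\{p\}$; together with $p\in A^G$ this shows $p$ is the unique closed point fixed by $G$, as desired.

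There is essentially no genuine obstacle here --- this is the ``trivial fact'' advertised --- so the hard part, such as it is, is purely a matter of bookkeeping in finite characteristic. When $G$ is a finite group scheme one should read $A^N$ and $A^G$ as scheme-theoretic fixed loci; the conjugation identity and the containment $A^G\subseteq A^N$ both persist at this level, and the hypothesis is simply that $A^N$ is supported at one closed point. The other point worth isolating is the role of the standing assumption $G\cap A=0$: a nontrivial translation has no fixed point whatsoever, so the step forcing $g(p)=p$ would be incompatible with such a $g$ lying in $G$. In fact the very same computation shows that the hypothesis on $N$ already excludes nontrivial translations from $G$ (taking $g$ a translation yields $p+a=p$, hence $a=0$), so $G\cap A=0$ functions as a consistency condition rather than an essential ingredient of the argument.
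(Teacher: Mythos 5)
Your proof is correct and is exactly the argument the paper has in mind (the paper states this as a ``trivial fact'' with no proof): normality makes $A^N$ stable under $G$, so its unique closed point is fixed by $G$, and uniqueness follows from $A^G\subseteq A^N$. Your side remark that $G\cap A=0$ is a consequence rather than a needed hypothesis is also accurate; the paper carries it as a standing assumption in the surrounding classification of torsors rather than as an ingredient of this lemma.
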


More generally, we may use the Hochschild-Serre sequence, which in
the initial terms reads
\[
0\to H^1(G/N;A^N)\to H^1(G;A)\to H^1(N;A)^{G/N}\to
\]
so that in particular if $H^1(N;A)=0$, we can reduce to cohomology
with coefficients in the typically finite group (scheme) $A^N$.
This happens quite often, and indeed one can use $N=Z(G)$ when $G$ has
nontrivial center, due to the following fact.

\begin{lem}
  Let $g\in \Aut_0(A)$ be an element of finite order $d$ such that
  $1+g+\cdots+g^{d-1}=0$.  Then $H^1(\langle g\rangle;A)=0$.
\end{lem}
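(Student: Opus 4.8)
The plan is to reduce the computation of $H^1(\langle g\rangle;A)$ to the standard description of the cohomology of a finite cyclic group, and then to exploit the hypothesis $1+g+\cdots+g^{d-1}=0$ twice: once to trivialize the relevant kernel, and once to show that $g-1$ is an isogeny.

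First I would recall the periodicity of the cohomology of the cyclic group $\langle g\rangle$ of order $d$ acting on an abelian group $M$: the odd cohomology is the Tate quotient
\[
H^1(\langle g\rangle;M)\cong \ker(N\colon M\to M)/\im(g-1\colon M\to M),
\]
where $N=1+g+\cdots+g^{d-1}$ is the norm. Taking $M=A(\bar k)$ and using that $N=0$ as an endomorphism of $A$, the norm annihilates every point, so $\ker(N)=A(\bar k)$ and the cohomology collapses to the cokernel $A(\bar k)/(g-1)A(\bar k)$. It therefore suffices to show that $g-1$ is surjective on $\bar k$-points.

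The key point is that $g-1$ is an isogeny. Since $N(1)=d\ne 0$, the polynomial $N(x)-d\in\Z[x]$ vanishes at $x=1$, hence is divisible by $x-1$; write $N(x)-d=(x-1)h(x)$ with $h\in\Z[x]$. Substituting $g$ and using $N(g)=0$ gives $(g-1)h(g)=-d$ in $\End(A)$. Thus $g-1$ becomes a unit in $\End(A)\otimes_\Z\Q$, with $d\cdot(g-1)^{-1}=-h(g)$, which is exactly the statement that $g-1$ has finite kernel and is therefore an isogeny. An isogeny of abelian varieties over the algebraically closed field $\bar k$ is surjective on $\bar k$-points, so $\im(g-1)=A(\bar k)$, and the cokernel above vanishes, giving $H^1(\langle g\rangle;A)=0$.

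I do not expect a serious obstacle: the only points requiring care are the identification of $H^1$ with the Tate quotient (the standard $2$-periodicity of cyclic cohomology) and the equivalence between invertibility of $g-1$ in $\End(A)\otimes_\Z\Q$ and its being an isogeny. Crucially, the argument is insensitive to the characteristic of $\bar k$, since the only geometric input is that an isogeny is surjective on geometric points, which holds in every characteristic; the remainder is the purely formal manipulation in $\End(A)$ forced by the relation $N(g)=0$.
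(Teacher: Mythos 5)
Your proof is correct and follows essentially the same route as the paper: identify $H^1(\langle g\rangle;A)$ with the Tate quotient $\ker(N)/\im(g-1)$, note that $N=0$ kills the kernel condition, and conclude by surjectivity of $g-1$. The paper simply asserts that $1+g+\cdots+g^{d-1}=0$ forces $(1-g)$ to be surjective, whereas you spell out the underlying reason (the factorization $N(x)-d=(x-1)h(x)$ making $g-1$ an isogeny), which is a useful elaboration but not a different argument.
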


\begin{proof}
  Since $1+g+\cdots+g^{d-1}=0$, $(1-g)$ is surjective, and thus
  $H^1(\langle g\rangle;A)=\ker(1+g+\cdots+g^{d-1})/\im(1-g)=0$.
\end{proof}

\begin{cor}
  Suppose the irreducible subgroup $G\subset \Aut_0(A)$ has a cyclic normal
  subgroup $\langle g\rangle$ of order $N>1$.  Then $H^1(G;A)\cong
  H^1(G/\langle g\rangle;A^g)$.  Furthermore, if $N$ is not a prime power,
  then $H^1(G;A)=0$.
\end{cor}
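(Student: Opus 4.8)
The plan is to deduce both assertions from the two preceding lemmas via the Hochschild--Serre (inflation--restriction) sequence displayed above, applied with $N=\langle g\rangle$. That sequence reads $0\to H^1(G/\langle g\rangle;A^{\langle g\rangle})\to H^1(G;A)\to H^1(\langle g\rangle;A)^{G/\langle g\rangle}$, and since $A^{\langle g\rangle}=A^g$, the first statement follows at once provided I show $H^1(\langle g\rangle;A)=0$; by the preceding lemma it is enough to verify that $1+g+\cdots+g^{N-1}=0$ in $\End(A)$.

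First I would observe that $A^g$ is a $G$-\emph{stable} subscheme. Indeed, for $h\in G$ the normality of $\langle g\rangle$ gives $hgh^{-1}=g^k$ with $\gcd(k,N)=1$, so $h\cdot A^g=A^{hgh^{-1}}=A^{g^k}=A^g$, the last equality holding because $g^k$ generates $\langle g\rangle$ and hence has the same fixed locus. Consequently $(A^g)^0$ is a $G$-stable abelian subvariety; as $g\ne 1$ it cannot be all of $A$, so by irreducibility it is trivial and $A^g$ is finite. Thus $1-g$ is an isogeny, so $1$ is not a root of the minimal polynomial $\mu$ of $g$; since $\mu$ divides the separable polynomial $x^N-1=(x-1)(1+x+\cdots+x^{N-1})$ and is coprime to $x-1$, it divides the second factor, giving $1+g+\cdots+g^{N-1}=0$ (as an identity in $\End(A)\otimes\Q$, hence in the torsion-free ring $\End(A)$). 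The preceding lemma then yields $H^1(\langle g\rangle;A)=0$ and the isomorphism $H^1(G;A)\cong H^1(G/\langle g\rangle;A^g)$.

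For the second statement I would sharpen the finiteness of $A^g$ to $A^g=0$ when $N$ is not a prime power. Choose distinct primes $p\ne r$ dividing $N$ and let $g_p,g_r$ be the corresponding prime-power parts of $g$, of orders $p^a,r^b>1$; these generate characteristic subgroups of $\langle g\rangle$, hence subgroups normal in $G$, so the first part applies verbatim to each. From $1+g_p+\cdots+g_p^{p^a-1}=0$ I get, for any $x\in A^{g_p}$, that $p^a x=0$, so $A^{g_p}\subseteq A[p^a]$; likewise $A^{g_r}\subseteq A[r^b]$. Since $A^g=A^{\langle g\rangle}\subseteq A^{g_p}\cap A^{g_r}\subseteq A[p^a]\cap A[r^b]=0$, we conclude $A^g=0$, whence $H^1(G;A)\cong H^1(G/\langle g\rangle;A^g)=0$.

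The conceptual crux---and the step I expect to need the most care---is the translation of the \emph{group-theoretic} normality of $\langle g\rangle$ into the \emph{geometric} statement that $A^g$ is $G$-stable, since that is precisely what lets irreducibility force $A^g$ to be finite; everything afterward is the standard norm computation for cyclic cohomology. The only other point to watch is the characteristic: the factorization of $x^N-1$ and the scalar $p^a$ live in $\End(A)\otimes\Q$, so the argument is insensitive to whether $\ch(\bar k)$ divides $N$, and I use torsion-freeness of $\End(A)$ only to pass from the rational identity $\sum_i g^i=0$ to the integral one.
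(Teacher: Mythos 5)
Your proposal is correct, and it shares the paper's skeleton (establish $1+g+\cdots+g^{N-1}=0$, invoke the preceding lemma to get $H^1(\langle g\rangle;A)=0$, then apply the Hochschild--Serre sequence), but both key steps are implemented differently. For the norm relation, the paper decomposes $A$ into the $G$-invariant subvarieties $\ker(\Phi_d(g))^0$ over all $d\mid N$ and uses irreducibility to conclude that exactly one survives, forcing the minimal polynomial of $g$ to be exactly $\Phi_N$; you instead apply irreducibility once, to $(A^g)^0$, to see that $A^g$ is finite, so that the minimal polynomial is merely coprime to $x-1$ and hence divides $1+x+\cdots+x^{N-1}$ --- a leaner argument that extracts only what is needed. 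The divergence is sharper in the second claim: the paper uses its exact knowledge of the minimal polynomial to get $A^g\subset A[\Phi_N(1)]$ together with the classical evaluation $\Phi_N(1)=1$ for $N$ not a prime power, whereas you apply your first part to the prime-power parts $g_p,g_r$ (whose cyclic subgroups are characteristic in $\langle g\rangle$, hence normal in $G$) to get $A^g\subseteq A[p^a]\cap A[r^b]=0$. Your route is more elementary and self-contained, avoiding any cyclotomic number theory; what the paper's stronger conclusion buys is reusability --- the very next corollary (characteristic $p$, $p$-rank $0$, cyclic normal subgroup of order $p^l$) is proved by noting that ``the same argument applies, except that $A[\Phi_N(1)]=A[p]$,'' which exploits the containment $A^g\subset A[\Phi_N(1)]$ in exactly the prime-power case that your splitting trick cannot address. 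One small point in your favor worth keeping: your observation that the norm relation, restricted to $A^{g_p}$, gives multiplication by $p^a$, works scheme-theoretically, so the conclusion $A^g=0$ holds at the level of group schemes, matching the paper's use of $A^g$ as coefficients.
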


\begin{proof}
  For any $d|N$, the abelian subvariety $\ker(\Phi_d(g))^0$ is
  $G$-invariant (where $\Phi_d$ is the cyclotomic polynomial), so is either
  0 or $A$.  Since the cyclotomic polynomials are relatively prime, the
  kernels meet in finite subgroups of $A$, and thus there is precisely one
  $d$ for which the kernel is $A$.  Since $\Phi_d(g)=0$ implies $g^d=1$, we
  conclude that $d=N$ and that $g$ has minimal polynomial $\Phi_N(g)$.  In
  particular, we see that $H^1(\langle g\rangle;A)=0$, implying the first
  claim.

  When $N$ is not a prime power, the fixed locus of $g$ on any $\langle
  g\rangle$-equivariant torsor is a coset of $A^g\subset
  A[\Phi_N(1)]=A[1]=0$.  In particular, $g$ fixes a unique point on any
  $G$-equivariant torsor, and thus so does $G$.
\end{proof}

\begin{cor}
  Suppose $A$ has characteristic $p$ and $p$-rank 0 and the irreducible
  subgroup $G\subset \Aut_0(A)$ has a cyclic normal subgroup of order
  $p^l\ne 1$.  Then $H^1(G;A)=0$.
\end{cor}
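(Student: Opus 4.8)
The plan is to reduce to the preceding corollary and then show that the coefficient group appearing there vanishes because of the $p$-rank hypothesis. First I would apply that corollary to the cyclic normal subgroup $\langle g\rangle$, now of order $N=p^l>1$: since this applies to any cyclic normal subgroup of order $>1$, it yields the isomorphism $H^1(G;A)\cong H^1(G/\langle g\rangle;A^g)$ together with the fact that $g$ has minimal polynomial $\Phi_{p^l}$, i.e.\ $\Phi_{p^l}(g)=0$ in $\End(A)$. (This portion of the argument is insensitive to the characteristic.) It therefore suffices to prove that $A^g$ has no nonzero $\bar{k}$-points, so that $H^1(G/\langle g\rangle;A^g)=0$.

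To that end I would exploit the arithmetic of the value $\Phi_{p^l}(1)$. For a prime power one has $\Phi_{p^l}(1)=p$, so dividing by $x-1$ gives $\Phi_{p^l}(x)=(x-1)q(x)+p$ for some $q\in\Z[x]$, and evaluating at $g$ yields $(g-1)q(g)=-p$ in $\End(A)$. Consequently, if $a\in A(\bar{k})$ satisfies $ga=a$, then $pa=-q(g)(g-1)a=0$, since $q(g)$ commutes with $g-1$. Thus $A^g(\bar{k})\subseteq A[p](\bar{k})$. Finally, the hypothesis that $A$ has $p$-rank $0$ means precisely that $A[p](\bar{k})=0$, whence $A^g(\bar{k})=0$; combining this with the isomorphism above gives $H^1(G;A)=0$.

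The only genuine subtlety — and the point where this corollary differs from its prime-to-$p$ predecessor — is the computation of where $A^g$ lands. For a non-prime-power $N$ one has $\Phi_N(1)=1$, forcing $A^g\subseteq A[1]=0$ outright; for $N=p^l$ the value is $p$ instead, so the fixed subscheme is only forced into $A[p]$, which is a nontrivial (local) group scheme. The characteristic-$p$, $p$-rank-$0$ hypothesis is exactly what is needed to annihilate the geometric points of $A[p]$, and hence to recover vanishing. Everything else is a direct consequence of the earlier lemmas, so I do not expect any of the remaining steps to present difficulty.
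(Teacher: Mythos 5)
Your proof is correct and is essentially the paper's own argument: the paper's proof simply says the argument of the preceding corollary applies verbatim, except that now $A^g\subset A[\Phi_{p^l}(1)]=A[p]$, and the $p$-rank $0$ hypothesis ensures $A[p]$ has a unique closed point. Your routing through the isomorphism $H^1(G;A)\cong H^1(G/\langle g\rangle;A^g)$ together with the computation $\Phi_{p^l}(1)=p$ (forcing $A^g(\bar k)\subseteq A[p](\bar k)=0$) is exactly that argument.
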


\begin{proof}
  The same argument applies, except that $A[\Phi_N(1)]=A[p]$.  But the
  $p$-rank condition ensures that $A[p]$ has a unique closed point, and
  thus $g$ still fixes a unique closed point as required.
\end{proof}

\begin{rem}
  For reflection groups, the $p$-rank condition is equivalent to asking for
  the root curves to be supersingular.  This is automatic in the
  quaternionic cases and quite common in the complex cases.
\end{rem}

\subsection{Cohen-Macaulay invariants}

In the usual theory of polynomial invariants in characteristic 0, a major
tool for the computation is the observation that rings of polynomial
invariants are Cohen-Macaulay, as a consequence of the general fact that
invariant subrings of CM rings are CM in good characteristic.  This
argument fails in our case, however, for the simple reason that cones over
abelian varieties are typically not Cohen-Macaulay!  In general, one has
the following.

\begin{prop}
  Let $X$ be a Cohen-Macaulay algebraic variety and let ${\cal L}$ be an
  ample line bundle on $X$.  Then the graded algebra $\bigoplus_{d\ge 0}
  \Gamma(X;{\cal L}^d)$ is Cohen-Macaulay iff $H^p(X;{\cal L}^d)=0$ for
  $0<p<\dim(X)$, $d\in \Z$.
\end{prop}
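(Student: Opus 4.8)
The plan is to translate the statement into local cohomology of the affine cone and invoke the standard cone computation. Write $R:=\bigoplus_{d\ge 0}\Gamma(X;{\cal L}^d)$ and $n:=\dim(X)$. Since ${\cal L}$ is ample on the integral variety $X$, the algebra $R$ is a finitely generated graded domain with $\Proj R\cong X$ and $\dim R=n+1$. Let $C:=\Spec R$, let $\mathfrak m:=R_+=\bigoplus_{d>0}R_d$ be the irrelevant maximal ideal (the vertex of the cone), and let $U:=C\setminus V(\mathfrak m)$ be the punctured cone. Because the Cohen-Macaulay property of a graded $k$-algebra is detected at $\mathfrak m$, Grothendieck's depth formula $\operatorname{depth}_{\mathfrak m}(R)=\min\{j:H^j_{\mathfrak m}(R)\neq 0\}$ reduces the claim to showing that $H^j_{\mathfrak m}(R)=0$ for all $j<n+1$ precisely when $H^p(X;{\cal L}^d)=0$ for $0<p<n$ and all $d$.

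First I would compute the $H^j_{\mathfrak m}(R)$ via the $\G_m$-torsor. The punctured cone $U$ is exactly the $\G_m$-torsor associated to $(X,{\cal L})$ (the same construction as the $T$ considered above), and the projection $\pi\colon U\to X$ is affine with $\pi_*\sO_U\cong\bigoplus_{d\in\Z}{\cal L}^d$; hence $H^i(U;\sO_U)\cong\bigoplus_{d\in\Z}H^i(X;{\cal L}^d)$ for every $i$. Feeding this into the long exact sequence of local cohomology at $\mathfrak m$ (using that $C$ is affine, so $H^{>0}(C;\sO_C)=0$ and $H^0(C;\sO_C)=R$) yields graded isomorphisms
\[
H^j_{\mathfrak m}(R)\;\cong\;\bigoplus_{d\in\Z}H^{j-1}(X;{\cal L}^d)\qquad(j\ge 2),
\]
together with the four-term exact sequence
\[
0\to H^0_{\mathfrak m}(R)\to R\to\bigoplus_{d\in\Z}H^0(X;{\cal L}^d)\to H^1_{\mathfrak m}(R)\to 0.
\]

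It then remains to dispose of the boundary terms and match indices. Since $R$ is a domain, $H^0_{\mathfrak m}(R)=0$; and since the map $R\to\bigoplus_d H^0(X;{\cal L}^d)$ is the obvious inclusion of nonnegative degrees, $H^1_{\mathfrak m}(R)\cong\bigoplus_{d<0}H^0(X;{\cal L}^d)$, which vanishes because an anti-ample line bundle on an integral projective variety has no global sections. Thus the two lowest local cohomologies vanish unconditionally, so $R$ is Cohen-Macaulay iff $H^j_{\mathfrak m}(R)=0$ for $2\le j\le n$, which by the displayed isomorphism is exactly the vanishing of $H^{p}(X;{\cal L}^d)$ for $1\le p\le n-1$ (set $p=j-1$) and all $d$, i.e.\ for $0<p<\dim(X)$. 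I expect the main obstacle to be purely the bookkeeping behind the cone computation: establishing the affine-torsor description of $U$ with $\pi_*\sO_U\cong\bigoplus_d{\cal L}^d$, deriving the graded local-cohomology isomorphism from the triangle relating $R\Gamma_{\mathfrak m}$, $R\Gamma(C)$, and $R\Gamma(U)$, and taking care that the Cohen-Macaulay property really is governed by depth at the single ideal $\mathfrak m$. Everything else is formal.
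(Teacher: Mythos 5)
Your proof is correct. The paper in fact gives no proof of this proposition---it is stated as a standard fact---and your argument (identify the punctured cone with the $\G_m$-torsor of $(X,{\cal L})$ so that $H^i(U;\sO_U)\cong\bigoplus_{d\in\Z}H^i(X;{\cal L}^d)$, run the local cohomology sequence at the vertex, and use that Cohen-Macaulayness of a finitely generated graded algebra is detected at the irrelevant maximal ideal) is precisely the standard argument the paper is implicitly invoking. Two minor remarks: your proof never actually uses the hypothesis that $X$ is Cohen-Macaulay, and indeed that hypothesis is redundant, since either side of the equivalence forces $R$ to be Cohen-Macaulay and hence $X$ as well, by flat descent along the faithfully flat map $\pi\colon U\to X$; and the unconditional vanishing of $H^1_{\mathfrak m}(R)$ requires $\dim X\ge 1$ (an anti-ample bundle on a point has plenty of sections), though for $\dim X=0$ the proposition holds vacuously since a one-dimensional Noetherian domain is Cohen-Macaulay.
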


Since $A$ is regular, it is Cohen-Macaulay as a scheme, and thus the
Cohen-Macaulayness of its cones reduces to the cohomological condition.
Now, line bundles of nonzero degree on an abelian variety have a single
nonvanishing cohomology class, and thus since an ample bundle ${\cal L}$
has $\deg({\cal L})$ global sections, it has vanishing higher cohomology,
and Serre duality implies that ${\cal L}^{-1}$ has no cohomology below
degree $\dim(A)$.  It follows that any nonzero {\em power} of an ample
bundle has no intermediate cohomology and thus that only $\sO_A$ needs to
be checked.  However, the cohomology of $\sO_A$ is well-understood and is
in particular nonzero in every degree, so that $A$ can have CM cones only
when $\dim(A)=1$.

For any finite subgroup $G\subset \Aut(A)$ with $|G|$ invertible on $A$,
the quotient $A/G$ is again Cohen-Macaulay as a scheme, so that again we
need to control vanishing of $H^p(A/G;{\cal L}^d)$.  Since we are in good
characteristic, the cohomology of the quotient scheme agrees with the
equivariant cohomology, and thus we have
\[
H^p(A/G;{\cal L}^d) \cong
H^p([A/G];{\cal L}^d) \cong H^p(A;{\cal L}^d)^G
\]
so again automatically have vanishing of intermediate cohomology for
$d\ne 0$.  Moreover, abelian varieties satisfy $H^p(A;\sO_A)\cong \wedge^p
H^1(A;\sO_A)$ and $H^1(A;\sO_A)\cong \Lie(\Pic^0(A))$.  We thus know how
$G$ acts on $H^1(A;\sO_A)$; and in particular in the case of a complex
reflection group, $H^1(A;\sO_A)$ may be identified with the dual of the
standard reflection representation of $G$.  Moreover, $H^1(A;\sO_A)$ is
invariant under separable isogenies.

Since (by the classification) quaternionic reflection groups do not have
good characteristic, we consider only real and complex reflection groups.
For convenience, we state the result for $A=E^n$ in characteristic 0, but
it is straightforward to deduce analogous results in characteristic prime
to both the order of $G$ and the degree of the relevant isogeny.

\begin{prop}\label{prop:CM_and_CY}
  Let $E$ be an elliptic curve in characteristic 0, and let $G\subset
  \GL_n(\End(E))\subset \GL_n(\C)$.  If $\wedge^p(\C^n)^G=0$ for $0<p<n$,
  then $E^n/G$ has Cohen-Macaulay cones, and if $G\subset \SL_n(\C)$, then
  $E^n/G$ is Calabi-Yau.
\end{prop}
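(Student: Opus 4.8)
The plan is to treat the two assertions separately, in each case transferring the question from the quotient to a representation-theoretic computation on $A=E^n$ by using that in characteristic $0$ the order $|G|$ is invertible, so that cohomology of the quotient scheme coincides with equivariant cohomology.

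For the Cohen--Macaulay assertion I would apply the cohomological criterion proved above: since $E^n/G$ is Cohen--Macaulay as a scheme (a finite quotient in good characteristic), its cone with respect to ${\cal L}$ is Cohen--Macaulay iff $H^p(E^n/G;{\cal L}^d)=0$ for $0<p<n$ and all $d\in\Z$. Writing $H^p(E^n/G;{\cal L}^d)\cong H^p(E^n;{\cal L}^d)^G$, the vanishing is automatic for $d\ne 0$, since a nonzero power of an ample bundle (or its inverse) has no intermediate cohomology on an abelian variety. Only $d=0$ remains, and here $H^p(E^n;\sO)\cong\wedge^p H^1(E^n;\sO)$ with $H^1(E^n;\sO)\cong(\C^n)^*$ the dual of the standard representation; since $(\wedge^p(\C^n)^*)^G\cong((\wedge^p\C^n)^G)^*$ for the finite group $G$, the hypothesis $\wedge^p(\C^n)^G=0$ delivers precisely the required vanishing $H^p(E^n;\sO)^G=0$.

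For the Calabi--Yau assertion (now also assuming $G\subset\SL_n$) the crux is to show that the dualizing sheaf of $X:=E^n/G$ is trivial. As $A=E^n$ is an abelian variety with a pointed $G$-action, $\Omega^1_A$ is $G$-equivariantly trivial with fiber the standard representation $T_0^*A=\C^n$, so $\omega_A\cong\sO_A\otimes\wedge^n\C^n$ with $G$ acting on the one-dimensional factor through $\det$. Under $G\subset\SL_n$ this character is trivial, so $\omega_A$ is $G$-equivariantly isomorphic to $\sO_A$. Because $|G|$ is invertible, the finite quotient map $\pi\colon A\to X$ gives $\omega_X\cong(\pi_*\omega_A)^G\cong(\pi_*\sO_A)^G=\sO_X$, and $G\subset\SL_n$ moreover makes $X$ Gorenstein, so $\omega_X$ is genuinely a trivial line bundle. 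Together with the Cohen--Macaulayness and the vanishing $H^p(X;\sO_X)=H^p(E^n;\sO)^G=0$ for $0<p<n$ (the $d=0$ instance of the criterion above), this is exactly the Calabi--Yau condition.

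The step I expect to be the main obstacle is the identification $\omega_X\cong(\pi_*\omega_A)^G$ together with the descent of the $G$-equivariant triviality of $\omega_A$ to triviality of $\omega_X$. This rests on $|G|$ being invertible---so that $\pi_*\sO_A$ splits off $\sO_X$ via the Reynolds operator and duality for the finite flat morphism $\pi$ applies cleanly---and on the action being pointed, which is what makes $\omega_A$ equivariantly (not merely abstractly) trivial. The remaining ingredients are the standard vanishing theorems for line bundles of nonzero degree on abelian varieties and the elementary behavior of exterior powers under passage to $G$-invariants.
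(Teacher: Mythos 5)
Your proof is correct and follows essentially the same route as the paper: the Cohen--Macaulay part invokes the same cohomological criterion together with $H^p(A/G;{\cal L}^d)\cong H^p(A;{\cal L}^d)^G$ and $H^p(E^n;\sO)\cong\wedge^pH^1(E^n;\sO)$, and the Calabi--Yau part rests on the same observation that $G$ acts on the trivialization of $\omega_{E^n}$ through $\det$, so that for $G\subset\SL_n(\C)$ the trivialization is invariant and descends to the quotient. Your formulation of the descent via $\omega_X\cong(\pi_*\omega_{E^n})^G$ and the Gorenstein property is just a more explicit rendering of the paper's one-line descent argument, not a different method.
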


\begin{proof}
  The hypothesis is precisely that $(\wedge^pH^1(E^n;\sO_{E^n}))^G=0$,
  which is precisely what we need to have Cohen-Macaulay cones.

  For $G\subset \SL_n(\C)$, we have already established the requisite
  vanishing of cohomology to be Calabi-Yau, so it remains only to show
  that $\omega_{E^n/G}\cong \sO_{E^n/G}$.  In general, the action
  of $G$ on $\Hom(\omega_{E^n},\sO_{E^n})$ factors through the determinant,
  and thus any chosen isomorphism $\omega_{E^n}\cong \sO_{E^n}$ is
  $G$-invariant, so descends to an isomorphism $\omega_{E^n/G}\cong
  \sO_{E^n/G}$ as required.
\end{proof}

Note that to show vanishing of intermediate cohomology, it suffices to show
it for some subgroup of $G$.  It turns out that this vanishing actually
holds for derived subgroups of reflection groups.

\begin{prop}
  Let the finite group $G\subset \GL_n(\C)$ be an irreducible complex (or
  real) reflection group.  Then $\wedge^p(\C^n)^{G'}=0$ for $0<p<n$.
\end{prop}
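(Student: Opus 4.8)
The plan is to reformulate the statement representation-theoretically and then split along the classification. Write $V=\C^n$ for the reflection representation. Since $G'$ is normal in $G$ with abelian quotient, the space $(\wedge^p V)^{G'}$ is a $G/G'$-module, hence a direct sum of one-dimensional characters; concretely, $(\wedge^p V)^{G'}\neq 0$ if and only if some linear character $\chi$ of $G$ occurs as a subrepresentation of $\wedge^p V$. So the proposition is equivalent to the assertion that, for $0<p<n$, no linear character of $G$ occurs in $\wedge^p V$.

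First I would dispose of the extreme values of $p$. Poincar\'e duality on the exterior algebra gives a $G$-isomorphism $\wedge^p V\cong (\wedge^{n-p}V)^*\otimes\det$, so $\chi$ occurs in $\wedge^p V$ iff $\chi^{-1}\otimes\det$ occurs in $\wedge^{n-p}V$; it therefore suffices to treat $1\le p\le n/2$. For $p=1$ there is nothing to prove, since $\wedge^1 V=V$ is irreducible of dimension $n\ge 2$ and hence contains no character; by duality this also settles $p=n-1$. Consequently only the range $n\ge 4$, $2\le p\le n/2$ remains, and here I would invoke the classification of irreducible reflection groups. For the real (Coxeter) groups I would simply cite the classical fact that every exterior power $\wedge^p V$ of the reflection representation is irreducible; being of dimension $\binom{n}{p}>1$ for $0<p<n$, it is not a character, and we are done.

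For the imprimitive complex groups $G=G(de,e,n)$ (which also covers the real types $B_n$, $D_n$) I would argue directly with the monomial basis $e_S=\bigwedge_{i\in S}e_i$, $|S|=p$, of $\wedge^p V$. The diagonal subgroup $\Delta\subset G$ scales $e_S$ by $\prod_{i\in S}\theta_i$, and a short computation using the constraint $\prod_i\theta_i\in\mu_d$ defining $G(de,e,n)$ shows that for $0<p<n$ distinct $p$-subsets $S$ afford distinct characters of $\Delta$. Thus every $\Delta$-eigenvector in $\wedge^p V$ is a multiple of a single $e_S$; since a linear character of $G$ restricts to such an eigenvector while the permutation subgroup $S_n\subset G$ acts transitively on the $e_S$, no $e_S$ can span a $G$-stable line when $0<p<n$, and again no character occurs.

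The remaining cases are the primitive complex groups of rank $\ge 4$, of which there are only finitely many: the real groups $W(F_4),W(H_4),W(E_6),W(E_7),W(E_8)$ (already covered above) and the five genuinely complex groups $ST_{29},ST_{31},ST_{32}$ (rank $4$), $ST_{33}$ (rank $5$) and $ST_{34}$ (rank $6$). For these I would check the absence of linear characters in the relevant $\wedge^p V$ directly from their character tables (equivalently, from the known fake degrees), a computation involving only $p=2$ in ranks $4$ and $5$ and $p\in\{2,3\}$ in rank $6$. I expect this last, classification-dependent case-check to be the main obstacle, in the sense that it is the only step without conceptual content: there seems to be no uniform argument excluding characters from the middle exterior powers of these exceptional groups, so one is forced to appeal to the classification and verify them one at a time.
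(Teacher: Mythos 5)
Your proposal follows the same broad strategy as the paper's proof---split along the classification, kill the groups in the infinite family using the diagonal subgroup acting on the monomial basis of $\wedge^p(\C^n)$, and reduce the primitive groups to a finite character-theoretic verification---but your refinements genuinely shrink the case check. The paper disposes of rank $2$ by noting that a nontrivial finite-order element of $\SL_2(\C)$ fixes no nonzero vector, and then verifies the identity $\frac{1}{|G'|}\sum_{g\in G'}\det(1+sg^{-1})=1+s^n$ for all $15$ primitive groups of rank $\ge 3$, real and complex alike. You instead reformulate the claim as the absence of linear characters of $G$ in $\wedge^p V$, use the duality $\wedge^pV\cong(\wedge^{n-p}V)^*\otimes\det$ together with irreducibility of $V$ to restrict to $2\le p\le n/2$, and import the classical Bourbaki/Steinberg fact that the exterior powers of the reflection representation of an irreducible finite Coxeter group are irreducible. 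This cuts the unavoidable computation down to five non-real primitive groups of rank $\ge 4$ and only their low exterior powers, and it also cleanly covers $A_n=S_{n+1}$ (whose diagonal subgroup is trivial, so the paper's imprimitive argument says nothing about it, and which is not among the paper's $15$ sporadic cases); the price is reliance on a classical theorem the paper does not need.

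There is, however, one false claim in your imprimitive argument. For $G(de,e,n)$, the characters of the diagonal subgroup $\Delta$ on $e_S$ and $e_T$ coincide exactly when $1_S-1_T$ is congruent mod $de$ to a constant vector $(m,\dots,m)$ with $d\mid m$; for $|S|=|T|=p$ and $S\ne T$ this forces $T=S^c$, $n=2p$, and $de\mid 2$ with $d=1$, i.e.\ $G(2,2,n)=D_n$ in middle degree. So ``distinct $p$-subsets afford distinct characters of $\Delta$'' fails precisely there: in $W(D_4)$, for instance, $e_1\wedge e_2$ and $e_3\wedge e_4$ carry the same character of $\Delta$. Your parenthetical assertion that the monomial argument ``also covers the real types $B_n$, $D_n$'' is therefore wrong for $D_n$; your proof survives only because $D_n$ is real and you covered it separately by the Coxeter citation, so the error is redundant rather than fatal. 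Note that the paper's version of this step sidesteps the issue entirely: it never needs distinctness of characters, only that every basis vector $e_S$ with $0<|S|<n$ is moved by \emph{some} element of the product-one diagonal subgroup (which lies in $G'$ for rank $\ge 3$)---take $\theta_i=\zeta$, $\theta_j=\zeta^{-1}$ with $i\in S$, $j\notin S$, or for $D_n$ take $\theta_i=\theta_j=-1$---which already forces the space of invariants to vanish.
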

  
\begin{proof}
  For $G$ of rank $2$, we note that $G'\ne 1$ since otherwise $G$ would be
  abelian and thus reducible.  But a finite order element of $\SL_2(\C)$
  cannot fix a point in $\C^2$, and thus the claim holds in the only degree
  $p=1$ of interest.

  For $G$ imprimitive of rank $>2$, the diagonal subgroup of $G'$ consists
  of the product-1 subgroup of $\mu_d^n$, which acts diagonally on the
  induced basis of $\wedge^p(\C^n)$, and it is easy to see that any basis
  element is acted on nontrivially by some element of the diagonal
  subgroup.

  Finally, for $G$ primitive of rank $>2$, we may compute
  $\dim\wedge^p(\C^n)^{G'}$ via characters and thus reduce to checking that
  \[
  \frac{1}{|G'|}\sum_{g\in G'} \det(1+sg^{-1})
  =
  1+s^n
  \]
  in the remaining 15 cases.
\end{proof}

\begin{rem}
  This could presumably be shown more directly from known results on the
  structure of semi-invariants in the supercommutative algebra
  $S^*(V)\otimes \wedge^*(V^*)$, see \cite{SheplerAV:1999}.
\end{rem}

\begin{cor}
  Let $E$ be an elliptic curve of characteristic 0, let $G\subset
  \GL_n(\End(E))$ be a complex reflection group, and let $A$ be any abelian
  variety $G$-equivariantly isogenous to $E^n$.  Then for any subgroup
  $G'\subset H\subset G$, $A/H$ has Cohen-Macaulay cones, and if $H\subset
  \SL_n(\End(E))$, then $A/H$ is Calabi-Yau.
\end{cor}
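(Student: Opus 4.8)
The plan is to reduce the corollary to the two hypotheses of Proposition~\ref{prop:CM_and_CY}, applied with $H$ in place of $G$, by observing that both hypotheses depend only on the $H$-representation $H^1(A;\sO_A)$, which is identified with the standard representation $\C^n$ (up to dualizing/conjugating, which is harmless below). Since we are in characteristic $0$, the given $G$-equivariant isogeny $E^n\to A$ is separable; because $H^1(-;\sO)$ is invariant under separable isogenies and the isogeny is $H$-equivariant, the identification $H^1(A;\sO_A)\cong H^1(E^n;\sO_{E^n})\cong\C^n$ is an isomorphism of $H$-modules. Thus it will suffice to verify the relevant vanishing and determinant conditions for the action of $H\subset\GL_n(\End(E))$ on $\C^n$.

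For the Cohen--Macaulay claim, I would argue as follows. As $G$ is irreducible, the preceding proposition gives $\wedge^p(\C^n)^{G'}=0$ for $0<p<n$. Since $G'\subset H$, every $H$-invariant is a fortiori $G'$-invariant, so $\wedge^p(\C^n)^H\subset\wedge^p(\C^n)^{G'}=0$ in the same range; moreover the dimension of $\wedge^p$-invariants is insensitive to dualizing or complex-conjugating the representation (it records only the multiplicity of the trivial summand), so the vanishing holds for $H^1(A;\sO_A)$ as well. By the $H$-equivariant identification, $(\wedge^pH^1(A;\sO_A))^H=0$ for $0<p<n$. Since $|H|$ is invertible in characteristic $0$, $A/H$ is Cohen--Macaulay as a scheme and its cohomology agrees with the $H$-equivariant cohomology, whence $H^p(A/H;\sO_{A/H})\cong H^p(A;\sO_A)^H\cong(\wedge^pH^1(A;\sO_A))^H=0$ for $0<p<n$. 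For any $H$-equivariant ample $\sL$ the nonzero powers automatically have no intermediate cohomology, so the only condition of the general criterion (the proposition relating Cohen--Macaulayness of cones to vanishing of intermediate cohomology) left to check is exactly this $d=0$ case. Hence $A/H$ has Cohen--Macaulay cones.

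For the Calabi--Yau claim, assume additionally $H\subset\SL_n(\End(E))$. Exactly as in the proof of Proposition~\ref{prop:CM_and_CY}, the action of $H$ on $\Hom(\omega_A,\sO_A)$ factors through the determinant of the action on the top exterior power of $H^1(A;\sO_A)$. Under the equivariant identification with $\wedge^n\C^n$ this determinant is the determinant of $H$ on $\C^n$, which is trivial since the entrywise embedding $\End(E)\hookrightarrow\C$ on tangent spaces carries $\SL_n(\End(E))$ into $\SL_n(\C)$. Therefore the global trivialization $\omega_A\cong\sO_A$ (which exists because $A$ is an abelian variety) is $H$-invariant and descends to $\omega_{A/H}\cong\sO_{A/H}$; combined with the cohomological vanishing already established, this exhibits $A/H$ as Calabi--Yau.

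The one genuinely substantive point—what I expect to be the crux rather than a true obstacle—is the $H$-equivariance of the isogeny-transported identification $H^1(A;\sO_A)\cong\C^n$, as this is precisely what lets me import both inputs verbatim: the vanishing $\wedge^p(\C^n)^{G'}=0$ (through the inclusion $G'\subset H$) and the determinant triviality (from $H\subset\SL_n$). Everything else is the bookkeeping of subgroup-monotonicity of invariants together with the equivariant-cohomology dictionary available in good characteristic.
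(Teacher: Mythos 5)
Your proposal is correct and follows exactly the route the paper intends: transport the vanishing $\wedge^p(\C^n)^{G'}=0$ (from the preceding proposition, via $G'\subset H$ and the separable-isogeny invariance of $H^1(-;\sO)$) into the hypotheses of Proposition~\ref{prop:CM_and_CY} applied to $H$ acting on $A$, with the determinant argument handling the Calabi--Yau case. The care you take with the dual/conjugate representation and with irreducibility matches the paper's implicit bookkeeping, so there is nothing to add.
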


This suggests that we should also investigate quotients by derived
subgroups of crystallographic groups, which we do (for a suitably
crystallographic version of the derived subgroup) in Section
\ref{sec:derived} below.

\section{Imprimitive groups}
\label{sec:imprim}

\subsection{Strongly crystallographic actions in rank 1}

Both as our first case of the main theorem and as an input to understanding
imprimitive groups, we first consider a question for actions on elliptic
curves: What are the strongly crystallographic actions on pairs
$(E,{\cal L})$?

We first observe that there is no need to consider torsors.

\begin{prop}
  Any subgroup $G\subset \Aut(E)$ with $G\cap E=0$ fixes a point of $E$.
\end{prop}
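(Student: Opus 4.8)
The plan is to translate the statement into the vanishing of a group cohomology and then dispatch it using the results on $H^1$ already established above. First I would use the split exact sequence $0\to E\to \Aut(E)\to \Aut_0(E)\to 1$ to identify $\Aut(E)=E\rtimes\Aut_0(E)$. Since $G\cap E=0$, the linear-part map $\Lin$ restricts to an isomorphism of $G$ onto the finite group $\bar G:=\Lin(G)\subset\Aut_0(E)$, so $G$ is the graph of a crossed homomorphism $c\colon\bar G\to E$, i.e. $G=\{(c(\sigma),\sigma):\sigma\in\bar G\}$ with $c(\sigma\tau)=c(\sigma)+\sigma\cdot c(\tau)$. Conjugating $G$ by a translation $t\in E$ replaces $c$ by $c+\partial t$, where $(\partial t)(\sigma)=(\sigma-1)t$; hence $G$ is conjugate by a translation to the pointed subgroup $\bar G\subset\Aut_0(E)$ --- equivalently, $G$ fixes a point of $E$ --- if and only if the class $[c]\in H^1(\bar G;E)$ vanishes. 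It therefore suffices to prove $H^1(\bar G;E)=0$ for every nontrivial finite subgroup $\bar G\subset\Aut_0(E)$.

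Suppose first that $\bar G=\langle g\rangle$ is cyclic of order $d>1$. Since every root curve is $E$ itself, $\End(E)\otimes_\Z\Q$ is a division algebra (namely $\Q$, an imaginary quadratic field, or a quaternion algebra $Q_p$), so the nonzero element $g-1$ is invertible there. From $(g-1)(1+g+\cdots+g^{d-1})=g^d-1=0$ we conclude that $1+g+\cdots+g^{d-1}=0$ already in $\End(E)$, and the lemma above on cyclic groups with $1+g+\cdots+g^{d-1}=0$ gives $H^1(\langle g\rangle;E)=0$. This disposes of all $\bar G$ in characteristic $0$ and for all ordinary $E$, where $\Aut_0(E)$ is cyclic.

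It remains to treat non-cyclic $\bar G$. A non-cyclic finite subgroup of a division algebra can occur only when $\End(E)\otimes_\Z\Q$ is a quaternion algebra, i.e. when $E$ is supersingular of characteristic $p\in\{2,3\}$; in particular $E$ has $p$-rank $0$, and irreducibility of $\bar G$ acting on the $1$-dimensional $E$ is automatic. Here $\Aut_0(E)$ is the binary tetrahedral group $\SL_2(\F_3)$ when $p=2$ and the dicyclic group $Z_3\rtimes Z_4$ of order $12$ when $p=3$, and I would check directly that each non-cyclic subgroup $\bar G$ contains a cyclic normal subgroup of $p$-power order: the central involution $\langle-1\rangle$ when $p=2$ (both $Q_8$ and $\SL_2(\F_3)$ contain it as their center), and the normal Sylow $3$-subgroup $Z_3$ when $p=3$ (the full dicyclic group being the only non-cyclic subgroup). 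Applying the corollary above for $p$-rank $0$ together with a cyclic normal $p$-subgroup then yields $H^1(\bar G;E)=0$, completing every case. The main point to verify carefully is precisely this supersingular case: one must pin down the structure of $\Aut_0(E)$, enumerate its non-cyclic subgroups, and exhibit the cyclic normal $p$-subgroup in each; the remaining steps are formal.
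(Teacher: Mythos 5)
Your proof is correct and is in substance the paper's own argument: the same split into cyclic versus non-cyclic $\bar{G}$, the same identification of the central $[\mu_2]$ (characteristic $2$) resp.\ the normal Sylow $[\mu_3]$ (characteristic $3$) inside the non-cyclic subgroups of $\Aut_0(E)$, with your cohomological packaging via the norm lemma and the $p$-rank-$0$ corollary simply re-deriving the paper's direct fixed-point facts (surjectivity of $1-g$ in the cyclic case; the unique closed fixed point of the normal $p$-subgroup in the other). One small factual slip that does not affect the argument: having $\End(E)\otimes_\Z\Q$ a quaternion algebra is equivalent to supersingularity in \emph{any} characteristic, not to characteristic $p\in\{2,3\}$; what is true, and what you actually use, is that $\Aut_0(E)$ is non-cyclic only for supersingular curves in characteristic $2$ or $3$.
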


\begin{proof}
  If $G$ is cyclic with generator $g\ne 1$, then $g-1$ is surjective and
  thus the fixed subscheme is nonempty.  If $G$ is not cyclic, then either
  $E$ has characteristic 2 and $G$ has center $[\mu_2]$ or $E$ has
  characteristic 3 and $G$ has normal $3$-Sylow subgroup $[\mu_3]$; in
  either case, the given cyclic normal subgroup fixes a unique closed
  point, which must therefore be fixed by $G$.
\end{proof}

We may thus restrict our attention to $G\subset \Aut_0(E)$.  To understand
strongly crystallographic actions, we need to pull back to $E^{++}$, which by
construction is dual to the quotient $E/E^G$.  Moreover, we may verify in
each case that $E/E^G\cong E$ so that for our question, it
suffices to consider equivariant structures of $E^G\times G$ on pulled back
line bundles, or equivalently equivariant structures of $\mu_d.E^G\times G$
on the minimal pulled back line bundle.  It will be helpful to consider a
somewhat broader question: What are the finite subgroups of
$\G_m.(E^G\times G)$ that are generated by strong reflections?

In general, given a pair $(A,{\cal L})$, a reflection $g\in \Aut(A)$
preserving ${\cal L}$, and a subgroup $T$ of the root curve $E_g$ which is
isotropic for the Weil pairing associated to ${\cal L}$, the strong
reflections in $\G_m.Tg$ can be understood as follows.  Each point of $A$
is fixed by a unique point of $E_g g$, and in particular the subscheme of
$A\times Tg$ where the point of $A$ is fixed is the graph of the morphism
$(1-g)^{-1}T\to T$.  The preimage of $Tg$ in $\Aut(A,{\cal L})$ is a
$\G_m$-torsor over $T$ which pulls back to a $\G_m$-torsor over
$(1-g)^{-1}T$, and there is a $\G_m$-equivariant morphism from that torsor
to $\G_m$ taking each point to the eigenvalue of the corresponding
reflection at that point.  The induced section of that $\G_m$-torsor (i.e.,
where the image is 1) thus induces a morphism
\[
(1-g)^{-1}T\to \G_m.Tg
\]
the image of which is precisely the strong reflection subscheme of
$\G_m.Tg$.  This is clearly constant under translation by the reduced
identity component of $\ker(1-\Lin(g))$, and thus the domain is a finite
subscheme of the quotient ``coroot'' curve $A/\ker(1-\Lin(g))^{0\red}$
(which has an isogeny to $E_g$ given by $1-\Lin(g)$).  For any subscheme of
$\G_m.Tg$, the corresponding reflection subscheme will be the image of a
subscheme of $(1-g)^{-1}T/\ker(1-\Lin(g))^{0\red}$, and thus there are only
finitely many possibilities.  (Finite subschemes of curves are essentially
just effective divisors, so all we can do is reduce the multiplicities; in
particular, each reduced point gives rise to a chain of possible
multiplicities, and it suffices to understand the corresponding chains of
subgroups they generate.)  Note that there is a further simplification
coming from the fact that strong reflections are preserved under
conjugation, causing the morphism to $\G_m.Tg$ to be constant under a
larger subgroup of $(1-\Lin(g))^{-1}T$, and thus allowing us to restrict
our attention to invariant subschemes.

\begin{eg}
  Let $G=[\mu_2]$, so that $E^G\cong E[2]$ and we must consider the action
  of $\G_m.E[2]\times [\mu_2]$ on the line bundle ${\cal L}(E[2])$ (i.e.,
  functions with poles bounded by the subscheme $E[2]$ viewed as a
  divisor).  The strong reflections are controlled by a map $E[4]\to
  \G_m.E[2]\times [\mu_2]$ which factors through $E[4]/E[2]\cong E[2]$.
  Indeed, in odd characteristic, there are four strong reflections, each of
  which fixes (with eigenvalue 1) a different coset of $E[4]/E[2]$.  Each
  subscheme of $E[2]$ gives a strong reflection group, which contains
  $E[2]$ in the five cases where the subscheme has degree 3 or 4.  The
  degree 3 cases do not meet $\G_m$ and correspond to the four
  $[\mu_2]$-invariant bundles of degree 1, while the degree 4 case contains
  $\mu_2$ and corresponds to ${\cal L}(2[0])$.  In characteristic 2, there
  are fewer subschemes of $E[2]$ (9 cases when $E$ is ordinary and 5 when
  $E$ is supersingular).  When the corresponding divisor has degree 3 or 4,
  the subgroup surjects on $E[2]\times [\mu_2]$ and thus gives a strongly
  crystallographic action, again with ${\cal L}$ either invariant of degree
  1 or ${\cal L}(2[0])$.
\end{eg}

\begin{eg}
  Let $G=[\mu_3]$, so that $E^G\cong E[\sqrt{-3}]$ and we must consider the
  action of $\G_m.E[\sqrt{-3}]\times [\mu_3]$ on ${\cal L}(E[\sqrt{-3}])$.
  In this case, the strong reflections lying over $[\zeta_3]$ are
  parametrized by $E[\sqrt{-3}]\cong E[3]/E[\sqrt{-3}]$, and the subschemes
  of degree 2 or 3 of $E[\sqrt{-3}]$ give strongly crystallographic
  actions, corresponding to invariant line bundles of degree 1 or their
  common cube.
\end{eg}

\begin{eg}
  Let $G=[\mu_6]$ so that $E^G=0$ and the pulled back bundle is ${\cal
    L}([0])$.  Since order 6 reflections are naturally products of order 2
  and order 3 reflections, it suffices to consider the possible schemes of
  order 2 or 3 reflections.  For order 2, the strong reflections are
  classified by subschemes of $E[2]$, but the eigenvalue is constant on
  $[\mu_3]$-orbits, so in odd characteristic there are two generating
  strong reflections of order 2 while in characteristic 2 the strong
  reflections of order 2 generate a subgroup in the chain
  \[
  1\subset [\mu_2]\subset \mu_2\times [\mu_2].
  \]
  Similarly, for order 3, the strong reflections over $[\zeta_3]$ are
  classified by subschemes of $E[\sqrt{-3}]$, but the eigenvalue is
  constant on $[\mu_2]$-orbits, so there are again only two strong
  reflections in the image, while in characteristic 3 the different
  subgroups that can arise are
  \[
  1\subset [\mu_3]\subset \mu_3\times [\mu_3].
  \]
  In characteristic not 2 or 3, each ramified point of $E\to E/G$ has an
  associated strong reflection group, of orders 2, 3, and 6 respectively,
  generating $\mu_6\times [\mu_6]$.  Thus the strong reflection subgroups
  correspond to triples $(d_2,d_3,d_6)$ with $d_m|m$, i.e.,
  $\mu_{d_2}\times \mu_{d_3}\times \mu_{d_6}\subset \mu_2\times \mu_3\times
  \mu_6$.  The strongly crystallographic actions correspond to triples with
  $\lcm(d_2,d_3,d_6)=6$, giving four equivariant structures on ${\cal
    L}([0])$, two on ${\cal L}(2[0])$, two on ${\cal L}(3[0])$ and one on
  ${\cal L}(6[0])$.
\end{eg}

\begin{eg}
  For $[\mu_4]$ (with fixed locus $E[1-i]$), the strong reflections over
  $[\zeta_4]$ are classified by $E[1-i]$ while the strong reflections over
  $[-1]$ are classified by $E[2]$, but with eigenvalue invariant under
  $[i]$.  In odd characteristic, this gives three independent generators:
  two of order 4 and one of order 2, so that the strong reflection
  subgroups have the product form $\mu_{d_1}\times \mu_{d_2}\times
  \mu_{d_3}\subset \mu_2\times \mu_4\times \mu_4$.  This gives rise to two
  strongly crystallographic actions of degree 1, three of degree 2, and one
  of degree 4.  In characteristic 2, we have two chains of subgroups to
  consider.  The subgroups generated by reflections over $[\zeta_4]$ are
  \[
  1\subset [\mu_4]\subset \mu_2\times \alpha_2\times [\mu_4],
  \]
  while the subgroups generated by reflections over $[-1]$ are
  \[
  1\subset [\mu_2]\subset \alpha_2\times [\mu_2]\subset \mu_2\times
  \alpha_2\times [\mu_2],
  \]
  so that the strongly crystallographic actions correspond to
  \[
  \alpha_2\times[\mu_4] = \langle [\mu_4],\alpha_2\times [\mu_2]\rangle
  \]
  and
  \[
  \mu_2\times \alpha_2\times[\mu_4].
  \]
\end{eg}
 
\begin{rem}
  This gives an example of the failure of ``strong reflection'' to respect
  base change.  The subscheme of strong reflections over $[-1]$ is a
  multiplicity 3 point in $\Aut(A,{\cal L})$, and in particular has a
  subscheme generating $\mu_2\times [\mu_2]$.  But the closed subscheme of
  strong reflections in $\mu_2\times [\mu_2]$ is reduced, so this is not a
  strong reflection group!
\end{rem}

\begin{eg}
  If $E$ is a supersingular curve of characteristic 3, we may consider
  $G=\Aut_0(E)$, which again satisfies $E^G=0$.  The subgroup of order $3$
  gives rise to the chain
  \[
  0\subset [\mu_3]\subset \mu_3\times [\mu_3]
  \]
  while there are two order 2 strong reflections (the eigenvalue being
  constant under $[\mu_3]$) and two preimages for each order 4 reflection
  differing by $\zeta_4$.  We thus find that the strong reflection
  subgroups containing $[\mu_3]$ (which are the only ones we will need to
  understand!) are naturally identified with subgroups $\mu_{d_1}\times
  \mu_{d_2}\subset \mu_{12}\times \mu_4$.
\end{eg}

\begin{eg}
  Similarly, if $E$ is a supersingular curve of characteristic $2$, then we
  may consider $G=\Aut_0(E)$.  We only consider the strong reflection
  subgroups containing a preimage $Q_8$.  In that case, the preimage of
  $Q_8$ is one of $Q_8$ or $\mu_2\times Q_8$, to which one should add one
  or both of the two conjugacy classes of order 3 refections.  This gives a
  total of 6 strongly crystallographic actions (as well as additional cases
  in which one enlarges $\mu_2$ to $\mu_4$ or $\mu_8$, which turn out to
  also have polynomial invariants).
\end{eg}

\begin{eg}
  Finally, since $G=Q_8$ acting on the supersingular curve $E/\F_2$ is
  generated by elements of order 4, the fixed subscheme is the kernel of
  Frobenius and thus the full group is $\alpha_2\times Q_8$.  The order 2
  element produces the chain
  \[
  0\subset [\mu_2]\subset \alpha_2\times [\mu_2]\subset \mu_2\times
  \alpha_2\times [\mu_2]
  \]
  while each order 4 element gives a chain
  \[
  0\subset [\mu_4]\subset \mu_2\times \alpha_2\times [\mu_4],
  \]
  and thus the strong reflection subgroups containing $[\mu_2]$ correspond
  to products of subgroups of $Q_8$ with $1$, $\alpha_2$, or $\mu_2\times
  \alpha_2$.  In particular, the strongly crystallographic actions are
  $\alpha_2\times Q_8$ and $\mu_2\times \alpha_2\times Q_8$, while the
  additional cases with scalars extended to $\mu_4$ and $\mu_8$ turn out to
  also have polynomial invariants.
\end{eg}

\subsection{Classification in rank $\ge 3$}

An imprimitive reflection group of rank $n\ge 3$ is contained in a wreath
product $G_1^n\rtimes S_n$ for some $G_1$, where $G_1$ is a subgroup of the unit
group of the appropriate division ring and one imposes a suitable condition
on the product of the coefficients of the (monomial) matrices.  To be
precise, they all have the form $G_n(G_1,H_1)$
defined as follows.  Let $G_1$ be any group and let $H_1\subset G_1$ be any
subgroup containing the derived subgroup $G'_1$ of $G_1$.  Then
$G_n(G_1,H_1)$ is defined to be the subgroup of $G_1^n\rtimes S_n$ containing
$S_n$ and the subgroup of $G_1^n$ for which the product of the coordinates
lies in $H_1$.

\begin{lem}
  For any elliptic curve $E$, the group scheme $\Aut(E)^n\rtimes S_n$ is a
  reflection group on $E^n$.
\end{lem}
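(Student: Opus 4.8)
The plan is to identify the reflections in $\Aut(E)^n\rtimes S_n$ explicitly and then check that they generate the full group scheme. Writing a general element as a monomial automorphism (an automorphism applied in each coordinate, followed by a permutation), there are two natural families of reflections. First, each transposition $(i\,j)\in S_n$ has fixed locus the diagonal $\{x_i=x_j\}$, which is a hypersurface, so transpositions are reflections. Second, for a nontrivial $g_0\in\Aut_0(E)$ and any $t\in E$, the automorphism acting as $x\mapsto g_0x+t$ in a single coordinate and trivially on the others is a reflection: since $\End(E)$ has no zero divisors, $g_0-1\neq 0$ is an isogeny with finite kernel, so the fixed subscheme of that coordinate is $0$-dimensional and the fixed locus in $E^n$ is everywhere of codimension $1$. (Surjectivity of $1-g_0$ also guarantees that such a $g$ always has a fixed point, matching the earlier characterization of reflections of $\Aut(A)$.)

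Next I would show these reflections generate. The transpositions generate $S_n$ by the standard fact. Taking $t=0$ in the second family shows that every nonidentity element of $\Aut_0(E)$, applied in a single coordinate, is a reflection; since these nonidentity elements generate $\Aut_0(E)$, we obtain all of $\Aut_0(E)^n$. It then remains only to produce the translations $E^n$, and here the key point is that reflections with nontrivial linear part multiply to give translations. Fixing a coordinate and using the reflection $[-1]$ (always a nontrivial linear automorphism, since $-1\neq 1$ in $\End(E)$ even in characteristic $2$) together with the reflection $x\mapsto -x+t$, one computes that their product is the translation $x\mapsto x-t$ in that coordinate; letting $t$ range over $E$ yields every translation in each coordinate, hence all of $E^n$.

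Combining these, the reflections generate $E^n$, $\Aut_0(E)^n$, and $S_n$, and therefore the entire semidirect product $\Aut(E)^n\rtimes S_n=(E^n\rtimes\Aut_0(E)^n)\rtimes S_n$. Since this group scheme is smooth (built from the smooth $E$, the finite \'etale $\Aut_0(E)$, and $S_n$), and the translation-producing construction is a morphism in the parameter $t\in E$, the subgroup scheme generated by the closed subscheme of reflections contains the full identity component $E^n$ and surjects onto the component group $\Aut_0(E)^n\rtimes S_n$. By the scheme-theoretic definition of a reflection group, this verifies the claim.

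I expect the only genuinely delicate step to be the scheme-theoretic bookkeeping in the last paragraph: one must confirm that the product-of-reflections family sweeping out the translations is an honest morphism of schemes, so that the generated \emph{closed subgroup scheme} contains all of $E^n$ and not merely its geometric points. Everything else is routine, since the codimension-$1$ fixed-locus computation reduces entirely to the fact that a nonzero endomorphism of $E$ is an isogeny.
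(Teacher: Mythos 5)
Your proof is correct and takes essentially the same approach as the paper: the paper likewise identifies the single-coordinate ("rank 1") reflections — noting that $g\in\Aut(E)$ is a reflection iff $\Lin(g)\ne 1$, so that products of such elements yield all translations — together with the rank 2 reflections (of which your transpositions are the special case $g=1$), and concludes that these generate $\Aut(E)^n$ and $S_n$ respectively. Your extra care about translations arising as products of reflections and about the generated closed subgroup scheme only makes explicit what the paper leaves implicit.
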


\begin{proof}
  There are two classes of reflections in $\Aut(E)^n\rtimes S_n$:
  the ``rank 1'' reflections that act as a reflection on a single
  coordinate and as the identity on all others (noting that $g\in \Aut(E)$
  is a reflection iff $\Lin(g)\ne 1$), and the ``rank 2'' reflections that
  act as $(x_i,x_j)\mapsto (gx_j,g^{-1}x_i)$ on two coordinates, fixing all
  others.  (It is easy to see that these are the {\em only} reflections in
  the group.)  In particular, $\Aut(E)^n\rtimes S_n$ contains $S_n$ (which
  is generated by rank 2 reflections) and $\Aut(E)^n$ (which is generated
  by rank 1 reflections), so is a reflection group.
\end{proof}

We are interested in understanding the irreducible finite reflection groups
contained in $\Aut(E)^n\rtimes S_n$.  Such a subgroup $G$ must map to a
transitive subgroup of $S_n$ and thus in particular must contain a rank 2
reflection (as otherwise the image in $S_n$ would be trivial), from which
it follows that it must surject on $S_n$.  In particular, we may choose
preimages of the simple transpositions $(i,i+1)$ in $G$.  We may then
conjugate by a suitable element of $\Aut(E)^n$ to make those preimages lie
in $S_n$ and thus, up to conjugation, $G$ contains $S_n$.  Having made such
a choice, $G$ is then determined by its intersection with the ``diagonal''
subgroup $\Aut(E)^n$, which must be an $S_n$-invariant subgroup.

\begin{prop}
  Let $n\ge 3$.  Then, up to conjugation by $\Aut(E)^n$, the irreducible
  finite reflection subgroups of $\Aut(E)^n\rtimes S_n$ are precisely the
  groups of the form $G_n(G_1,H_1)$ where $\Lin(G_1)\ne 1$ and $H_1$ is
  generated over $G'_1$ by reflections.
\end{prop}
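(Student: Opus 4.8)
Set $A:=\Aut(E)$. Following the discussion preceding the statement, after conjugating by $A^n$ we may assume $S_n\subseteq G$, so that $G$ is recovered from the $S_n$-invariant subgroup $D:=G\cap A^n$, and we put $G_1:=\pi_1(D)$ (equal to every $\pi_i(D)$ by symmetry). By the reflection lemma above, the reflections of $A^n\rtimes S_n$ are the rank-$1$ elements $e_i(g):=(1,\dots,g,\dots,1)$ with $\Lin(g)\neq1$ and the rank-$2$ elements $f_{ij}(g)\,\tau_{ij}$ inducing $(x_i,x_j)\mapsto(gx_j,g^{-1}x_i)$, where $f_{ij}(g)$ has $g$ in slot $i$, $g^{-1}$ in slot $j$ and $\tau_{ij}\in S_n$ is the transposition (the latter a reflection for \emph{every} $g\in A$); such an element lies in $G$ iff $e_i(g)\in D$, resp.\ $f_{ij}(g)\in D$. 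The plan is to pin down $D$ as $\{(a_1,\dots,a_n)\in G_1^n:\ a_1\cdots a_n\in H_1\}$ for a subgroup $G_1'\subseteq H_1\subseteq G_1$, and then read off the two asserted conditions.

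The subgroup $H_1$ is controlled by the \emph{product homomorphism} $\pi:A^n\rtimes S_n\to A^{\mathrm{ab}}$, $((a_i),\sigma)\mapsto \overline{a_1\cdots a_n}$, which is a genuine homomorphism precisely because the target is abelian; its restriction $\bar\mu:=\pi|_{A^n}$ sends $(a_i)\mapsto\overline{\prod a_i}$. Each rank-$2$ reflection lies in $\ker\pi$ (its product is $gg^{-1}=1$), while a rank-$1$ reflection $e_i(g)$ maps to $\bar g$ and already lies in $D$. Since $G$ is generated by reflections, $\pi(G)=\pi(D)=\bar\mu(D)$ is therefore generated by the classes $\bar g$ of the rank-$1$ reflections, i.e.\ by (images of) reflections lying in $G_1$. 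Writing $H_1\subseteq G_1$ for the preimage of $\bar\mu(D)\subseteq G_1^{\mathrm{ab}}$, we get $G_1'\subseteq H_1\subseteq G_1$ with $H_1=\langle G_1',\ \{g\ \text{a reflection}:e_i(g)\in D\}\rangle$, so that $H_1$ is generated over $G_1'$ by reflections.

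It remains to show $D$ is the \emph{full} preimage, equivalently $D\supseteq D_0:=\{(a_i)\in G_1^n:\prod a_i\in G_1'\}=\ker(\bar\mu|_{G_1^n})$; granting this, $D/D_0\cong\bar\mu(D)$ yields the stated description and $G=G_n(G_1,H_1)$. This is where both $n\ge3$ and the reflection hypothesis are essential: the $S_n$-invariant diagonal $\{(a,\dots,a)\}$ shows $S_n$-invariance alone is far from sufficient, as it contains no nontrivial reflection. The mechanism is to produce the generators $f_{ij}(g)$ ($g\in G_1$) and $e_n(c)$ ($c\in G_1'$) of $D_0$ inside $D$ by concentrating onto single coordinates: using that positions $1,2,n$ are distinct one has $[f_{n1}(g),f_{n2}(h)]=e_n([g,h])$ and $[e_1(s),f_{12}(t)]=e_1([s,t])$, which feed commutators of $G_1$ into a single slot, while a column-clearing computation reduces an arbitrary product-trivial tuple to such generators. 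I expect verifying $D\supseteq D_0$ — in particular that every $g\in G_1$ is realized by some $f_{ij}(g)\in D$ — to be the main obstacle, especially in the supersingular cases where $A$ is nonabelian and the product set is not a subgroup; the commutator identities above are designed precisely to handle the derived contribution introduced by non-commutativity.

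For the reverse inclusion I must check that each $G_n(G_1,H_1)$ with $\Lin(G_1)\neq1$ and $H_1$ generated over $G_1'$ by reflections is an irreducible reflection group. Generation by reflections is the computation of the previous paragraphs run in reverse: the rank-$2$ reflections $f_{ij}(g)\tau_{ij}$ ($g\in G_1$, all of which lie in $G_n(G_1,H_1)$ since $f_{ij}(g)\in D_0$) generate $G_n(G_1,G_1')$, and adjoining the rank-$1$ reflections $e_i(g)$ with $g\in H_1$, $\Lin(g)\neq1$, enlarges $G_1'$ to exactly $\langle G_1',\{g\in H_1:\Lin(g)\neq1\}\rangle=H_1$, recovering the whole group. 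Finally, irreducibility is equivalent to $\Lin(G_1)\neq1$: if some $g\in G_1$ has $\Lin(g)=\zeta\neq1$, then the single $S_n$-orbit of root curves of the $f_{ij}(g)\tau_{ij}$ already spans $E^n$ (the associated linear group is a classical irreducible imprimitive reflection group of type $G(m,p,n)$), whereas if $\Lin(G_1)=1$ every reflection has transposition linear part, so all root curves lie in the codimension-$1$ locus $\{\sum_i x_i=0\}$ and $\prod_r E_r\to E^n$ is never surjective. This establishes both inclusions and completes the classification.
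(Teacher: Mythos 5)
There is a genuine gap, and it sits exactly at the crux of the proposition. Your reduction of the problem to the single inclusion $D\supseteq D_0$ is fine, as are the converse direction and the equivalence of irreducibility with $\Lin(G_1)\neq 1$; but the inclusion $D\supseteq D_0$ itself --- in particular the claim that every $g\in G_1=\pi_1(D)$ satisfies $f_{ij}(g)\in D$ --- is not a deferrable verification: it \emph{is} the mathematical content of the statement, and you leave it unproven (you say so yourself). The commutator identities do not help here, because $[f_{n1}(g),f_{n2}(h)]=e_n([g,h])$ and the column-clearing computation can only be run \emph{after} the elements $f_{ij}(g)$, $g\in G_1$, are known to lie in $D$; they generate $D_0$ from those elements, they do not produce them. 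With $G_1$ defined as a projection, nothing you have established rules out an element of $D$ none of whose coordinates is a rank-2 parameter. A secondary, fixable point: your $H_1$ must be cut out by the product homomorphism valued in $G_1/G_1'$, call it $\nu$, not by $\pi$ valued in $A^{\mathrm{ab}}$; the natural map $G_1^{\mathrm{ab}}\to A^{\mathrm{ab}}$ need not be injective (e.g.\ $G_1=Q_8$ lies in the derived group of $\Aut(E)$ for $E$ supersingular of characteristic $2$, so the $\pi$-preimage would force $H_1=Q_8$ and miss the group $G_n(Q_8,[\mu_2])$), so generation of $\pi(G)$ by rank-1 classes does not transfer as written; fortunately your argument works verbatim for $\nu$, since rank-2 reflections also lie in $\ker\nu$.

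The way to close the main gap --- and it is how the paper proceeds --- is to reverse your definition: let $G_1^{(2)}$ be the set of $g$ for which the rank-2 reflection $(x_1,x_2)\mapsto(gx_2,g^{-1}x_1)$ lies in $G$, and prove directly that $G_1^{(2)}$ is a \emph{group}. This is where $n\ge 3$ does its real work: from $f_{12}(g),f_{12}(h)\in D$ one gets $(gh,g^{-1}h^{-1},1,\dots,1)\in D$, hence the $S_n$-conjugate $(1,gh,g^{-1}h^{-1},1,\dots,1)\in D$ (a third slot is needed here), and conjugating the transposition $\tau_{12}$ by that element exhibits $(gh)^{\pm 1}$ as a rank-2 parameter. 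Combined with the observation that each rank-1 parameter $h$ is also a rank-2 parameter (conjugate $\tau_{12}$ by $e_1(h)$), every reflection of $G$ lies in the group $(G_1^{(2)})^n\rtimes S_n$; since $G$ is generated by its reflections, $G\subseteq (G_1^{(2)})^n\rtimes S_n$, whence $\pi_1(D)\subseteq G_1^{(2)}$, i.e.\ your $G_1$ coincides with the rank-2 parameter set and every $f_{ij}(g)$, $g\in G_1$, does lie in $D$. Only after this step do your identities, and the rest of your argument --- which is then essentially the paper's proof --- go through.
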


\begin{proof}
  Let $G^+\supset S_n$ be the given finite reflection subgroup, and suppose
  first that $G^+$ is generated by rank 2 reflections.  Let $G_1$ be the
  sub{\em scheme} of $\Aut(E)$ consisting of elements $g$ such that the
  rank 2 reflection $(x_1,x_2)\mapsto (gx_2,g^{-1}x_1)$ is in $G^+$.  Then
  for all $g\in G_1$, $G^+\cap \Aut(E)^n$ contains $(g,g^{-1},1,\dots,1)$
  and thus in particular for $g,h\in G_1$, it contains
  $(gh,g^{-1}h^{-1},1,\dots,1)$.  Since $G^+$ contains $S_n$, it also
  contains $(1,gh,g^{-1}h^{-1},1,\dots,1)$ and thus we may conjugate $(12)$
  by this element to see that $gh\in G_1$ and thus $G_1$ is a group.  Note
  in particular that $(gh,(gh)^{-1},1,\dots,1)\in G^+$ and thus
  $(1,ghg^{-1}h^{-1},1,\dots,1)\in \G^+$, implying by $S_n$-invariance that
  $(G'_1)^n\subset G^+$.  Moreover, the element $(g,g^{-1},1,\dots,1)$ and
  permutations thereof generate the subgroup of $(G_1/G'_1)^n$ on which the
  product is trivial, and thus $G^+$ contains $G_n(G_1,G'_1)$.  Since the
  rank 2 reflections are easily seen to lie in $G_n(G_1,G'_1)$, we conclude
  that $G^+=G_n(G_1,G'_1)$.

  Returning to the general case, if $G^+$ is {\em not} generated by rank 2
  reflections, then the normal subgroup generated by rank 2 reflections is
  still of the form $G_n(G_1,G'_1)$.  Let $H_1$ be the subgroup of $G^+\cap
  \Aut(E)^n$ that fixes all but the first coordinate, viewed as a subgroup
  of $\Aut(E)$; in particular, the rank 1 reflections in $G^+$ have the
  form $x_i\mapsto hx_i$ for some reflection $h\in H_1$.  Conjugating
  $(12)$ by such a rank 1 reflection shows that $H_1\subset G_1$ and we
  immediately conclude that $G^+$ contains $G_n(G_1,H_1)$.  Conversely, if
  $(g_1,\dots,g_n)\in G^+$ then we can multiply by elements of the form
  $(1,\dots,1,g,g^{-1},1,\dots,1)$ to conclude that $(g_n\cdots
  g_1,1,\dots,1)\in G^+$ and thus that $g_n\cdots g_1\in H_1$, so
  that $G^+=G_n(G_1,H_1)$ as required.
\end{proof}

\begin{rem}
  This fails for $n=2$, which gives rise to a finite collection of
  crystallographic actions of imprimitive reflection groups not of the
  above form.  But of course we may feel free to view those additional
  cases as sporadic and deal with them ad hoc.
\end{rem}

\begin{rem}
  Similarly, (strong) reflection subgroups of $(\Aut(E,{\cal L}))^n\rtimes
  S_n$ have the form $G_n(G_1,H_1)$ where $G_1$ is generated by strong
  reflections and $H_1$ is generated over $G'_1$ by strong reflections.
  Note that the product 1 subgroup of $\G_m^n$ acts trivially, so enlarging
  $G_1$ to contain $\G_m$ has no effect on the group that effectively acts.
\end{rem}

Now (still supposing $n\ge 3$) suppose the pointed reflection group
$G_n(G_1,H_1)$ with $G_1\ne 1$ acts irreducibly and crystallographically on
some abelian variety $A$.  If $H_1\ne 1$, then the root curves
corresponding to the rank 1 reflections give rise to an equivariant isogeny
$E^n\to A$.  If $H_1=1$, this fails, but one can still construct suitable
maps $E\to A$ as follows.  Let $r_1$, $r_2$ be distinct rank 2 reflections
mapping to $(12)\in S_n$.  Then $\im(r_1r_2-1)$ is a $2$-dimensional
abelian subvariety of $A$ and the identity component of $\im(r_1r_2-1)\cap
(13)\im(r_1r_2-1)$ is an elliptic curve in $A$.  Moreover, endomorphism
ring considerations tell us that this elliptic curve has an orbit of size
$n$, so again gives an equivariant isogeny $E^n\to A$.

Now, suppose we are given an irreducible crystallographic action on $A$ of
a group $G^+$ with $\Lin(G^+)=G_n(G_1,H_1)$.  Any reflection not in the
subgroup $S_n$ will fix some $S_n$-invariant point, and thus we may assume
some such fixed point is the identity.  The other reflections will then
take the identity to torsion points in the corresponding root curves, and
thus we may assume that $G^+$ is contained in $A[N]\rtimes G_n(G_1,H_1)$
for some $N$.  But then composing with the equivariant isogenies $E^n\to
A^+\to A$ gives an action of a reflection subgroup of $E[N']^n\rtimes
G_n(G_1,H_1)$ on $E^n$ (for some multiple $N'$ of $N$) such that the
quotient by the torsion subgroup is $A$.  In other words, any such
irreducible crystallographic action comes from a group of the form
$G_n(G_1,H_1)$ with $G'_1\normal H_1\normal G_1\subset \Aut(E)$.

\begin{prop}
  If $H_1$ contains a reflection, then $G_n(G_1,H_1)/(G_n(G_1,H_1)\cap
  E^n)$ acts crystallographically on $E^n/(G_n(G_1,H_1)\cap E^n)$.
\end{prop}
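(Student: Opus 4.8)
The plan is to descend crystallographicity from the cover $E^n$. Write $T:=G_n(G_1,H_1)\cap E^n$ for the subgroup scheme of translations contained in $G$, so that $A:=E^n/T$ and $\bar G:=G_n(G_1,H_1)/T$ act as in the statement; since $T=G\cap E^n$ is normalized by $G:=G_n(G_1,H_1)$ and the conjugation action on the translation subgroup factors through $\Lin$, the isogeny $E^n\to A$ is $\Lin(\bar G)$-equivariant. The first step is to check that $G$ is already a reflection group on $E^n$. The rank $2$ reflections generate $G_n(G_1,G_1')$, exactly as in the classification of reflection subgroups above, so it remains to produce the diagonal part lying over $H_1$. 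Here the crux is that the reflections contained in $H_1$ are precisely the complement of the subgroup $H_1\cap E$ of translations in $H_1$ (the elements with trivial linear part); since $H_1$ contains a reflection, $H_1\cap E$ is a proper subgroup, and the complement of a proper subgroup always generates the whole group. Hence the rank $1$ reflections $x_1\mapsto hx_1$ with $h\in H_1$ a reflection already generate $\{(h,1,\dots,1):h\in H_1\}$, and together with $G_n(G_1,G_1')$ this recovers all of $G$.

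Next I would identify $A^+$ as an intermediate quotient. Because $H_1$ contains a reflection, the rank $1$ reflections descend (by the lemma that reflections induce reflections on quotients) to reflections of $\bar G$ on $A$ whose root curves are the images of the coordinate curves $E_i\subset E^n$. Consequently the coordinate curves occur among the root curves of $\bar G$ on $A$, the tautological map $\prod_i E_i=E^n\to A$ factors as $E^n\to A^+\to A$, and a dimension count (using that $A^+\to A$ is an isogeny by construction) shows $E^n\to A^+$ is a surjective isogeny. Setting $T_0:=\ker(E^n\to A^+)\subseteq T$, a finite subgroup scheme of translations invariant under $\Lin(G)$ since it is the kernel of a canonical equivariant map, we get $A^+=E^n/T_0$ and $\ker(A^+\to A).\bar G=G/T_0$.

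Finally, the crystallographic condition is exactly that $G/T_0$ act as a reflection group on $A^+$. As $G$ is generated by reflections on $E^n$, each such reflection $g$ is normalized by $T_0$ but not contained in it, its linear part being nontrivial while $T_0$ consists of translations; so its image in $\Aut(A^+)$ is again a reflection by the (group-scheme form of the) lemma on quotients. Thus $G/T_0$ is generated by reflections on $A^+$, which is precisely the assertion that $\bar G$ is crystallographic.

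I expect the main obstacle to lie in the middle step: confirming that the coordinate curves genuinely appear among the root curves of $\bar G$ on $A$ (so that $A^+$ really is an intermediate quotient $E^n\to A^+\to A$ rather than a cover built from extra orbits), together with the care needed to run the descent of reflections through a possibly non-reduced group scheme $T_0$ in finite characteristic, where the cited lemma must be applied in its scheme-theoretic form rather than for a discrete subgroup.
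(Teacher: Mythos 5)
Your proof is correct, and it takes a genuinely different route from the paper's. The paper goes \emph{up}: it notes that for every $N$ the torsion extension $E[N]^n.G_n(G_1,H_1)\cong G_n(E[N].G_1,E[N].H_1)$ is again a reflection group on $E^n$ --- the engine being the same observation you use, that a translation times a reflection is again a reflection, so that $E[N].H_1$ is generated by reflections once $H_1$ contains one --- and then lets the general machinery of Section 2 (every equivariant isogeny is a factor of a multiplication-by-$N$ cover, together with descent of reflections along quotients) carry this family of statements over to $A^+$. You go \emph{down} instead: you verify only the $N=1$ statement, that $G_n(G_1,H_1)$ itself is a reflection group on $E^n$, and then supply the one ingredient the paper leaves implicit, namely that $A^+$ is an \emph{intermediate} quotient $E^n\to A^+\to A$. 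That factorization is where the hypothesis on $H_1$ enters a second time: rank $1$ reflections exist, so the images of the coordinate curves occur among the root curves, the sub-product of these rank $1$ root curves already surjects onto $A$, and composing its inclusion into $\prod_r E_r$ with the quotient map $\prod_r E_r\to A^+$ factors $E^n\to A$ through $A^+$ (surjectivity onto $A^+$ then follows since the image is a full-dimensional abelian subvariety of the connected $A^+$). Once this is in place, descending the reflections of $G:=G_n(G_1,H_1)$ along $E^n\to A^+$ --- with the quotient lemma applied scheme-theoretically to the possibly non-reduced $T_0$, exactly the point you flag --- identifies $\ker(A^+\to A).(G/T)$ with $G/T_0$ and shows it is generated by reflections, which is precisely the crystallographic condition. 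What each approach buys: yours is self-contained, needs no quantification over $N$, and makes the position of $A^+$ explicit; the paper's one-line argument conforms to the uniform template it uses throughout for checking crystallographicity on torsion covers, and never needs to know where $A^+$ sits relative to $E^n$.
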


\begin{proof}
  Indeed, if $H_1$ contains a reflection, then $E[N]^n.G_n(G_1,H_1)\cong
  G_n(E[N].G_1,E[N].H_1)$ acts as a reflection group on $E^n$ for all $N$,
  implying that the induced action on $A^+$ is as a reflection group.
\end{proof}

If $H_1$ does not contain a reflection, so in particular $H_1=G'_1$, then
it must consist entirely of translations, and thus $\Lin(G_1)$ is abelian.
Since an abelian subgroup of $\Aut_0(E)$ is cyclic, we may choose a
generator $g_0$ of $\Lin(G_1)$.  It then follows easily that
$G_n(G_1,G'_1)$ can be generated over $S_n$ by a single rank 2 reflection
$(x_1,x_2)\mapsto (g_0x_2,g_0^{-1}x_1)$, and is thus generated by $n$
reflections.  This lets us identify $A^+$ with the product of those root
curves, and we find that the dual isogeny $E^n\to \hat{A}^+$ has kernel
contained in $(E^{g_0})^n$.  Thus for $G^+$ to be crystallographic, it
suffices for $(E^{g_0})^n.G^+$ to act as a reflection group on $E^n$
(noting that $x\mapsto (g_0-1)x$ induces $E/E^{g_0}\cong E$).

\begin{prop}
  Let $g\in \Aut(E)$ be a reflection, let $T\subset E$ be a finite
  subgroup, and define $G_1=T\rtimes \langle g\rangle$.  Then
  $G_n(G_1,G'_1)$ is a crystallographic reflection group iff $T$ contains
  $E^{\Lin(g)}$.
\end{prop}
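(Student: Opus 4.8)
The plan is to conjugate to the pointed case and then reduce the crystallographic condition to a comparison of two translation lattices. First I would conjugate $G_1$ by a translation so that $g=g_0=\Lin(g)$ fixes the origin; this leaves $T$ unchanged (translations commute with $T$) and only alters the translation part of $g$, so we may assume $g=g_0\in\Aut(E)$ with $d:=\ord(g_0)$, and set $c:=E^{g_0}=\ker(g_0-1)$. Since $H_1=G'_1=(g_0-1)T$ consists of translations it contains no reflection, so the only reflections of $G:=G_n(G_1,G'_1)$ are the rank $2$ reflections $(x_i,x_j)\mapsto(g_0^a x_j,g_0^{-a}x_i)$; thus $\Lin(G)=G(d,d,n)$ and $G$ is generated by $n$ of them. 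By the reduction recalled just above, $G$ is crystallographic iff the reflections of $\ker(A^+\to\bar A).\bar G$ generate it, where $\bar A=E^n/(G\cap E^n)$, $\bar G=G/(G\cap E^n)\cong G(d,d,n)$, and $A^+$ is the product of the root curves (so that $\ker(A^+\to\bar A)\subseteq c^n$); equivalently, the affine reflections must generate the full group of translations of $\bar A$.

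The core of the proof is then a direct lattice computation. Writing $E=\C/\Lambda_E$, $T=\Lambda_T/\Lambda_E$ with $\Lambda_E\subseteq\Lambda_T$, and $g_0$ acting as multiplication by $\zeta$, the translations of $G$ are $\{(t_i)\in T^n:\sum_i t_i\in(g_0-1)T\}$, so the period lattice of $\bar A$ is $\Lambda=\{(v_i)\in\Lambda_T^n:\sum_i v_i\in(\zeta-1)\Lambda_T+\Lambda_E\}$. For a fixed linear reflection $r$ the affine reflections over it differ by translations in $\Lambda\cap\im(r-1)$, i.e.\ by $\Lambda$ intersected with the corresponding root line; for the root $e_i-\zeta^a e_j$ this intersection is $\{(w,-\zeta^a w)\text{ in positions }i,j:w\in\Lambda_T\}$ (the sum condition being automatic since $(1-\zeta^a)w\in(\zeta-1)\Lambda_T$). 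Summing over all twists $a$ and all pairs $(i,j)$, I would show that the translations generated by the reflections form exactly the ``root lattice'' $\Lambda_{\mathrm{root}}=\{(v_i)\in\Lambda_T^n:\sum_i v_i\in(\zeta-1)\Lambda_T\}$, using that the differences $(\zeta^{a'}-\zeta^{a})\Lambda_T$ exhaust $(\zeta-1)\Lambda_T$ and that a single-coordinate vector $(0,\dots,0,s)$ with $s\in(\zeta-1)\Lambda_T$ already lies in the pair lattice for $(n-1,n)$.

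Finally I would compare the two lattices. They contain the common sum-zero sublattice, so $\Lambda/\Lambda_{\mathrm{root}}\cong\big((\zeta-1)\Lambda_T+\Lambda_E\big)/(\zeta-1)\Lambda_T\cong\Lambda_E/\big(\Lambda_E\cap(\zeta-1)\Lambda_T\big)$, and multiplication by $(\zeta-1)^{-1}$ identifies this quotient with $c/(c\cap T)$. Hence $\Lambda=\Lambda_{\mathrm{root}}$ — so that the reflections generate and $G$ is crystallographic — exactly when $c/(c\cap T)=0$, i.e.\ when $T\supseteq c=E^{\Lin(g)}$; and when $T\not\supseteq c$ the nonzero group $c/(c\cap T)$ is precisely the obstruction, the reflection subgroup being the proper subgroup $\Lambda_{\mathrm{root}}.\bar G$. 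The point that needs care is that the proposition is asserted in arbitrary characteristic, so the period-lattice picture above is only the characteristic-$0$ shadow: I would rerun the same bookkeeping with the finite group schemes $c$ and $T$ and the isogeny $A^+\to\bar A$ directly, checking that the set of reflections (those fixing a divisor on $A^+$) and the subgroup they generate are computed correctly even when $c$ is non-reduced, so that the identification of the obstruction with $c/(c\cap T)$ — and hence the criterion $T\supseteq E^{\Lin(g)}$ — persists. This group-scheme version is the one genuinely delicate step.
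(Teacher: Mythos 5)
Your characteristic-zero argument is correct and complete, and it takes a genuinely different route from the paper's. You verify the topological criterion directly on the universal cover: compute the period lattice $\Lambda$ of $\bar A$, compute the sublattice $\Lambda_{\mathrm{root}}$ generated by the translation parts of the affine reflections, and identify the obstruction $\Lambda/\Lambda_{\mathrm{root}}\cong c/(c\cap T)$; this delivers both implications at once (the paper's own proof is phrased only as a sufficiency reduction in its first line). The paper never touches $\pi_1$: it pulls $G$ back along the $\Lin(G)$-equivariant isogeny $1-g_0\colon E^n\to E^n$ (kernel $c^n$), observes that the pullback is again of the form $G_n(\tilde G_1,\tilde H_1)$ --- in that proof $E^{g_0}.G_1$, $E^{g_0}.T$, $E^{g_0}.G_1'$ denote the pullback extensions along $1-g_0$, not subgroups of $\Aut(E)$ generated by the two factors --- and then the classification of reflection subgroups of $\Aut(E)^n\rtimes S_n$ collapses the whole question to the rank-one identity $(E^{g_0}.G_1)'=E^{g_0}.G_1'$, settled by counting orders in the sequence $0\to E^{g_0}\to T^+\to (E^{g_0}.G_1)'\to 0$. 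Your route is more transparent over $\C$; the paper's is engineered so that every step survives in finite characteristic. (A small shared slip: when $T\ne 0$ the group $G$ is \emph{not} generated by $n$ reflections, since it contains translations; neither argument actually uses this.)

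The genuine gap is the one you flag and then defer: arbitrary characteristic. This proposition is not a characteristic-zero statement with a routine reduction; it is invoked in the classification precisely where $T$ and $c=E^{g_0}$ are non-reduced finite group schemes (for supersingular $E$ in characteristic $2$ one has $E[2]\cong\alpha_4$ and subgroups such as $\alpha_2\times[\mu_2]$; in characteristic $3$ the kernel of $1-[\zeta_3]$ is non-reduced). In that setting the period-lattice bookkeeping has no meaning: there is no $\pi_1$, ``crystallographic'' is defined through the cover $A^+$ and the group scheme $\ker(A^+\to\bar A).\bar G$, and ``the reflections generate'' means that the subgroup \emph{scheme} generated by the closed subscheme of reflections is everything --- a notion the paper itself warns is delicate (it can fail to respect base change). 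So ``rerunning the same bookkeeping with group schemes'' is not a check to be performed; it is the missing content. The sum homomorphism, the comparison of the two lattices, and the second-isomorphism-theorem step all have to be re-expressed in terms of isogenies and finite subgroup schemes, and generation by a possibly non-\'etale reflection subscheme has to be handled. To close the gap you would either carry out that scheme-theoretic argument --- at which point you have essentially reproduced the paper's proof, whose ingredients (pullback of extensions along $1-g_0$, exactness, multiplicativity of orders of finite group schemes) are all reducedness-free --- or genuinely redevelop your lattice comparison with $\ker(E^n\to\bar A)$ and its subgroup schemes in place of $\Lambda$, which is not the same computation.
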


\begin{proof}
  For $G_n(G_1,G'_1)$ to be crystallographic, it suffices for the induced
  action of $(E^{g_0})^n.G_n(G_1,G'_1)$ to be a reflection group.  This
  group is isomorphic to
  \[
  G_n(E^{g_0}.G_1,E^{g_0}.G'_1),
  \]
  where $g_0:=\Lin(g)$, and since the new $H_1$ does not contain any
  reflections, we may rephrase the requirement as
  \[
  (E^{g_0}.G_1)'\cong E^{g_0}.G'_1.
  \]
  Let $T^+=E^{g_0}.T$ (again a translation subgroup of $E$).  Then
  $E^{g_0}.G_1\cong T^+\rtimes \langle g\rangle$ has derived subgroup
  $(1-g_0)T^+$ which since $T^+$ contains $E^{g_0}$ gives a short
  exact sequence
  \[
  0\to E^{g_0}\to T^+\to (E^{g_0}.G_1)'\to 0.
  \]
  In particular, we see that $|(E^{g_0}.G_1)'|=|T|$, and thus we may
  further reduce the requirement to say that $|G'_1|=|T|/|E^{g_0}|$.
  But this in turn reduces to requiring that the exact sequence
  \[
  E^{g_0}\to T\to G_1'\to 0
  \]
  be short exact, and thus that $T$ contains $E^{g_0}$.
\end{proof}

Of course, given a crystallographic action of $G_n(G_1,H_1)$ on $E^n$, we
may as well quotient by $(E\cap H_1)^n$ to obtain a crystallographic action
for which $E\cap H_1=0$.  We have seen that such a group $H_1$ has a fixed
point and thus for crystallographic groups of the form $G_n(G_1,H_1)$ such
that $H_1\cap E=0$, we may assume (by conjugating by an element of the
diagonal copy of $E$ in $E^n$) that $H_1$ fixes the identity.  If this is a
trivial condition because $H_1=1$, then $\Lin(G_1)$ is cyclic, and we may
choose a preimage of a generator of $\Lin(G_1)$ in $G_1$ and assume WLOG
that the preimage fixes a point.  We thus obtain the following exhaustive
list of possibilities.

\begin{thm}
  For $n\ge 3$, any crystallographic reflection group action with
  $\Lin(G^+)$ of the form $G_n(G_1,H_1)$ is equivalent to the action of a
  group $G_n(\tilde{G}_1,\tilde{H}_1)$ on $E^n$, where
  $\tilde{G}_1=E^{G_1}\rtimes \tilde{G}_1$ and $\tilde{H}_1$ is generated
  over $\tilde{G}'_1$ by reflections.
\end{thm}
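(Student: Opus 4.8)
The plan is to treat this theorem as the bookkeeping summary of the reductions assembled above, so the real work is to chain them together in the right order rather than to prove anything new. First I would invoke the pullback reduction immediately preceding the statement: composing the given crystallographic action of $G^+$ on $A$ with the equivariant isogenies $E^n\to A^+\to A$ realizes it as a reflection subgroup of $E[N']^n\rtimes G_n(G_1,H_1)$ acting on $E^n$, whose quotient by a torsion subgroup recovers $A$, and with $G'_1\normal H_1\normal G_1\subset \Aut(E)$. Since equivalence of actions is preserved under these isogenies and the later quotients, it suffices to bring the resulting $E^n$-action into the asserted normal form $G_n(\tilde G_1,\tilde H_1)$.

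Next I would normalize the translation content in two steps. Quotienting by the $S_n$-invariant normal subgroup $(E\cap H_1)^n$ replaces $E^n$ by an isogenous power of $E$ and reduces to the case $H_1\cap E=0$; as $H_1$ is then a rank-$1$ reflection group meeting $E$ trivially, the earlier Proposition that such subgroups of $\Aut(E)$ fix a point lets me conjugate by a diagonal translation so that $H_1\subset \Aut_0(E)$ fixes the identity. At this stage the assertion that $\tilde H_1$ is generated over $\tilde G'_1$ by reflections is exactly the conclusion of the classification of reflection subgroups of $\Aut(E)^n\rtimes S_n$ proved above, so that part is automatic.

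The substance is then to pin down $\tilde G_1$ as the semidirect product of its translation subgroup $E^{\Lin(G_1)}$ with a lift of its linear part, and here I would split along the dichotomy already isolated in the text. If $H_1$ contains no reflection, then $H_1=G'_1$ consists of translations, so $\Lin(G_1)$ is abelian, hence cyclic; choosing a preimage $g$ of a generator that fixes a point (possible since $\Lin(g)-1$ is surjective) and normalizing it to fix the identity, the condition $G'_1=0$ after the quotient forces $(1-\Lin(g))T=0$, i.e.\ $T\subseteq E^{\Lin(g)}$, while the Proposition characterizing crystallographicity of $T\rtimes\langle g\rangle$ gives the reverse inclusion, so $T=E^{\Lin(g)}=E^{\Lin(G_1)}$ exactly. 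If instead $H_1$ contains a reflection, then the Proposition that $G_n(G_1,H_1)$ is automatically crystallographic applies, and I would use the freedom $E[N]^n.G_n(G_1,H_1)\cong G_n(E[N].G_1,E[N].H_1)$ together with the fixed-identity normalization to again absorb the translation part into the fixed locus $E^{\Lin(G_1)}$.

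I expect the main obstacle to lie in this last identification: reconciling the reflection and non-reflection cases into the single normal form with translation subgroup exactly $E^{\Lin(G_1)}$, rather than merely containing it, while keeping track of which subgroup has been quotiented out at each stage so that the successive reductions (the quotient by $(E\cap H_1)^n$, the conjugation moving the fixed point to the identity, and the splitting of $\tilde G_1$) are mutually compatible. The delicate point is that the crystallographic constraint by itself only bounds the translation subgroup from one side, so one must verify that the normalization steps trim it to the fixed locus on the nose in the reflection case as well.
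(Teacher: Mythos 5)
Your reductions up to the dichotomy are sound and agree with the paper's pre-theorem discussion: the pullback to $E^n$, the quotient by $(E\cap H_1)^n$, the fixed-point normalizations, and the observation that the condition on $\tilde H_1$ follows from the rank-$\ge 3$ classification once the normalized group is known to be a reflection group. Your treatment of the case where $H_1$ contains no reflection is complete and is essentially the paper's own analysis: abelianness of the quotiented $G_1$ forces $T\subseteq E^{\Lin(g)}$, while the crystallographicity criterion for $G_n(T\rtimes\langle g\rangle,G_1')$ forces $T\supseteq E^{\Lin(g)}$, giving equality. But the case where $H_1$ \emph{does} contain a reflection --- which you flag as the main obstacle and leave open --- is a genuine gap, and it is precisely where the content of the theorem lies. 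The obstruction you identify is real: crystallographicity only bounds the translation subgroup from below, there is no abelianness constraint to bound it from above, and the isomorphism $E[N]^n.G_n(G_1,H_1)\cong G_n(E[N].G_1,E[N].H_1)$ by itself only makes the translation part \emph{larger}, so no amount of ``absorbing'' within the given curve can produce the normal form.

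The paper closes this gap by going the other way and then changing the curve. After splitting $\hat G_1=T\rtimes G_1$ (for non-cyclic $G_1$ --- the quaternionic cases --- this splitting itself requires choosing a cyclic normal subgroup of $G_1'$ whose preimages fix a unique closed point, a step absent from your sketch, since you only normalize $H_1$ or a single cyclic generator), one uses crystallographicity to pass to the enlarged group $G_n(E[6].\hat G_1,E[6].\hat H_1)$, which is still a reflection group, and then quotients by $K:=(E[6].\hat G_1)'\cap E$, i.e.\ passes to the isogenous curve $E'=E/K$. Writing $M=E[6].T$ for the enlarged translation part, one has $K\supseteq(1-g)M$ for every $g\in G_1$, so the image of $M$ in $E'$ lands inside $E'^{G_1}$; and since $E^{g_0}\subseteq E[6]\subseteq M$, an order count (in the cyclic case $|M/K|=|\ker(1-g_0)\cap M|=\deg(1-g_0)=|E'^{g_0}|$) shows that the containments $M/K\subseteq E'^{G_1}\subseteq E'^{g_0}$ are all equalities. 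Thus the translation subgroup becomes exactly the fixed subgroup not by being trimmed on the original curve, but by becoming the \emph{whole} fixed subgroup of a quotient curve: the enlargement by the full $6$-torsion and the change of curve are both essential, neither appears in your proposal, and this single argument handles both halves of your dichotomy uniformly.
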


\begin{proof}
  Consider a crystallographic group of the form $G_n(\hat{G}_1,\hat{H}_1)$
  with $\hat{G}_1=G_1$.  If $G_1$ is cyclic, choose a preimage of $G_1$ in
  $\hat{G}_1$ and conjugate by a suitable element of $E\subset E^n$ so that
  it has a fixed point in $E$, and thus $\hat{G}_1=T\rtimes \langle
  g\rangle$ for some $g$-invariant $T\subset E$.  If $\hat{G}_1$ is not
  cyclic, then it has a cyclic normal subgroup contained in $\hat{G}_1'$
  such that any preimage has a unique fixed point; translating that point
  to the identity again gives $\hat{G}=T\rtimes G_1$.

  Now, since the group is crystallographic, the group
  \[
  G_n(E[6].\hat{G}_1,E[6].\hat{H}_1)
  \]
  is also a reflection group.  We then see that quotienting by
  $(E[6].\hat{G}_1)'\cap E$ gives a group $G_n(\tilde{G}_1,\tilde{H}_1)$
  where $\tilde{G}_1=E^{G_1}\rtimes G_1$, possibly acting on an isogenous
  curve $E$.  Moreover, this group is crystallographic iff $\tilde{H}_1$ is
  generated over $\tilde{G}'_1$ by reflections.
\end{proof}

\begin{rem}
  Here we note that $(E^{G_1}\times G_1)'\cap E=0$, but if $H_1$ contains
  reflections, it may also contain translations; we could, of course,
  quotient by those translations, but since our approach to computing
  invariants depends on $\tilde{G}$, it will be simpler to keep $\tilde{G}$
  fixed.  Also, note from the above proof that the isogeny with kernel
  $E^{G_1}$ is precisely the natural isogeny $E^{++}\to E$, and that $A^{++}$
  is the quotient of $E^n$ by the sum zero subgroup of $(E^{G_1})^n$.  Thus
  the analogous statement for strongly crystallographic actions is that the
  group have the form $G_n(\mu_d.E^{G_1}\rtimes G_1,\tilde{H}_1)$ where
  $\tilde{H}_1$ is generated by strong reflections over
  $(\mu_d.E^{G_1}\times G_1)'$ and contains $\mu_d$.
\end{rem}

\begin{rem}
  We have the following possibilities for $\tilde{G}$:
  \begin{itemize}
  \item[(a)] For any $E$, we may take $\tilde{G}=E[2]\times [\mu_2]$ (of
    order 8).
  \item[(b)] If $j(E)=0$, we may also take $\tilde{G}=E[\sqrt{-3}]\times
    [\mu_3]$ (order 9) or $\tilde{G}=[\mu_6]$.
  \item[(c)] If $j(E)=1728$, we may also take $\tilde{G}=E[i-1]\times [\mu_4]$
   (order 8).
  \item[(d)] If $E/\bar\F_2$ is supersingular, we may also take
    $\tilde{G}=\alpha_2\times Q_8$ or $Q_8\rtimes [\mu_3]$.
  \item[(e)] If $E/\bar\F_3$ is supersingular, we may also take
    $\tilde{G}=Z_3\rtimes [\mu_4]$.
  \end{itemize}
\end{rem}

\subsection{Classification in rank 2}

For rank 2, the reflection subgroups of $\Aut(E)^2\rtimes S_2$ can be more
complicated.  (Compare the discussion of the quaternionic case in
\cite{CohenAM:1980}.)  We may still assume that it contains $S_2$ and thus
is determined by its intersection $\Delta$ with $\Aut(E)^2$.  Let
$G_1$ denote the image of $\Delta$ under projection onto the first
coordinate, and let $H_1\normal G_1$ denote the kernel of projection onto
the second coordinate.  Then one has $G_1/H_1\cong \Delta/H_1^2\subset
(G_1/H_1)^2$ and thus there is an automorphism $\alpha$ of $G_1/H_1$ such
that $(g_1,g_2)\in \Delta$ iff $g_2H_1=\alpha(g_1H_1)$.  For this to be a
reflection group, $G_1/H_1$ must be generated by the subscheme on which
$gH_1=\alpha(gH_1)^{-1}$ (i.e., the closed subscheme corresponding to rank
2 reflections), and this suffices if $H_1$ contains a reflection or is
trivial.

Although we could in principle work with general subgroups of $\Aut(E)$ (or
even $\G_m.\Aut(E)$) as we did for $n\ge 3$, it is somewhat complicated to
enumerate the possibilities by hand, especially since $\alpha$ depends on
the choice of rank 2 reflection we conjugated to $(12)$, so one must
quotient by a nontrivial equivalence relation.  Moreover, imprimitive
reflection groups of rank 2 can have multiple systems of imprimitivity
(e.g., the diagram automorphism of $B_2$ does not respect the coordinate
basis), giving rise to nontrivial automorphisms and isomorphisms.  We will
thus instead first classify the subgroups of $\Aut_0(E)^2\rtimes S_2$ and
then determine the possible lattices and torsors.

In the complex case there are no sporadic possibilities for $\Lin(G^+)$ (we
may as well take $\alpha$ to be $g\mapsto g^{-1}$).  There is a sporadic
phenomenon, however, in that $G_2(G_1,1)$ is actually a {\em real}
reflection group: $G_2([\mu_3],1)\cong A_2$, $G_2([\mu_4],1)\cong B_2$,
$G_2([\mu_6],1)\cong G_2$.  (Relatedly, the real group $G_2([\mu_2],1)\cong
A_1A_1$ fails to be irreducible.)  Note that $G_2([\mu_4],1)\cong B_2\cong
G_2([\mu_2],[\mu_2])$ is an example of a nontrivial isomorphism between
imprimitive reflection groups.

We next turn to the classification of lattices.  Since $H_1\ne 1$, the two
coordinate copies of $E$ are root curves, and thus any lattice with the
given root curve is a quotient $E^2/T$ where $T\subset (E^{H_1})^2$ does
not meet either coordinate copy of $E$.  In particular, $T$ has the form
$(1,\alpha)T_1$ where $T_1\subset E^{H_1}$ is $G_1$-invariant and $\alpha$
is an automorphism of $T_1$ of order 2 (since $G^+$ contains $S_2$).
Moreover, for this to be crystallographic, $T$ must be generated by its
intersections with root curves of rank 2 reflections.

For $G_2([\mu_6],[\mu_6])$, this already tells us that the lattice is
unique.  For $G_2([\mu_4],[\mu_4])$, $E^H=E[1-i]$ is cyclic, so the only
lattices have $T=0$ or $T$ diagonal in $E[1-i]^2$, both of which extend to
higher rank.  Similarly, for $G_2([\mu_3],[\mu_3])$, a nonzero $T$ must be
either the diagonal copy of $E[\zeta_3]$ or the antidiagonal copy of
$E[\zeta_3]$, and only the latter is contained in a root curve.

For $G_2([\mu_4],[\mu_2])$, $T_1\subset E[2]$ must be an
$[\mu_4]$-invariant subgroup, so $T_1\in \{0,E[1-i],E[2]\}$.  Since these
form a chain, $T$ must be contained in the intersection with a single root
curve, and thus $T\in \{0,(1,1)E[1-i],(1,1)E[2],(1,i)E[2]\}$.  The latter
two cases are related to $T=0$ by an outer automorphism of
$G_2([\mu_4],[\mu_2])$ (swapping its three systems of imprimitivity), and
thus the only additional example is the diagonal $E[1-i]$, which again
extends to higher rank.

For $G_2([\mu_6],[\mu_3])$, the possible subgroups are $0$, $(1,1)E[\sqrt{-3}]$,
and $(1,-1)E[\sqrt{-3}]$, with the latter two being equivalent and
sporadic.

For $G_2([\mu_6],[\mu_2])$, $T_1$ must be a $[\mu_3]$-invariant subgroup of
$E[2]$.  In characteristic not 2, this forces $T_1\in \{0,E[2]\}$.  The
rank 2 reflections are all conjugate, so WLOG $T$ contains an element of
the form $(s,s)$ and thus also contains $([\zeta_3]s,[\zeta_3^{-1}]s)$.
(In particular, $T$ meets each of the three rank 2 root curves in a group
of order 2.)  This gives three nonzero possibilies for $T$ that are
conjugate under $G_2([\mu_6],[\mu_6])$ and thus gives a single sporadic
lattice.  (In this case, the rank two root curves have endomorphism ring
$\Z[\sqrt{-3}]$ ($j=54000$) rather than $\Z[(-1+\sqrt{-3})/2]$.)  In
characteristic 2, the curve is supersingular, so $E[2]\cong \alpha_4$, and
this has $[\mu_3]$-invariant subgroup $\alpha_2$ (the kernel of Frobenius).
Since the group acts irreducibly on $\alpha_2^2$, this does not give any
additional lattices.  Note that the action of $[\mu_3]$ on $\alpha_4$ does
not extend to an action of $S_3$, so the sporadic lattice fails in this
case!  More precisely, the degeneration of that subgroup to characteristic
2 is $\alpha_2^2$, so introduces a kernel to the map from the rank 1 root
curves.  This corresponds to the fact that every copy of $\Z[\sqrt{-3}]$ in
the relevant quaternion order extends to $\Z[\zeta_3]$.  (For $p\in
\{3,5\}$, the curve with $j=0$ has two equivalence classes of endomorphisms
squaring to $-3$, one of which does not extend.)

We next turn to torsors.  For the cases with $G_1=[\mu_6]$, the
antidiagonal copy of $[\mu_6]$ is cyclic and normal, and thus those cases
do not admit nontrivial torsors.\footnote{This is in apparent contradiction
with \cite[\S5]{GoryunovV/ManSH:2006}, which purports to give a torsor
action of (in our notation) $G_2([\mu_6],[\mu_3])$ in characteristic 0, but
their action actually {\em does} have a global fixed point; see
\cite[Rem.~2.4]{PuenteP:2023} for an explicit calculation.} In the cases
with $G_1=H_1$, both conjugacy classes of reflection are needed to generate
the group, and we can generate it with $2$ reflections, and thus there are
no nontrivial torsors on which the group acts by reflections.  Since the
cases with $H_1=1$ are real, we are left with only the case
$G_2([\mu_4],[\mu_2])$ to consider.

In that case, we have center of order $4$, and thus any torsor is trivial
in characteristic 2.  In characteristic not 2, every quotient by a subgroup
of $A^G$ is again crystallographic, so that we have two lattices to
consider.  The normal reflection subgroup $H:=G_2([\mu_2],[\mu_2])$ is
real, so has a global fixed point, and thus the remaining torsors are given
by classes in $H^1(G/H;A^H)$ such that the images of reflections lie in the
corresponding root curves.  Since there is a single conjugacy class of
reflections with nontrivial images in $G/H$, this translates to asking for
classes in the image of $H^1(G/H;E_r\cap A^H)\to H^1(G/H;A^H)$ where $E_r$
is any of the root curves.  (They form a single $H$-orbit, so $E_r\cap A^H$
is independent of the choice of root.)  For the degree 1 polarization,
$G/H$ acts as $[i]$ on $E_r\cap A^H\cong E_r[2]$, so the cohomology is
trivial, while for the degree 2 polarization, it acts trivially, and thus
there is a unique nontrivial torsor action (which extends to higher rank).

There are, in addition, two ways in which $B_2$ behaves sporadically: the
polarization of the degree 4 lattice is twice a split principal
polarization, while for the degree 2 lattice, the induced polarization on
$A^{++}$ has degree 8 (as opposed to degree 4 for $n>2$).  This sporadic
behavior for $B_2\cong G_2([\mu_4],1)$ induces sporadic behavior for the
degree 2 lattices for $G_2([\mu_4],[\mu_2])$ and $G_2([\mu_4],[\mu_4])$; in
the first case, $A^{++}\to A$ is multiplication by $1-i$, while in the
second case, $A^{++}\cong E^2$, related by a $2$-isogeny.  (These groups are
all $2$-groups, and thus automatically fix some 2-torsion point of $A^\vee$,
so that $A^{++}\to A$ cannot be an isomorphism!)  In all other cases,
$A^{++}$ is as expected from higher rank.

\medskip

We next turn to lattices in the rank 2 cases of the quaternionic cases we
considered above.  For $G_2(Q_8,[\mu_2])$, $E^H=E[2]\cong
\alpha_4$, so $T_1\in \{0,\alpha_2,\alpha_4\}$.  Since this is a chain, $T$
must be contained in some rank 2 root curve, and since these are all
conjugate, we may assume WLOG that $T$ is (anti)diagonal (i.e., lies in the
root curve corresponding to $(12)$).  The case $T_1=E[2]$ is related by an
outer automorphism of the group (which has {\em five} systems of
imprimitivity, all equivalent under outer automorphisms!) to the case
$T_1=0$, so only the case $T_1=\alpha_2$ is new, and this extends to higher
rank.

For $G_2(Q_8,[\mu_4])$ and $G_2(Q_8,Q_8)$, $E^H\cong E[1-i]\cong \alpha_2$,
and only the diagonal subgroup gives a crystallographic lattice.  This
again extends to higher rank.

For $G_2(\Aut(E),Q_8)$, $E^H\cong \alpha_2$, but the action of $\Aut(E)/H$
on $\alpha_2^2$ is irreducible, and thus there are no additional lattices.
Similarly, for $G_2(\Aut(E),\Aut(E)$, $E^H=1$ and there is nothing
sporadic.

In all of these characteristic 2 cases, the group has a normal subgroup of
order 2 (i.e., the center) and thus any equivariant torsor is trivial.

For characteristic 3, we again find that the group acts irreducibly on
$(E^H)^2$ and thus there are no additional lattices.  The groups
$G_2(\Aut(E),[\mu_3])$ and $G_2(\Aut(E),[\mu_6])$, have cyclic normal
subgroups of order 6, so no possible torsors.  But these give normal
subgroups of $G_2(\Aut(E),\Aut(E))$ that fix a unique point on every
torsor, so that $G_2(\Aut(E),\Aut(E))$ also fixes a unique point on every
torsor.

\medskip

In the quaternionic case, however, there are also two sporadic possibilities
for $\Lin(G^+)$.  Here the classification was done in \cite{CohenAM:1980},
although with duplication (the author implicitly assumed uniqueness of the
system of imprimitivity).  One obtains the following sporadic cases in
which $G\subset \Aut_0(E)$ for some $E$.  (In each case, there is a unique
equivalence class of $\alpha$, so we can simply denote the group by
$G_2(G_1,H_1)$.)  For $\ch(E)=3$, the unique example is
$G_2(\Aut_0(E),[\mu_2])$, since
\[
G_2(\Aut_0(E),1)\cong G_2([\mu_6],[\mu_2])
\]
is actually a complex reflection group.  For $\ch(E)=2$, the unique
example is $G_2(\Aut_0(E),[\mu_2])$; the other two apparent examples are
again complex, with
\[
G_2(Q_8,1)\cong G_2([\mu_4],[\mu_2])\qquad\text{and}\qquad
G_2(\Aut_0(E),1)\cong ST_{12}
\]
(In both cases, we verified the classification of \cite{CohenAM:1980} by using
Magma to enumerate the conjugacy classes of subgroups of
$G_2(\Aut_0(E),\Aut_0(E))$ and check which are generated by reflections and
remain irreducible inside $\Sp_4$.)

For the sporadic group $G_2(\Aut_0(E/\bar\F_3),[\mu_2])$, the only
nontrivial possibility for $T$ is the diagonal $E[2]$, but this is related
by an outer automorphism to the original lattice.  This group has a cyclic
normal subgroup of order 6 (in fact, it has one of order 12!), so again
there are no torsors.

For the sporadic group $G_2(\Aut_0(E/\bar\F_2),[\mu_2])$, the only
nontrivial possibilities for $T$ are the diagonal $\alpha_2$ and the
diagonal $\alpha_4$, but neither of these are invariant under the group, so
we have no non-split lattice.  Since we still have center of order 2, we
again find that there are no torsor actions.

\smallskip

In general, for rank 2 imprimitive quaternionic reflection groups, or
equivalently for groups of the form $G_2(G_1,H_1)$ with $G_1\subset
\SL_2(\R)$ finite, a necessary condition to act on an abelian variety is
that the character take values in $\Q$, or equivalently that
$\Tr(g_1)+\Tr(g_2)\in \Q$ whenever $(g_1,g_2)\in G$.  If $H_1$ has a
nontrivial element $h$, then $\sum_{0\le d<\ord(h)} \Tr(g h^d)=0$ and thus
$G_1$ must have rational character (which always works, and is accounted
for above).  Thus the only other way this can occur is if the character of
$G_1$ is defined over a real quadratic field, when $G_2(G_1,1)$ twisted by
an automorphism that conjugates the character will have rational character.
(Of course, in most cases this will not be a reflection group!)  These
cases must be considered in the full classification, but are effecively
sporadic, since the given system of imprimitivity does not correspond to
actual abelian subvarieties.  (This, of course, is another reflection of
the nonuniqueness of systems of imprimitivity.)

The additional groups that arise in this way are the following:
\begin{itemize}
  \item $G_2(\mu_{12}.\Z/2\Z,1)\cong G_2(\Aut_0(E),[\mu_2])$ (characteristic 3)
  \item $G_2(2.S_4,1)\cong G_2(\Aut_0(E),[\mu_2])$ (characteristic 2)
  \item $G_2(2.\Alt_5,1)\cong 2.\Sym_5$ (characteristic 5),
\end{itemize}
so that the first two are not actually new, and the third is primitive over
$\End(E)$ so is outside the scope of this section.

\medskip

In sum, we have the following four cases of sporadic rank 2 imprimitive
reflection group actions:
\begin{align}
  G_2([\mu_6],[\mu_2])&\qquad\text{(nonsplit degree 1, rank 2 root curves
    have order $\Z[\sqrt{-3}]$)}\notag\\
  G_2([\mu_6],[\mu_3])&\qquad\text{(degree 3)}\notag\\
  G_2(\Aut(E/\bar\F_3),[\mu_2])&\qquad\text{(split degree 1)}\notag\\
  G_2(\Aut(E/\bar\F_2),[\mu_2])&\qquad\text{(split degree 1).}\notag
\end{align}

\subsection{Invariants in rank $1$}

A large number of cases of the main theorem in rank 1 come from the
following fact.  Here the equivariant structure on ${\cal L}([0])$ is
induced from the action on functions.

\begin{prop}
  Let $G\subset \Aut_0(E)$ be a nontrivial subgroup, and let $m$ divide
  $|G|$.  Then the graded algebra
  \[
  \bigoplus_{d\ge 0} \Gamma(E;{\cal L}(dm[0]))^G \cong k[x,y],
  \]
  where $\deg(x)=1$, $\deg(y)=|G|/m$.
\end{prop}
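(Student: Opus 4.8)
The plan is to identify the ring $\bigoplus_{d\ge 0}\Gamma(E;{\cal L}(dm[0]))^G$ using the general criterion of the proposition from Subsection~2.2: it suffices to exhibit two algebraically independent invariants $x,y$ of the correct degrees with no common zero on $E$, and to check that the numerical identity $|G|=\deg({\cal L})\cdot n!\,d_0\cdots d_n$ holds. Here $n=1$, $\deg({\cal L}(m[0]))=m$, and the claimed degrees are $(d_0,d_1)=(1,|G|/m)$, so the numerical condition reads $|G|=m\cdot 1!\cdot 1\cdot(|G|/m)=|G|$, which is automatic. Thus the entire content is to produce the two generators.

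First I would construct $x$. The linear part is a section of ${\cal L}(m[0])$ that is $G$-invariant; since $G\subset\Aut_0(E)$ fixes $0$ and acts on functions, I would take $x$ to be (a suitable $G$-invariant combination giving) a section vanishing appropriately at the ramification data of $E\to E/G$. Concretely, because $G$ fixes $0$ with eigenvalue a root of unity on $\Gamma(E;\sO(m[0]))$, the space of invariant sections of ${\cal L}(m[0])$ is nonzero, and I would pick a nonzero such $x$ of degree $1$. Next I would construct $y$ of degree $|G|/m$: the natural candidate is a norm-type invariant, i.e.\ the product over $G$ of a non-invariant section, or equivalently a pullback from the quotient $E/G$. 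Since $E/G\cong\P^1$ (as $G$ is a nontrivial finite subgroup of $\Aut_0(E)$), the function field of invariants is $k(E)^G=k(\P^1)$, and a rational function on $E/G$ with a pole only at the image of $0$ of the right order gives a section $y\in\Gamma(E;{\cal L}(dm[0]))^G$ with $d=|G|/m$.

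The key step, and the main obstacle, is verifying that $x$ and $y$ have no common zero on $E$. I would argue this by examining vanishing loci: $x$ is a degree-$m$ section, so its zero divisor has degree $m$; I would choose $x$ so that its zeros are supported away from $0$ (or are exactly the non-free part of a $G$-orbit), and then choose $y$ to be an invariant whose zero locus is a single free $G$-orbit (of size $|G|$) that avoids the zeros of $x$. Since $E/G\cong\P^1$ and the invariant ring separates orbits, the condition ``no common zero'' reduces to checking that the images of the zero divisors of $x$ and $y$ in $\P^1$ are disjoint, which is a one-parameter genericity statement one can arrange by choice of $y$ (adding a scalar multiple of a suitable invariant to move its zeros). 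Algebraic independence then follows from the lemma on no-common-zero sections, and integral closedness of the homogeneous coordinate ring of $E$ gives the conclusion via the cited proposition.

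One should also take care that the argument is uniform across characteristics and across the possible $G$ (the groups $[\mu_d]$ for $d\in\{2,3,4,6\}$, together with the supersingular possibilities $\Aut_0(E)$ in characteristics $2$ and $3$). I expect the cleanest route is to treat $E/G\cong\P^1$ abstractly: the total ring $\bigoplus_d\Gamma(E;{\cal L}(dm[0]))^G$ is the homogeneous coordinate ring of $\P^1\cong E/G$ for the line bundle descended from ${\cal L}(m[0])$, and since ${\cal L}(m[0])^{\otimes(|G|/m)}$ descends to a bundle of degree $1$ on $E/G$ while ${\cal L}(m[0])$ itself descends to degree $m/|G|$ on the stack, the weights $(1,|G|/m)$ fall out of the stabilizer character at the ramification point over $0$. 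The remaining verification—that the chosen $x,y$ generate rather than merely lie in the ring—is exactly the no-common-zero check above, and that is where essentially all the work lies.
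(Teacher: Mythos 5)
Your proposal is correct and follows essentially the same route as the paper: both proofs rest on $E/G\cong\P^1$ (so that the pullback of $\sO_{\P^1}(1)$ is ${\cal L}(|G|[0])$, giving the invariant of degree $|G|/m$), a $G$-invariant section of degree $1$, and the no-common-zero-plus-degree-count criterion of Subsection 2.2, the only organizational difference being that the paper reduces to $m=1$ and recovers general $m$ via the $m$-th Veronese while you treat all $m$ at once. The one point to tighten is the existence of $x$: ``eigenvalue a root of unity'' does not by itself yield an invariant section; rather, with the equivariant structure induced from the action on functions, the constant function $1\in\Gamma(E;{\cal L}(m[0]))$ is invariant (this is the paper's choice), and since its zero divisor is exactly $m[0]$ while your $y$ (pulled back from a function on $\P^1$ with pole only at the image of $0$) vanishes only on the preimage of a point distinct from that image, the disjointness of zero loci is automatic and no genericity argument is needed.
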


\begin{proof}
  Let us first assume $m=1$.  We note that since $G$ contains a reflection,
  $E/G$ cannot have genus 1, and thus $E/G\cong \P^1$.  (This remains true
  over non-closed fields as the image of the identity is a marked point on
  $\P^1$.)  The pullback of $\sO_{\P^1}(1)$ is then naturally a
  $G$-equivariant line bundle on $E$.  If we express $\sO_{\P^1}(1)$ as
  ${\cal L}(p)$ where $p$ is the image of $0\in E$, then we see that the
  pullback is ${\cal L}(|G|[0])$, and thus
  \[
  \bigoplus_{d\in |G|\Z} \Gamma(E;{\cal L}(d[0]))^{G} \cong k[z,w],
  \]
  where $\deg(z)=\deg(w)=|G|$.  The action of $G$ on $\Gamma(E;{\cal
    L}([0]))=k\langle 1\rangle$ is trivial, so that there is an invariant
  $x$ of degree 1.  In particular, $x^{|G|}$ is a nonzero linear
  combination of $z$ and $w$, and thus we may extend it to a regular
  sequence $(x^{|G|},y)$ in $k[z,w]$ with $y$ an independent combination of
  $z$ and $w$.  This is then also a regular sequence in the cone over $E$,
  so that $(x,y)$ is a regular sequence in the cone, and thus the claim
  follows by degree considerations.

  The claim for $m>1$ then follows by observing that the $m$th Veronese is
  generated by $x^m$ and $y$.
\end{proof}

\begin{rem}
  In characteristic 2, the actions of $[\mu_4]$, $Q_8$, and
  $\Aut_0(E/\F_2)$ on ${\cal L}(4m[0])$ with $4m$ dividing $|G|$ have
  polynomial invariants but are not strongly crystallographic.
\end{rem}

This is enough to establish the main theorems for $G\in
\{[\mu_2],[\mu_3]\}$, as in both cases the strongly crystallographic
equivariant bundles are equivalent under conjugation by $E^G$ to powers of
${\cal L}([0])$.  Similarly, for $[\mu_4]$ in characteristic 2, any
equivariant bundle is a power of ${\cal L}([0])$, and similarly for $Q_8$,
while for $[\mu_6]$ in bad characteristic and the other two quaternionic
cases, the $p$-Sylow has polynomial invariants and the quotient by the
$p$-Sylow acts by independently rescaling $x$ and $y$.

We are thus left with considering $[\mu_4]$ and $[\mu_6]$ in characteristic
0.  For $[\mu_4]$, there is one additional equivariant bundle to consider:
${\cal L}(E[1-i])$, again with the action as functions.  The bundle has an
invariant function ($1$) and squares to an equivariant bundle isomorphic to
${\cal L}(4[0])$, so the same argument shows that it has polynomial
invariants of degrees $1,2$.

Finally, for $[\mu_6]\cong [\mu_2]\times [\mu_3]$, every strongly
crystallographic case is a power of one of four equivariant structures on
${\cal L}([0])$: $[\mu_2]$ either acts as functions or fixing the fiber at
the identity, and similarly for $[\mu_3]$.  We have already dealt with the
case that both act as on functions, and otherwise we may explicitly
identify divisors of invariant sections of degrees 2 and 3:
\[
(2[0],E[2]-[0]), (E[\sqrt{-3}]-[0],3[0]), (E[\sqrt{-3}]-[0],E[2]-[0]);
\]
since in each case the divisors are disjoint, the sections are
algebraically independent and thus generate by degree considerations.
We thus find that in those three cases the invariant ring is polynomial
with $\deg(x)=2$, $\deg(y)=3$, and again polynomiality persists
for the square and cube of the equivariant bundle.

\subsection{Invariants in rank $\ge 3$}

The key idea for computing quotients by groups of the form
$G_n(\tilde{G}_1,\tilde{H}_1)$ is that there is a short exact sequence
\[
1\to G_n(\tilde{G}_1,\tilde{H}_1)\to \tilde{G}_1^n\rtimes S_n \to
\tilde{G}_1/\tilde{H}_1\to 1,
\]
and thus $E^n/G_n(\tilde{G}_1,\tilde{H}_1)$ is an abelian cover of
$\Sym^n(E/\tilde{G}_1)$ with structure group $\tilde{G}_1/\tilde{H}_1$.
This makes it relatively straightforward to compute generators of the
invariant {\em field}, from which one can easily find generators of the
invariant ring.

Given two $n$-tuples $(x_1,\dots,x_n)$, $(y_1,\dots,y_n)$, define
polynomials $e_k(\vec{x},\vec{y})$, $0\le k\le n$ by
\[
\sum_{0\le k\le n} e_k(\vec{x},\vec{y}) t^k
=
\prod_{1\le i\le n} (x_i+ty_i).
\]
The following is then classical.

\begin{lem}
  Let $X=\Proj(k[x,y])\cong \P^1$.  Then $\Sym^n(X)\cong \P^n$, with
  homogeneous coordinates $e_k(x_1,\dots,x_n;y_1,\dots,y_n)$ for $0\le k\le
  n$.
\end{lem}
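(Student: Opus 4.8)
The plan is to exhibit an explicit isomorphism $\Sym^n(\P^1)\cong \P^n$ realized by the elementary-symmetric-type coordinates $e_k(\vec{x},\vec{y})$. First I would set up the symmetric product carefully: a point of $\Sym^n(\P^1)$ is an unordered $n$-tuple of points of $\P^1$, each given in homogeneous coordinates $[x_i:y_i]$, and the expression $\prod_{1\le i\le n}(x_i+ty_i)$ is the binary form of degree $n$ whose roots (in the variable $t$, projectively) record those $n$ points. The coefficients $e_0,\dots,e_n$ of this form are symmetric in the $n$ pairs and scale with weight $1$ under simultaneous rescaling of any single pair $(x_i,y_i)$, so the assignment $(x_1,y_1,\dots,x_n,y_n)\mapsto [e_0:\cdots:e_n]$ descends to a well-defined morphism $\Sym^n(\P^1)\to\P^n$.

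The core of the argument is that this morphism is an isomorphism. I would prove this by invoking the classical fact that binary forms of degree $n$ over an algebraically closed field are classified up to scalar by their unordered multiset of $n$ projective roots: a nonzero binary form $\sum_k e_k\, x^{n-k}y^k$ factors (uniquely up to reordering and scaling) as a product of $n$ linear forms $\prod_i(y_i x - x_i y)$, and this factorization is exactly the inverse construction. Concretely, I would check bijectivity on closed points by this unique factorization, and then note both sides are smooth proper varieties of dimension $n$ with the map given by everywhere-defined global sections (the $e_k$ have no common zero, since a nonzero tuple of points always yields a nonzero form), so the set-theoretic bijection upgrades to an isomorphism of varieties. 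Equivalently, and perhaps cleaner algebraically, one identifies the graded invariant ring $k[x_1,y_1,\dots,x_n,y_n]^{S_n}$, where $S_n$ acts by permuting the pairs, as the polynomial ring $k[e_0,\dots,e_n]$; this is the statement that the $e_k(\vec{x},\vec{y})$ are algebraically independent generators of the ring of multisymmetric polynomials that are \emph{linear} in each column, which follows from the classical theory of symmetric functions applied to the ratios $t_i = x_i/y_i$ after clearing denominators.

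The main obstacle I anticipate is making the factorization/inverse-map argument fully rigorous over an arbitrary field, including characteristics dividing $n!$. The clean bijection on closed points uses unique factorization of binary forms, which holds over any field since $\bar k$ is algebraically closed, so that direction is safe. The subtler point is that the ring-theoretic identification of the invariants with a genuine \emph{polynomial} ring (rather than just the right function field or the right Hilbert series) should be argued characteristic-freely: here I would lean on the fact that the $e_k$ are algebraically independent (their Jacobian, or a direct degree/dimension count, shows the fibers of $\Sym^n\P^1\to\P^n$ are singletons) together with normality of $\Sym^n(\P^1)$, exactly in the spirit of the generation lemmas of Subsection 2.2 above. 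Since both $\Sym^n(\P^1)$ and $\P^n$ are normal and the morphism is finite and birational (indeed bijective on points), it is an isomorphism, and this reasoning is insensitive to the characteristic.
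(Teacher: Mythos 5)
The paper itself offers no proof here—it simply records the lemma as classical—so the only question is whether your argument stands on its own. Your construction of the morphism and the pointwise bijection via unique factorization of binary forms are fine, but the step your final paragraph actually rests on—``finite and birational (indeed bijective on points)''—contains a genuine gap: in characteristic $p$ a finite morphism that is bijective on closed points need \emph{not} be birational. The geometric Frobenius $\P^1\to\P^1$ is a degree-$p$ bijection, and this is not a hypothetical worry in the present setting: the paper explicitly flags Frobenius-type phenomena (the kernel of Frobenius in $\PGL_2$, and the $\alpha_2^2\subset\PGL_2$ quotient through which the degree-$2$ Frobenius factors) as producing quotients of the ``wrong'' degree. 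Your earlier appeal to smoothness of $\Sym^n(\P^1)$ is also circular—smoothness of symmetric powers of curves is usually deduced from exactly the local statement being proven, and fails already for surfaces—though your substitution of normality (a quotient of a normal variety by a finite group is normal) repairs that particular point, since finite $+$ birational onto the normal target $\P^n$ does give an isomorphism.

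The gap is fixable in at least two characteristic-free ways, the second of which you sketch yourself. Geometrically: the pullback of $\sO_{\P^n}(1)$ to $(\P^1)^n$ is $\sO(1,\dots,1)$, whose top self-intersection is $n!$, so the composite $(\P^1)^n\to\P^n$ has degree $n!$; since $(\P^1)^n\to\Sym^n(\P^1)$ already has degree $n!$ (its function-field extension is Galois with group $S_n$), the map $\Sym^n(\P^1)\to\P^n$ has degree $1$, i.e.\ is birational. (Alternatively, check that $(\P^1)^n\to\P^n$ is \'etale over the locus of squarefree forms: the differential at an ordered tuple of distinct roots is invertible by Lagrange interpolation, which works over any field.) Algebraically: writing $(\P^1)^n=\Proj R$ with $R=\bigoplus_d\Gamma(\sO(d,\dots,d))$, one has $\Sym^n(\P^1)=\Proj R^{S_n}$, and $R^{S_n}=k[e_0,\dots,e_n]$ in every characteristic—injectivity is algebraic independence of the $e_k$, and surjectivity follows by dehomogenizing via $t_i=x_i/y_i$, applying the fundamental theorem of symmetric functions (valid over $\Z$) together with the lex-leading-term bound showing that degree $\le d$ in each $t_i$ forces degree $\le d$ in the elementary symmetric functions; or one can simply match Hilbert functions using the orbit-sum basis of $\bigl((\Sym^d k^2)^{\otimes n}\bigr)^{S_n}$. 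One caution on your phrasing: the \emph{full} ring of multisymmetric polynomials $k[x_1,y_1,\dots,x_n,y_n]^{S_n}$ is not a polynomial ring (and is genuinely pathological in characteristic $p$); it is only the diagonal-multidegree subring relevant here that is polynomial, so that restriction is essential, not cosmetic.
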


There is a variation starting with an arbitrary weighted projective line,
which also exhibits a toy version of the calculation for imprimitive groups.

\begin{lem}
  Let $X$ be a weighted projective line with degrees $d_1$, $d_2$.  Then
  $\Sym^n(X)$ is a weighted projective space with degrees $d_1$, $d_2$, and
  (with multiplicity $n-1$) $\lcm(d_1,d_2)$.
\end{lem}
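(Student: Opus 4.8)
The plan is to reduce the statement to the previous lemma (the case of an honest projective line $\P^1$) by passing to a suitable branched cover, and then to track how the weights transform. Let $X$ be the weighted projective line with degrees $d_1$, $d_2$, so that $X=\Proj(k[u,v])$ with $\deg(u)=d_1$, $\deg(v)=d_2$. Writing $m=\lcm(d_1,d_2)$, I would first pass to the Veronese picture: the map $u\mapsto u$, $v\mapsto v$ realizes $X$ as the quotient of an ordinary $\P^1$ (with coordinates $x,y$ of degree $1$) by the $\mu_{m/d_1}\times \mu_{m/d_2}$-action rescaling the two coordinates, or more usefully, $X$ is covered by $\P^1=\Proj(k[x,y])$ via $u=x^{m/d_1}$, $v=y^{m/d_2}$ after a suitable finite quotient. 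The cleanest bookkeeping is to regard the homogeneous coordinate ring of $X$ as the subring of $k[x,y]$ generated by $x^{m/d_1}$ and $y^{m/d_2}$ where now $\deg(x)=\deg(y)=1$; here $X\cong \P^1$ as a variety but carries the nonstandard grading in which $x$ and $y$ have been given weights $d_1$ and $d_2$.

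The main step is then to compute $\Sym^n(X)$ using the preceding lemma applied to the underlying $\P^1$. By that lemma, $\Sym^n(\P^1)\cong \P^n$ with homogeneous coordinates the bihomogeneous forms $e_k(\vec{x},\vec{y})=\sum_{|S|=k}\prod_{i\in S} y_i \prod_{i\notin S} x_i$, each of total degree $n$ in the $2n$ variables. The $S_n$-invariant grading on $\Sym^n(X)$ is inherited from the grading on $X$: each factor contributes a monomial in which some $x_i$ carry weight $d_1$ and the complementary $y_i$ carry weight $d_2$, so that $e_k$ becomes \emph{weighted-homogeneous} of degree $(n-k)d_1 + k\,d_2$. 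Thus $\Sym^n(X)$ is cut out as the image of the map sending a point of $\Sym^n(\P^1)$ to the tuple $(e_0,\dots,e_n)$, but now with $e_k$ assigned degree $(n-k)d_1+k d_2$. The remaining work is to show that this weighted ring of invariants is a \emph{polynomial} ring and to identify its generator degrees with $d_1$, $d_2$, and $m$ (with multiplicity $n-1$).

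The way I would finish is to exploit the compatibility between the $e_k$ and the original coordinates: one has $e_0 = \prod x_i = x_1^{\phantom{}}\cdots$ corresponding to degree $n d_1$, and $e_n=\prod y_i$ of degree $n d_2$, but the true generators of the \emph{weighted} coordinate ring should be $u=x^{d_1}$-type and $v=y^{d_2}$-type invariants together with intermediate invariants whose degrees are forced to be $m$. Concretely, after pulling back to $k[x,y]$ with the nonstandard grading, I would extract from among the $e_k$ (or from products and roots thereof that land in the subring generated by $x^{m/d_1},y^{m/d_2}$) a generating set, show these satisfy the hypotheses of the earlier Proposition (algebraic independence via ``no common zero,'' correct dimension count from the Hilbert-function lemma $\lim d^{-n}\dim R[d]=1/(n!\prod \deg)$), and conclude polynomiality. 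The expected main obstacle is the degree bookkeeping: verifying that the intermediate invariants genuinely have degree $m=\lcm(d_1,d_2)$ rather than some proper divisor, and that there are exactly $n-1$ of them, requires carefully analyzing when a symmetric combination of the $e_k$ descends to the subring generated by $x^{m/d_1}$ and $y^{m/d_2}$. Once the generator degrees $d_1, d_2, m^{(n-1)}$ are established, a dimension count (the product $d_1 d_2 m^{n-1}$ matching $n!\cdot \text{(leading coefficient)}$) together with algebraic independence of the generators (from the no-common-zero criterion) gives the result by the Proposition on polynomial invariants.
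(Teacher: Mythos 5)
Your overall skeleton matches the paper's proof (reduce to the $\Sym^n(\P^1)\cong\P^n$ lemma, take roots of the two extreme invariants, finish with a degree/growth count), but the step you build everything on fails, and the point you defer as ``the expected main obstacle'' is precisely the missing idea. The elements $e_k(\vec x,\vec y)$ for $0<k<n$ are not homogeneous elements of the coordinate ring of $\Sym^n(X)$ at all. A degree-$d$ homogeneous element of that ring is an $S_n$-invariant section of $\sO_X(d)^{\boxtimes n}$, i.e.\ an invariant polynomial whose degree in \emph{each} pair $(x_i,y_i)$ is the same $d$. The monomials of $e_k(\vec x,\vec y)$ have degree $d_2$ in $k$ of the pairs and $d_1$ in the other $n-k$, so for $0<k<n$ (and $d_1\ne d_2$) they are sections of no single $\sO(d)^{\boxtimes n}$; equivalently, $e_k$ does not transform by a character under the rescalings $(x_i,y_i)\mapsto(\lambda_i^{d_1}x_i,\lambda_i^{d_2}y_i)$, so it does not even give a well-defined map on $X^n$, and ``degree $(n-k)d_1+kd_2$'' is not a meaningful grading. (Consistently with this, those degrees do not occur in the statement being proved.) Your grading bookkeeping has related slips: in the relevant grading $\prod_i x_i$ has degree $d_1$, not $nd_1$, since it is a section of $\sO(d_1)^{\boxtimes n}$; and the cover $u=x^{m/d_1}$, $v=y^{m/d_2}$ with $\deg x=\deg y=1$ produces the graded ring of $\P^{[d_2/g,d_1/g]}$, $g=\gcd(d_1,d_2)$, not the graded ring $k[u,v]$, $\deg u=d_1$, $\deg v=d_2$, which is what carries the content here (all weighted projective lines are abstractly $\P^1$, so the lemma \emph{is} the grading).

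The fix --- and this is the paper's proof --- is to apply the $\P^1$ lemma not to $(\vec x,\vec y)$ but to the degree-$m$ Veronese, $m=\lcm(d_1,d_2)$: the $m$-th Veronese of the coordinate ring of $X$ is freely generated by $x^{m/d_1}$ and $y^{m/d_2}$, both of degree $m$, so setting $X_i:=x_i^{m/d_1}$, $Y_i:=y_i^{m/d_2}$, the previous lemma gives generators $e_k(\vec X,\vec Y)$, $0\le k\le n$, of the Veronese of the invariant ring, all of degree exactly $m$. This dissolves your ``main obstacle'' immediately: the intermediate invariants $e_1,\dots,e_{n-1}$ are visibly of degree $m$ (not some proper divisor), and there are exactly $n-1$ of them by construction. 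Then, since $e_0(\vec X,\vec Y)=(\prod_i x_i)^{m/d_1}$ and $e_n(\vec X,\vec Y)=(\prod_i y_i)^{m/d_2}$, one replaces these two by $\prod_i x_i$ and $\prod_i y_i$ (degrees $d_1$ and $d_2$), which still gives a homogeneous system of parameters; the endgame you invoke (algebraic independence via no common zero, the Hilbert-function limit, and integral closedness of the invariant ring) then correctly shows that these $n+1$ elements generate. So your final step is sound, but as written your proposal contains no valid candidates for the $n-1$ generators of degree $m$, and producing them requires the Veronese substitution that you gesture at but never perform.
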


\begin{proof}
  Let $x$, $y$ be the generators, with degrees $d_1$, $d_2$ respectively.
  Then the order $\lcm(d_1,d_2)$ Veronese is freely generated by
  $X:=x^{\lcm(d_1,d_2)/d_1}$ and $Y:=y^{\lcm(d_1,d_2)/d_2}$ and thus the
  invariants of $S_n$ in the Veronese are generated by $n+1$ elements of
  degree $\lcm(d_1,d_2)$.  Replacing the generating invariants
  $\prod_i X_i$ and $\prod_i Y_i$ by $\prod_i x_i$ and $\prod_i y_i$
  respectively gives a new system of parameters for the invariant ring, and
  degree considerations show that those new parameters generate as
  required.
\end{proof}

\begin{rem}
  We may view this as computing the invariant ring of
  $G_n(\mu_{\lcm(d_1,d_2)/d_1}\times \mu_{\lcm(d_1,d_2)/d_2},1)$ on $X^n$.
\end{rem}

\begin{cor}
  For any nontrivial subgroup $G_1\subset \Aut_0(E)$, let
  $\tilde{G}_1:=E^{G_1}\times G_1$.  Then
  \[
  \bigoplus_{d\in \Z} \Gamma(E^n;{\cal L}(d[E^{G_1}])^{\boxtimes
    n})^{G_n(\tilde{G},\tilde{G})} \cong k[X,Y_1,\dots,Y_n],
  \]
  where $\deg(X)=1$, $\deg(Y_1)=\cdots\deg(Y_n)=|G_1|$.
\end{cor}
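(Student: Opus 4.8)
The plan is to recognize that $G_n(\tilde{G}_1,\tilde{G}_1)$ is the full wreath product $\tilde{G}_1^n\rtimes S_n$ (the product-in-$\tilde{H}_1$ condition is vacuous when $\tilde{H}_1=\tilde{G}_1$), so that the short exact sequence quoted above degenerates and $E^n/G_n(\tilde{G}_1,\tilde{G}_1)\cong \Sym^n(E/\tilde{G}_1)$. Concretely, writing $R:=\bigoplus_d \Gamma(E;{\cal L}(d[E^{G_1}]))^{\tilde{G}_1}$ for the rank $1$ invariant ring, the external product together with the factorwise action gives $\Gamma(E^n;{\cal L}(d[E^{G_1}])^{\boxtimes n})^{\tilde{G}_1^n}\cong R_d^{\otimes n}$, and taking $S_n$-invariants yields $\bigoplus_d \Sym^n(R_d)$. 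This is exactly the homogeneous coordinate ring of $\Sym^n(\Proj R)$ with respect to its tautological polarization, so the whole statement reduces to (i) computing $R$ and (ii) invoking the preceding lemma on $\Sym^n$ of a weighted projective line.

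For step (i) I would first dispose of the translations. Since $E^{G_1}$ is the (scheme-theoretic) kernel of the isogeny $\pi\colon E\to E':=E/E^{G_1}$ and is $G_1$-invariant, the pair $(\tilde{G}_1,{\cal L}([E^{G_1}]))$ descends: one has $[E^{G_1}]=\pi^*[0_{E'}]$ as divisors, hence ${\cal L}(d[E^{G_1}])\cong \pi^*{\cal L}(d[0_{E'}])$ equivariantly, and faithfully flat descent along the group-scheme quotient $\pi$ gives $\Gamma(E;{\cal L}(d[E^{G_1}]))^{E^{G_1}}\cong \Gamma(E';{\cal L}(d[0_{E'}]))$. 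The residual action is that of the \emph{pointed} group $G_1\subset \Aut_0(E')$, so $R\cong \bigoplus_d \Gamma(E';{\cal L}(d[0_{E'}]))^{G_1}$. By the rank $1$ proposition established above (with $m=1$) this is $k[x,y]$ with $\deg x=1$ and $\deg y=|G_1|$, where $x$ is the invariant constant section; equivalently $E/\tilde{G}_1\cong \Proj R$ is the weighted projective line $\P^{[1,|G_1|]}$.

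With $R=k[x,y]$ in hand, step (ii) is immediate: the lemma on $\Sym^n$ of a weighted projective line, applied with $d_1=1$ and $d_2=|G_1|$ (so $\lcm(d_1,d_2)=|G_1|$), shows $\Sym^n(\Proj R)$ is the weighted projective space with weights $1$ and $|G_1|$ repeated $n$ times, i.e.\ $\bigoplus_d \Sym^n(R_d)\cong k[X,Y_1,\dots,Y_n]$ with $\deg X=1$ and $\deg Y_j=|G_1|$. To pin down the generators I would follow that lemma's Veronese argument: writing $x_i,y_i$ for the pullbacks of $x,y$ to the $i$-th factor and $m:=|G_1|$, the order $m$ Veronese is freely generated by $x_i^m$ and $y_i$, so by the classical $\Sym^n(\P^1)\cong \P^n$ its $S_n$-invariants are generated by the $e_k(x_1^m,\dots,x_n^m;y_1,\dots,y_n)$; de-Veronizing replaces $e_n=(\prod_i x_i)^m$ by its $m$-th root, giving the explicit generators $X=\prod_i x_i$ (of degree $1$) and $Y_j=e_{j-1}(x_1^m,\dots,x_n^m;y_1,\dots,y_n)$ (of degree $m$) for $1\le j\le n$.

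The only genuinely non-formal step is the rank $1$ computation in (i), and the main care is needed in the inseparable cases (e.g.\ $E^{G_1}=\alpha_2$ in characteristic $2$, or the characteristic $3$ analogue), where $\pi$ is not \'etale: there $[E^{G_1}]$ must be taken with its scheme-theoretic multiplicity, and one must check that descent of sections still holds along the infinitesimal quotient and that the cited rank $1$ proposition applies verbatim to the pointed action of $G_1$ on $E'$. Everything else---the identification of the group as a wreath product, the passage to $\Sym^n$, and the generator bookkeeping---is formal once $R$ is known. I would close by noting that the grading on the invariant ring is $n$ times coarser than the ambient degree in $R^{\otimes n}$, which is precisely why $X=\prod_i x_i$ has degree $1$ rather than $n$.
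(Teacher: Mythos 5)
Your proposal is correct and takes essentially the same route as the paper: the paper's proof likewise begins with descent along $E^{G_1}$ (identifying ${\cal L}(d[E^{G_1}])$ with the pullback of ${\cal L}(d[0])$ from $E/E^{G_1}$), then produces the canonical degree-one invariant together with the homogeneous coordinate ring of $\Sym^n(\P^1)$ in degrees divisible by $|G_1|$, and concludes by swapping one of those generators for the degree-one invariant and citing degree considerations. The only difference is packaging: you run the identical argument modularly, through the rank-one Proposition and the $\Sym^n$-of-a-weighted-projective-line Lemma as black boxes (whose proof contains precisely the Veronese/generator-swap step the paper inlines), and your explicit care about inseparable descent and the characteristic-independence of $\Sym^n(\P^1)\cong\P^n$ is exactly what the paper's terser proof implicitly relies on.
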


\begin{proof}
  We first observe that the pullback of ${\cal L}([0])$ under the
  quotient by $E^{G_1}$ is ${\cal L}([E^{G_1}])$, and thus taking
  $(E^{G_1})^n$-invariants reduces to computing
  \[
  \bigoplus_{d\in \Z} \Gamma(E^n;{\cal L}(d[0])^{\boxtimes n})^{G_n(G_1,G_1)}.
  \]
  We again have a canonical invariant of degree $1$, while in degrees a
  multiple of $|G_1|$, we get the homogeneous coordinate ring of
  $\Sym^n(\P^1)$, and thus have independent generators $z_0,\dots,z_n$ of
  degree $|G_1|$.  Again, we may replace one of those generators by $x$ to
  obtain a sequence that by degree considerations must generate the
  invariant ring.
\end{proof}

\begin{rem}
  Note that if $x,y$ are our chosen generators for the univariate quotient,
  then the generators for the multivariate case may be taken to be
  $x_1\cdots x_n$ and $e_k(\vec{x}^{|G_1|},\vec{y})$ for $1\le k\le n$.
  In particular, we see that it is straightforward to compute these
  invariants!
\end{rem}

Of course, the quotient {\em scheme} here is still isomorphic to $\P^n$,
and the stack we get is a very simple $\mu_{|G_1|}$-cover of $\P^n$ (i.e.,
adjoining a root of one of the homogeneous coordinates).  It will thus be
convenient to replace $\tilde{G}_1$ by the even larger group
\[
\hat{G}_1:=\mu_{|G_1|}\times E^{G_1}\times G_1,
\]
so that $G_n(\hat{G}_1,\hat{G}_1)$ still acts on the cone but has invariant
ring a free polynomial ring in generators of degree $|G_1|$.  One
convenient aspect of this is that different choices of lift of
$\tilde{H}_1$ to $\hat{G}_1$ correspond to different equivariant
structures, making it easier to determine which equivariant structures give
well-behaved quotients (and in particular to verify that the theorem holds
for strongly crystallographic groups).  Conveniently, the subgroup
$\mu_{|G_1|}^{n-1}$ acts trivially (since the $\G_m^{n-1}$ already acts
trivially) and thus the invariants of $G_n(\hat{G}_1,\hat{G}'_1)$ are the
same as those of $G_n(\tilde{G}_1,\tilde{G}'_1)$, but now viewed as a
$\tilde{G}_1/\tilde{G}'_1$-cover of $\P^n$.

The key observation is that the cone over $E$ is itself a
$\hat{G}_1/\hat{G}'_1$ cover of $\P^1$, which in most cases makes it easy
to write down the requisite additional invariants.  Indeed, in good
characteristic, the group $\hat{G}_1/\hat{G}'_1$ is multiplicative, and
thus the cover is a Kummer extension as a field, and clearing the poles
of the generating $l$-th roots gives the desired invariants.

\begin{eg}
  Let $G_1=[\mu_2]$ and let $E$ be a curve not of characteristic 2.  Since
  we need to include $2$-torsion, we take $E$ of the form
  $y^2=x(x-1)(x-\lambda)$ with $\lambda\notin\{0,1,\infty\}$.  (Here, of
  course, $\lambda$ is the usual modular function for elliptic curves with
  full level $2$ structure.)  The group $\hat{G}_1/\hat{G}'_1$ is
  elementary abelian of order $16$, so (since $2$ is invertible) may be
  taken to be $\mu_2^4$.  Moreover, the action of $\mu_2^4$ on global
  sections of ${\cal L}(E[2])$ is faithful, and thus there is a basis of
  the latter on which $\mu_2^4$ acts as the obvious diagonal group.  If
  that basis is $t_1,\dots,t_4$, then the invariants are generated by
  $t_1^2,\dots,t_4^2$, which must all be linear combinations of the
  homogeneous coordinates $T$, $U$ on $\P^1$.  We find that we may make a
  change of basis so that
  \begin{align}
  x_1^2 &= U\notag\\
  x_2^2 &= T\notag\\
  x_3^2 &= T-U\notag\\
  x_4^2 &= T-\lambda U.
  \end{align}
  We then find that for $1\le j\le 4$, the products
  \[
  X_j:=\prod_i x_j^{(i)}
  \]
  are invariants for $G_n(\mu_2^4,1)$, satisfying
  \begin{align}
  X_1^2 &= \prod_i U_i\notag\\
  X_2^2 &= \prod_i T_i\notag\\
  X_3^2 &= \prod_i (T_i-U_i)\notag\\
  X_4^2 &= \prod_i (T_i-\lambda U_i).
  \end{align}
  For $n\ge 3$, the elements
  $X_1^2,\dots,X_4^2$ are linearly independent and thus we may extend them
  to a basis on $\Sym^n(\P^1)$.  This remains a system of parameters if we
  replace them by $X_1,\dots,X_4$, and thus the $G_n(\mu_2^4,1)$-invariants
  are a polynomial ring with degrees $1^4 2^{n-3}$ (i.e., four generators
  of degree 1 and $n-3$ of degree 2).  More generally, for any subset
  $S\subset \{1,\dots,4\}$, the elements $X_i$ for $i\notin S$ remain
  invariant under $G_n(\mu_2^4,\mu_2^S)$, and again we get polynomial
  invariants, with degrees $1^{4-|S|} 2^{n-3+|S|}$ (valid as long as $n\ge
  |S|-3$).
\end{eg}

\begin{rem}
  The four ``coordinate'' copies of $\mu_2$ inside $\mu_2^4$ are (strong
  reflection) preimages of the four reflections inside $E[2]\rtimes
  [\mu_2]$, and thus this shows that the main theorem holds for the
  infinite families of Coxeter groups other than $A_n$.  (See also
  \cite{KacVG/PetersonDH:1984} for the characteristic 0 case).
\end{rem}

\begin{eg}
  For the curve of $j$-invariant $0$ in characteristic not 3, the group
  associated to $[\mu_3]$ may be identified (over $k[\zeta_3]$) with
  $\mu_3^3$, giving a model for $E$ as
  \[
  x_1^3=T,\ x_2^3=U,\ x_3^3=T-U.
  \]
  We find that the invariants for $G_n(\mu_3^3,1)$ are generated over
  $\Sym^n(\P^1)$ by products satisfying
  \begin{align}
  X_1^3 &= \prod_i U_i\notag\\
  X_2^3 &= \prod_i T_i\notag\\
  X_3^3 &= \prod_i (T_i-U_i),
  \end{align}  
  and the right-hand sides are linearly independent so long as $n\ge 2$.
  We thus obtain a polynomial ring with degrees $1^3 3^{n-2}$ or, for any
  subset $S\subset \{1,\dots,3\}$ a polynomial ring with degrees $1^{3-|S|}
  3^{n-2+|S|}$.  Again, the three coordinate copies of $\mu_3$ lie over the
  six reflections in $\tilde{G}_1$.
\end{eg}

\begin{eg}
  For the curve of $j$-invariant 1728 in characteristic not 2, the group
  associated to $[\mu_4]$ may be identified with $\mu_4^2\times \mu_2$,
  giving a model for $E$ as
  \[
  x_1^4=T,\ x_2^4=U,\ y^2 = T-U,
  \]
  (where now $\deg(y)=2$).  We again find that for $n\ge 2$ and any
  sequence $d_1|4$, $d_2|4$, $d_3|2$, the invariants of $G_n(\mu_4^2\times
  \mu_2,\mu_{d_1}\times \mu_{d_2}\times \mu_{d_3})$ are a polynomial ring
  with degrees $d_1,d_2,2d_3,4^{n-2}$.
\end{eg}

\begin{eg}
  For the curve of $j$-invariant $0$ in characteristic not 2 or 3, the
  group associated to $[\mu_6]$ may be identified with $\mu_2\times
  \mu_3\times \mu_6$ with model
  \[
  z^2=T,\ y^3=U,\ x^6=T-U,
  \]
  and for $d_2|2$, $d_3|3$, $d_6|6$ and $n\ge 2$ we get a quotient with
  polynomial invariants of degrees $d_6$, $2d_3$, $3d_2$ and $6^{n-2}$.
\end{eg}

This settles the infinite families in the complex case, at least in large
characteristic.  Before considering the quaternionic cases, we should clean
up the complex case by dealing with the missing finite characteristic
cases.

\begin{eg}
  Consider the characteristic 2 version of a $[\mu_2]$ case.  The lift to
  characteristic 0 is generated over the $\P^n$ of
  $G_n(E[2]\times[\mu_2],E[2]\times[\mu_2])$-invariants by $0\le m\le 4$
  invariants of degree 1, and one can choose a basis so that each invariant
  vanishes precisely on those points in $E^n$ such that one of the
  coordinates is in a corresponding coset of $E[2]$ in $E[4]$.  The basis
  elements all have the same (up to scalars) reduction mod 2, but suitable
  linear combinations will have linearly independent reductions.  On the
  other hand, the ideal in $E^n$ over characteristic 0 cut out by those $m$
  invariants (i.e., that each of the $m$ cosets appears as one of the
  coordinates) also has a limit (the ideal of the fiber of the Zariski
  closure) and since every degree 1 element of the ideal is invariant, this
  persists in the limiting ideal.  We thus see that the degree 1 invariants
  in characteristic 2 cut out the Zariski closure, i.e., the condition that
  the corresponding divisor in $E/E[2]\cong E$ contains the given subscheme
  of $E[4]/E[2]\cong E[2]$.  This is a codimension $m$ linear condition on
  $\P^n$ (i.e., that the corresponding polynomial have a given factor), and
  thus we may choose $n+1-m$ additional degree 2 invariants to make the
  overall intersection trivial as required.  We thus see that we again have
  degrees $1^m2^{n+1-m}$.
\end{eg}

\begin{rem}
  Note that we have {\em fewer} cases in characteristic 2, as the cases
  correspond to choices of subschemes of $E[2]$.  (We readily verify that
  each subscheme of $E[2]$ in characteristic 2 is the limit of a subcheme
  in characteristic 0.)  For an ordinary curve, $E[2]$ consists of two
  double points, and thus we have a total of 9 subschemes.  In terms of the
  group $H_1$, there is a natural splitting $G_1\cong (\mu_2\times
  \Z/2\Z)^2$ and $H_1$ corresponds to choosing one of $1$, $\Z/2\Z$ or
  $\mu_2\times \Z/2\Z$ for each of the two factors.  For a supersingular
  curve, $E[2]$ is a quadruple point and there are $5$ choices of
  subscheme, corresponding to the groups
  \[
  1\subset [\mu_2]\subset \alpha_2\times [\mu_2]\subset \alpha_4\times
  [\mu_2]\subset \mu_2\times \alpha_4\times [\mu_2].
  \]
  In either case, we still have at least one subscheme of each size, so we
  get the same set of possible degrees of invariants.
\end{rem}

\begin{eg}
  The same argument applies to the $\mu_3$ cases in characteristic 3,
  replacing $E[4]/E[2]$ by $E[3]/E[\sqrt{-3}]$, and thus
  for $H_1$ in the list
  \[
  1\subset [\mu_3]\subset \alpha_3\times [\mu_3]\subset \mu_3\times
  \alpha_3\times [\mu_3],
  \]
  the quotient is a polynomial ring with degrees $1^m 3^{n+1-m}$,
  where $|H_1|=3^{3-m}$.
\end{eg}

\begin{eg}
  For $\mu_4$, the bad characteristic is 2.  Here there is a slight
  additional complication in that $G_n(E[1-i]\times [\mu_4],1)$ also has a
  degree 2 invariant in characteristic 0.  This is not a problem if both
  degree 1 invariants appear, because it is then still true that the degree
  2 elements of the relevant ideal are all invariant (since the degree 1
  invariants are then all sections of the line bundle).  The main
  consequence is that when lifting $H_1$ to characteristic 2, we must
  choose the lift in such a way as to first eliminate the degree 2
  invariant.  In particular, the case $H_1=[\mu_2]$ lifts to the {\em
    torsor} action, not the action fixing the identity!  This can be done
  for every subgroup of $\alpha_2\times [\mu_4]$ other than $[\mu_4]$,
  showing that the main theorem applies in those cases.  For the remaining
  case, we observe that in characteristic 0, the quotient of $E^n$ by
  $G_n(E[1-i]\times [\mu_4],[\mu_4])$ may be viewed as the quotient of
  $(E/[\mu_4])^n$ by $G_n(E[1-i],1)$, which puts enough constraints on the
  ideal to show that its limit is still generated by invariants as
  required.
\end{eg}

\begin{eg}
  For $\mu_6$ in characteristic 3, we first note that the quotient of $E^n$
  by $G_n(\mu_6\times [\mu_6],[\mu_3])$ may be expressed as the quotient of
  $(E/[\mu_3])^n$ by $G_n(\mu_6\times \mu_2,1)$, and thus the same argument
  we used in characteristic 0 shows that we obtain a weighted projective
  space with degrees $d_1,3d_2,6,\dots,6$.  Similarly, the usual argument
  shows that $G_n(\mu_6\times [\mu_6],\mu_2\times [\mu_2])$ continues to
  have polynomial invariants (of degrees $2,2,6,\dots,6$) in characteristic
  3.  If $\delta_1$ is the (unique up to scalars) section of degree 1
  vanishing when one of the coordinates is the identity and $\delta_3$ is
  the section of degree 3 vanishing when one of the coordinates is a
  nontrivial $2$-torsion point, then $\delta_1$ and $\delta_3$ are
  eigeninvariants for $G_n(\mu_6\times [\mu_6],\mu_2\times[\mu_2])$.
  Moreover, by considering the behavior near associated points of the
  respective divisors, we find that any nontrivial eigensection is a
  multiple of $\delta_1$, $\delta_3$, or $\delta_1\delta_3$ as appropriate.
  This tells us that the invariants for $G_n(\mu_6\times [\mu_6],1)$ are
  flat and it remains only to observe that $\delta_1^2$ and $\delta_3^2$
  are independent invariants to see that for $H\in \{1,\mu_2,[\mu_2]\}$ the
  invariant ring remains polynomial.
\end{eg}

\begin{rem}
  Note that again only one of the two lifts of $[\mu_3]$ to characteristic
  0 gives flat invariants.
\end{rem}

\begin{eg}
  For the reduction of $\mu_6$ to characteristic 2, the argument is
  analogous; $G_n(\mu_6\times[\mu_6],[\mu_2])$ involves a multiplicative
  group acting on $(\P^{[1,2]})^n$, $G_n(\mu_6\times [\mu_6],\mu_3\times
  [\mu_3])$ has polynomial invariants by the usual argument, and the ring
  of $G_n(\mu_6\times [\mu_6],1)$-invariants adjoins cube roots of
  independent invariants (to give $\delta_1$ vanishing when one of the
  coordinates is the identity and $\delta_2$ vanishing when one of the
  coordinates is a nontrivial $\sqrt{-3}$-torsion point).
\end{eg}

The reduction to multiplicative actions on weighted projective lines also
works for two of the quaternionic cases.

\begin{eg}
  Let $E$ be a supersingular curve of characteristic 3 and consider the
  quotient of $E^n$ by $G_n(\mu_{12}\times \Aut(E),H_1)$ with
  $\Aut(E)'\subset H_1$.  We may equivalently view this as the quotient by
  $G_n(\mu_{12}\times Z_4,H_1/\Aut(E)')$ of $(E/\Aut(E)')^n$,
  or equivalently as the quotient by $G_n(\mu_{12}\times \mu_4,\hat{H}_1)$
  of $(\P^{[1,3]})^n$, which by the usual argument gives polynomial
  invariants of degrees $d_1,3d_2,12,\dots,12$.
\end{eg}

\begin{eg}
  Let $E$ be a supersingular curve of characteristic 2 and consider the
  quotient of $E^n$ by $G_n(\mu_{24}\times \Aut(E),H_1)$ with
  $\Aut(E)'\subset H_1$.  We may equivalently view this as the quotient by
  $G_n(\mu_{24}\times Z_3,H_1/\Aut(E)')$ of $(E/\Aut(E)')^n$,
  or equivalently as the quotient by $G_n(\mu_{24}\times \mu_3,\hat{H}_1)$
  of $(\P^{[1,8]})^n$, which by the usual argument gives polynomial
  invariants of degrees $d_1,8d_2,24,\dots,24$.
\end{eg}

It remains only to consider the characteristic 2 cases with
$G_1=\mu_8\times \alpha_2\times Q_8$.  Here the situation is somewhat
trickier, in that the quotient by $G_1'\cong [\mu_2]$ is (unweighted)
$\P^1$ but the residual action is no longer multiplicative.  We can still
deal with this using the following fact (which also gives direct arguments
for some of the cases above).

\begin{prop}
  Let $G_1\subset \G_a$ be a finite subgroup scheme of order $p^l$ in
  characteristic $p$ and let it act on $\P^{[1,d]}$ by $x\mapsto x+t w^d$
  where $x$ is the generator of degree $d$.  Then $G_n(G_1,1)$ has
  polynomial invariants on $(\P^{[1,d]})^n$, with degrees $1,d,p^l
  d,\dots,p^l d$.
\end{prop}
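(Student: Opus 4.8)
The plan is to run the additive analogue of the multiplicative computations for $\mu_\ell$-type groups carried out earlier in this subsection, replacing the Kummer cover $\P^1\to\P^1$, $z\mapsto z^\ell$, by the additive (Artin--Schreier-type) cover furnished by the quotient isogeny $\G_a\to\G_a$ with kernel $G_1$, and replacing norms $\prod_i x_i$ by traces $\sum_i x_i$.

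\textbf{Rank one.} First I would record the quotient $\P^{[1,d]}/G_1$. With $k[w,x]$ the coordinate ring ($\deg w=1$, $\deg x=d$) and $u:=x/w^d$, the action reads $u\mapsto u+t$ for $t\in G_1$, so the quotient is that of $\G_a$ by the translation subgroup $G_1$. Let $\psi(T)=\sum_{0\le i\le l}c_iT^{p^i}$ (with $c_l=1$) be the additive polynomial with $\ker\psi=G_1$, so that $\psi\colon\G_a\to\G_a$ is the quotient isogeny. Then $y:=w^{dp^l}\psi(u)=\sum_i c_i x^{p^i}w^{d(p^l-p^i)}$ is a $G_1$-invariant of degree $dp^l$ (as $\psi(u+t)=\psi(u)$), monic of degree $p^l$ in $x$ over $k[w,y]$; since $\psi$ is the quotient isogeny we get $k[w,x]^{G_1}=k[w,y]$ and $\P^{[1,d]}/G_1\cong\P^{[1,dp^l]}$. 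Hence the quotient by the full group $G_n(G_1,G_1)=G_1^n\rtimes S_n$ is $\Sym^n(\P^{[1,dp^l]})$, which by the weighted-$\Sym^n$ lemma above is the weighted projective space with degrees $1,dp^l,\dots,dp^l$; concretely its coordinate ring is generated by the $e_k(\vec W,\vec y)$, $0\le k\le n$, where $W_i:=w_i^{dp^l}$.

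\textbf{The extra invariant.} Since $G_n(G_1,1)\normal G_1^n\rtimes S_n$ has quotient $G_1\subset\G_a$ (via the sum of coordinates), the ring $R^{G_n(G_1,1)}$ is a $G_1$-cover of the base. The additive analogue of a norm is the trace $s:=\sum_i u_i=\sum_i x_i/w_i^d$, which is fixed by the sum-zero subgroup of $G_1^n$ (because $\sum_i t_i=0$) and by $S_n$; its image $\psi(s)=\sum_i y_i/W_i$ already lies in the base, exhibiting the cover. Clearing denominators yields the genuine invariant $\sigma:=\sum_i x_i\prod_{j\ne i}w_j^d=(\prod_j w_j)^d\,s$, of balanced multidegree $(d,\dots,d)$, i.e.\ of degree $d$. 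I then propose the generators $g_0:=\prod_i w_i$ (degree $1$), $\sigma$ (degree $d$), and $e_2,\dots,e_n$ (degree $dp^l$); the deliberate point is to retain the top function $e_n=\prod_i y_i$ and to discard $e_1$.

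\textbf{No common zero, and conclusion.} The main work, and the step I expect to be the most delicate, is to check that these $n+1$ invariants have no common zero on $(\P^{[1,d]})^n$. I would stratify by the number $m$ of coordinates lying ``at infinity'' (those with $w_i=0$, which forces $x_i\ne0$, hence $W_i=0$ and $y_i=x_i^{p^l}\ne0$). Expanding $\prod_i(W_i+ty_i)$ shows $e_k=0$ for $k<m$ while $e_m=(\prod_{\mathrm{inf}}y_i)(\prod_{\mathrm{fin}}W_i)\ne0$; moreover $g_0\ne0$ exactly when $m=0$, and $\sigma=x_{i_0}\prod_{j\ne i_0}w_j^d\ne0$ when $m=1$ while $\sigma=0$ once $m\ge2$ (each term then loses one of the $\ge2$ vanishing $w$'s). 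Thus the stratum $m=0$ is covered by $g_0$, the stratum $m=1$ by $\sigma$, and each stratum $2\le m\le n$ by $e_m$ --- which is exactly why $e_n$ must be kept, since at the totally degenerate point $m=n$ every one of $g_0,\sigma,e_2,\dots,e_{n-1}$ vanishes and only $e_n$ survives. Granting this, the lemmas of Subsection 2.2 make the $n+1$ generators algebraically independent with finite-dimensional quotient ring; normality of $(\P^{[1,d]})^n$ and of its quotient by the finite group scheme $G_n(G_1,1)$ makes $R^{G_n(G_1,1)}$ integrally closed; and the dimension count of Subsection 2.2 (which divides by $|G_n(G_1,1)|=p^{l(n-1)}n!$) gives $\lim_{m\to\infty}m^{-n}\dim R^{G_n(G_1,1)}_m=\frac{1}{d^n\,p^{l(n-1)}\,n!}=\frac{1}{n!\,d\,(dp^l)^{n-1}}$, matching the product of the proposed degrees. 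The integrally-closed-plus-growth-rate lemma then yields $R^{G_n(G_1,1)}=k[g_0,\sigma,e_2,\dots,e_n]$, a polynomial ring with degrees $1,d,p^ld,\dots,p^ld$, as claimed.
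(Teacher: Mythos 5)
Your construction coincides with the paper's: the rank-one Artin--Schreier invariant $y=w^{dp^l}\psi(x/w^d)$, the invariants $\prod_i w_i$ of degree $1$ and $(\prod_i w_i^d)\sum_i x_i/w_i^d$ of degree $d$, and completion by symmetric functions of degree $p^ld$. Your stratification by the number $m$ of coordinates at infinity is a correct (indeed more explicit) version of the paper's observation that on the common zero locus of the first two invariants the corresponding divisor in $\Sym^n(\P^{[p^ld,p^ld]})$ contains $w=0$ with multiplicity $2$, so that $n-1$ further coordinates suffice; up to that point your argument is fine.

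The gap is in the final step, where you invoke ``the dimension count of Subsection 2.2'' to get $\lim_m m^{-n}\dim R^{G_n(G_1,1)}_m = 1/(n!\,d\,(dp^l)^{n-1})$. That lemma is proved only for abelian varieties, and the principle you are actually using---that invariants of a faithful action of a finite group scheme of order $N$ have growth $1/N$ times the ambient growth---is \emph{false} in exactly the present setting: $G_1$, and hence $G_n(G_1,1)$, is in general nonreduced, and the paper's remark immediately following this proposition (echoing the caution after the dimension lemma in Subsection 2.2) gives counterexamples of precisely the wreath form $G_n(G_1,1)$ acting on $(\P^1)^n$, namely the Frobenius kernel of $\PGL_2$ and $\mu_2.\alpha_2^2$, for which the quotient degree is strictly larger than expected. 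So the equality you need---equivalently, that $X\to X/G_n(G_1,1)$ has generic degree exactly $|G_n(G_1,1)|$---must be argued from the specific structure of $G_1\subset\G_a$, not quoted. The paper closes this by a sandwich: semicontinuity gives $\deg(X)/|G_n(G_1,1)|\le\deg(X/G_n(G_1,1))$, while quotienting further by $(G_1^n\rtimes S_n)/G_n(G_1,1)\cong G_1$ gives $\deg(X/G_n(G_1,1))\le|G_1|\,\deg(\Sym^n(\P^{[1,d]}/G_1))$, and the two bounds agree precisely because $\P^{[1,d]}/G_1$ has the expected degree---which is your own rank-one computation $k[w,x]^{G_1}=k[w,y]$. (Alternatively, one can note that translation actions of subgroup schemes of $\G_a$ are scheme-theoretically free, so $G_n(G_1,1)$ acts generically freely.) With that replacement your proof is complete and is essentially the paper's.
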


\begin{proof}
  A finite subgroup scheme of $\G_a$ is cut out by a ``linear'' equation of
  the form $q(t):=\sum_{0\le i\le l} c_l t^{p^l}=0$ (with $c_l=1$), and one
  finds that the $G_1$-invariants in $\P^{[1,d]}$ are generated by $w$ and
  $w^{p^l d}q(x/w^d)$.  The function $\sum_i (x_i/w_i^d)$ on
  $(\P^{[1,d]})^n$ is invariant under the sum zero subscheme of $\G_a$, so
  certainly under the sum zero subscheme of $G_1^n$, and thus we have
  invariants $\prod_i w_i$ of degree 1 and $(\prod_i w_i^d)\sum_i
  (x_i/w_i^d)$ of degree $d$.  When these invariants vanish, the
  corresponding divisor in $(\P^{[1,d]}/\mu_{p^ld}\times G_1)\cong
  \P^{[p^ld,p^ld]}$ contains the point $w=0$ with multiplicity 2, and thus
  we may choose $n-1$ further invariants of degree $p^l d$ to give a
  generating set of the appropriate degree.
\end{proof}

\begin{rem}
  There is an important technical point here: there are (nonreduced)
  subgroup schemes $G_1\subset \Aut(\P^1)$ for which the quotient by
  $G_n(G_1,1)$ does not have the degree one expects (e.g., the kernel of
  the Frobenius homomorphism $\PGL_2\to \PGL_2$).  In general,
  semicontinuity ensures that the quotient by a group scheme always has
  {\em at least} the degree one expects, with equality guaranteed in the
  case of a reduced group scheme.  However, when the quotient $\P^1/G_1$
  has the expected degree, the quotient by $G_n(G_1,1)$ will also have the
  expected degree, as we have
  \[
  \deg((\P^1)^n)/|G_n(G_1,1)|\le
  \deg((\P^1)^n/G_n(G_1,1))\le
  |G_1|\deg((\P^1/G_1)^n/S_n)
  \]
  and the two bounds agree.
\end{rem}

\begin{rem}
  In addition to the multiplicative and additive cases, there is one more
  type of abelian subgroup of $\PGL_2$, namely the Heisenberg group of an
  abelian variety with degree 2 polarization.  This is a priori trickier to
  deal with since the subgroup of $\GL_2$ has nontrivial derived subgroup
  $\mu_2$, so that one is directly led to consider a group acting on a
  conic rather than on $\P^1$ itself (and one must assume $n>1$ since the
  coordinate ring of a conic is not polynomial).  However, for the cases
  coming from elliptic curves, we have already shown that the quotient has
  polynomial invariants, as we may identify it with the quotient by
  $G_n(E[2]\times [\mu_2],[\mu_2])$.  The remaining case $\alpha_2^2$ is
  ill-behaved, as the degree 2 Frobenius morphism $\P^1\to \P^1$ factors
  through the quotient by $\alpha_2^2$.  The invariants are still
  polynomial in that case (at least for $n>2$), but the degree is larger
  than expected and every $G_n(\mu_2.\alpha_2^2,1)$-invariant is actually
  $G_n(\mu_2.\alpha_2^2,\mu_2.\alpha_2^2)$-invariant.  We omit the details.
\end{rem}  

\begin{rem}
  One can use this idea to iteratively find explicit invariants in
  the cases above.  The idea is to choose an additive subgroup (e.g.,
  $[\mu_2]$ in characteristic 2), use the Proposition to determine the
  additional invariant required for that sum-zero subgroup, and then
  modify it by adding additional (invariant) terms to make it invariant
  under the rest of the group.  The result will then be a cover of
  $(\P^{[1,pd]})^n$ by an element invariant under the full group, allowing
  us to induct to a larger group with additive quotient.  The resulting
  expressions can be fairly complicated, however.
\end{rem}

\begin{eg}
  If $G_1=\alpha_2\times Q_8$ and $H_1$ is one of
  \[
  [\mu_2],[\mu_4],\alpha_2\times [\mu_2],[Q_8],\alpha_2\times
  [\mu_4],\alpha_2\times [\mu_8],
  \]
  then $E/H_1$ is a weighted projective line with degrees $1$ and $|H_1|/2$
  and the residual action of $G_1/H_1$ is as a subgroup of $\G_a$,
  and thus the proposition applies to give invariants of degrees
  $1,|H_1|/2,8,\dots,8$.  It is then clear that we will continue to have
  polynomial invariants if we include a factor of $\mu_d$ where $d$ divides
  $|H_1|/2$.
\end{eg}

\subsection{Invariants in rank 2}

From the classification of imprimitive reflection groups in rank 2, one
immediate conclusion is that every such group contains rank 1 reflections.
It follows that the quotient by $H^2$ is isomorphic to $\P^1\times \P^1$
(possibly with nontrivial weighting), and thus we may reduce to considering
reflection groups inside $\Aut(\P^1)^2\rtimes S_2$, or more precisely those
groups generated by {\em rank 2} reflections.  The preimage of this group
inside $\SL_2^2\rtimes S_2$ acts naturally on $\Gamma(\P^1\times
\P^1;\sO(1)\boxtimes \sO(1))$.  The corresponding embedding $\P^1\times
\P^1\to \P^3$ identifies $\P^1\times \P^1$ with a smooth quadric, and thus
we see that every element of $\SL_2^2\rtimes S_2$ preserves this quadric.
It is, in fact, easy to see that it preserves the equation on the nose (it
suffices to check that $S_2$ does so) and thus this gives a (well-known)
map
\[
\SL_2^2\rtimes S_2\to \GO_4
\]
(with kernel the diagonal copy of $\mu_2$ inside $\SL_2^2$).  We may then
(at least in good characteristic) compute the invariant ring on $\P^1\times
\P^1$ by computing the invariants of the corresponding subgroup of $\GO_4$
and then quotienting by the equation of the quadric.

\begin{lem}
  An element of $\SL_2^2\rtimes S_2$ is a rank 2 reflection iff its image
  in $\GO_4$ is a reflection.
\end{lem}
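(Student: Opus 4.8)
The plan is to make the identification sketched in the text fully explicit and then read off both conditions from the eigenstructure on the four-dimensional space $W:=\Gamma(\P^1\times\P^1;\sO(1)\boxtimes\sO(1))$. I would identify $W$ with $\Mat_2$ so that the Segre quadric becomes $Q(M)=\det(M)$; with a suitable choice of bases the element $(g,h)\in\SL_2^2$ acts by $M\mapsto gMh^T$ and the nontrivial element $\sigma\in S_2$ acts by $M\mapsto M^T$, both visibly preserving $Q$. One checks $\sigma(g,h)\sigma=(h,g)$ at the level of these operators, so this is an honest action of $\SL_2^2\rtimes S_2$, and since $gMh^T=M$ for all $M$ forces $g=h=\pm I$, the kernel of the resulting map to $\GO_4=\GO(W)$ is exactly the diagonal $\mu_2=\{\pm(I,I)\}$, as asserted in the text. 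I would then treat the two cosets of $\SL_2^2$ separately.

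On the non-swap coset the statement is essentially free. The group $\SL_2^2$ is connected, so its image lies in the identity component $\SO(W)$ and every $(g,h)$ has determinant $+1$ on $W$; as reflections have determinant $-1$, no such element maps to a reflection. On the other hand no element of $\SL_2^2$ is a rank 2 reflection, since rank 2 reflections interchange the two factors and hence lie in the $\sigma$-coset. Thus both sides of the claimed equivalence are false here.

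The content is on the swap coset. Write $\phi=(a,b)\sigma$, so that $\phi$ acts on $W$ by $M\mapsto aM^Tb^T$ and $\phi^2=(ab,ba)$. A reflection is an involution, so if the image of $\phi$ is a reflection then $\phi^2$ lies in the kernel $\mu_2$, forcing $ba=\pm I$. I would then compute the fixed space directly in the two cases. When $b=a^{-1}$ the equation $aM^Ta^{-T}=M$ says precisely that $aM^T$ is symmetric, and since $M\mapsto aM^T$ is invertible the fixed space is the $3$-dimensional space $\Sym$ of symmetric matrices; as $\phi^2=\mathrm{id}$ this makes the image a genuine reflection. When $b=-a^{-1}$ the same computation gives $aM^T$ antisymmetric, a $1$-dimensional condition, so the image is an involution with a $3$-dimensional $(-1)$-eigenspace — the negative of a reflection, and in particular not a reflection. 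Since the rank 2 reflections are exactly the elements $(g,g^{-1})\sigma$ whose two $\SL_2$-components are genuine inverses (the $ba=I$ case), this establishes the equivalence on the swap coset.

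The main obstacle — and the only real content — is this last $\pm$ dichotomy. The elements $(g,-g^{-1})\sigma$ act on $\P^1\times\P^1$ exactly as the rank 2 reflection $(g,g^{-1})\sigma$ does (since $-I$ acts trivially on $\P^1$), so geometrically they look like reflections; what the fixed-space computation shows is that, at the linear level of $W$, they map to $-1$ times a reflection rather than to a reflection. Keeping the bookkeeping of lifts straight — i.e. remembering that a rank 2 reflection is the specific lift with inverse components, not merely any swap-coset involution — is exactly what makes the equivalence come out clean, and is the step I would be most careful about.
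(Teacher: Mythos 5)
Your proof is correct, but it takes a genuinely more explicit route than the paper's. The paper disposes of the lemma in one line: the rank $2$ reflections form a single conjugacy class in $\SL_2^2\rtimes S_2$ (the conjugates of $(12)$), and conjugation preserves reflections in $\GO_4$, so the forward implication reduces to checking $(12)$ itself; the converse is left implicit (it follows because reflections in $\GO_4$ are conjugate under the image of the map, and both preimages of the image of $(12)$, namely $(12)$ and $(-I,-I)(12)=\bigl((-I),(-I)^{-1}\bigr)(12)$, are again rank $2$ reflections). What your computation in the model $W\cong \Mat_2$, $Q=\det$ buys is exactly the two points the paper glosses over or defers: you prove the converse honestly (a reflection is an involution, so $\phi^2=(ab,ba)$ must lie in the kernel, forcing $b=\pm a^{-1}$, and the fixed-space dimension $3$ versus $1$ then separates the two cases), and you establish en route the $\pm$ dichotomy --- that $(a,-a^{-1})\sigma$ maps to the \emph{negative} of a reflection --- which the paper only invokes in the \emph{following} lemma (``the negative of a reflection is not a reflection'') to get uniqueness of reflection lifts from $\PSL_2^2\rtimes S_2$. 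Your closing paragraph also correctly identifies the one place where the statement could be misread: the rank $2$ reflections must be taken to be the specific elements $(g,g^{-1})\sigma$, not arbitrary elements of $\SL_2^2\rtimes S_2$ inducing a reflection of $\P^1\times\P^1$, since $(g,-g^{-1})\sigma$ induces the same automorphism of $\P^1\times\P^1$ yet maps to minus a reflection; this is consistent with the paper, whose conjugacy-class description of the rank $2$ reflections yields precisely the set $\{(g,g^{-1})\sigma\}$. The only caveat is that your determinant and eigenvalue bookkeeping ($\det=-1$, eigenvalues $(1,1,1,-1)$) is specific to odd characteristic, whereas the paper's conjugacy observation is characteristic-free; but the remark immediately following the lemma shows the surrounding discussion is in any case sensitive to characteristic $2$, and the identity-component half of your coset argument survives there.
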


\begin{proof}
  The rank 2 reflections are all conjugate to $(12)$, so it suffices to
  observe that $(12)$ maps to a reflection in $\GO_4$.
\end{proof}

\begin{rem}
  To be precise, in odd characteristic, there are two maps $\SL_2^2\rtimes
  S_2\to \GO_4$ as we can multiply by the map $S_2\cong Z(\GO_4)\cong \mu_2$,
  and the claim fails for the other such map.
\end{rem}

There is also a map $\GO_4\to \PSL_2^2\rtimes S_2$ coming from the fact that
the kernel of $\SL_2^2\rtimes S_2\to \PSL_2^2\rtimes S_2$ contains the
kernel of the map to $\GO_4$.

\begin{lem}
  The preimage of a rank 2 reflection in $\PSL_2^2\rtimes S_2$ in $\GO_4$
  contains a unique reflection.
\end{lem}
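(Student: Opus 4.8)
The plan is to exploit the fact that the map $\psi\colon \GO_4\to \PSL_2^2\rtimes S_2$ has kernel of order exactly $2$, so that the preimage of the given rank $2$ reflection $\bar r$ is a two-element coset $\{r,-r\}$, and then to show that exactly one of $r$, $-r$ is a reflection. First I would pin down the kernel. The map $\psi$ is induced from the surjection $\phi\colon \SL_2^2\rtimes S_2\to \GO_4$ (whose kernel is the diagonal $\mu_2\subset\SL_2^2$) together with the quotient $\SL_2^2\rtimes S_2\to \PSL_2^2\rtimes S_2$ (whose kernel is the full center $\mu_2\times\mu_2$ of $\SL_2^2$). Since $\phi$ is surjective---its image contains $\SO_4$ together with the determinant $-1$ element coming from $(12)$---and the kernel of the quotient contains $\ker\phi$, the induced map $\psi$ is surjective with kernel $(\mu_2\times\mu_2)/\mu_2^{\mathrm{diag}}\cong\{\pm I\}$. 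Thus $\psi^{-1}(\bar r)=\{r,-r\}$ for any one preimage $r$.

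For existence I would write $\bar r=(g,g^{-1})\sigma$ with $g\in\PSL_2$ and $\sigma$ the swap (every rank $2$ reflection, being an involution in the nonidentity coset, has this shape). Choosing a lift $\hat g\in\SL_2$ of $g$, the element $\tilde r:=(\hat g,\hat g^{-1})\sigma$ is conjugate to $(12)$ by $(\hat g,1)$, hence is a rank $2$ reflection of $\SL_2^2\rtimes S_2$. By the preceding lemma its image $r:=\phi(\tilde r)\in\GO_4$ is then a reflection, and by construction $\psi(r)=\bar r$; so the preimage does contain a reflection.

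For uniqueness it remains to rule out $-r$. The essential observation---and the one place where care is needed---is that in even dimension $\det(-r)=\det(r)$, so the determinant does \emph{not} distinguish the two preimages; the right invariant is instead the dimension of the $(-1)$-eigenspace. Since $r$ is a reflection it has eigenvalues $(-1,1,1,1)$, whence $-r$ has eigenvalues $(1,-1,-1,-1)$ and a three-dimensional $(-1)$-eigenspace, so $-r$ is not a reflection. Hence $\{r,-r\}$ contains exactly one reflection, as claimed. The only genuine obstacle is the bookkeeping of the three distinct copies of $\mu_2$ (and the implicit restriction to $\ch\ne 2$, where the whole $\GO_4$ picture is valid); once the kernel of $\psi$ is correctly identified as order $2$ and one resists the temptation to separate $r$ from $-r$ by determinant, the argument is immediate.
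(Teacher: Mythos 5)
Your proof is correct and takes essentially the same route as the paper's: existence comes from lifting the rank 2 reflection to $\SL_2^2\rtimes S_2$ and applying the preceding lemma, and uniqueness comes from the preimage being a $\mu_2$-coset $\{r,-r\}$ in which $-r$ fails to be a reflection. Your eigenvalue computation (and the warning that determinant cannot distinguish $r$ from $-r$ in even dimension) simply spells out the paper's one-line observation that the negative of a reflection is not a reflection.
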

  
\begin{proof}
  A rank 2 reflection has the form $(x,y)\mapsto (gy,g^{-1}x)$ and thus any
  lift $\tilde{g}\in \SL_2$ induces a rank 2 reflection in $\SL_2^2\rtimes
  S_2$ and thus a reflection in $\GO_4$.  But the preimage in $\GO_4$ is a
  coset of $\mu_2$ and the negative of a reflection is not a reflection,
  and thus the lift to a reflection is unique.
\end{proof}

\begin{prop}
  Let $G\subset \PSL_2^2\rtimes S_2$ be a subgroup generated by rank 2
  reflections, defined over a field in which $|G|\ne 0$.  Then there is a
  $G$-equivariant line bundle on $\P^1\times \P^1$ such that
  the corresponding invariant ring is a free polynomial ring.
\end{prop}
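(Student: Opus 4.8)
The plan is to transport the question into the linear orthogonal group, where the Shephard--Todd--Chevalley theorem applies, and then cut down by the quadric. By the preceding lemma, each rank $2$ reflection generating $G$ lifts to a \emph{unique} reflection in $\GO_4$; let $\tilde{G}\subset \GO_4$ be the subgroup generated by these lifts. Since these reflections are images of rank $2$ reflections of $\SL_2^2\rtimes S_2$, the group $\tilde{G}$ lies in the image of $\SL_2^2\rtimes S_2$, so the map $\GO_4\to \PSL_2^2\rtimes S_2$ restricts to a homomorphism $\tilde{G}\to \PSL_2^2\rtimes S_2$ whose image is exactly $G$ (the generating reflections of $\tilde{G}$ map back to the given generators of $G$). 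Its kernel $Z$ is $\tilde{G}\cap\langle -I_4\rangle$, hence is either trivial or the central $\mu_2=\langle -I_4\rangle$, and in either case acts trivially on the smooth quadric $\P^1\times \P^1\subset \P^3$.

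Next I would apply Shephard--Todd--Chevalley to $\tilde{G}\subset \GL_4$. Note that $G$, being generated by involutions, has even order whenever it is nontrivial, so $|G|\ne 0$ forces the characteristic to be odd; since $|\tilde{G}|\in\{|G|,2|G|\}$ we still have $|\tilde{G}|\ne 0$, and we are in good characteristic. Writing $R=\bigoplus_d\Gamma(\P^3;\sO(d))=k[x_0,\dots,x_3]$, the theorem gives a free polynomial ring $R^{\tilde{G}}=k[f_1,f_2,f_3,f_4]$. The defining quadratic form $Q$ of the quadric is a $\tilde{G}$-invariant of degree $2$; because $Q$ is nondegenerate it is not a product of two linear forms, hence its class in $\mathfrak m/\mathfrak m^2$ (for $\mathfrak m$ the irrelevant ideal of $R^{\tilde{G}}$) is nonzero, and we may take $Q=f_1$. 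Good characteristic makes the invariant functor exact, so $(R/(Q))^{\tilde{G}}=R^{\tilde{G}}/(Q)=k[f_2,f_3,f_4]$, a free polynomial ring in three generators, matching the $2$-dimensional quotient.

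It remains to read this off as a $G$-invariant ring for a genuine $G$-equivariant bundle, which is where $Z$ must be handled. The homogeneous coordinate ring of $(\P^1\times\P^1,\sO(1)\boxtimes\sO(1))$ is $R/(Q)$, with a $\tilde{G}$-equivariant structure from $\tilde{G}\subset\GL_4$. If $Z=1$, then $\tilde{G}\cong G$ and $\sO(1)\boxtimes\sO(1)$ is $G$-equivariant, so the ring $(R/(Q))^{G}=k[f_2,f_3,f_4]$ already proves the claim. If $Z=\langle -I_4\rangle$, then $-I_4$ acts on $R_d$ by $(-1)^d$, so every $\tilde{G}$-invariant lies in even degree and each $f_i$ has even degree. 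In that case $-I_4$ acts trivially on $\sO(2)\boxtimes\sO(2)$, which therefore descends to a $G$-equivariant bundle ${\cal L}$, and $\bigoplus_e\Gamma({\cal L}^e)^G$ is the even part of $(R/(Q))^{\tilde{G}}$ re-indexed by halving degrees, again the free polynomial ring $k[f_2,f_3,f_4]$ with degrees $\tfrac12\deg f_i$.

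The main obstacle is precisely this last bookkeeping: checking that $\tilde{G}$ really is a reflection group in good characteristic (so that Shephard--Todd--Chevalley applies) and that the central $\mu_2$ obstruction to $G$-equivariance of $\sO(1)\boxtimes\sO(1)$ is resolved by passing to $\sO(2)\boxtimes\sO(2)$ \emph{without} destroying polynomiality. The point that makes the Veronese step harmless is that the obstructing scalar forces all invariants into even degrees, so one is taking the even part of a polynomial ring all of whose generators already have even degree, and halving degrees preserves freeness.
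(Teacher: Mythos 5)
Your proposal is correct and follows essentially the same route as the paper's proof: lift $G$ to the reflection group $\tilde G\subset\GO_4$ generated by the unique reflection preimages of its rank~2 reflections, apply Shephard--Todd--Chevalley to $\tilde G$ in good characteristic, and pass to the quotient by the invariant quadric. Your handling of the kernel $Z=\tilde G\cap\{\pm 1\}$ and of which bundle ($\sO(1)\boxtimes\sO(1)$ or $\sO(2)\boxtimes\sO(2)$) actually carries the $G$-equivariant structure is bookkeeping that the paper's one-line proof leaves implicit, and you resolve it correctly.

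One justification, however, is invalid as stated, although the claim it supports is true. You assert that since $Q$ is not a product of two linear forms, its class in $\mathfrak m/\mathfrak m^2$ is nonzero. But a degree-$2$ element of $\mathfrak m^2$ need not be a single product of two linear forms: it is a sum of products of pairs of degree-$1$ invariants, i.e.\ an arbitrary quadratic form in the $\tilde G$-invariant linear forms. For instance, when $G$ is generated by a single rank~2 reflection, $\tilde G$ is generated by one orthogonal reflection and has three independent invariant linear forms $\ell_1,\ell_2,\ell_3$, and $\ell_1\ell_2+\ell_3^2$ lies in $\mathfrak m^2$ without being a product of two linear forms. The repair is a rank count: a quadratic form expressible in $k$ linearly independent linear forms has rank at most $k$; since $\tilde G\ne 1$ acts faithfully, its invariant linear forms span a proper subspace of the dual space (of dimension at most $3$), so every degree-$2$ element of $\mathfrak m^2$ has rank at most $3$, whereas $Q$ has rank $4$. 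Hence $Q\notin\mathfrak m^2$, and by graded Nakayama it may be completed to a set of polynomial generators, as you wanted. (This also makes visible where nontriviality of $G$ enters: for $G=1$ the conclusion genuinely fails, the section ring of $\sO(1)\boxtimes\sO(1)$ being the quadric cone.)
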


\begin{proof}
  Since $G$ is generated by rank 2 reflections, it is the image of the
  reflection group in $\GO_4$ generated by the preimages of its rank 2
  reflections.  Any reflection group in good characteristic has polynomial
  invariants, with at least one generator of degree 2, and thus the
  quotient by the invariant quadric is again a free polynomial ring.
\end{proof}

\begin{rem}
  This result can fail in bad characteristic, even when the group lifts to
  characteristic 0!  For instance, there is a reflection group of the form
  $G_2(\SL_2(\F_5),1)$ for which the invariant ring over $\F_5$ has Hilbert
  series $(1+t^{15})/(1-t)(1-t^6)(1-t^{20})$ (or
  $(1+t^{16})/(1-t^2)(1-t^6)(1-t^{20})$ for $G_2(\SL_2(\F_5),\mu_2)$.)
\end{rem}

\begin{rem}
  There is a canonical preimage in which every rank 2 reflection in $G$
  lifts to a reflection in $\GO_4$, but there may be other preimages in
  which only a conjugacy-invariant generating subset of rank 2 reflections
  lift.
\end{rem}

\begin{cor}
  Let $G\subset \PSL_2\rtimes S_2$ be any subgroup generated by
  reflections, defined over a field in which $|G|\ne 0$.  Then there is a
  $G$-equivariant line bundle on $\P^1\times \P^1$ such that the
  corresponding invariant ring is a free polynomial ring.
\end{cor}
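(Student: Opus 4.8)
The plan is to realize $\PSL_2\rtimes S_2$ as the diagonal subgroup of $\PSL_2^2\rtimes S_2$ and deduce the statement from the preceding Proposition. Concretely, I would use the diagonal embedding $\Delta\colon \PSL_2\to \PSL_2^2$, $g\mapsto (g,g)$, which is compatible with the two $S_2$-actions (the transposition fixes the diagonal pointwise) and hence extends to an injection $\PSL_2\rtimes S_2\hookrightarrow \PSL_2^2\rtimes S_2$. Under this embedding $G$ becomes a subgroup of $\PSL_2^2\rtimes S_2$ acting on $\P^1\times\P^1$ in exactly the same way, so it suffices to verify that the hypothesis of the Proposition---that $G$ be generated by rank $2$ reflections---holds automatically here.

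The key step is therefore to identify the reflections of $G$ as automorphisms of $\P^1\times\P^1$. First I would observe that an element $(g,g)\in\Delta(\PSL_2)$ with $g\ne 1$ acts with fixed locus $\mathrm{Fix}(g)\times\mathrm{Fix}(g)$; since $|G|\ne 0$ forces $g$ to be semisimple, this is a finite set of points, hence of codimension $2$, so no element of the identity component of the diagonal group is a reflection. On the other hand, an element of the form $(g,g)\sigma$ (with $\sigma$ the transposition) acts by $(x,y)\mapsto (gy,gx)$, and its fixed locus is $\{(x,y):\ y=gx,\ x=g^2x\}$, which is a curve (a hypersurface) precisely when $g^2=1$ and is finite otherwise. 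Thus the reflections of $G$ are exactly the elements $(g,g)\sigma$ with $g^2=1$; since then $g=g^{-1}$, these coincide with the rank $2$ reflections $(g,g^{-1})\sigma$ of $\PSL_2^2\rtimes S_2$, and in particular no rank $1$ reflections can occur inside the diagonal subgroup.

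Having made this identification, the conclusion follows immediately: if $G$ is generated by reflections, then its image under $\Delta$ is generated by rank $2$ reflections, so the Proposition produces a $G$-equivariant line bundle on $\P^1\times\P^1$ whose invariant ring is a free polynomial ring, and (the action, the line bundle, and the invariant ring being unchanged by the embedding) this is exactly the assertion of the Corollary. I do not expect a genuine obstacle here: the only point requiring care is the fixed-locus computation ruling out rank $1$ reflections, which reduces to the elementary dimension count above once semisimplicity is guaranteed by the good-characteristic hypothesis $|G|\ne 0$.
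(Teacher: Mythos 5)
You have proved a correct statement, but not the one the paper intends, and the difference is exactly where the content of the corollary lies. The ``$\PSL_2\rtimes S_2$'' in the statement is a misprint for $\PSL_2^2\rtimes S_2$: the paper's own proof begins by taking $H\subset\PSL_2$ to be ``the group corresponding to the rank 1 reflections in $G$'' and then passes to $G/H^2$, which only makes sense for subgroups of the full wreath product. Your diagonal reading removes the very issue the corollary exists to address: as your (correct) fixed-locus computation shows, the diagonal subgroup contains no rank 1 reflections at all, so every reflection in it is already a rank 2 reflection and the preceding Proposition applies verbatim, making the corollary an empty repackaging. But on $\P^1\times\P^1$ --- unlike on an abelian surface --- the element $(g,1)$ is a reflection for \emph{every} $g\ne 1$ in $\PSL_2$, since its fixed locus is the hypersurface $\mathrm{Fix}(g)\times\P^1$ (every automorphism of $\P^1$ has a fixed point). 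Reflection subgroups of $\PSL_2^2\rtimes S_2$ containing such rank 1 reflections are precisely what the corollary must cover, and your argument says nothing about them.

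The missing step, which is the entirety of the paper's proof, is a reduction rather than an identification. Let $H\subset\PSL_2$ be the finite subgroup generated by the $\PSL_2$-components of the rank 1 reflections of $G$; the set of rank 1 reflections is stable under conjugation by $G$, so $H^2\normal G$. Since $|H|$ divides $|G|$ and is therefore invertible, the quotient $\P^1/H$ is again isomorphic to $\P^1$, and hence $G/H^2$ acts on $(\P^1/H)\times(\P^1/H)\cong\P^1\times\P^1$. The rank 1 reflections die in this quotient, while images of rank 2 reflections remain rank 2 reflections (images of reflections under quotients are reflections, by the lemma in Subsection 2.1), so $G/H^2$ is generated by rank 2 reflections and the Proposition applies to it. Pulling the resulting equivariant line bundle back along $\P^1\times\P^1\to(\P^1/H)^2$ then gives a $G$-equivariant line bundle on the original $\P^1\times\P^1$ whose invariant ring is the same free polynomial ring, which is the corollary in its intended generality. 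So while your fixed-point analysis is sound as far as it goes, it establishes only the degenerate diagonal case and omits the quotient-by-$H^2$ reduction that constitutes the actual proof.
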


\begin{proof}
  Let $H\subset \PSL_2$ be the group corresponding to the rank 1
  reflections in $G$.  Then $\P^1/H$ is isomorphic to $\P^1$ and thus
  $G/H^2$ acts on $\P^1\times \P^1$, and is generated by rank 2
  reflections.
\end{proof}

Note that this gives an alternate approach to the classification of
imprimitive quaternionic reflection groups of rank 2.  Indeed, the above
maps in the complex case correspond to maps
\[
U(\H)^2\rtimes S_2\to \GO_4\to \PGU(\H)^2\rtimes S_2
\]
of compact groups, and thus there is an injection from (a) quaternionic
reflection groups that are imprimitive of rank 2 and generated by rank 2
reflections to (b) Coxeter groups of rank at most 4.  (To be precise, every
Coxeter group pulls back to a reflection group in $U(\H)^2\rtimes S_2$, but
of course some of those will not be {\em quaternionic} reflection groups!)
More generally, any irreducible reflection subgroup of $U(\H)^2\rtimes S_2$
that contains $-1$ has a normal subgroup generated by rank 2 reflections,
and thus the corresponding subgroup of $\GO_4$ at least {\em normalizes} a
Coxeter group.

It thus remains only to determine the invariants in bad characteristic.
The nonsporadic cases follow from the calculation for $n\ge 3$.

\begin{eg}
  Consider the sporadic lattice for $G_2([\mu_6],[\mu_2])$ in odd
  characteristic.  This corresponds to a group of the form
  $G_2(E[2].[\mu_6],[\mu_2])$ on $E^2$ and thus (taking the line bundle to
  be $\sO(2[0])$) a group of the form $G_2(\mu_2.E[2].[\mu_3],1)\subset
  \SL_2^2\rtimes S_2$.  The image in $\GO_4$ may be identified with the
  permutation representation of $S_4$, and thus even in characteristic 3,
  we have invariants of degrees $1$, $3$, $4$ forming a regular sequence
  with the natural quadric.  It follows that the quotient of $E^2$ is a
  weighted projective space with degrees $1$, $3$, $4$.
\end{eg}
  
\begin{eg}
  Consider the sporadic lattice for $G_2([\mu_6],[\mu_3])$.  This
  corresponds to a group of the form $G_2(E[\sqrt{-3}].[\mu_6],[\mu_3])$ on
  $E^2$ and thus $G_2(E[\sqrt{-3}].[\mu_2],1)$ on $(\P^1)^2$.  In
  characteristic not 3, this corresponds to the Coxeter group $A_1A_2$
  resulting in invariants of degrees $1,2,3$.  In characteristic 3, the
  ``Coxeter'' group is a nonreduced group scheme, but the invariants are
  still straightforward to compute.  (Note that $\mu_2.\alpha_3.[\mu_2]$
  embeds uniquely in $\SL_2$, so we do not need to do computations on $E$.)
\end{eg}
  
\begin{eg}
  Let $E/\bar{F}_3$ be supersingular, and consider $G_2(\Aut_0(E),[\mu_2])$.
  The quotient $E/[\mu_2]$ is a weighted projective space with degrees
  $1,2$, and $\Aut_0(E)/[\mu_2]$ has generators acting on the coordinates by
  $(x,y)\mapsto (x,-y)$ and $(x,y)\mapsto (x,y+x^2)$.  A straightforward
  computation (given that $\Aut_0(E)/[\mu_2]$ is embedded diagonally in
  $G_2(\Aut(E)/[\mu_2],1)$) gives invariants of degrees $1,4,6$.  (We
  could, of course, use the line bundle ${\cal L}(2[0])$ instead, giving
  invariants of degrees $1,2,3$.)
\end{eg}

\begin{eg}
  Let $E/\bar{F}_2$ be supersingular, and consider $G_2(\Aut_0(E),[\mu_2])$.
  The quotient by $[\mu_2]$ again has weights $1,2$ and the group acts by
  $(x,y)\mapsto (x,\zeta_3 y)$ (embedding antidiagonally in the imprimitive
  group) and $(x,y)\mapsto (x,y+x^2)$ (embedding diagonally in the
  imprimitive group) with invariants of degrees $1,6,8$.  (Relative to
  ${\cal L}(2[0])$, the invariants have degrees $1,3,4$, and the image
  in $\GO_4$ is the permutation representation of $S_4$.)
\end{eg}

\section{Classification of crystallographic actions of primitive groups}
\label{sec:classify}

\subsection{Real reflection groups}

In characteristic 0, these were classified in \cite{SaitoK:1985}, and
in general characteristic were essentially classified in \cite{elldaha},
but it is straightforward enough to rederive this from the above
considerations.  (In particular, the condition in \cite{elldaha} was based
on simple roots, and thus although the actions given there are certainly
crystallographic, it is not entirely obvious, though true, that those are
the only crystallographic actions.)  As mentioned, there are no
crystallographic torsor actions (since Coxeter groups of rank $n$ are
generated by $n$ reflections), and thus the only question is to determine
the lattices.  For simply laced groups, there is a single conjugacy class of
involutions and thus a unique choice of crystallographic lattice, namely
$\Lambda\otimes E$ where $\Lambda$ is the root lattice.  We are thus left
with (since we may ignore imprimitive cases) $G_2$ and $F_4$.

For $G_2$, any crystallographic lattice for $G_2$ is a quotient of the
lattice $\Lambda_{A_2}\otimes E$ for the reflection subgroup $A_2$ by a
$G$-invariant subgroup of $(\Lambda_{A_2}\otimes E)^{A_2}\cong E[3]$.  The
quotient by $E[3]$ again has the form $\Lambda_{A_2}\otimes E$ relative to
the other conjugacy class of roots (which is related by an outer
automorphism), and thus the only other lattices are quotients by subgroups
$T\subset E[3]$ of order 3, which are again crystallographic (e.g., by the
explicit description as an action on $E\times E/T$ given in
\cite{elldaha}).  We thus have one family of lattices (over the moduli
stack of elliptic curves) with polarization of degree 3 and one (over the
moduli stack of $3$-isogenies) with principal polarization.

Similarly, for $F_4$, any crystallographic lattice is a quotient of
$\Lambda_{D_4}\otimes E$ by a $F_4$-invariant subgroup of
$(\Lambda_{D_4}\otimes E)^{D_4}\cong E[2]^2$.  Since $F_4$ acts on $E[2]^2$
as $F_4/D_4\cong \SL_2(\Z/2\Z)$, we see that the invariant subgroups have
the form $T^2$ for $T\subset E[2]$.  Again, each invariant subgroup gives a
crystallographic lattice, and the the quotient by $E[2]^2$ is related by an
outer automorphism.  We thus find one family of lattices (over the moduli
stack of elliptic curves) with polarization of degree 4 and one (over the
moduli stack of $2$-isogenies) with principal polarization.

It remains to determine the strongly crystallographic equivariant bundles.
It turns out that in all of the primitive cases above, the fixed subschemes
of the reflections are integral, implying that $A^{++}=A$ and that there is
a unique strongly crystallographic bundle; more precisely, there is a
unique invariant bundle representing the minimal polarization, and the
strongly crystallographic equivariant structure is the one that acts
trivially on the fiber over the identity.  The action on the fiber at the
identity is not always obvious from our constructions below, but in each
case we obtain polynomial invariants, for which being strongly
crystallographic is a necessary condition in characteristic 0 \cite[\S
  0.2]{BernsteinJ/SchwarzmanO:2006}.  (And, of course, triviality of the
action on the fiber over the identity survives reduction to finite
characteristic.)

The corresponding cases of the main theorems were shown in characteristic 0
by \cite{BernsteinJ/SchwarzmanO:2006,KacVG/PetersonDH:1984}, and we will
show below that this remains true in all finite characteristics.  We
summarize the results in the table below.  The first column gives the
group, the second gives the degree of the minimal polarization, the third
gives the degrees of the invariants relative to the minimal polarization,
and the fourth indicates the subsection in which the result is shown.  The
last column describes the anticanonical class on the quotient in
characteristic prime to the orders of the reflections.  Since abelian
varieties have trivial canonical bundle, this is the same as the
ramification divisor of the quotient map, and thus is the linear
combination of reflection hypersurfaces in which the coefficient of a
reflection hypersurface is one less than the order of the stabilizer of the
generic point.  The sum over any given orbit of hypersurfaces is a
$G$-invariant divisor, so represents a multiple of the minimal invariant
polarization; a factor $[d]^m$ in that column (here $m=1$ always)
represents an orbit such that the sum is $d$ times the minimal polarization
and the components appear with multiplicity $m$.  (In fact, this
decomposition continues to hold in characteristic 2, as the different
orbits remain distinct on reduction.)  There is also an explicit formula
for the anticanonical class on a weighted projective space, namely
$\omega\cong \sO(\sum_i d_i)$, which gives a sanity check on the degrees as
stated; see also Proposition \ref{prop:canclass} below.  (In each case, the
degrees of the generating invariants are given by the standard labels of an
associated affine Dynkin diagram; for a conceptual explanation for this
fact, see the computation of the Hilbert series in
\cite{LooijengaE:1976,SaitoK:1990}.)

\begin{table}[H]
  \begin{center}
  \begin{tabular}{|c|c|l|c|l|}
    \hline
    $\Lin(G)$ & $\deg$ &$d_1,\dots,d_{n+1}$&\S&$-K$\\
    \hline
    $A_n$ &$n$ & $1,\dots,1$&\ref{ss:linsys} &$[n+1]$\\
    $E_6$ &$3$ & $1,1,1,2,2,2,3$&\ref{ss:dps} &$[12]$\\
    $E_7$ &$2$ & $1,1,2,2,2,3,3,4$&\ref{ss:dps} &$[18]$\\
    $E_8$ &$1$ & $1,2,2,3,3,4,4,5,6$&\ref{ss:dps} &$[30]$\\
    $F_4$ &$4$ & $1,1,2,2,3$&\ref{ss:surf_auts} &$[3][6]$\\
    $F_4$ &$1$ & $1,2,2,3,4$&\ref{ss:surf_auts} &$[6][6]$\\
    $G_2$ &$3$ & $1,1,2$&\ref{ss:linsys} &$[1][3]$\\
    $G_2$ &$1$ & $1,2,3$&\ref{ss:linsys} &$[3][3]$\\
    \hline
  \end{tabular}
\end{center}\end{table}

\subsection{Complex reflection groups}

Here, again, these have been classified in characteristic 0
(\cite{PopovVL:1982}), but a priori there could be (and are!) additional
cases in finite characteristic, which requires us to redo the
classification.  The possible linear groups were classified by Shephard and
Todd in \cite{ShephardGC/ToddJA:1954}, and we denote the $k$-th group in
their classification by $ST_k$.  A priori, $4\le k\le 37$, but we can
eliminate any case for which the center of the $\Z$-span of the matrices is
not an imaginary quadratic order.  We are thus left with 12 cases, namely
$k\in \{4,5,8,12,24,25,26,29,31,32,33,34\}$.  (In each case, the center is
a maximal order, so there are no residual choices there.)

We first consider the possibilities of lattices.  After eliminating cases
with a single orbit of root curves, we are left with considering $ST_5$ and
$ST_{26}$.

For $ST_5$ (over $\Z[\zeta_3]$), there are two orbits of root curves, and
the corresponding reflection subgroups are related by an outer
automorphism.  We thus start with the unique crystallographic lattice for
$ST_4$ and quotient by a subgroup of the group of $ST_4$-invariants in
$E^2$.  That fixed subgroup is $E[2]$, on which $ST_5$ acts by $[\zeta_3]$.
In odd characteristic, the only invariant subgroups are $0$ and $E[2]$, and
the quotient by $E[2]$ gives a lattice related by an outer automorphism, so
does not give a new case.  However, in characteristic 2, the kernel of
Frobenius is an invariant subgroup $\alpha_2$, so we get a new lattice with
polarization of degree 3.

For $ST_{26}$ (again over $\Z[\zeta_3]$), neither conjugacy class of root
subgroups generates the group alone, so we may start with the
crystallographic lattice for $ST_{25}$ (with polarization of degree 9),
with (cyclic) fixed subgroup $E[\sqrt{-3}]$.  This has precisely two
subgroups, both invariant, and the quotient by $E[\sqrt{-3}]$ gives an
additional lattice of degree 3.

Having classified the lattices, we turn to torsors.  Again, once we
eliminate cases that can be generated by $n$ reflections, there are only
two cases to consider, namely $ST_{12}$ and $ST_{31}$ (both of which luckily
have unique lattices).

For $ST_{12}$, there are no fixed points on the crystallographic lattice
and thus any torsor action must have the same underlying abelian variety.
Since the center contains $[\mu_2]$, the torsor actions are classified by
$H^1(G/[\mu_2];A[2])$.  This is trivial (as a set) in characteristic 2 and
has a unique class otherwise, which may be verified to be crystallographic.
(Indeed, it is the restriction of the natural theta torsor.)

Similarly, for $ST_{31}$, the underlying variety is the same, and the
actions are classified by the cohomology group $H^1(G/[\mu_4];A[1-i])$.
Again, this is trivial in characteristic 2 and has a unique class
otherwise, which again is the restriction of the theta torsor.

With the exception of $ST_8$, all reflections again have integral fixed
subscheme, so that there is a unique (when the group is pointed) or at most
one (for torsor actions) strongly crystallographic bundle, which we may
again identify in characteristic 0 via polynomiality of its invariants.
Since $ST_8$ is at least {\em generated} by such reflections, the only
choice in that case is whether the strongly crystallographic contains
$\mu_2$, and again we find that both cases work.

This argument fails for the sporadic lattice for $ST_5$, since that does
not lift to characteristic 0.  In particular, there are 9 equivariant
structures on the minimal invariant line bundle, only one of which is
strongly crystallographic.  However, for any choice which does not fix the
fiber over the identity, the invariants of degree prime to 3 have a
nontrivial common divisor, namely the sum of the fixed subschemes of the
reflections with nontrivial character.  However, since the bundle we
consider below has polynomial invariants of degrees $1$, $3$, and $4$, the
intersection of the prime-to-3 invariants has codimension 2, and thus the
bundle is indeed the unique strongly crystallographic bundle.

In each case, we again find (which is now new even in characteristic 0)
that the main theorems hold in arbitrary characteristic, with results
summarized in the table below.  An asterisk in the degree
column denotes actions without global fixed points, and for $ST_8$ we have
two rows to reflect the fact that there are multiple choices of strongly
crystallographic line bundle.
\begin{table}[H]\begin{center}
  \begin{tabular}{|c|c|l|c|l|}
    \hline
    $\Lin(G)$ & $\deg$ &$d_1,\dots,d_{n+1}$&\S & $-K$\\
    \hline
    $ST_4$ & $6$ & $1,1,2$ & \ref{ss:complexsurf0} & $[2]^2$\\
    $ST_5$ & $6$ & $1,2,3$ & \ref{ss:complexsurf0} & $[1]^2[2]^2$\\
    $ST_5$ & $3$ & $1,3,4$ & \ref{ss:complexsurf0} & $[2]^2[2]^2$\\
    $ST_8$ & $2$ & $2,3,4$ & \ref{ss:complexsurf1728} & $[3]^3$\\
    $ST_8$ & $2$ & $2,4,6$ & \ref{ss:complexsurf1728} & $[3]^3[3]$\\
    $ST_{12}$ & $1$ & $2,4,6$ & \ref{ss:complexsurf8000} & $[6]$\\
    $ST_{12}$ & $1\rlap{${}^*$}$ & $1,3,8$ & \ref{ss:ST12_normal} & $[6]$\\
    $ST_{24}$ & $1$ & $1,2,4,7$ & \ref{ss:ST24} & $[14]$\\
    $ST_{25}$ & $9$ & $1,2,2,3$ & \ref{ss:complexsurf0} & $[4]^2$\\
    $ST_{26}$ & $9$ & $1,2,3,4$ & \ref{ss:complexsurf0} & $[2][4]^2$\\
    $ST_{26}$ & $3$ & $1,3,4,6$ & \ref{ss:complexsurf0} & $[4]^2[6]$\\
    $ST_{29}$ & $1$ & $1,2,4,5,8$ & \ref{ss:ST31} & $[20]$\\
    $ST_{31}$ & $1$ & $2,4,6,8,10$ & \ref{ss:complexsurf1728} & $[30]$\\
    $ST_{31}$ & $1\rlap{${}^*$}$ & $1,4,5,8,12$ & \ref{ss:ST31} & $[30]$ \\
    $ST_{32}$ & $9$ & $2,3,4,5,6$ & \ref{ss:complexsurf0} & $[10]^2$ \\
    $ST_{33}$ & $2$ & $1,1,3,3,4,6$ & \ref{ss:ST33} & $[18]$\\
    $ST_{34}$ & $1$ & $1,3,4,6,7,9,12$ & \ref{ss:ST34} & $[42]$\\
    \hline
  \end{tabular}
\end{center}\end{table}
Note that the section reference for $ST_{29}$ does not prove the theorem in
characteristic 2, for which see Subsection \ref{ss:ST29_2}.  Also, for the
non-torsor cases of $ST_{12}$ and $ST_{31}$, the given section only
discusses characteristic not 2 (but does discuss the torsor case for
$ST_{31}$ in characteristic 2).

In characteristic 0, the rank 2 cases other than $ST_{12}$ were shown in
\cite{ShvartsmanOV:1979,TokunagaS/YoshidaM:1982}), while the torsor case of
$ST_{12}$ was shown in \cite{KoziarzV/RitoC/RoulleauX:2021}, and $ST_{24}$
was dealt with in \cite{MarkushevichD/MoreauA:2022b}.  The non-torsor
$ST_{12}$ case and the remaining higher rank cases are apparently new even
in characteristic 0.

The stated decomposition of the anticanonical class remains valid in finite
characteristics (including the sporadic, but tamely ramified, $ST_5$ case),
with the one exception being the second $ST_8$ case, for which the
decomposition in characteristic 2 is $[3]^4$.  (This is tricky to compute
directly, since it involves wild ramification, but flatness tells us that
the canonical divisor of the reduction is the reduction of the canonical
divisor.)  Also, the torsor and non-torsor cases for $ST_{12}$ and $ST_{31}$
have the same reductions to characteristic 2, and the invariants of the
reduction are as given in the torsor case.

\subsection{Quaternionic reflection groups}

For quaternionic groups, the analogue of the Shephard-Todd classification
was given in \cite{CohenAM:1980}.  There are a couple of technical issues
(beyond the fact that the list in \cite{CohenAM:1980} has duplications and
includes some complex cases), with the biggest being that the notions of
``irreducible'' and ``primitive'' there are over $\C$ rather than over
$\Q$.  Luckily, we can sanity check the list by comparing to the list of
maximally finite quaternionic groups given in \cite{NebeG:1998} (which
gives a complete classification up to rank 10 for quaternion algebras with
center $\Q$).  (Also, that list is available as a database in Magma, making
it particularly convenient to use.)  In particular, for each such group, it
is straightforward to check whether the reflection subgroup remains
irreducible and quaternionic (i.e., whether it has the same span over
$\Q$).  We can then do a similar check for each subgroup to obtain the
following lists for the different choices of ramified prime, with no prime
$>5$ appearing.

In $\GL_n(Q_5)$, the only possibility is a central extension $2.S_5\subset
\GL_2(Q_5)$.  (This appears in \cite{CohenAM:1980} as an imprimitive group,
but is primitive for our purposes, since the systems of imprimitivity are
only defined over a {\em real} quadratic field.)

In $\GL_n(Q_3)$, the only possibilities are $2.\!\Alt_5\subset
2.\!\Alt_6\subset \GL_2(Q_3)$ and $\pm\PGU_3(\F_3)\subset \GL_3(Q_3)$.

In $\GL_n(Q_2)$, the maximal possibilities are $\SL_2(\F_3)\otimes
A_2\subset \GL_2(Q_2)$, $2_-^{1+4}.\!\Alt_5\subset \GL_2(Q_2)$,
$2_-^{1+6}.\Omega^-_6(\F_2)\subset \GL_4(Q_2)$, and $\pm\SU_5(\F_2)\subset
\GL_5(Q_2)$.  We also have primitive reflection subgroups
$2_-^{1+4}.S_3,2_-^{1+4}.\!\Dih_5\subset 2_-^{1+4}.\!\Alt_5$ and
$2_-^{1+6}.3^{1+2}.Z_2,2_-^{1+6}.G_3(Z_3,1)\subset
2_-^{1+6}.\Omega^-_6(\F_2)$.

Again, in most of the cases there is a single orbit of roots with maximal
associated order, so that the only cases that might have multiple lattices
are $\SL_2(\F_3)\otimes A_2$ and $2_-^{1+4}.S_3$.

For $\SL_2(\F_3)\otimes A_2$, the two classes of order 2 reflections are
related by an outer automorphism, and the associated orders are maximal,
so we obtain a quotient of the $A_2$ lattice by an $\SL_2(\F_3)$-invariant
subgroup of $E[3]$.  Since $\SL_2(\F_3)$ acts faithfully on $E[3]$, the
only other possibility is the quotient by all of $E[3]$, which is related
by an outer automorphism to the original lattice.

For $2_-^{1+4}.S_3$, there are two orbits of roots, and neither associated
order is maximal.  Luckily, the orders (consisting of Hurwitz quaternions
with integer coefficients) are contained in unique maximal orders, and thus
embed uniquely in $\End(E)$.  Thus despite the lack of maximality, we still
have no additional freedom in the initial lattice.  Since the subgroup
$ST_8$ generated by the unique conjugacy class of order 4 reflections fixes
only the identity, there is again a unique lattice.

We next turn to torsors.  Since all of the above groups contain $[-1]$ and
have no fixed points in $A$, the torsors (ignoring the crystallographic
condition) are classified by $H^1(G/[\mu_2];A[2])$.  This is trivial in
characteristic 2 and the crystallographic condition rules out $2.\!\Alt_5$
(which can be generated by two reflections).  For the remaining three cases
$2.S_5$, $2.\!\Alt_6$, $\pm\PGU_3(\F_3)$, there is a unique class in
$H^1(G/[\mu_2];A[2])$ which can be verified to be the theta class, so
crystallographic.

We again have $A^{++}=A$ for every primitive quaternionic case, so there is a
unique minimal invariant bundle and the strongly crystallographic
equivariant structure is the one with eigenvalue 1 at the identity.  For
the characteristic 2 groups $2_-^{1+4}.\!\Alt_5$,
$2_-^{1+6}.\Omega^-_4(\F_2)$ and $\pm \SU_5(\F_2)$, the equivariant
structure is unique (so strongly crystallographic) since $G/G'$ is a
$2$-group, and thus this remains true for the restrictions to
crystallographic subgroups.  (Since $2_-^{1+4}.\!\Alt_5$ has reflections of
order 4, the square of the minimal bundle is also strongly
crystallographic, and this remains true for the crystallographic subgroups
containing such reflections.)  For the remaining cases, we may argue as in
the sporadic $ST_5$ case: the invariants of degree prime to 2 or 3 have
intersection of codimension $>1$ and thus the group must act trivially on the
fiber over the identity.

We summarize the structures of the invariant rings below, noting
the five cases with dashes denoting that the invariant ring is not
actually polynomial.  We omit the information about the anticanonical
class as in most cases we have a wildly ramified map to a wild
stack, making it tricky to compute the multiplicities in cases with
multiple orbits of roots.
\begin{table}[H]\begin{center}
  \begin{tabular}{|c|c|l|c|}
    \hline
    $\Lin(G)$ & $\deg$ &$d_1,\dots,d_{n+1}$&\S\\
    \hline
    $2.S_5$ & $1$ & $4,6,10$ & \ref{ss:ST12_normal}\\
    $2.S_5$ & $1\rlap{${}^*$}$ & \text{---} & \ref{ss:ST12_normal}\\
    \hline
    $2.\!\Alt_5$ & $3$ & $1,4,5$ & \ref{ss:alt6}\\
    $2.\!\Alt_6$ & $3$ & $4,6,10$ & \ref{ss:quatsurf}\\
    $2.\!\Alt_6$ & $3\rlap{${}^*$}$ & \text{---} & \ref{ss:alt6}\\
    $\pm\PGU_3(\F_3)$ & $1$ & $2,8,14,18$ & \ref{ss:ST24}\\
    $\pm\PGU_3(\F_3)$ & $1\rlap{${}^*$}$ & \text{---} & \ref{ss:ST24}\\
    \hline
    $\SL_2(\F_3)\otimes A_2$ & $3$ &$1,3,8$ & \ref{ss:linsys}\\
    $2_-^{1+4}.S_3$ & $2$ & $2,3,8$ & \ref{ss:alt5}\\
    $2_-^{1+4}.S_3$ & $2$ & $2,6,8$ & \ref{ss:alt5}\\
    $2_-^{1+4}.\!\Dih_5$ & $2$ & $2,5,8$ & \ref{ss:alt5}\\
    $2_-^{1+4}.\!\Dih_5$ & $2$ & $2,8,10$ & \ref{ss:alt5}\\
    $2_-^{1+4}.\!\Alt_5$ & $2$ & $5,8,12$ & \ref{ss:alt5}\\
    $2_-^{1+4}.\!\Alt_5$ & $2$ & $8,10,12$ & \ref{ss:quatsurf}\\
    $2_-^{1+4}.\!\Alt_5$ & $2$ & $8,12,20$ & \ref{ss:quatsurf}\\
    $2_-^{1+6}.3^{1+2}.Z_2$ & $2$ & \text{---} & \ref{ss:badomega6}\\
    $2_-^{1+6}.G_4(Z_3,1)$ & $1$ & $1,4,6,9,16$ & \ref{ss:omega6}\\
    $2_-^{1+6}.\Omega^-_6(\F_2)$ & $1$ & $1,9,16,24,40$ & \ref{ss:omega6}\\
    $\pm\SU_5(\F_2)$ & $1$ & \text{---} & \ref{ss:SU5}\\
    \hline
  \end{tabular}
\end{center}\end{table}
Note that the line bundle giving $8,12,20$ for $2_-^{1+4}.\!\Alt_5$ is not
strongly crystallographic.  Also, in the two tamely ramified cases $2.S_5$
and $\pm\PGU_3(\F_3)$, we find that the canonical classes have structure
$[20]$ and $[42]$ respectively.  Note that there is an argument given in
Subsection \ref{ss:quatsurf} that reduces the $\pm\PGU_3(\F_3)$ case to the
characteristic 3 case of $ST_{34}$, but although the {\em reduction} is
conceptual, the direct arguments for both cases are computational.

\subsection{Tables for imprimitive cases}

For convenience, we give the analogous tables for imprimitive groups, as
discussed in Section \ref{sec:imprim} above.  We omit the reference column
and in the quaternionic cases omit the decomposition of the ramification
divisor.

For the real cases, we have the following.  Note that a given line may
represent multiple cases with nonisomorphic line bundles but the same
invariants.  Also, for $n=2$, the $D_2$ case should be omitted (being
reducible), as should the $C_2$ case with degree 4 polarization (as that is
twice a principal polarization), and the decomposition of the canonical
divisor for the degree 2 case should be replaced as stated in the last row.
\begin{table}[H]\begin{center}
  \begin{tabular}{|c|c|l|l|}
    \hline
    $\Lin(G)$ & $\deg$ &$d_1,\dots,d_{n+1}$&$-K$\\
    \hline
    $D_n$ & $4$ & $1,1,1,1,2,\dots,2$ & $[2n-2]$\\
    $C_n$ & $4$ & $1,1,1,2,\dots,2$ & $[1][2n-2]$\\
    $C_n$ & $2$ & $1,1,2,\dots,2$ & $[1][1][2n-2]$\\
    $C_n$ & $1$ & $1,2,\dots,2$ & $[1][1][1][2n-2]$\\
    $C_n$ & $1$ & $2,\dots,2$ & $[1][1][1][1][2n-2]$\\
    \hline
    $C_2$ & $2$ & $1,1,2$ & $[1][1][1][1]$\\
    \hline
  \end{tabular}
\end{center}\end{table}

We next turn to the complex cases.  Again many of the lines correspond to
multiple choices of equivariant bundles.
\begin{table}[H]\begin{center}
  \begin{tabular}{|c|c|l|l|}
    \hline
    $\Lin(G)$ & $\deg$ &$d_1,\dots,d_{n+1}$&$-K$\\
    \hline
    $G_n([\mu_3],1)$ & $3$ & $1,1,1,3,\dots,3$ & $[3n-3]$\\
    $G_n([\mu_3],[\mu_3])$ & $3$ & $1,1,3,\dots,3$ & $[1]^2 [3n-3]$\\
    $G_n([\mu_3],[\mu_3])$ & $1$ & $1,3,\dots,3$ & $[1]^2[1]^2[3n-3]$\\
    $G_n([\mu_3],[\mu_3])$ & $1$ & $3,\dots,3$ & $[1]^2[1]^2[1]^2[3n-3]$\\
    \hline
    $G_n([\mu_4],1)$ & $2$ & $1,1,2,4,\dots,4$ & $[4n-4]$\\
    $G_n([\mu_4],[\mu_2])$ & $2$ & $1,2,2,4,\dots,4$ & $[1][4n-4]$\\
    $G_n([\mu_4],[\mu_2])$ & $2$ & $2,2,2,4,\dots,4$ & $[1][1][4n-4]$\\
    $G_n([\mu_4],[\mu_2])$ & $2\rlap{${}^*$}$ & $1,1,4,\dots,4$ & $[2][4n-4]$\\
    $G_n([\mu_4],[\mu_2])$ & $1$ & $1,2,4,\dots,4$ & $[1][2][4n-4]$\\
    $G_n([\mu_4],[\mu_2])$ & $1$ & $2,2,4,\dots,4$ & $[1][1][2][4n-4]$\\
    $G_n([\mu_4],[\mu_4])$ & $2$ & $1,2,4,\dots,4$ & $[1]^3[4n-4]$\\
    $G_n([\mu_4],[\mu_4])$ & $2$ & $2,2,4,\dots,4$ & $[1][1]^3[4n-4]$\\
    $G_n([\mu_4],[\mu_4])$ & $1$ & $1,4,\dots,4$ & $[1]^3[2][4n-4]$\\
    $G_n([\mu_4],[\mu_4])$ & $1$ & $2,4,\dots,4$ & $[1][1]^3[2][4n-4]$\\
    $G_n([\mu_4],[\mu_4])$ & $1$ & $4,\dots,4$ & $[1]^3[1]^3[2][4n-4]$\\
    \hline
    $G_2([\mu_4],[\mu_2])$ & $2$ & $1,2,2$ & $[1][1][1][1][1]$\\
    $G_2([\mu_4],[\mu_2])$ & $2$ & $2,2,2$ & $[1][1][1][1][1][1]$\\
    $G_2([\mu_4],[\mu_2])$ & $2\rlap{${}^*$}$ & $1,1,4$ & $[1][1][1][1][2]$\\
    $G_2([\mu_4],[\mu_4])$ & $2$ & $1,2,4$ & $[1]^3[2][2]$\\
    $G_2([\mu_4],[\mu_4])$ & $2$ & $2,2,4$ & $[1]^3[1][2][2]$\\
    \hline
    $G_n([\mu_6],1)$ & $1$ & $1,2,3,6,\dots,6$ & $[6n-6]$\\
    $G_n([\mu_6],[\mu_2])$ & $1$ & $1,2,6,\dots,6$ & $[3][6n-6]$\\
    $G_n([\mu_6],[\mu_2])$ & $1$ & $2,2,3,6,\dots,6$ & $[1][6n-6]$\\
    $G_n([\mu_6],[\mu_2])$ & $1$ & $2,2,6,\dots,6$ & $[1][3][6n-6]$\\
    $G_n([\mu_6],[\mu_3])$ & $1$ & $1,3,6,\dots,6$ & $[2]^2[6n-6]$\\
    $G_n([\mu_6],[\mu_3])$ & $1$ & $2,3,3,6,\dots,6$ & $[1]^2[6n-6]$\\
    $G_n([\mu_6],[\mu_3])$ & $1$ & $3,3,6,\dots,6$ & $[1]^2[2]^2[6n-6]$\\
    $G_n([\mu_6],[\mu_6])$ & $1$ & $1,6,\dots,6$ & $[2]^2[3][6n-6]$\\
    $G_n([\mu_6],[\mu_6])$ & $1$ & $2,3,6,\dots,6$ & $[1]^2[3][6n-6]$\\
    $G_n([\mu_6],[\mu_6])$ & $1$ & $2,3,6,\dots,6$ & $[1][2]^2[6n-6]$\\
    $G_n([\mu_6],[\mu_6])$ & $1$ & $2,3,6,\dots,6$ & $[1]^5[6n-6]$\\
    $G_n([\mu_6],[\mu_6])$ & $1$ & $2,6,\dots,6$ & $[1][2]^2[3][6n-6]$\\
    $G_n([\mu_6],[\mu_6])$ & $1$ & $2,6,\dots,6$ & $[1]^5[3][6n-6]$\\
    $G_n([\mu_6],[\mu_6])$ & $1$ & $3,6,\dots,6$ & $[1]^2[2]^2[3][6n-6]$\\
    $G_n([\mu_6],[\mu_6])$ & $1$ & $3,6,\dots,6$ & $[1]^5[2]^2[6n-6]$\\
    $G_n([\mu_6],[\mu_6])$ & $1$ & $6,\dots,6$ & $[1]^5[2]^2[3][6n-6]$\\
    \hline
    $G_2([\mu_6],[\mu_2])$ & $1$ & $1,3,4$ & $[2][6]$\\
    $G_2([\mu_6],[\mu_3])$ & $3$ & $1,2,3$ & $[1][1]^2[3]$\\
    \hline
  \end{tabular}
\end{center}\end{table}
The last two entries are the sporadic lattices.  There are mild changes in
bad characteristic (apart from changes in the structure of the ramification
divisor): the second $G_n([\mu_4],[\mu_2])$ case fails in characteristic 2
(but the line bundle ceases to be strongly crystallographic) while the
first case does not appear (being subsumed in the non-torsor case).  In
addition, the last $G_n([\mu_4],[\mu_4])$ case has the same invariants, but
is no longer strongly crystallographic.  The $2,2,3,6,\dots,6$ case and the
$2,3,6,\dots 6$ case with ramification $[1][2]^2[6n-6]$ do not appear in
characteristic 2, while the $2,3,3,6,\dots,6$ case and the $2,3,6,\dots 6$
case with ramification $[1]^2[3][6n-6]$ do not appear in characteristic 3.

For the imprimitive quaternionic cases in characteristic 3, we have the
following, omitting the cases arising as Veronese subalgebras.
\begin{table}[H]\begin{center}
  \begin{tabular}{|c|c|l|}
    \hline
    $\Lin(G)$ & $\deg$ &$d_1,\dots,d_{n+1}$\\
    \hline
    $G_n(\Aut_0(E/\bar\F_3),[\mu_3])$ & $1$ & $1,3,12,\dots,12$ \\
    $G_n(\Aut_0(E/\bar\F_3),[\mu_6])$ & $1$ & $1,6,12,\dots,12$ \\
    $G_n(\Aut_0(E/\bar\F_3),[\mu_6])$ & $1$ & $2,3,12,\dots,12$ \\
    $G_n(\Aut_0(E/\bar\F_3),\Aut(E/\F_3))$ & $1$ & $1,12,\dots,12$ \\
    $G_n(\Aut_0(E/\bar\F_3),\Aut(E/\F_3))$ & $1$ & $3,4,12,\dots,12$ \\
    \hline
    $G_2(\Aut_0(E/\bar\F_3),[\mu_2])$ & $1$ & $1,4,6$ \\
    \hline
  \end{tabular}
\end{center}\end{table}

Finally, for the imprimitive quaternionic groups in characteristic 2, we
have the following, again omitting Veronese cases.
\begin{table}[H]\begin{center}
  \begin{tabular}{|c|c|l|}
    \hline
    $\Lin(G)$ & $\deg$ &$d_1,\dots,d_{n+1}$\\
    \hline
    $G_n(Q_8,[\mu_2])$ & $2$ & $1,1,8,\dots,8$\\
    $G_n(Q_8,[\mu_2])$ & $1$ & $1,2,8,\dots,8$\\
    $G_n(Q_8,[\mu_4])$ & $2$ & $1,2,8,\dots,8$\\
    $G_n(Q_8,[\mu_4])$ & $1$ & $1,4,8,\dots,8$\\
    $G_n(Q_8,Q_8)$ & $2$ & $1,4,8,\dots,8$\\
    $G_n(Q_8,Q_8)$ & $1$ & $1,8,\dots,8$\\
    \hline
    $G_n(\Aut_0(E/\bar\F_2),Q_8)$ & $1$ & $1,8,24,\dots,24$\\
    $G_n(\Aut_0(E/\bar\F_2),\Aut_0(E/\bar\F_2))$ & $1$ & $1,24,\dots,24$\\
    $G_n(\Aut_0(E/\bar\F_2),\Aut_0(E/\bar\F_2))$ & $1$ & $3,8,24,\dots,24$\\
    \hline
    $G_2(\Aut_0(E/\bar\F_2),[\mu_2])$ & $1$ & $1,6,8$\\
    \hline
  \end{tabular}
\end{center}\end{table}

\section{Geometric constructions}

\subsection{Linear systems}
\label{ss:linsys}

One approach for proving primitive special cases of the main theorem is to
exhibit the quotient as a moduli space.  This comes in two flavors, with
the simpler corresponding to the $W(A_n)$ case.  For $A_n$, there is a
unique lattice, $\Lambda_{A_n}\otimes E$, and the $S_{n+1}$-equivariant
exact sequence
\[
0\to \Lambda_{A_n}\to \Z^{n+1}\to \Z\to 0
\]
produces an exact sequence
\[
0\to \Lambda_{A_n}\otimes E\to E^{n+1}\to E\to 0,
\]
exhibiting $\Lambda_{A_n}\otimes E$ as the sum-zero subvariety of
$E^{n+1}$.  In particular, we may view points of the abelian variety as
$(n+1)$-tuples of points of $E$ and points of the quotient as unordered
$(n+1)$-tuples, or equivalently as divisors of degree $n+1$.  From the
latter perspective, the sum-zero condition is nothing other than the
condition that the divisor be linearly equivalent to $(n+1)[0]$, and thus
$\Lambda_{A_n}\otimes E/S_{n+1}$ is naturally isomorphic to the
corresponding linear system, i.e., the projective space of sections of
${\cal L}((n+1)[0])$.   We thus find that $\Lambda_{A_n}\otimes
E/S_{n+1}\cong \P^n$.

One consequence is that it is straightforward, given a sum-zero point of
$E^{n+1}$, to compute its image in the quotient $\P^n$: find the unique
(up to scalars) function with divisor $\sum_{0\le i\le n} [x_i]-(n+1)[0]$
and expand in some fixed basis of ${\cal L}((n+1)[0])$, e.g., monomials in
$x$ and $y$ of degree $<2$ in $y$.  (We will make use of this below in the
more computational approaches.)  Note that we may replace $(n+1)[0]$ by any
other linearly equivalent divisor without changing the resulting map.

In addition, there is a residual action of $E[n+1]\rtimes \Aut(E)$ on the
quotient, acting (as it must to be $S_{n+1}$-equivariant) diagonally on the
coordinates in $E^{n+1}$, and it is straightforward to describe the
corresponding action in terms of $\P^n$.  Indeed, $E[n+1]\rtimes \Aut(E)$
acts projectively on ${\cal L}((n+1)[0])$, with $\Aut(E)$ acting in the
obvious way, while the action of $E[n+1]$ is twisted by Weil functions
(functions with divisor $(n+1)[t]-(n+1)[0]$ for $t\in E[n+1]$).  The action
on the quotient $\P^n$ is then nothing other than the dual representation
(i.e., replacing $\rho(g)$ by $\rho(g)^{-t}$).

One immediate consequence is that we can prove the main theorem for $G_2$
as well, using the fact that $W(A_2)$ is normal in $W(G_2)$.  (A similar
approach lets one deduce the $B_3$ cases from $A_3\cong D_3$, but of course
we have already dealt with those above.)  Here, as discussed above, there
are essentially two lattices to consider: in addition to the lattice
$\Lambda_{A_2}\otimes E$, we may also quotient by the diagonal action of
the kernel of a $3$-isogeny.

For the original lattice, we are quotienting by $[\mu_2]$, which acts on
${\cal L}(3[0])=\langle 1,x,y\rangle$ in the usual way, from which we
readily see that the dual representation has invariants of degrees $1,1,2$.
For the $3$-isogeny case, if $K$ is the kernel of the $3$-isogeny, then we
may replace ${\cal L}(3[0])$ by ${\cal L}(K)$ (as usual, we are viewing the
subgroup scheme $K\subset E$ as a subscheme and thus as a divisor), and
thus have a natural linear action of $K\rtimes [\mu_2]$.  When $K$ is
\'etale, this is geometrically nothing other than the permutation action of
$S_3$, so has dual of the same form and invariants of degrees $1,2,3$.
Similarly, when $K\cong \mu_3$ in characteristic 3, it splits ${\cal
  L}(K)={\cal L}(3[0])$ into three distinct eigenspaces with the nontrivial
eigenspaces swapped by $[-1]$, and again the invariants have degrees
$1,2,3$.  The remaining case is $K\cong \alpha_3$, which may be computed
directly.  Indeed, the action of $\alpha_3$ on an appropriate basis is
$(u_1,u_2,u_3)\mapsto (u_1+\epsilon u_2-\epsilon^2 u_3,u_2+\epsilon
u_3,u_3)$ with $[-1]$ acting by $u_2\mapsto -u_2$, and the invariants
$u_3$, $u_2^2+u_1u_3$ and $u_1^3$ are clearly algebraically independent.
  
There are several other examples of rank 2 reflection groups in which
$W(A_2)$ is normal.  The idea, just as in the $G_2$ case, is to extend by
the diagonal action of a subgroup of $\Aut_0(E)$, with the caveat being that
this is not always a reflection group.  The reflections in $W(A_2)\times
\Aut_0(E)$ come in precisely two flavors: conjugates of reflections in
$W(G_2)$, and elements of the form $[\zeta_3^{\pm 1}] (123)^{\pm 1}$.
We thus see that the subgroup of $\Aut_0(E)$ must either be generated by
elements of order 3 or by such elements along with $[-1]$.  We thus find
the following cases: $1$, $[\mu_2]$, $[\mu_3]$, $[\mu_6]$, and $\SL_2(\F_3)$.

The $[\mu_3]$, $[\mu_6]$ cases correspond to imprimitive complex reflection
groups $G_2([\mu_3],[\mu_3])$ and $G_2([\mu_6],[\mu_3])$, with the system
of imprimitivity given by the two eigenspaces of $(123)\in S_3$, and we
recover the previous calculation.  The final case (corresponding to $E$ a
supersingular curve of characteristic 2) can be computed explicitly from
the action of $\SL_2(\F_3)$ on $y^2+y=x^3$, and a further explicit calculation
shows that the invariant ring has degrees 1,3,8.

\subsection{Anticanonical Del Pezzo surfaces}
\label{ss:dps}

The other geometric approach, though only applicable in sporadic cases,
covers a large number of cases, most directly the Weyl groups of type $E$.
It is well-known that the configuration of $-1$-curves on a del Pezzo
surface (e.g., the 27 lines on a cubic surface) have combinatorics related
to such Weyl groups.  For instance, a del Pezzo surface $X$ of degree $1$
has 240 $-1$-curves, which may be identified with the 240 roots of a root
system of type $E_8$.  Indeed, if $e$ is such a $-1$-curve, then $e+K_X$
has self-intersection $-2$, and the lattice spanned by such elements (the
orthogonal complement of $K_X$) only differs from the $E_8$ lattice by
negating the inner product.  To see this, note that $X$ is, at least
geometrically, expressible as a blowup of $\P^2$ in a sequence of 8 points,
and thus $\Pic(X)$ has a basis consisting of $h$ (the pullback of
$\sO_{\P^2}(1)$) and the consecutive exceptional curves $e_i$.  The
expression as an iterated blowup (a ``blowdown structure'', following
\cite{HarbourneB:1997}) is not unique, and thus neither is the given basis.

In that basis, $K_X=-3h+e_1+\cdots+e_8$ and $K_X^\perp$ is easily seen to
be a negative definite even lattice of determinant 1.  Indeed, $K_X^\perp$
has basis $h-e_1-e_2-e_3$, $e_1-e_2$,\dots, $e_7-e_8$, which are easily
seen (after negating the intersection form) to be the simple roots of a
root system of type $E_8$.  It thus has $240$ elements of self-intersection
$-2$, and subtracting $K_X$ from any such root gives a $-1$-curve.

To relate this picture to our problem, we note that the anticanonical
divisor $-K_X$ is effective and (apart from a few cases in characteristic 2
and 3) represented by a pencil of (generically smooth) genus 1 curves.  If
we choose such a curve $C$, then there is an induced homomorphism
$\rho_X:\Pic(X)\to \Pic(C)$ such that $\deg(\rho_X(D))=-D\cdot K_X$.  In
particular, $\rho_X(-K_X)\in \Pic^1(C)$ determines a marked point of $C$
allowing us to view it as an elliptic curve $E$, and, given a choice of
blowdown structure, $\rho_X|_{K_X^\perp}$ gives a point of
$\Hom(\Lambda_{E_8},E)\cong E\otimes \Lambda_{E_8}$.  Moreover, we can
recover $X$ as an iterated blowup from $\rho_X$ in the following way.  The
divisor class $\rho(h)$ has degree 3, so embeds $C$ in $\P^2$ (more
precisely, in a Brauer-Severi variety geometrically isomorphic to $\P^2$),
while the divisor classes $\rho(e_i)$ have degree 1, so give a sequence of
$8$ points of $C$.  After blowing up the first $d$ of those points, we
still have an embedding of $C$ as an anticanonical curve, and thus
$\rho(e_{d+1})$ specifies a point of the surface, allowing us to blow it
up.

We thus obtain a natural family of smooth projective rational surfaces over
$\Lambda_{E_8}\otimes E$, which is $W(E_8)$-invariant in a certain weak
sense.  For $1\le i\le 7$, the reflection in $e_i-e_{i+1}$ swaps
$\rho(e_i)$ and $\rho(e_{i+1})$.  If the points are the same, this does
nothing, while when the points are different, the surfaces agree since
blowups in distinct points commute.  For the remaining simple root, if
$\rho(e_1)$,\dots,$\rho(e_3)$ are not collinear, then we can obtain a new
description as an iterated blowup by applying a quadratic transformation.
This corresponds to a reflection in $h-e_1-e_2-e_3$, and the
non-collinearity condition is nothing other than the requirement that
$\rho(h-e_1-e_2-e_3)\ne 0$, so that the quadratic transformation is defined
on the complement of the reflection hyperplane.  We thus see in general
that any two fibers of the family lying over the same $W(E_8)$-orbit are
isomorphic.  This does not, however, extend to an action of $W(E_8)$ on the
total space of the family: e.g., when $\rho(h-e_1-e_2-e_3)=0$, the divisor
class $h-e_1-e_2-e_3$ is effective, and thus the corresponding reflection
does not preserve the effective cone!

A related issue is that the surfaces thus obtained are not in general del
Pezzo surfaces: the anticanonical bundle is nef, but fails to be ample
whenever $\rho$ maps some root to the identity (making the corresponding
positive root effective).  This gives rise to an additional family of
surfaces over $\Lambda_{E_8}\otimes E$ by taking images under the
anticanonical map
\[
X\to \Proj\bigoplus_{0\le i} \Gamma(X;\omega_X^{-i}).
\]
The fibers of the new family have ample anticanonical bundle, but may be
singular.  This new family {\em does} have an action of $W(E_8)$ (see the
discussion in \cite{fildefs}), and thus descends to a family over
the quotient $\Lambda_{E_8}\otimes E/W(E_8)$ we want to understand.

The point of the above discussion is that there is an alternate description
of this family that lets us directly identify it with a weighted projective
space.  Since $X$ comes with an embedding of $E$ as an anticanonical curve,
there is (up to scalars) a corresponding section $t\in
\Gamma(X;\omega_X^{-1})$ vanishing on $E$.  Taking the quotient of the
graded algebra by $t-1$ gives a natural {\em filtered} algebra, and the
associated graded of the filtered algebra is the quotient by $t$.  By
cohomological considerations (again see \cite{fildefs}), this associated
graded is nothing other than the homogeneous coordinate ring of $E$ itself.
In particular, if we lift the generators of the coordinate ring of $E$ to
the filtered algebra, they will still generate, and the relation(s) they
satisfy will differ by the addition of lower order terms.  Since the
homogeneous coordinate ring of $E$ has generators $w,x,y$ of degrees
$1,2,3$ and a single relation of the form
\[
y^2+a_{10}wxy+a_{30}w^3y=x^3+a_{20}w^2x^2+a_{40}w^4x+a_{60}w^6,
\]
the filtered algebra also has a single relation, of the form
\begin{align}
y^2+(a_{10}w+a_{11})xy+{}&
(a_{30}w^3+a_{31}w^2+a_{32}w+a_{33})y=\notag\\
x^3&{}+(a_{20}w^2+a_{21}w+a_{22})x^2+(a_{40}w^4+a_{41}w^3+a_{42}w^2+a_{43}w+a_{44})x\notag\\
&{}+(a_{60}w^6+a_{61}w^5+a_{62}w^4+a_{63}w^3+a_{64}w^2+a_{65}w+a_{66}).
\end{align}
Now, there is also a unipotent group acting by adding lower-order terms to
the generators (i.e., by changing the choice of lifted generators):
\[
(y,x,w)\mapsto (y+b_1x+c_1w^2+b_2w+b_3,x+d_1w+c_2,w+e_1)
\]
and thus the family involves quotienting by this action.  This group acts
freely (by \cite[Cor.~2.5]{fildefs}, since $E$ is smooth), so that
(\cite[Prop.~2.3]{fildefs}) we can exhaust the corresponding freedom by
setting appropriate coefficients to 0, and thus obtain a family over a
weighted projective space with $(5-4)$ generators of degree 1, $(4-2)$
generators of degree 2, $(3-1)$ generators of degree 3, $(2-0)$ generators
of degree 4, and $(1-0)$ of degrees 5 and 6, or in other words of degrees
$122334456$.  (Note that although the reference largely assumes
characteristic 0, the results we use are proved in arbitrary
characteristic.)  Since this is integral and 8-dimensional, it is a
quotient of $(\Lambda_{E_8}\otimes E)/W(E_8)$, and degree considerations
(along with the identification in \cite{fildefs} of the line bundle
corresponding to the action of $\G_m$ on the anticanonical section) tell us
that it must be isomorphic to $(\Lambda_{E_8}\otimes E)/W(E_8)$, and thus
the main theorem applies in the $E_8$ case.

One caveat here is that although fibers over distinct points of
$(\Lambda_{E_8}\otimes E)/W(E_8)$ correspond to nonisomorphic pairs
$(X,E)$, and {\em discrete} automorphisms of fibers are given by the
quotient of the stabilizer by the reflection subgroup of the stabilizer,
singular fibers can also have infinitesimal automorphisms; we will see
examples of this phenomenon below.
 
For $E_7$ and $E_6$, the argument is similar.  The main difference is that
now $-K_X$ has degree $2$ or $3$ respectively, so that the anticanonical
curve is no longer canonically an elliptic curve.  We thus have instead a
smooth genus 1 curve $C$ with a marked divisor class $D$ of degree $d$ and
a homomorphism $\rho_X:\Pic(X)/\langle K_X\rangle\to \Pic(C)/\langle
D\rangle$ which can be used in a similar way to construct families of
surfaces.  The key point is that a representatation as an iterated blowup
gives an isomorphism $\Pic(X)/\langle K_X\rangle\cong
\Lambda_{E_{9-d}}^\perp$, and thus if $C=E$ and $D=d[0]$, then the family
of smooth surfaces is parametrized by
\[
\Hom(\Lambda_{E_{9-d}}^\perp,E)\cong \Lambda_{E_{9-d}}\otimes E,
\]
and we again obtain a family of singular surfaces over the quotient we want
to understand.

For $d=2$ (i.e., $E_7$), the original model of $C$ is as a degree 4
hypersurface in a weighted projective space of degrees $1,1,2$, and we find
that the coefficients of lower order terms have degreees $1^62^43^24$,
while the unipotent action has coefficients of degrees $1^42$, so that
$(\Lambda_{E_7}\otimes E)/W(E_7)$ is a weighted projective space of degrees
$1^22^33^24$.  For $d=3$, the original model of $C$ is a cubic curve in
$\P^2$, and thus the coefficients lower order terms have degrees $1^62^33$
and the unipotent action has degrees $1^3$, telling us that
$(\Lambda_{E_6}\otimes E)/W(E_6)$ is a weighted projedtive space of degrees
$1^32^33$.

Something similar applies for $d=4$, where $C$ is now a complete
intersection of two quadrics in $\P^3$ and we get a description of
$(\Lambda_{D_5}\otimes E)/W(D_5)$ as a weighted projective space.  (For
$d\ge 5$, $C$ is no longer a complete intersection, and thus the
coefficients of lower order terms must satisfy certain equations in order
for the family to be a flat deformation of the cone over $C$; for $d=5$,
the quotient is $(\Lambda_{A_4}\otimes E)/W(A_4)\cong \P^4$, while for
$d=6$ the group is decomposable ($A_2A_1)$ and the quotient is $\P^2\times
\P^1$.)

Note that in each case there is an induced action of $\Aut_D(C)$ (i.e., the
automorphisms of $C$ fixing $D$) on the weighted projective space, which is
fairly straightforward to compute: the group acts projectively on the
original model of $C$ and normalizes the unipotent action, and thus the
action of any element can be computed by first acting on the generators of
the filtered algebra in the same way and then using the unipotent action to
put it back into standard form (i.e., zeroing out suitable coordinates).
This actually works scheme-theoretically, and thus in particular for
infinitesimal automorphisms in $\Aut_D(C)$ when those exist.

We note in passing that there is an analogous construction of
$\Lambda_{D_n}\otimes E/D_n$ as a moduli space of rational surfaces with
anticanonical curve isomorphic to $E$ and a marked rational ruling.  Since
we have simpler ways to compute $D_n$ invariants, and the construction does
respect triality, so does not help with the $F_4$ cases, we omit
the details.

\subsection{Automorphisms}
\label{ss:surf_auts}

There are a number of other cases of the main theorem in which we can view
the quotient as a moduli space of surfaces.  The key idea is that if
$Y\subset \Lambda_{E_m}\otimes E$ is a coset of an abelian subvariety, then
we can pull back to a family of surfaces over $Y$.  The quotient of $Y$ by
its stabilizer in $W(E_m)$ maps to its image in $(\Lambda_{E_m}\otimes
E)/W(E_m)$, and thus when the stabilizer acts as a reflection group, we can
hope to identify the quotient by showing that suitable generators of the
ambient weighted projective space vanish.  More generally, if the
reflection group of interest is only a subgroup of the stabilizer, we still
obtain a family of (possibly singular) surfaces over the quotient, but now
those surfaces have additional structure.

The problem with this approach as stated is that it is difficult in general
to find a modular interpretation of the subfamily that is simple enough to
let us write down the corresponding family of equations, even when such an
interpretation exists.  Moreover, if the structure corresponding to $Y$ is
not unique, its image in $(\Lambda_{E_m}\otimes E)/W(E_m)$ will tend to
intersect itself, and thus recovering the quotient of $Y$ requires
normalization.

As an example, consider the case that $Y$ is a reflection hypersurface
in $\Lambda_{E_8}\otimes E_8$, corresponding to the case that the
surface is singular.  This maps to a degree 60 hypersurface in
$(\Lambda_{E_8}\otimes E_8)/W(E_8)$, since the invariant divisor
given by the sum of reflection hypersurfaces represents 30 times
the invariant principal polarization and each hypersurface appears
with even multiplicity in any invariant.  Since there are 1366606
monomials of degree 60 on the weighted projective space, this is
hard enough to compute, let alone identify as a weighted projective
space (since $Y\cong \Lambda_{E_7}\otimes E$).  We can get around
this by moving the singular point to the origin (and thus in
particular separating it from any other singularities), or even
better by blowing down the anticanonical curve passing through the
singular point to obtain a del Pezzo surface of degree 2.

There is, however, one type of additional structure that is unique and in
fact automatically (in good characteristic) preserves the weighted
projective space structure.  This comes from the following general fact
about weighted projective spaces.

\begin{prop}
  Let $A$ be a finitely generated graded polynomial algebra with generators
  of positive degree, and let $G$ be a linearly reductive affine group
  scheme acting on $A$ preserving the grading.  Then the $G$-fixed
  subscheme of $\Spec(A)$ has the form $\Spec(B)$ where $B$ is a graded
  polynomial algebra with generators of positive degree.
\end{prop}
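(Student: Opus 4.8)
The plan is to exploit the fact that a connected graded polynomial algebra is canonically the symmetric algebra on its space of indecomposables, and that linear reductivity lets us make every relevant splitting $G$-equivariant. Write $V := A_+/A_+^2$ for the graded $k$-vector space of indecomposables, where $A_+ = \bigoplus_{d>0} A_d$; since the generators have positive degree we have $A_0 = k$, each $A_d$ is finite-dimensional, and $V$ is concentrated in positive degrees. Because $A$ is assumed polynomial, any homogeneous lift of a basis of $V$ is a set of algebraically independent generators, so the induced graded map $\Sym(V) \to A$ is an isomorphism. The group $G$ acts on the exact sequence $0 \to A_+^2 \to A_+ \to V \to 0$ of graded $G$-modules, and since $G$ is linearly reductive I would choose, degree by degree, a $G$-equivariant section $V \to A_+$ of the projection. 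The resulting graded isomorphism $\Sym(V) \xrightarrow{\sim} A$ is then $G$-equivariant, reducing the problem to the linear case $A = \Sym(V)$ with $V$ a graded $G$-representation in positive degrees.

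Next I would describe the fixed subscheme concretely. Writing $\sigma : A \to A \otimes R$ for the coaction of the Hopf algebra $R = \mathcal{O}(G)$, the $G$-fixed subscheme of $\Spec(A)$ is $\Spec(B)$ with $B = A/I$, where $I$ is the ideal generated by all elements $(\mathrm{id} \otimes \xi)(\sigma(a) - a \otimes 1)$ for $a \in A$ and $\xi \in R^\ast$; equivalently, $B$ is the largest graded quotient algebra of $A$ on which $G$ acts trivially (so that a $T$-point $\phi : A \to S$ is fixed precisely when it kills $I$). The task thus becomes to identify this largest trivial quotient and to check that it is again a graded polynomial ring with generators of positive degree.

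Finally I would compute $I$ using linear reductivity. Decompose the $G$-representation $V = V^G \oplus V'$, where $V'$ is the sum of the nontrivial isotypic components (a canonical $G$-stable complement, available because $G$ is linearly reductive), and let $(V')$ denote the ideal of $A = \Sym(V)$ generated by $V'$. On one hand $A/(V') \cong \Sym(V^G)$ carries the trivial $G$-action, so $I \subseteq (V')$. On the other hand, the restriction to $V'$ of the projection $A \twoheadrightarrow B$ is a $G$-equivariant linear map from a representation with no trivial constituents into a module with trivial action, hence is zero; thus $V'$ maps to $0$ in $B$ and $(V') \subseteq I$. Therefore $I = (V')$ and $B \cong \Sym(V^G)$, a graded polynomial algebra whose generators (a homogeneous basis of $V^G \subseteq V$) all have positive degree, as required. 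The main obstacle is the scheme-theoretic bookkeeping around the fixed-point ideal—in particular getting $I$ right and ensuring the argument survives for non-reduced $G$ in positive characteristic—and it is precisely at the splittings (of $A_+ \to V$ and of $V$ into isotypic pieces) that the linear reductivity hypothesis is indispensable.
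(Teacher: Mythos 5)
Your proposal is correct and takes essentially the same route as the paper's own proof: the paper likewise uses linear reductivity to choose, degree by degree, equivariant complements to the decomposables (equivalently, your equivariant section of $A_+\to A_+/A_+^2$), reducing to a $G$-equivariant identification $A\cong\Sym(M)$, and then shows the fixed-point ideal is generated by the nontrivial isotypic part of $M$, so that $B\cong\Sym(M^G)$.
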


\begin{proof}
  Suppose first that $A$ is $G$-equivariantly the symmetric algebra of a
  $G$-module $M$.  The $G$-fixed subscheme is cut out by equations $g\cdot
  m=m$ where $m$ ranges over a basis of $M$ and $g$ ranges over $G$.  More
  precisely, there is a natural map $k[G]\otimes M\to M$ acting by
  $g\otimes m\mapsto gm-m$ and the $G$-fixed subscheme is cut out by the
  ideal generated by the image of that map.  The image is clearly
  $G$-invariant and contains a vector in each nontrivial irreducible
  constituent of $M$ and thus the $G$-fixed subscheme has the form
  $\Spec(B)$ where $B$ is the symmetric algebra of the quotient $N$ of $M$
  by the nontrivial isotypic components.  If $M$ is homogeneous with
  respect to the grading on $A$, then so is $N$, and thus $B$ is graded as
  required.

  For more general actions, note that for any $d>0$, the degree $d$
  component $A_d$ of $A$ is a $G$-module, and there is a natural invariant
  submodule given by the intersection of $A_d$ with the subring of $A$
  generated by $A_e$ for $e<d$.  Since $G$ is linearly reductive, that
  invariant submodule has a complement $M_d$.  Moreover, $\dim(M_d)$ is
  equal to the number of generators of $A$ of degree $d$, and thus $M_d=0$
  for all but finitely many degrees.  It is then straightforward to see
  that for any way of choosing an equivariant complement in each degree, we
  have $A\cong \Sym^*(\bigoplus_{d>0} M_d)$.  Thus the argument in the
  previous paragraph applies.
\end{proof}

\begin{rem}
  In our case, since we are dealing with finite group schemes, the
  ``linearly reductive'' condition is automatic in characteristic 0, while
  in characteristic $p$ reduces to saying that (a) the connected component
  of the identity is a multiplicative $p$-group (i.e., a product of groups
  $\mu_{p^l}$) and (b) the quotient by that subgroup has order prime to
  $p$.  Of course, the claim {\em can} still hold for more general groups,
  but does fail for some cases of interest to us.
\end{rem}

Note that this applies to the action of the group on the graded polynomial
algebra.  If instead we are given an action of a linearly reductive group
scheme $G$ on a weighted projective space (even in the stacky form), we
cannot quite apply the proposition, as $G$ need not act on the homogeneous
coordinate ring.  We do, however, have a natural action of an extension
$\G_m.G$ on the ring, and such an extension remains linearly reductive.  In
particular, the proof of the Proposition still lets us write the
homogeneous coordinate ring as the symmetric algebra of a $\G_m.G$-module
which is positively graded for the action of $\G_m$.  The $G$-fixed
subscheme (or substack) is then the {\em union} of subschemes corresponding
to the different (multi)sections $G\to \G_m.G$, each of which is a weighted
projective space.  We may also consider subgroups $\mu_l\subset \G_m$,
which correspond to the substacks on which the isotropy group contains
$\mu_l$; in scheme-theoretic terms, these typically correspond to
components of the singular locus.  (The exception is when they have
codimension 1, as a $\mu_l$-quotient ``singularity'' of codimension 1 is
not actually singular!)

In our setting, the weighted projective space arises as a quotient $A/G$
and there is an action of a group $N/G$ on the quotient, where $N$ is the
normalizer of $G$ inside $\Aut(A)$.  (This is not quite correct as stated,
as the weighted projective space structure depends on a choice of
$G$-equivariant line bundle, and $N$ is required to preserve that structure
as well; that is, we replace $N$ by the kernel of the natural map $N\to
\Hom(G,\G_m)$ measuring the failure to preserve equivariance.)  Thus for
any linearly reductive subgroup of $N/G$, the fixed locus of the quotient
will be a union of weighted projective spaces.

Now, fix a subgroup $H\subset N/G$ with $|H|\in k^*$ (so in particular $H$
is \'etale) and suppose $x\in A/G$ is a (not necessarily closed) point in
the $H$-fixed locus.  Then the preimage of $x$ in $A$ is a $G$-orbit of
points $y_1,\dots,y_m$, and for any $h\in H$ there is at least one preimage
$n_h\in N$ such that $n_hy_1=y_1$.  (Moreover, since $h$ has order prime to
$p$, we may choose $n_h$ to have order prime to $p$, as otherwise we may
replace $n_h$ by the power $n_h^{ap}$ such that $ap$ is congruent to $1$
modulo the order of $h$.)  For each $h$, the fixed subscheme of $n_h$ is a
coset of a {\em reduced} subgroup scheme of $A$, and thus the intersection
of the fixed subschemes of the preimages is a coset of a reduced subgroup
scheme, or equivalently a finite union of cosets of an abelian subvariety
$B$.  If $y_1$ is not the generic point $y'_1$ of the coset $y_1+B$, then
the image $x'\in A/G$ of $y'_1$ is a point in the $H$-fixed locus
generalizing $x$.  It follows immediately that every associated point of
the $H$-fixed locus in $A/G$ is the image of the generic point of a coset
of an abelian subvariety!

In general, it is not too difficult to use this to identify cosets of
abelian subvarieties corresponding to the $H$-fixed locus in $A/G$.
Indeed, for any subgroup $H^+$ of $N$ with image $H$ and kernel of
invertible order, the fixed locus is either empty or a union of cosets of
abelian subvarieties, and each such coset is fixed by a possibly larger
group of the form $H^+$.  We thus obtain a collection of pairs $(H^+,y+B)$
of groups and cosets such that $H^+$ is the stabilizer of $y+B$ inside $HG$
and $y+B$ is a component of the fixed locus of $H^+$.  We then see that
$y+B$ corresponds to a component of the $H$-fixed locus in $A/G$ iff
every subgroup of $H^+$ surjecting on $H$ has fixed subscheme of the same
dimension.  Note that since every component of the fixed subscheme (in good
characteristic) gives a special case of the main theorem, we can still get
useful results without having to identify {\em every} component.

There are two caveats here: First, although this gives us cases of the main
theorem, it does not tell us the degrees of the invariants unless we have
some other means of determining how $H$ acts on $A/G$.  Second, since the
different components of the $H$-fixed locus are distinguished by how $H$
acts on the equivariant line bundle, it can be subtle to determine the
bijection between the components of the $H$-fixed locus and the $G$-orbits
of cosets arising from the above calculations.  In practice, these issues
are relatively minor: in the cases arising from families of surfaces, we
{\em do} know the action of $H$, and the components we are interested in
tend to be distinguished up to equivariant isomorphism by their dimension.

In most cases, a similar calculation applies to the $\mu_l$-fixed locus
(i.e., the points of the weighted projective stack with stabilizer
containing $\mu_l$).  We may assume $l$ is prime (if $l=ql'$ with $q$
prime, then the $\mu_l$-fixed subscheme is the $\mu_{l'}$-fixed subscheme
of the $\mu_q$-fixed subscheme), and for simplicity assume characteristic
0.  If the $\mu_l$-fixed locus is not codimension 1, then it corresponds to
a component of the singular locus of the quotient, and thus to a component
of the locus of points where the stabilizer is not generated by
reflections.  More precisely, a point of $A$ maps to the $\mu_l$-fixed
locus iff the quotient of its stabilizer by the reflection subgroup of the
stabilizer has order a multiple of $l$.  We again obtain a collection of
pairs $(H^+,y+B)$ where $H^+$ is the stabilizer of $y+B$, $y+B$ is a
component of the fixed subscheme of $H^+$, and the reflection subgroup of
$H^+$ has index $l$.  The coset $y+B$ maps to the generic point of the
$\mu_l$-fixed locus iff it is maximal among the cosets arising in this way,
or equivalently iff any generator of $H^+$ over its reflection subgroup has
fixed subscheme of dimension $\dim(B)$.  (Compare the calculation in
\cite{MarkushevichD/MoreauA:2022a} of the singular locus of the quotient
corresponding to $ST_{24}$.)

As an example of the latter, consider the $\mu_2$-fixed locus in $(E\otimes
\Lambda_{E_8})/W(E_8)$, a weighted projective space of degrees $2,2,4,4,6$
(i.e., the even degrees of generators of the $E_8$ quotient).  There are 6
possibilities for an element of $W(E_8)$ that might generate $H^+$: they
must have even order and have $4$-dimensional fixed subscheme.  We may rule
out the order 6 cases (as their cubes still generate $H^+$ and have fixed
locus at least as large) and can easily see that the {\em identity}
components of the fixed loci do not work (the element being contained in
the reflection subgroup of the stabilizer).  There is only one case that
allows nonidentity components: the order 2 element $g_2$ with centralizer
of order $221184$ has fixed subgroup isomorphic to $E^4\times E[2]^2$.  For
any component that is fixed by a reflection, multiplying $g_2$ by that
reflection increases the dimension of the fixed subgroup, and we thus find
that the only components that survive are those (a single orbit) for which
the two elements of $E[2]$ generate $E[2]$.  The stabilizer is then an
extension of $W(F_4)$ by an elementary abelian subgroup of order $2^4$ that
acts on $E^4$ by translations, allowing us to see that this corresponds to
$\Lambda_{F_4}\otimes E/W(F_4)$.  Unfortunately, this argument breaks badly
in characteristic 2, as in that case the fixed subgroup is highly
nonreduced, and the appearance of $\mu_2$ in the stabilizers is quite
subtle.

Luckily, there is an alternate approach to $\Lambda_{F_4}\otimes E/W(F_4)$
that avoids this: rather then embed $F_4$ in $E_8$, we embed it in $E_6$ as
the sublattice fixed by a diagram automorphism.  This corresponds to the
coset of $N$ containing $[-1]$, and we can easily compute the action on the
weighted projective space using the model as a family of cubic surfaces.
To be precise, we may model $\Lambda_{E_6}\otimes E/W(E_6)$ as the family
of filtered algebras extending the cubic curve corresponding to the {\em
  Weierstrass} model of the curve.  Over $\Z[1/6]$, such surfaces have
canonical form
\[
y^2z - x^3-a_4 xz^2-a_6 z^3 +
i_1 xy+j_1 xz + k_1 z^2
+i_2 y + j_2 x + k_2 z
+i_3
=0,
\]
and one lift of the action of $[-1]$ corresponds to negating $y$ and thus
negating $i_1$ and $i_2$.  The fixed locus of this action is a weighted
projective space with generators $1,1,2,2,3$, while the other lift negates
$j_1$, $k_1$, $i_2$, $i_3$ so has fixed locus a weighted projective space
with generators $1,2,2$.  On the other hand, it is easy to see that the
generic point of $\Lambda_{F_4}\otimes E\subset \Lambda_{E_6}\otimes E$ is
{\em only} fixed by the diagram automorphism, and thus is a component of
the $[-1]$-fixed locus.

To extend this to finite characteristic, we note that the map
$\Lambda_{F_4}\otimes E\to (\Lambda_{E_6}\otimes E)/W(E_6)$ always maps to
the $[-1]$-fixed locus.  If we can show by direct means that the
$[-1]$-fixed locus remains $4$-dimensional and the unique component of
dimension $4$ remains a weighted projective space of the same degrees, then
the map from $(\Lambda_{F_4}\otimes E)/W(F_4)$ is an isomorphism by degree
considerations.  This is actually automatic in odd characteristic: in any
odd characteristic, the homogeneous coordinate ring of the $E_6$ quotient
is a symmetric power of a $[\mu_2]$-equivariant graded module, and that module
lifts equivariantly to characteristic 0, so the fixed locus is the ``same''
union of weighted projective spaces!  Thus it remains only to consider the
characteristic 2 case.  The above canonical form of course fails to work,
but there is still a covering by ``canonical'' forms: for each choice of 3
degree 2 monomials, there is an open subset of the family of cubic curves
where the other degree 2 monomials can be eliminated (and uniquely so!)
using the unipotent freedom, and those open subsets cover the space of
smooth cubics (and in particular the smooth Weierstrass cubics).  For each
such choice, we can determine the action of $[-1]$ by first acting linearly
on $y,x,z$ in the same way that $[-1]$ acts on the curve and then reducing
to the corresponding ``canonical'' form.  In particular, every smooth
ordinary Weierstrass curve of characteristic 2 has a model with $a_1=1$,
$a_3=0$, giving us a ``canonical'' form
\[
y^2z + xyz +x^3+a_2 x^2z+a_4 xz^2+a_6z^3
+i_1 y^2+j_1 x^2+k_1 z^2
+i_2 y + j_2 x + k_2 z
+i_3
=0
\]
with $[-1]$ acting by
\[
(i_1,j_1,k_1,i_2,j_2,k_2,i_3)\mapsto (i_1,j_1+i_1,k_1,i_2,j_2+i_2,k_2,i_3).
\]
(In this case, the homogeneous action of $[-1]$ preserves the standard
form.)  In particular, we see that the $[-1]$-fixed locus is $i_1=i_2=0$,
so is still a weighted projective space.  (Moreover, we see that the locus
$i_1=i_2=0$ has a natural interpretation as a moduli stack of affine cubic
surfaces on which the action of $[-1]$ extends.)  For supersingular curves,
we use a somewhat different ``canonical'' form:
\[
y^2z + yz^2 +x^3+a_4 xz^2+a_6z^3
+i_1 xy+j_1 yz+k_1 xz
+i_2 y +j_2 x + k_2 z
+i_3
=
0.
\]
Here $[-1]$ acts on the cubic surface as $y\mapsto y+z+j_1$ and
on invariants as
\[
(i_1,j_1,k_1,i_2,j_2,k_2,i_3)
\mapsto
(i_1,j_1,k_1+i_1,i_2,j_2+i_1j_1,k_2+i_2,i_3+i_2j_1).
\]
Again, the fixed locus is $i_1=i_2=0$ and thus gives the correct weighted
projective space, finishing the proof of the main theorem for
$\Lambda_{F_4}\otimes E/W(F_4)$.

As for $G_2$, there is another crystallographic action of $W(F_4)$
associated to a choice of $2$-isogeny $\phi$ on $E$ (corresponding to the
quotient of $\Lambda_{F_4}\otimes F_4$ by the corresponding subgroup of
$E[2]^2$).  This naturally leads one to consider the $\ker(\phi)$-fixed
locus in $\Lambda_{E_7}\otimes E_7/W(E_7)$ (this being the only one of our
three cases in which we can see a cyclic $2$-torsion subgroup).  Over
$\Z[1/2]$, the typical degree 2 curve of genus 1 with an explicit action of
a $2$-torsion element has the form
\[
y^2 = x^4 + a_2 x^2 w^2 + a_4 w^4
\]
with the $2$-torsion element acting by $(y,x,w)\mapsto (-y,-x,w)$ or
$(y,x,w)\mapsto (-y,x,-w)$.  The associated surface has reduced form
\[
y^2-x^4-a_2 x^2 w^2-a_4 w^4
+i_1 x^2 w + j_1 x w^2
+i_2 x^2 + j_2 xw + k_2 w^2
+i_3 x + j_3 w
+i_4
\]
and the $2$-torsion element acts by negating $j_1$, $j_2$, and $i_3$ or
by negating $i_1$, $j_2$, and $j_3$, so that in either case the fixed
subscheme is a weighted projective space of degrees $1,2,2,3,4$.

To identify this as a crystallographic quotient, we note that the
corresponding element $g\in W(E_7)$ cannot fix any $-1$-curve, as the
corresponding point of $E$ would need to be invariant under the
translation.  There is a unique conjugacy class of elements of 2-power
order that fix no $-1$-curve and have 4-dimensional fixed subgroup, so that
the above weighted projective space arises as a quotient of one of the
corresponding fixed components.  (In fact, {\em both} weighted projective
spaces arise as such quotients, since they are swapped by the action of the
other $2$-torsion point!)  In terms of the parametrization as $6$-fold
blowups of $\P^1\times \P^1$, we are thus left with the surfaces such that
\[
x_2-x_1=x_4-x_3=x_6-x_5=t
\]
(where $t$ is the chosen $2$-torsion point) and $s+f-x_1-x_3-x_5\in
E[2]\subset \Pic^1(E)$.  We thus have one component for each choice of
$s+f-x_1-x_3-x_5$.  The element that swaps $x_1$ and $x_2$ preserves the
subscheme but adds $t$ to $s+f-x_1-x_3-x_5$, so we in fact have two orbits
of components that are swapped under translating all six $x_i$ by the other
generator of the $2$-torsion group.  So we may as well consider the
component for which $s+f-x_1-x_3-x_5=0$.  We may thus parametrize the
subvariety by $s,f,x_1,x_3$.  The action $(e_1,e_2,e_5,e_6)\mapsto
(e_2,e_1,e_6,e_5)$ adds $t$ to $x_1$ while $(e_3,e_4,e_5,e_6)\mapsto
(e_4,e_3,e_6,e_5)$ adds $t$ to $x_3$, giving a translation subgroup of the
stabilizer, while the remainder of the stabilizer acts as $W(F_4)$.  We
thus find that the weighted projective space is the quotient
$\Lambda_{F_4}\otimes E/\langle t\rangle^2.W(F_4)$.

The corresponding map $\Lambda_{F_4}\otimes E\to \Lambda_{E_7}\otimes
E/W(E_7)$ extends to arbitrary pairs $(E,t)$ (including in characteristic
2) and gives a morphism
\[
\Lambda_{F_4}\otimes E/\langle t\rangle^2.W(F_4)\to \Lambda_{E_7}\otimes
E/W(E_7)
\]
factoring through the $\langle t\rangle$-fixed subscheme of the codomain.
(The one caveat is that in characteristic $2$, $t$ may become $0$, in which
case $\langle t\rangle$ should be replaced by the corresponding nonreduced
subgroup.)  Indeed, this reduces to the claim that the morphism
\[
\Lambda_{F_4}\otimes E\to \Lambda_{E_7}\otimes E/W(E_7)
\]
is invariant under precomposition with $\langle t\rangle^2.W(F_4)$ and
postcomposition with $\langle t\rangle$.  Both invariance conditions are
closed, and thus reduce to the generic (i.e., characteristic 0) case.  We
thus see that, just as in the other $F_4$ case, it suffices to show that
the $\langle t\rangle$-fixed subscheme remains a weighted projective space
with the correct degrees.  This is automatic when $\langle t\rangle$ is
linearly reductive, which covers both the odd characteristic cases and the
case of ordinary curves of characteristic 2 with $\langle t\rangle$
nonreduced.  For the \'etale characteristic 2 case, we find that the
typical surface with an action of $\langle t\rangle$ has the form
\[
y^2 + (x^2 + a_1 t x + j_2)y + (a_4 t^4+i_1 t^3+i_2 t^2+i_3 t+i_4)=0,
\]
while the typical surface with an action of $\alpha_2$ has the form
\[
y^2 + (t^2+i_1tu) y = tx^3 + i_2 x^2u^2+j_2txu^2+i_3 tu^3+i_4 u^4=0,
\]
so that in both cases the fixed subscheme behaves well.

Note that we have now settled all of the cases corresponding to {\em real}
reflection groups; in characteristic 0, this was shown in
\cite{BernsteinJ/SchwarzmanO:2006,KacVG/PetersonDH:1984}, but the above
arguments show that it continues to hold in every finite characteristic.

Some further notes on the above constructions.  First, the main technical issue
with the approach to $F_4$ via $\mu_2$-invariants on the $E_8$ quotient is
the fact that $\mu_2$ acts trivially on the quotient scheme.  We can work
around this by instead considering the map from the cone over
$\Lambda_{F_4}\otimes E$ to the quotient of the cone over
$\Lambda_{E_8}\otimes E$, and again find that the invariance condition is
closed so remains true in finite characteristic.  It is then automatic that
the codomain is flat!

The approach via $E_6$, though it requires some additional verification in
characteristic 2, has simpler combinatorics, and in particular makes it
easy to determine the analogous quotient by $W(D_4)$.  Indeed, we can
choose our canonical form so that the coefficient of $y^2$ is $z$, and note
that the subscheme $z=0$ is then generically a union of three coincident
lines.  There is then an $S_3$-cover of the family over which those lines
are rational, and since $W(E_6)$ acts faithfully on lines, that $S_3$-cover
must be the quotient by a normal subgroup of $W(F_4)$.  Since the quotient
is still a weighted projective space (with degrees $1,1,1,1,2$), the normal
subgroup must be a reflection group, and is thus $W(D_4)$.  Note that this
gives an alternate approach to $\Lambda_{G_2}\otimes E/W(G_2)$ as the locus
of $\Lambda_{D_4}\otimes E/W(D_4)$ fixed by triality.  (This can also be
derived from the $\mu_3$-fixed locus in the $E_8$ quotient, but this again
requires a careful choice of component of the fixed subgroup.)  We can also
obtain the other $G_2$ quotient via surfaces, in this case as a family of
{\em cubic} surfaces with an extension of the action of a cyclic
$3$-torsion group.

\subsection{$j=0$ cases}
\label{ss:complexsurf0}

In general, the application of the above results to surfaces involve
families of surfaces on which some automorphisms of the graded algebra
extend to the filtered algebra.  For general curves, there is not much more
to say.  Indeed, the automorphism group of the graded algebra is
$\G_m.E[d].\Aut_0(E)$, and there is an induced filtration of any subgroup.
In particular, if the group meets $\G_m$ nontrivially, then we may first
pass to the fixed locus of the intersection $\mu_l$, which is only
interesting (i.e., corresponds to an imprimitive reflection group) for
$d=1$, $l\in \{2,3\}$.  Moreover, when $d=1$, the other element $[-1]$ of
the group actually extends canonically, so those are the only cases arising
from $E_8$.  Similarly, when $d=2$, the element $x\mapsto D-x$ mapping to
$[-1]$ extends canonically, and we have already considered translations.
Finally, for $d=3$, we have already considered extensions of $[-1]$ and of
translations, and since $[-1]\in W(G_2)$ the surfaces on which a
translation extend also have extensions of $[-1]$.

If $j(E)\in \{0,1728\}$, the curve has additional automorphisms, and thus
we may consider extensions of those automorphisms instead.  Note that the
extension to bad characteristic will tend to fail for components not of the
maximal dimension, as the smaller components tend to coalesce into the
larger components upon reduction.

\begin{eg}
  In characteristic not 3, consider filtered deformations of the
  Weierstrass curve $y^2+w^3y=x^3$ on which the automorphism
  $(y,x,w)\mapsto (y,\zeta_3 x,w)$ extends.  The corresponding family is a
  weighted projective space with degrees $2,3,4,5,6$:
  \[
  y^2+w^3y-x^3 + i_2 wy + i_3 y + i_4 w^2 + i_5 w + i_6=0
  \]
  The corresponding subvariety is a component of $\ker(g-[\zeta_3])$ and
  there is a unique choice of $g$ (up to conjugation) making the kernel
  $4$-dimensional.  We may then verify that the kernel is reduced and
  connected (with polarization of degree 9), and the stabilizer of the
  kernel is the Shephard-Todd group $ST_{32}$ of order $155520$.
  This is the unique crystallographic action of $ST_{32}$, and thus it
  remains only to show that the quotient remains a weighted projective
  space in finite characteristic.  As in the $F_4$ cases, this is immediate
  in characteristic prime to $3$, and thus it remains only to show that the
  fixed subscheme is still a weighted projective space in characteristic
  3.  In that case, the curve is $y^2=x^3-w^4x$ with $[\zeta_3]$ acting by
  $x\mapsto x+w^2$, and the corresponding family of filtered algebras
  before extending the $[\zeta_3]$ action is
  \[
  y^2-x^3+w^4x + a_1 wx^2 + a_2 x^2+b_2 w^2x + a_3 wx + b_3 w^3 + a_4 x +
  b_4 w^2 + a_5 w + a_6=0.
  \]
  The action of $[\zeta_3]$ is straightforward to compute (i.e., act by
  $x\mapsto x+w^2$ then reduce back to standard form).  In this case, the
  fixed locus is not actually a weighted projective space, but its reduced
  subscheme {\em is}, and the domain of the usual map is reduced, so the
  argument still works to show that the main theorem holds for $ST_{32}$ in
  characteristic 3.
\end{eg}

\begin{eg}
  Similarly, over $\Z[1/3]$, extensions of $(x,y,z)\mapsto (\zeta_3 x,y,z)$
  to filtered deformations of the cubic curve $x^3+y^3+z^3=0$ have the form
  \[
  x^3+y^3+z^3 + a_1 yz + a_2 y + b_2 z + a_3
  \]
  and correspond to the unique case of the main theorem corresponding to
  $ST_{25}$ (acting on its unique crystallographic lattice, an abelian
  3-fold with degree $9$ polarization).  In characteristic 3, the
  (reduced!) fixed subscheme corresponds to the family
  \[
  y^2z - x^3 + xz^2 + a_1 y^2 + a_2 y + b_2 z + a_3=0
  \]
  so that again the characteristic 3 case has polynomial invariants.  In
  either case, there is a residual action of $E[\sqrt{-3}].[\mu_2]$ commuting
  with the action of $[\mu_3]$, so acting on the above families.  The
  quotient by $[\mu_2]$ (which swaps $y$ and $z$ in the $\Z[1/3]$ model and
  negates $y$ in the characteristic 3 model) is in either case polynomial
  with invariants of degrees $1,2,3,4$, corresponding to the degree $9$
  lattice for $ST_{26}$, while the quotient by $E[\sqrt{-3}].[\mu_2]$
  corresponds to the degree $3$ lattice for $ST_{26}$ and has invariants of
  degrees $1,3,4,6$.  Indeed, over $\Z[1/3]$, $E[\sqrt{-3}]$ acts by
  $(a_1,a_2,b_2,a_3)\mapsto (a_1,\zeta_3 a_2,\zeta_3^{-1}b_2,a_3)$, so the
  calculation is straightforward even in characteristic 2.  In
  characteristic 3, $\alpha_3$ acts on the graded algebra by
  $(x,y,z)\mapsto (x+\epsilon y-\epsilon^2 z,y+\epsilon z,z)$ (with
  $\epsilon^3=0$) and thus on the invariants by $(a_1,a_2,b_2,a_3)\mapsto
  (a_1,a_2+ a_1^2 \epsilon,b_2+\epsilon a_2-\epsilon^2
  a_1^2,a_3-a_1a_2\epsilon+a_1^3\epsilon^3)$.  We thus find that the
  (algebraically independent) polynomials
  $a_1,a_3+a_1b_2,a_2^2+a_1^2b_2,b_2^3$ are invariant under
  $E[\sqrt{-3}].[\mu_2]$ and thus by degree considerations give generating
  invariants as required.
\end{eg}

\begin{rem}
  We could also obtain the degree 9 $ST_{26}$ case directly via the
  $[\mu_3]$-fixed subscheme of the $E_7$ quotient.  The corresponding
  surface has a natural nodal anticanonical curve, and the $ST_{25}$
  quotient is the double cover that splits the two branches at the node.
  The degree 3 case of $ST_{26}$ also arises directly as a $3$-dimensional
  component of the $[\mu_3]$-fixed subscheme of the $E_8$ quotient, but
  that description fails in characteristic 3.
\end{rem}

\begin{rem}
  There is another conjugacy class of order 3 elements in $E[3].\Aut_0(E)$
  arising by composing $[\zeta_3]$ with a translation, but the
  corresponding fixed subscheme is a $1$-dimensional weighted projective
  space, so not interesting for our purposes.  (In addition, this conjugacy
  class degenerates to the above conjugacy class in characteristic 3!)
\end{rem}

\begin{eg}
  The $F_4$ case corresponds to the group $ST_5$; computing it directly
  from $F_4$ gives a weighted projective space of degrees $1,2,3$ away from
  characteristic 3.  We may also interpret this via the $[\mu_6]$-fixed
  subscheme of the $E_6$ quotient, and thus as the $[-1]$-fixed subscheme
  of the $ST_{26}$ quotient, the latter giving a weighted projective space
  away from characteristic 2.  Thus the claim holds in general.  The
  intersection of $ST_5$ with $W(D_4)$ is $ST_4$, and thus the quotient by
  $ST_4$ arises by splitting the corresponding (cyclic!) cubic, and has
  invariants of degrees $1,1,2$.  (This corresponds to a twisted action of
  triality on the $D_4$ quotient.)
\end{eg}

\begin{eg}
  The other lattice for $F_4$ depends on a choice of $2$-torsion point, so
  does not normally admit an action of $[\mu_3]$.  However, in
  characteristic 2, the 2-torsion subgroup is the kernel of Frobenius and
  thus {\em is} normalized by $[\mu_3]$.  We may thus take the
  $[\mu_3]$-invariants in this case, and find that the resulting (sporadic)
  degree 3 lattice for $ST_5$ has invariants of degrees $1,3,4$.
\end{eg}

\subsection{$j=1728$ cases}
\label{ss:complexsurf1728}

\begin{eg}
  In characteristic 0, the Weierstrasss curve with $j=1728$ can be put in
  the form $y^2=x^3-xw^4$, and the canonical form of the filtered algebra
  over the $E_8$ quotient is
  \[
  y^2-x^3+(w^4+a_2 w^2+b_3 w+a_4)x+(a_1 w^5+b_2 w^4+a_3 w^3+b_4 w^2+a_5 w+a_6)=0.
  \]
  The $[\mu_4]$-fixed subscheme has a unique $4$-dimensional component
  corresponding to surfaces on which $(y,x,w)\mapsto (-iy,-x,-w)$ extends,
  or equivalently with $b_2=b_3=b_4=a_6=0$.  This is a weighted projective
  space with generators of degrees $1,2,3,4,5$, corresponding to a
  crystallographic action of $ST_{31}$.  The corresponding lattice has
  polarization of degree 16, and the equivariant line bundle is a square
  as a line bundle, but not as an {\em equivariant} line bundle.  Again,
  the analogous claim in odd characteristic follows immediately.  However,
  the weighted projective space property appears to break in characteristic
  2, as in that case the fixed subscheme is a hypersurface of degree 6 in a
  weighted projective space with degrees $1,2,3,4,5,6$.  (This has the same
  Hilbert series as that of the main component in characteristic 0, so is
  indeed equal to the quotient by $ST_{31}$.) This is not actually a
  counterexample to the main theorem, as in characteristic 2 the line
  bundle {\em is} equivariantly a square and thus the degree 1 invariant
  has divisor $2\Theta$ and is a square of an invariant section of the
  square root bundle.  Where the degree 1 invariant vanishes, the equation
  of the fixed subcheme is thus itself a square of a degree 3 invariant
  (which can be used as a generator), and we can then choose the degree 5
  invariant to make the equation of the form $a_1a_5=a_3^2$.  Writing
  $a_1=a_{1/2}^2$, we see that $a_3/a_{1/2}$ is an integral invariant of
  degree $5/2$ (the square root of the degree 5 invariant), and thus that
  the quotient in characteristic 2 is a weighted projective space with
  degrees $1/2,2,5/2,4,6$ (or degrees $1,4,5,8,12$ relative to the
  principal polarization).
\end{eg}

\begin{rem}
  The failure of the line bundle to be equivariantly a square is measured
  by a class in $H^1(ST_{31};A[2])$, and the corresponding action without a
  global fixed point {\em does} have an invariant theta divisor (and is
  still crystallographic).  We will see below that the quotient by this
  torsor action is a weighted projective space with degrees $1,4,5,8,12$,
  just as in the characteristic 2 case.  (Unfortunately, this action does
  not appear to correspond to a family of surfaces in odd characteristic.)
\end{rem}

\begin{rem}
  There is a homomorphism $ST_{31}\to S_6$ such that the preimage of the
  point stabilizer is the reflection group $ST_{29}$.  The corresponding
  action of $ST_{29}$ on the principally polarized abelian variety is again
  crystallographic, and the quotient is thus a degree 6 cover of the above
  family of surfaces.  However, it is unclear how to describe this covering in
  natural geometric terms (i.e., to give a simple description of the six
  structures being permuted by the $S_6$ quotient), and thus we have been
  unable to give a surface-based argument that this quotient is a weighted
  projective space.
\end{rem}

\begin{eg}
  Consider the $E_7$ quotient associated to the curve $y^2=x^4+w^4$ (i.e.,
  the $j=1728$ curve embedded by the divisor $E[1-i]$).  The $[\mu_4]$-fixed
  locus is contained in the $[-1]$-fixed locus, which is in turn equal to
  the $E[1-i]$-fixed locus (since $y\mapsto -y$ differs from $[-1]$ by
  translation in the nontrivial $1-i$-torsion point).  In other words, the
  $E_7$ case reduces to the $F_4$ case corresponding to the $2$-torsion
  group $E[1-i]$.  The corresponding family is
  \[
  y^2=x^4+w^4+a_2 w^2+ a_3 w +a_4,
  \]
  so a weighted projective space of degrees $2,3,4$, corresponding to the
  crystallographic action of $ST_8$ on a lattice of degree $2$.  In
  characteristic 2, we instead have (after passing to the reduced
  subscheme) the family
  \[
  y^2 + w^2 y = wx^3 + a_2 (x^2+wx)+a_3 w+a_4=0,
  \]
  so again the result follows by flatness.
\end{eg}

\begin{rem}
  Again, there is another conjugacy class of actions (embedding by
  the divisor $2[0]$), but without interesting fixed subscheme.
\end{rem}

\begin{eg}
  Similarly, an action of $[\mu_4]$ on a cubic curve restricts to an action
  of $[\mu_2]$, so again reduces to the analogous $F_4$ quotient.  The
  group is again $ST_8$, now with a polarization of degree 8 (which is
  twice the invariant polarization of degree 2), and having degrees
  $1,2,3$.  The corresponding family over $\Z[1/2]$ is
  \[
  y^2z+xz^2=x^3 + a_1 x^2 + a_2 x + a_3
  \]
  (with an extension of $(y,x,z)\mapsto (-iy,x,-z)$).
\end{eg}

\begin{rem}
  Of course, this graded algebra is nothing other than the second Veronese
  of the graded algebra appearing in our previous discussion of $ST_8$, and
  is a weighted projective space because the previous algebra has precisely
  one generator of odd degree.  In particular, the second algebra has an
  invariant of degree 3 which is a square on the abelian variety.  A likely
  candidate for this invariant is $a_3$, which cuts out a component of the
  locus where the surface is singular.
\end{rem}

\subsection{A $j=8000$ case}
\label{ss:complexsurf8000}

\begin{eg}
  \label{eg:ST12}
There is another complex crystallographic case that arises as a fixed
subscheme of a real crystallographic group, although again it is difficult
to describe the family explicitly.  The key idea is that the Coxeter group
$F_4$ has an additional diagram automorphism swapping the two conjugacy
classes of roots.  This automorphism does not act on $\Lambda_{F_4}\otimes
E$, and in the $2$-isogeny case becomes an isomorphism to the variety
corresponding to the dual $2$-isogeny.  There are, however, two cases in
which $E$ has a {\em self-dual} (up to isomorphism) $2$-isogeny: for
$j=1728$, $1-i$ is self-dual (up to composition with an automorphism), as
is the endomorphism $\sqrt{-2}$ on the curve with $j=8000$.  (The remaining
case $(-1+\sqrt{-7})/2$ of an endomorphism of degree 2 is not self-dual.)

The $\sqrt{-1}$ case gives nothing interesting, but in the $\sqrt{-2}$
case, we obtain a lattice for $ST_{12}$ with induced polarization twice a
principal polarization.  The quotient must be a weighted projective space
with degrees a submultiset of the degrees $1,2,2,3,4$ for $F_4$, and thus
degree considerations force it to have degrees $2,4,6$ relative to the
principal polarization.  Of course, this argument fails in characteristic
2, and we will see below that in that case the quotient has degrees $1,3,8$
(as does the relevant torsor action of $ST_{12}$.)
\end{eg}

\subsection{Quaternionic cases}
\label{ss:quatsurf}

When $E$ is supersingular of characteristic 2 or 3, we have even more
automorphisms to extend.  Of course, extending a single automorphism will
just give the corresponding special case of a complex group, but we can
consider cases in which a larger subgroup extends.  Of course, the
resulting quaternionic representation embeds in the original
representation, and since this embedding multiplies dimensions by 4, the
only interesting cases lie in $E_8$.

\begin{eg}
  Let $E$ be the curve $y^2=x^3-w^4x$ over $\bar\F_3$, and consider surfaces to
  which the entire group $\Aut_0(E)$ extends.  The center $[\mu_2]$ extends
  canonically, so we may start by extending
  the normal subgroup $[\mu_6]$ (which gives a weighted projective space as
  shown above) and then extending the residual action of the quotient.
  The result is the family
  \[
  y^2 = x^3 - (t^2+c_2)^2 x + c_3 t^3 + c_5 t,
  \]
  and shows that the quaternionic reflection group $2.\Alt_6\subset
  \GL_2(\Q_3)$ has quotient a weighted projective space of degrees $2,3,5$.
\end{eg}

\begin{eg}
  \label{eg:2_-^{1+4}.Alt_5}
  Let $E$ be the curve $y^2+y=x^3$ over $\bar\F_2$.  We can again write
  down the typical surface with an extension of $\Aut_0(E)$:
  \[
    y^2+t^3 y = x^3 + (c_4 t^2 + c_5 t + c_6),
  \]
  and conclude that the corresponding crystallographic action of
  $2_-^{1+4}.\Alt_5$ has quotient the weighted projective space with
  degrees $4,5,6$.  Note that the polarization is actually twice a
  principal polarization; we will see (aided by the above calculation) that
  the principal polarization also gives polynomial invariants, of degrees
  $5,8,12$.  (We also get polynomial invariants from $4\Theta$, of degrees
  $2,3,5$.)
\end{eg}

\begin{rem}
  We could also consider extensions of $Q_8$ in characteristic 2.  In
  addition to the above family, we get another family with degrees
  $1,2,6$, but the corresponding group is imprimitive, so does not give
  anything new.
\end{rem}

Both of the above examples also arise by taking fixed subschemes by
suitable outer automorphisms of quotients by complex reflection groups (in
both cases arising from the fact that the complex conjugate group is
isomorphic).  There is another interesting example of such automorphisms
coming from the group $ST_{34}$ of rank 6.  This has an outer automorphism
acting by complex conjugation on the center, which in characteristic 3 acts
on the abelian variety (using the fact that there is a unit in $Q_3$ acting
by complex conjugation on $\zeta_3$).  Since this is an involution and the
characteristic is odd, we immediately conclude (presuming that the main
theorem holds for $ST_{34}$) that it gives rise to a case of the main
theorem, and in particular shows that the corresponding lattice (twice a
principal polarization) for $[\mu_2]\times \PGU_3(\F_3)$ has quotient a
weighted projective space.  Once we have shown that $ST_{34}$ has degrees
$1,3,4,6,7,9,12$, it will follow (since we know the product of degrees)
that the degrees of $[\mu_2]\times \PGU_3(\F_3)$ are either $1,3,7,12$ or
$1,4,7,9$.  (In fact, since the quotient in this case is tamely ramified,
we can rule out the former by computing that the canonical class has degree
$42$ relative to the principal polarization, and $1+3+7+12\ne 42/2=1+4+7+9$.)

\section{Direct calculations}

\subsection{Generalities on using normal subgroups}

For \'etale groups acting linearly on affine space, we can always find the
invariants of any given degree by solving the equations $gm=m$ for $g$
ranging over generators of the group.  This can be done more generally for
(cones over) projective schemes by replacing the equations by $gm-m\in I$
where $I$ is the ideal cutting out the homogeneous coordinate ring.
(Similarly, when the group is not \'etale, we may take $g$ to be a suitable
point of $G$ over some $k$-algebra $R$ and insist that $gm-m\in I\otimes
R$.)  When the quotient is a polynomial ring, we can then simply find
invariants of degrees $1,2,\dots$ and check for invariants of each degree
that are not in the ring generated by the invariants of lower degree, until
we get $n+1$ such invariants (which we then verify are algebraically
independent).

However, for this approach to be feasible in our case, we need to be able
to explicitly write down the homogeneous coordinate ring of the polarized
abelian variety, and this tends to be difficult even for abelian surfaces!
Part of the difficulty is that we are given the polarization as a Hermitian
matrix in $\End(E^n)$, and although there are (at least in principle) ways
to compute a divisor representing the polarization, it is much more
difficult to find {\em invariant} such divisors.  (Indeed, in several
cases, there {\em is} no $G$-invariant divisor representing the minimal
polarization.)  Thus even when we {\em can} write down the homogeneous
coordinate ring, it is typically unclear how to write down the action of $G$.

The one exception is when the polarization is split, so that $A\cong E^n$.
More generally, if there is {\em some} lattice (possibly not
crystallographic) on which our group preserves a split polariation, we can
express $A$ as a quotient of $E^n$ by the relevant torsion subscheme, or
equivalently replace $G$ by a somewhat larger group scheme.  This, of
course, is precisely what we did in the imprimitive case; unfortunately, it
is easy to see that this can {\em only} happen when $G$ is imprimitive!

If our group $G$ has a normal subgroup $N$, we can hope that the quotient
by $N$ may be easier to work with.  The simplest case is that $N$ is itself
a crystallographic reflection group, as then we may take the invariants of
$G/N$ on the corresponding weighted projective space.  (This still has the
difficulty that the action of $G/N$ may not be obvious, but we will be able
to finesse this.)  Another case is that $N=[\mu_2]$, so that we are looking
for the action of $G/N$ on the {\em Kummer variety} $A/[\mu_2]$ of $A$.
The Kummer variety tends to be simpler than $A$ itself (e.g., for a
principally polarized abelian surface which is {\em not} split, the Kummer
is a quartic hypersurface in $\P^3$, while the homogeneous coordinate ring
of $A$ itself requires at leat 9 generators), and moreover comes with an
action of an extension $\mu_4.A[2].(G/[\mu_2])$, which we can hope is
enough (together with the known Hilbert series of the Kummer: e.g.,
$1+\sum_{n>0} (2^{d-1}(n^d+1)) t^n$ for the $2\Theta$ embedding of a
principal polarization) to uniquely determine the ideal.  (We will also
encounter one more case in which $N$ is not a reflection group but the
quotient by $N$ turns out to be a hypersurface in $\P^5$.)

\subsection{Subgroups of $2^{1+4}_-.\!\Alt_5$}
\label{ss:alt5}

The simplest instance of this approach involves the (characteristic 2)
quaternionic group $2^{1+4}_-.\!\Alt_5$ of order 1920 and its
primitive reflection subgroups of order 640 and 192.  The normal subgroup
$2^{1+4}_-$ is itself a reflection group, namely $G_2(Q_8,[\mu_2])$, and the
corresponding ring of invariants is the free polynomial ring
$\bar\F_2[x,w,y]$ with $\deg(x)=\deg(w)=1$, $\deg(y)=8$.  Thus to compute
the invariants of these primitive quaternionic reflection groups, it
suffices to determine how $\Alt_5$ acts on this ring.

The modular representation theory of $\Alt_5$ is of course well-understood,
and in particular we find that it has precisely three $2$-dimensional
representations: in addition to a trivial representation, it has two
representations coming from isomorphisms $\Alt_5\cong \SL_2(\F_4)$.  In
particular, the action on $\langle x,w\rangle$ must be one of these three
representations.  Since $\Alt_5$ must act faithfully on $\bar\F_2[x,w,y]$,
we can immediately rule out the trivial case, and the other two cases are
related by automorphisms of $\Alt_5$, so that WLOG $\Alt_5$ acts as
$\SL_2(\F_4)$ on $x$ and $w$.  Unfortunately, this does not {\em quite} pin
down the representation on $\bar\F_2[y,x,w]$: determining the action in
degree 8 is an extension problem, and nontrivial extensions exist.

Luckily, we have already computed the ring of invariants in even degree, in
Example \ref{eg:2_-^{1+4}.Alt_5} above (as a family of surfaces with an
action of $\Aut_0(E)$), and in particular see that
$\bar\F_2[x,w,y]^{\Alt_5}$ has an element $y'$ of degree 8 (this was degree
4 in Example \ref{eg:2_-^{1+4}.Alt_5}, but this was relative to twice the
principal polarization).  On the other hand, the ring
$\bar\F_2[x,w]^{\SL_2(\F_4)}$ is a polynomial ring in generators
\[
i_5 = x^4w+xw^4\qquad
i_{12} = \frac{x^{16}w+xw^{16}}{x^4w+xw^4},
\]
so does not itself contain any elements of degree 8.  It follows that $y'$
is not in $\bar\F_2[x,w]$ and thus generates $\bar\F_2[x,w,y]$ over
$\bar\F_2[x,w]$.  We may thus assume WLOG that $y=y'$, so that $y$ is
$\Alt_5$-invariant.

It then follows that for any $H\subset \Alt_5$, one has
\[
\bar\F_2[x,w,y]^H
\cong
\bar\F_2[x,w]^H[y],
\]
and thus we reduce to understanding the invariants of the {\em linear}
action of $H$.  In the three cases of interest, the classical invariants
are again a polynomial ring, and thus the main theorem holds for those
cases.  To be precise, we have the following degrees relative to the
principal polarization:
\begin{align}
  2_-^{1+4}.S_3&: 2,3,8\\
  2_-^{1+4}.\!\Dih_5&: 2,5,8\\
  2_-^{1+4}.\!\Alt_5&: 5,8,12
\end{align}
We also see that in each case $2\Theta$ gives a polynomial ring:
\begin{align}
  2_-^{1+4}.S_3&: 1,3,4\\
  2_-^{1+4}.\!\Dih_5&: 1,4,5\\
  2_-^{1+4}.\!\Alt_5&: 4,5,6,
\end{align}
as does $4\Theta$ for the last case:
\[
  2_-^{1+4}.\!\Alt_5: 2,3,5.
\]

\subsection{$ST_{12}$ and $2.S_5$}
\label{ss:ST12_normal}

The unique crystallographic lattice for $ST_{12}$ is a principally
polarized abelian surface with non-split polarization (except in
characteristic 2!), and thus its Kummer variety (embedded via $2\Theta$) is
a quartic hypersurface in $\P^3$ with a linear action of
$ST_{12}/[\mu_2]\cong S_4$.  (Moreover, $A$ is itself a Jacobian, of the
hyperelliptic curve $y^2=x^5w-xw^5$, that being the only genus 2 curve with
an action of $ST_{12}$.)

Including the action of $A[2]$ extends this to an action of a subgroup
$2^{2+4}.S_4\subset 2^{2+4}.\Sp_4(\F_2)\cong ST_{31}$.  There are four
subgroups of the given form, but only one admits a section of the map to
$S_4$, and thus the action of $\mu_4.A[2].S_4$ is uniquely determined.  The
result turns out to be (after putting the Heisenberg group $2^{2+4}$ into
standard form) a monomial matrix group, and thus it is easy to see that it
has a 2-dimensional space of invariants of degree $4$, spanned by
$x_1^4+x_2^4+x_3^4+x_4^4$ and $x_1x_2x_3x_4$.  To see which quartic in this
pencil cuts out the Kummer, we note that the natural action of $S_4$ fixes
a point of the Kummer.  There are four classes of section of the
homomorphism to $S_4$ forming a single orbit under (a) multiplying by the
sign character and (b) conjugating by the normalizer inside $ST_{31}$, and
thus WLOG the point which is fixed is $(i{:}1{:}1{:}1)$.  We thus conclude that
the Kummer has the form
\[
x_1^4+x_2^4+x_3^4+x_4^4+4i x_1x_2x_3x_4=0,
\]
and can readily check that this not only contains the given point, but is
singular there (and at the other 15 images under $A[2]$).

If we rescale the first coordinate to make the fixed point $(1{:}1{:}1{:}1)$,
then the subgroup $S_4$ simply permutes the coordinates, so that the
non-torsor problem reduces to understanding the $S_4$-invariants on
the surface
\[
x_1^4+x_2^4+x_3^4+x_4^4-4 x_1x_2x_3x_4=0.
\]
Writing this in terms of the elementary symmetric functions gives the
explicitly $S_4$-invariant form
\[
e_1^4-4e_1^2e_2+4e_1e_3+2e_2^2-8e_4=0
\]
and thus we immediately see that $e_1$, $e_2$, $e_3$ are algebraically
independent invariant functions on the Kummer, so that the quotient by
$ST_{12}$ is a weighted projective space with degrees $1$, $2$, $3$
(relative to twice the principal polarization), recovering the result of
Example \ref{eg:ST12} above.

If we instead rescale to make the fixed point $({-}1{:}1{:}1{:}1)$, then
the Kummer is still permutation-invariant, but now the permutation action
of $S_4$ does not fix any point of the surface.  In terms of the
invariants, the surface becomes
\[
e_1^4-4e_1^2e_2+4e_1e_3+2e_2^2=0
\]
or equivalently
\[
e_2^2 = e_1 (-2e_3 + 2e_1e_2-e_1^3/2)
\]
Moreover, we see that the divisor $e_1=0$ on the Kummer has multiplicity 2,
and thus is the square of a section of ${\cal L}(\Theta)$.  We thus
conclude that
\[
\sqrt{e_1},e_2/\sqrt{e_1},e_4
\]
are algebraically independent invariant elements of the homogeneous
coordinate ring coming from the principal polarization, and thus that the
quotient in this torsor case is again weighted projective, with degrees
$1,3,8$ relative to the principal polarization.

The above calculation continues to work in odd finite characteristic (the
invariants of $S_4$ work over $\Z$), but fails in characteristic 2 since
the above equation no longer represents a Kummer surface.  (This is not
surprising, as the above calculation implicitly assumed $A$ was
ordinary!)  Luckily, in that case the abelian surface has split
polarization, corresponding to the fact that $ST_{12}$ is isomorphic to an
imprimitive quaternionic reflection group of characteristic 2.  Thus in
this case we can easily write down both the homogeneous coordinate ring of
$A$ and the explicit action of the group, and thus can readily compute the
invariants up to degree 8.  We find as expected that the invariant ring is
polynomial with degrees $1,3,8$, just as in the torsor case.

There is also sporadic behavior in characteristic 5, where the quartic
surface turns out to have additional symmetries.  To be precise, the
quartic is preserved by a larger group of the form $2^{2+4}.S_5$
(corresponding to an action of $2.S_5$ on the abelian surface).  Again,
there are essentially two sections of $S_5$, one fixing a (singular) point
and one fixing a theta divisor.

For the subgroup fixing a point, the action of $S_5$ is the standard
reflection representation with invariants of degrees $2,3,4,5$, and the
equation of the Kummer becomes $e_4+e_2^2=0$, so that the quotient is again
weighted projective, now with degrees $2,3,5$.

For the other subgroup, this argument fails: the action of $S_5$ is now the
{\em dual} of the standard representation, and thus no longer has
polynomial invariants.  This does not in itself contradict polynomiality of
the invariant ring of the quotient, but after passing to the principal
polarization, we find that the invariant ring has Hilbert series
\[
\frac{1+t^{15}+t^{30}}{(1-t)(1-t^9)(1-t^{40})}.
\]
That is, by an explicit computation in degree 40, we find that the the
invariant ring is generated to that degree by elements of degrees $1$, $9$,
$15$, and $40$, the invariants of degrees $1$, $9$, and $40$ are
algebraically independent, and thus by degree considerations the invariant
ring is a cubic extension of the ring they generate.  This implies that the
quotient is a hypersurface of degree 45 in a weighted projective space with
degrees $1$, $9$, $15$, and $40$.  (To see that this is the strongly
crystallographic bundle, note that degree considerations imply that the
degree 1 invariant does not vanish on any reflection hypersurface and thus
every reflection acts as a strong reflection on ${\cal L}(\Theta)$.)

We thus find that the torsor case of $2.S_5\subset \GL_2(Q_8)$ is one of
the counterexamples to the main theorem!

\subsection{$ST_{24}$ and $\pm \PGU_3(\F_3)$}
\label{ss:ST24}

For the $ST_{24}\cong \pm \PSL_2(\F_7)$ case\footnote{Our approach is
similar in spirit to the approach of \cite{MarkushevichD/MoreauA:2022b},
except that we work with an {\em algebraic} description rather than an
analytic description as done op.~cit.}, the crystallographic lattice is a
principally polarized abelian $3$-fold with non-split polarization.  Such a
3-fold is the Jacobian of a curve, which (at least in characteristic 0)
cannot be hyperelliptic, lest it induce (via the action on the quotient
$\P^1$) a $2$-dimensional projective representation of $\PSL_2(\F_7)$.
(The only 2-dimensional projective representation of $\PSL_2(\F_7)$ is the
obvious one in characteristic 7.)  In particular, we see that $A$ must be
the Jacobian of a plane quartic invariant under some $3$-dimensional
faithful projective representation of $\PSL_2(\F_7)$, and thus conclude
that the curve must be the Klein quartic $x^3y+y^3z+z^3x=0$.  (This fails
in characteristic 7, where the curve becomes the hyperelliptic curve
$y^2=x^7w-xw^7$.)

Although in principle we could use this to explicitly compute the Kummer,
we can also compute the Kummer directly as in the $ST_{12}$ case.  The idea
here is that the action of the complex multiplication naturally splits the
$2$-torsion into two summands: the kernels of $(-1\pm \sqrt{-7})/2$.  Thus
in the action of $\mu_4.A[2].\PSL_2(\F_7)$ on the sections of ${\cal
  L}(2\Theta)$, we may diagonalize one of the summands and make the other
summand act by permutations, at which point $\PSL_2(\F_7)\cong
\PSL_3(\F_2)$ itself acts by permutations.  In particular, we again obtain
a monomial action.

One difficulty here is that the Kummer is no longer a hypersurface, so that
we cannot simply find an invariant of the appropriate degree.  However, in
characteristic 0 the Jacobian of a non-hyperelliptic curve is known to be
the {\em singular locus} of a hypersurface, the ``Coble quartic''
(originally constructed in \cite{CobleAB:1961}; see also
\cite{BeauvilleA:2003} for a treatment that extends cleanly to odd
characteristic).  This cuts the problem down to a $\P^2$ worth of candidate
invariant quartics, which can be reduced to $\P^1$ by observing that the
quartic must be singular at the image of the identity of $A$, which must be
invariant under the permutation group $\PSL_2(\F_7)$ and thus of the form
$(a{:}1{:}1{:}1{:}1{:}1{:}1{:}1)$.  (One must further have $a\ne 1$ since
it should have trivial stabilizer for the action of $A[2]$.)  This gives
two equations (vanishing of derivatives in the two orbits) and thus
generically gives a unique candidate quartic with coefficients polynomial
in $a$.  (This in fact works over $\Z[a,1/14(a-1)]$.)  We can further
reduce the search space by noting that the point must in fact be singular
on the 3-dimensional Kummer, which implies that the Hessian of the quartic
must have rank less than 5 at the point.  The minors of the Hessian give
polynomials in $a$ that must vanish, and we find that there are precisely
three common irreducible factors of those minors:
\[
a-1, a^2+7, 2a^2+7a+7.
\]
We have already ruled out the case $a=1$, while the case $a=\sqrt{-7}$
fails because it makes the quartic a square.  We thus conclude that
$a=\frac{-7\pm \sqrt{-7}}{4}$.  This gives two possible choices for the
quartic, but the results are actually equivalent.  Indeed, when normalizing
the group action, we chose one of the two CM-invariant subgroups of the
$2$-torsion, but those subgroups are only defined over $\Q(\sqrt{-7})$, so
that we actually expect to get two Galois-conjugate solutions.  And, of
course, the two conjugates will give us the same invariant theory!
Moreover, if we compute the Gr\"obner basis of the singular locus over
$\Q(\sqrt{-7})$, we find that the coefficients of the resulting ideal are
integral away from the primes over $2$ and $7$, and thus (since the Kummer
is integral!) this is the correct Gr\"obner basis over $\Z[\sqrt{-7},1/14]$.

In good characteristic (i.e., away from $2$, $3$, $7$), we can compute the
invariants by first computing generating invariants of the group on the
ambient polynomial ring, then looking for relations modulo the ideal (and
in particular for expressions of invariants as polynoimals in other
invariants).  Since we are expecting to get a polynomial ring with degrees
independent of the characteristic, we can instead find the invariants in
characteristic 0 and then hope to show that they (or suitably massaged
versions thereof) remain algebraically independent modulo the primes of
interest.  Finding the invariants in characteristic 0 is straightforward,
with the only real complication being that we are working with the square
of the correct line bundle, so actually get a hypersurface.  We conclude
that the invariant ring in characteristic 0 is polynomial with invariants
of degrees $1,2,4,7$.

To descend to finite characteristics, we need to show that the generating
invariants remain algebraically independent, say by showing via elimination
that the invariants of degrees $1,1,2,4$ in the $2\Theta$ ring remain
algebraically independent.  The corresponding Gr\"obner basis calculation
over $\Q(\sqrt{-7})$ is almost valid over $\Z[\sqrt{-7},1/14]$, except that
occasionally it divides by a non-unit.  If we adjoin an inverse every time
that happens, we will get a finitely generated subring of $\Q(\sqrt{-7})$
over which the claim remains valid.  In other words, this gives us an
explicit finite collection of primes away from which the invariant ring
remains polynomial!  We then need simply check that the calculation remains
valid at the remaining primes.  As stated, this is not quite feasible, as
it requires an implementation of Gr\"obner bases over $\Q(\sqrt{-7})$ that
keeps track of divisions.  However, this can be finessed by observing that
the Kummer descends to $\Q$ (indeed, the Klein quartic can be defined over
$\Q$) and using this to do the calculation over $\Q$ instead.

In fact, as implemented, we finesse things further, by using the
description as a Jacobian to come up with an alternate rational model of
the Kummer and check by comparing to the above model that it remains valid
over $\Z[1/14]$.  The point is that the Jacobian of a plane quartic is
birational to the moduli space of representations of the quartic as a
$2\times 2$ determinant of quadrics.  (Such a representation gives a line
bundle by taking the cokernel of the corresponding morphism of vector
bundles.)  This involves a quotient by $\SL_2\otimes \SL_2\cong \SO_4$, and
the Kummer is the quotient by $\GO_4$, so the invariant theory is completely
classical.  The resulting 3-fold in $\P^6$ is missing one section of
$2\Theta$, but it is not {\em too} difficult to guess what that section
must be and thus give a putative model of the Kummer.  One then finds that
there is a unique quartic singular along the putative Kummer and can find
(using the linear dependences between the singular points to pin down a
bijection between the singular points that comes from $\PGL_8$) an explicit
linear transformation putting the quartic into standard form.  This makes
the standard invariant theory calculation somewhat more complicated (since
now $\PSL_2(\F_7)$ has coefficients in $\Q(\zeta_7)$), but simplifies the
rest of the calculation.  In particular, the initial elimination turns out
to be valid away from nine small primes, and the result remains valid at
the seven of those primes that are not $2$ or $7$.  ($7$ fails because the
curve is hyperelliptic, while $2$ fails because we twisted the automorphism
that swaps the subgroups $\mu_2^3$ and $\Z/2\Z^3$ of the $2$-torsion.)
Note that although this does involve computation, it is not a particularly
arduous computation, as the entire calculation takes around $32$
seconds.\footnote{In Magma on a 2021 Macbook Pro.}

For characteristic 2, we note that the standard form of the Kummer has good
reduction modulo one of the two primes over $2$ (the abelian variety has
good reduction and the 2-torsion group we diagonalized reduces to
$\mu_2^3$), and indeed the reduction of the invariant quartic modulo that
prime very nearly has the correct singular locus, differing only by some
embedded components.  Removing those embedded components gives a scheme
with the correct Hilbert series, which must therefore be the desired
reduction of the Kummer.  We can then proceed by a direct calculation to
find the invariants of degree up to 7 and verify that we have the same
hypersurface structure as in characteristic 0.

In characteristic 7, the invariant quartic becomes $(\sum_i x_i^2)^2$,
reflecting the fact that the curve becomes hyperelliptic.  However, we can
avoid this issue by adjoining a new generator $y$ (of degree 2) of the form
$(\sum_i x_i^2)/\sqrt{-7}$ to the characteristic 0 ring.  In the weighted
grevlex ordering with $y$ appearing after the original variables, the
resulting Gr\"obner basis is now valid over $\Z[\sqrt{-7},1/2]$, and thus
the family is flat over $7$, giving the desired reduction of the Kummer.
We can then proceed as usual to verify that we still get polynomial
invariants.

As in the $ST_{12}$ case, there is an additional sporadic case to consider:
in characteristic 3, the Klein quartic is isomorphic to the Fermat quartic,
and thus has symmetry group $\PGU_3(\F_3)$.  We find by the usual
calculation that the invariant ring in the non-torsor case is polynomial of
degrees $1,4,7,9$ (relative to $2\Theta$).  For the torsor case, we find
(by a version of the bootstrapping method considered below) that not only
is the invariant ring not a polynomial ring, but it is apparently not even
Cohen-Macaulay!

\subsection{$2.\!\Alt_5$ and $2.\!\Alt_6$}
\label{ss:alt6}

For surfaces of dimension $>3$ or with non-principal polarizations, there
is relatively little known about the form of the ideal of the Kummer.
There is, however, one more case in which the representation theory of the
group allows us to determine the ideal, namely the characteristic 3 abelian
variety on which the quaternionic reflection group $2.\!\Alt_6$ acts.  In
this case, the polarization has degree 3, and thus the Kummer has Hilbert
series
\[
1+\sum_{n>0} (6n^2+2) t^n
\]
(Using the other eigenspace of $[-1]$ in $\Gamma(2\Theta)$, we actually get
a birational model of the Kummer as the Fermat quartic in $\P^3$, but one
must contract 16 $-2$-curves to get the true Kummer, and it is difficult to
construct the requisite line bundle in an equivariant way.)  We thus see
that we have 8 generators in degree 1 (relative to $2\Theta$) on which
$\mu_4.A[2]$ acts as the sum of two copies of the usual $4$-dimensional
irreducible representation.  It follows that $\mu_4.A[2].\!\Alt_6$ must
respect the given tensor product structure, and since the map to $\Alt_6$
has a section (while the $4$-dimensional projective representation of
$A[2].\!\Alt_6$ does not!), we conclude that the projective action on the
$2$-dimensional tensor factor must come from an isomorphism
$\Alt_6\cong\SL_2(\F_9)$.  Although there are four such isomorphisms and
thus in principle four such representations, they are actually isomorphic:
one outer automorphism of $\Alt_6$ comes from $\GL_2(\F_9)/\SL_2(\F_9)$ so
doesn't change the representation, while Galois conjugation on
$\SL_2(\F_9)$ can be transported to the action of $\Sp_4(2)/\!\Alt_6$ on the
4-dimensional tensor factor, so again doesn't change the representation.

Having determined how the group acts, we then note that since there are
$26$ invariant sections of $4\Theta$ and $36$ quadratic polynomials in the
generators, the ideal must contain at least 10 quadratic relations.  (It
could contain more, as there is no a priori reason to expect the
homogeneous coordinate ring of the Kummer to be generated in degree 1.)
However, it turns out that every $G$-invariant space of quadrics of
dimension at least 10 contains the {\em same} 10-dimensional space of
quadrics.  Moreover, there are two minimal supersubmodules, but one of them
gives a finite quotient algebra while the other gives a geometrically
reducible quotient.  In particular, in neither case can we get an integral
algebra by adjoining generators and relations of higher degree.  This tells
us that the 8 invariant sections of $2\Theta$ satisfy precisely $10$
relations, and thus that all 26 invariant sections of $4\Theta$ are
contained in the ring.  Since we are in odd characteristic, we can then
refer to \cite[Thm.~1.4]{SasakiR:1981} to conclude that the ring is generated in
degree 1 after all!  Moreover, the 10 relations give a ring with the
correct Hilbert series, and thus we cannot be missing any relations.  We
have thus completely determined the ideal of the Kummer!

We can then proceed as usual.  For the subgroup $\Alt_5$ (corresponding to
the quaternionic reflection group $2.\!\Alt_5$), we find that we have
invariants of degrees $1,2,3,5$ satisfying a relation of degree 6 and such
that the invariants of degrees $1,3,5$ have trivial intersection on the
Kummer.  Moreover, the relation is a square modulo the degree 1 invariant,
and thus we can pass to $\Theta$ to get independent invariants of degrees
$1,4,5$ as required.  For the subgroup $\Alt_6$ fixing the identity, we
have invariants of degrees $2,3,5$ with trivial intersection, which again
give a polynomial ring by degree considerations (of course, we already knew
this by Example \ref{eg:2_-^{1+4}.Alt_5} above!).

For the torsor version of $\Alt_6$, we again fail to get a polynomial ring;
the natural equivariant structure on ${\cal L}(\Theta)$ has
non-polynomial (but hypersurface) invariant ring with Hilbert series
\[
\frac{1+t^{15}}{(1-t)(1-t^{10})(1-t^{24})},
\]
and again we note that the degree 1 invariant does not vanish on any
reflection hypersurface, so this is the strongly crystallographic bundle.

\subsection{$ST_{31}$ and $ST_{29}$}
\label{ss:ST31}

For abelian varieties of dimension $>3$, even principally polarized Kummers
tend to have too many generators to allow us to reconstruct the equivariant
ideal; in fact, even {\em writing down} the invariants can become
cumbersome.  For instance, we know from a surface calculation that
one of the generating invariants of $ST_{31}$ is a section of $10\Theta$,
but there are $5008$ sections of this bundle on the Kummer, and thus we
need to specify $5008$ coefficients to determine that invariant.  (And this
becomes far worse for $ST_{34}$, where we will see that we have too many
odd-degree invariants to readily reconstruct from the Kummer, and the
largest invariant lies in a 2985984-dimensional space of sections!)

Luckily, for $ST_{31}$ and $ST_{29}$, we have a normal subgroup $2^{2+4}$
which is large enough to cut down the space of invariants considerably, but
small enough to not require invariants of large degree.  This still leaves
the problem of computing the quotient by the normal subgroup, as again it
is neither a reflection group nor one for which quotients have been
otherwise understood.  At first blush, this appears to require us to
compute the Kummer.  It appears that the group action again gives enough
information to uniquely determine the Kummer (as in the previous section),
but it is cumbersome even to write down the equations lying in the relevant
subrepresentations (as the most natural approach involves summing over the
group), and unclear whether the ensuing Gr\"obner basis calculation could
be done in a reasonable time.

If we are willing to forgo the full action of the group, we can resolve
this problem.  The point is that, although $ST_{31}$ and $ST_{29}$ both
have unique lattices, the normal subgroup $2^{2+4}$ also has a lattice
$E^4$ with split polarization.  We may thus write the true abelian variety
$A$ as the quotient of $E^4$ by an extension $T.2^{2+4}$ where $T$ is a
$2$-torsion group of order 16.  (To be precise, $T$ is generated by
elements $(t_1,t_1,0,0)$, $(t_1,0,t_1,0)$, $(t_1,0,0,t_1)$ and
$(t_2,t_2,t_2,t_2)$, where $t_1\in E[1-i]$ and $t_2\in E[2]\setminus
E[1-i]$.)  Since this makes the group imprimitive, the action of $2^{2+4}$
on $E^4$ is straightforward to compute (we simply permute the coordinates
and act by suitable elements of $[\mu_4]$), and thus it is easy to find
invariants.  (We can, moreover, embed $E$ in $\P^{[1,1,2]}$ via the divisor
$E[1-i]$, so that it becomes the curve $u^2=v^4-w^4$ and the action in
those coordinates is itself monomial.)

In particular, we can readily find the 16 sections of $2\Theta$ on $A$ as
the relevant invariants of $T$ acting on $E^4$.  We find (by linear algebra
on monomials of degree $\le 4$) that they satisfy 10 quadratic relations
and 505 additional quartic relations, and can check that the resulting
quotient has {\em almost} the correct Hilbert series; it agrees with the
Hilbert series of the Kummer in all but degree 2.  In particular, since we
have certainly found all {\em cubic} relations, we see that the ring
generated by sections of $2\Theta$ contains all sections of $6\Theta$ and
thus (again using \cite[Thm.~1.4]{SasakiR:1981} and avoiding characteristic
2) of all higher degree.  In other words, this ideal {\em does} cut out the
Kummer, but the model is not projectively normal.  (It is missing precisely
10 sections of $4\Theta$.)  But this is good enough for our purposes: since
the true invariant ring is normal, we can recover it as the normalization
of the invariant ring in our non-normal model.

We next wish to compute the invariants of the residual action of
$T.2^{2+4}/T.[\mu_2]$.  This is an elementary abelian $2$-group and thus can
be simultaneously diagonalized over $\Z[1/2]$, and in that basis the
problem of finding generators of the invariant ring becomes purely
combinatorial.  (I.e., finding generators of the monoid of monomials in
which the degrees satisfy various constraints modulo 2.)  We thus see that
the generating invariants remain valid in any odd characteristic.
Moreover, we can check that the invariants of degree $>1$ can be expressed
modulo the ideal as polynomials in the linear invariants (with everything
working over $\Z[1/2]$), and thus the invariant ring is generated (apart
possibly from degree 2) by those 6 linear invariants.  Dimension
considerations tell us that these invariants must satisfy a single relation
(the putative missing degree 2 elements have no effect on this), which must
have degree 6.  We can thus solve for this relation to obtain the desired
quotient hypersurface $X\subset \P^5$.

Having done so, the problem is to determine the residual action of $S_6$,
or more generally the residual (projective) action of $A[1-i].S_6$.  The
image of the identity has multiplicity 4 on $X$, and we readily find (in
characteristic 0) that $X$ has precisely 16 points of multiplicity 4.  If
we guess that the projective representation of $A[1-i].S_6$ is the usual
reflection representation of $D_6$, then there are two orbits of 16 points
(swapped by $C_6/D_6$), and we can find a putative linear bijection between
the two sets of points by asking for the sets of four linearly dependent
points to be preserved.  It is then straightforward to solve for the linear
map and verify that $X$ is indeed equivalent over $\Z[1/2]$ to a
$D_6$-invariant sextic, namely
\[
i_2^3 - 4 i_2i_4 + 8 i_6 - 16 e_6=0,
\]
where $i_{2k} := e_k(x_1^2,\dots,x_6^2)$.  (The entire calculation to this
point is again quite fast, requiring less than 3 seconds!)

For the quotient by $ST_{29}/2^{2+4}\cong S_5$, we can either directly
express this in terms of $S_5$-invariants or note that the normal
2-subgroup of $D_5$ surjects on $A[1-i]$ and thus gives the action of
$ST_{29}$ on the quotient.  The advantage of the latter is that it actually
gives us the {\em principal} polarization directly.  We in particular find
that the degree 6 invariant of $D_5$ can be eliminated using the equation
of $X$ and thus that we get invariants of degrees $1,2,4,5,8$.  (Similarly,
we can clearly use the equation to eliminate a degree 6 invariant of $D_6$,
recovering degrees $2,4,6,8,10$ relative to $\Theta$ as we saw via the
surface argument.)

For the torsor action of $ST_{31}$, the quotient acts by the ``other''
$S_6$ in $D_6$ (i.e., the conjugate of the obvious $S_6$ by an element of
$C_6/D_6$), and we find that $e_6$ drops out from the corresponding
expression for $X$.  However, $X$ is still a square modulo $e_1$ and thus
as usual we can adjoin a square root of $e_1$ to see that the torsor case
still gives a weighted projective space, with degrees $1,4,5,8,12$
(agreeing with our guess based on the surface calculation in characteristic
2).

Note that the above calculation fails in characteristic 2, and indeed it
appears that the quotient by $T.2^{2+4}$ is not nearly as nice in
characteristic 2.  (This should not be {\em too} surprising, as the group
$D_6/\mu_2$ reduces to $\alpha_2^4.S_6$.)  We will thus need to use
alternate means to deal with $ST_{29}$ in characteristic 2 (having already
dealt with $ST_{31}$ via a surface argument).

\section{Indirect calculations}

\subsection{Bootstrapping from non-normal subgroups}

In the remaining cases, none of the normal subgroups have particularly nice
(or particularly computable) invariants, and thus we need an alternate
approach.  Part of the benefit of working with the Kummer above was that it
made the linear algebra problem of finding invariants of a given degree
easier by making the space of potential invariants smaller.  This advantage
applies to {\em any} subgroup, even if that subgroup fails to be normal.
Of course, when the subgroup isn't normal, the group no longer acts on the
space, but we can still find equations satisfied by the invariants in many
cases.

The basic idea is as follows.  Let the group scheme $G$ act on a variety
$X$ (both defined over a field $k$), and suppose $V\subset k(X)$ is a
finite-dimensional space of {\em functions} on $X$ which are invariant
under a subgroup $H\subset G$.  Suppose moreover that $G$ is generated over
$H$ by an \'etale subscheme $S$.  Then a function in $V$ is $G$-invariant
iff it satisfies $f(x) = f(s(x))$ for every closed point $(x,s)$ of
$X\times S$.  In particular, each point of $X\times S$ cuts out a subspace
of $V$ (which may have codimension $>1$ if $(x,s)$ is defined over an
extension of $k$), and the intersection of that infinite family of
subspaces is precisely the space $k(X)^G\cap V=:W$.  But we are dealing
with finite-dimensional spaces, and thus there exists a {\em finite}
collection of points $(x,s)$ that gives the correct intersection.

If $k$ is a large finite field $\F_q$, an obvious approach is to choose
{\em random} points $(x,s)$.  Heuristically, the corresponding equations
should behave randomly, and thus after accumulating $\dim(V)-\dim(W)$ such
equations, the probability that the equations fail to cut out $W$ would be
$O(1/q)$.  Choosing more points than needed improves this considerably, and
we note the following bound (which should be applied to $(V/W)^*$); this
gives a bound depending only on the number of excess equations.

\begin{lem}
   For all $m,n\ge 0$, the probability that $m+n$ uniformly chosen random
   points of $\F_q^m$ fail to span $\F_q^m$ is bounded above by
   $q^{-n}/(q-1)$.
\end{lem}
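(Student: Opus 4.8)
The plan is to prove this by a union bound over hyperplanes, which is clean and gives exactly the stated estimate. First I would observe that a collection of vectors fails to span $\F_q^m$ precisely when its span is a proper subspace, and every proper subspace is contained in some hyperplane (codimension-one subspace). Hence the event ``the $m+n$ chosen points fail to span'' is the union, over the hyperplanes $H\subset \F_q^m$, of the events $E_H$ that all $m+n$ points lie in $H$.

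Next I would assemble the two ingredients of the union bound. The number of hyperplanes in $\F_q^m$ equals the number of lines through the origin in the dual space, namely $(q^m-1)/(q-1)$. For a fixed hyperplane $H$, each of the $m+n$ independently and uniformly chosen points lies in $H$ with probability $|H|/q^m=q^{m-1}/q^m=q^{-1}$, so $\Pr(E_H)=q^{-(m+n)}$ by independence. Summing over all hyperplanes then gives
\[
\Pr(\text{fail to span}) \;\le\; \frac{q^m-1}{q-1}\,q^{-(m+n)} \;=\; \frac{1-q^{-m}}{q-1}\,q^{-n} \;\le\; \frac{q^{-n}}{q-1},
\]
which is exactly the claimed bound. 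The degenerate case $m=0$ is immediate, as then $\F_q^0$ is spanned by the empty collection and there are no hyperplanes, so the failure probability is $0$.

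There is no genuine obstacle here: the only points requiring any care are the hyperplane count $(q^m-1)/(q-1)$ and the elementary simplification $\tfrac{q^m-1}{q-1}\,q^{-m}=\tfrac{1-q^{-m}}{q-1}\le\tfrac{1}{q-1}$. It is worth emphasizing that the union bound is deliberately not tight—one overcounts configurations lying in several hyperplanes at once—but even this crude estimate already yields the stated inequality, which is all that is needed for the application (applied to $(V/W)^*$, so that $m=\dim(V/W)$ is the number of excess equations). I would note in passing that the bound depends only on the number $n$ of equations chosen beyond the minimum $m$, which is what makes it useful for certifying that randomly sampled equations cut out the correct space with high probability.
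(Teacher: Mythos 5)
Your proof is correct and is essentially the same as the paper's: the paper phrases the argument in terms of nonzero linear functionals modulo scalars (whose kernels are exactly your hyperplanes) and bounds $\Pr(N\ge 1)$ by the expected number $E(N)$ of annihilating functionals, which is precisely your union bound with the identical count $(q^m-1)/(q-1)$ and the identical per-event probability $q^{-(m+n)}$. The only cosmetic difference is that the paper derives $\Pr(N\ge 1)\le E(N)$ from the tail-sum formula for expectation, whereas you invoke Boole's inequality directly.
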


\begin{proof}
   Let $v_1,\dots,v_{n+m}$ be the uniform random points.  If they fail to
   span, then there is a linear functional $\lambda$ that annihilates all
   of the points.  For any given linear functional, the probability that it
   annihilates $v_1$ is $1/q$, and thus by independence the probability
   that it annihilates all of the points is $1/q^{m+n}$.  Modulo scalars,
   there are $(q^m-1)/(q-1)$ nonzero linear functionals, and thus the
   expected number of functionals vanishing on the points is
   $(q^{-n}-q^{-m-n})/(q-1)$.  If $N$ is the random variable giving the
   number of such functionals, then
   \[
   E(N) = \sum_{k\ge 1} k \Pr(N=k)
        = \sum_{k\ge 1} \Pr(N\ge k)
        \ge \Pr(N\ge 1),
   \]
   and thus the probability is bounded above by
   \[
   \frac{q^{-n}-q^{-m-n}}{q-1}\le \frac{q^{-n}}{q-1}.
   \]
\end{proof}

Of course, in our situation, this is only heuristic (there are too few
random points in general to let the corresponding equation be even close to
uniformly distributed), though certainly good enough for exploratory
purposes.  The problem, of course, is that even though this lets us compute
$V\cap k(X)^G$ in practice, there is always {\em some} positive probability
that some spurious functions survive.  Luckily, this can be finessed, as we
will explain below.

One important caveat in the above is that the approach requires a space of
{\em functions}, while our usual question is about invariant sections of
line bundles.  Thus to find all invariant sections of a line bundle, we
will first need {\em one} invariant section of the line bundle (which may
be meromorphic).  Of course, an invariant section of a line bundle induces
invariant sections of all powers of that line bundle, so that the question
essentially reduces to finding an invariant section of the minimal
equivariant line bundle.

Another caveat is that we need to be able to compute pairs
$(\phi(x),\phi(gx))$ for $x\in A$, which is a problem in general since the
cases above with explicit formulas for generating invariants (imprimitive
cases and $A_n$, though surface cases could in principle be made explicit
as well) tend to give explicit formulas in terms of some {\em other}
lattice for the group.  (Indeed, $A$ itself is often not a crystallographic
lattice for $H$, and in the imprimitive cases the formulas are based on a
lattice coming from the system of imprimitivity.)  Thus $\phi$ is actually
expressed as a function on some {\em other} abelian variety $B$ (i.e.,
$E^n$ in the imprimitive cases and the sum zero locus in $E^{n+1}$ for
$A_n$) with an $H$-equivariant isogeny $B\to A$.  Suppose this isogeny has
kernel of exponent $N$.  Then there is a natural quasi-inverse isogeny
$A\to B$ that such that the compositions $B\to A\to B$, $A\to B\to A$ are
multiplication by $N$, and for any $g\in \Aut_0(A)$, we can define an
endomorphism $Ng$ of $B$ as the composition $B\to A\xrightarrow{g} A\to B$.
We then claim that for any point $x\in B$, the pullback to $B$ of any
invariant $\phi$ will satisfy $\phi(Nx)=\phi(Ngx)$.  Indeed, the pullback
of $\phi$ is a composition
\[
B\to A\to A/H\ratto \A^1
\]
and thus $\phi\circ [N]$ can be written as the composition
\[
B\to A\to B\to A\to A/H\ratto \A^1,
\]
while $\phi\circ Ng$ can be written as the composition
\[
B\to A\xrightarrow{g} A\to B\to A\to A/H\ratto \A^1.
\]
Since the composition $A\to B\to A$ is multiplication by $N$, which
commutes with $g$, we can rewrite this as
\[
B\to A\to B\to A\xrightarrow{g} A\to A/H\ratto \A^1,
\]
from which invariance follows immediately.  Moreover, since the composition
$B\to A\to B\to A$ is geometrically surjective, this doesn't actually
change the set of equations we get!  (It {\em does} make the equations
coming from random points even less random, but this is not an issue in
practice.)

In fact, we can often make this work even if the functions $\phi:B\ratto
\A^1$ are not invariant under the kernel of $B\to A$.  If we compose with
the quasi-inverse $A\to B$, we get functions on $A$ which are invariant
under a group $T\rtimes H$ with $T\subset A$.  If $A[N]\rtimes G$ is
generated by an \'etale set over $T\rtimes H$ (e.g., if $\langle
GT\rangle=A[N]$), then we can still use random points to find equations on
spaces of $T\rtimes H$-invariant functions cutting out $A[N]\rtimes
G$-invariant functions.  Directly applying the above reduction gives
equations of the form $\phi(N^2x)=\phi(N^2gx)$, but since $[N]$ is
surjective, this implies $\phi(Nx)=\phi(Ngx)$ still holds for all $x$ iff
$\phi$ comes from a $G$-invariant function on $A$.

The advantage of this is that it is not always feasible to compute the
action of $\ker(B\to A)$ on $B/H$; we have seen how to do this for $H=A_n$,
but it is more subtle for the imprimitive cases, especially when $B\to A$
is inseparable.  Of course, the cost is that the space of functions on
which we compute the equations tends to be larger by a factor of
$|\ker(B\to A)|$, but this is a surprisingly small cost in practice, to the
point that it is usually not worth computing the $\ker(B\to A)$-invariants
even when we {\em can} compute the action.

One non-computational application of this approach is to give conceptual
proofs of flatness of invariants in some bad characteristics.  The point is
that there is a {\em relative} Reynolds operator projecting from
$H$-invariants on $B$ to $G$-invariants on $A$, which makes sense as long
as $|\ker(B\to A)|[G:H]$ is invertible.  So once we have shown that the
ring of $H$ invariants is flat, it will follow immediately that the ring of
$G$-invariants is flat whenever $|\ker(B\to A)|[G:H]$ is invertible.

\subsection{$ST_{29}$ in characteristic 2}
\label{ss:ST29_2}

We first explain how to apply this approach to the characteristic $2$ case
of $ST_{29}$.  The first thing we need is a subgroup with computable
invariants, which in practice means either an imprimitive subgroup or one
of type $A$.  (In general, this is just to get started; in several cases
below, we will use a chain of subgroups and only need the first one to have
directly computable invariants.)  In this case, all of the maximal
reflection subgroups of $ST_{29}$ are of this form, but in two of the cases
the associated lattice is inseparable over the lattice we want, so for
simplicity we work with one of the two (Galois conjugate) subgroups of type
$A_4$.  The kernel $K$ of the isogeny $E\otimes \Lambda_{A_4}\to A$ is one
of the two eigenspaces of $[i]$ on $E[5]$ (acting diagonally on the
sum-zero subvariety of $E^5$).  The action of the kernel on the coordinates
of $E\otimes \Lambda_{A_4}\cong \P^4$ is the dual of its action on the
functions with poles along the kernel, so that it is straightforward to
compute the $K$-invariant polynomials of any given degree $d$.

In particular, we can apply this to degree $1$, where we find that there is
a unique $K$-invariant section $i_1$.  By the calculation in Subsection
\ref{ss:complexsurf1728}, we know that the group $ST_{31}$ has an invariant
section of degree 1, and thus so does its subgroup $ST_{29}$, so that this
unique $K$-invariant section is actually $ST_{29}$-invariant.  This lets us
apply the boostrapping method in any degree: find the $K$-invariant
polynomials of degree $d$, divide them by $i_1^d$, and then find the
subspace of $ST_{29}$-invariant functions.  (We can if we like improve this
slightly by noting that $A_4$ is normalized by $[i]$ and thus we may work
with the space of $K\rtimes [i]$-invariant polynomials, cutting the
dimension by a factor of $~4$.)

Applying this to degree $8$ gives (with very high probability, assuming we
use enough points) a 12-dimensional space, from which we can recover (by
pulling out factors of $i_1$) corresponding spaces in lower degrees.  We
thus find that the functions are polynomials in generators of the form
$i_2/i_1^2$, $i_4/i_1^4$, $i_5/i_1^5$, $i_8/i_1^8$ with at most an 8-th
order pole along $i_1^8$.  Moreover, the polynomials $i_1,i_2,i_4,i_5,i_8$
have trivial intersection on $\P^4$, implying that they have trivial
intersection on $A$ as well.  Thus either the main theorem continues to
hold for $ST_{29}$ in characteristic 2, or we were unlucky enough to have
spurious solutions survive.

In fact, we can rigorously rule out the latter possibility!  The point here
is that we already know that the theorem applies to $ST_{29}$ in
characteristic 0, and thus in characteristic 0 the invariant ring has
Hilbert series $1/(1-t)(1-t^2)(1-t^4)(1-t^5)(1-t^8)$, so that the space of
degree $8$ invariants has dimension $12$.  But semicontinuity then tells us
that the space of degree $8$ invariants in characteristic 2 has dimension
{\em at least} 12; our 12-dimensional subspace certainly {\em contains} the
true subspace, and thus must be correct!

\subsection{Dimensions}

What made the above calculation work is that we had a prior information
about the dimension of the true invariant space, allowing us to prove that
the probable invariants were actually invariant.  For polynomial invariants
in characteristic 0, character theory allows one to perform an a priori
computation of the Hilbert series of the invariant ring.  Although it is
unclear how to perform an analogous calculation in the abelian variety
setting, it turns out that we can often compute {\em some} of the
coefficients of the Hilbert series directly.  (In
\cite[\S3]{MarkushevichD/MoreauA:2022b}, the full Hilbert series for the
$ST_{24}$ case was computed; their method could most likely be applied in
other characteristic 0 cases, but our approach is easier in the degrees to
which it applies.)

In general, given a group $G$ acting on an abelian variety $A$ with (ample)
equivariant line bundle ${\cal L}$, the action of $G$ on sections of ${\cal
  L}$ extends to an action of a larger group $\G_m.\ker(\lambda).G$ where
$\lambda:A\to \Pic^0(A)$ is the isogeny corresponding to ${\cal L}$.
(I.e., $\lambda(x) = \tau_{x*}{\cal L}\otimes {\cal L}^{-1}$, where
$\tau_x$ is the corresponding translation.)  The finite group scheme
$\ker(\lambda)$ has order $\dim(\Gamma(A;{\cal L}))^2$ and the action of
$\G_m.\ker(\lambda)$ is the unique irreducible representation with $\G_m$
acting in the obvious way.  Moreover, we can decompose $\ker(\lambda)$ as a
product $\prod_l T_l$ over primes and the representation of
$\G_m.\ker(\lambda).G$ splits as a tensor product of representations of
$\G_m.T_l.G$.

If $T_l$ is an elementary abelian $l$-group with $l$ odd, then this
representation is the restriction of a natural (and well-studied, starting
with \cite{WeilA:1964}) representation of $\G_m.l^{2n}.\Sp_{2n}(l)$.  This
representation comes with a natural subgroup $\G_m.\Sp_{2n}(l)$
(corresponding to the centralizer of the essentially unique element of
order 2 lying over $[-1]$) which in turn has a subgroup $\Sp_{2n}(l)$
(which is unique unless $n=1$, $l=3$), and one has explicit formulas for
the character \cite{GerardinP:1977}.  In particular, the action of $G$ on
the tensor factor lies in $\G_m\times \Sp_{2n}(l)$, and thus its
representation can be obtained from the restriction of the natural
representation of $\Sp_{2n}(l)$ by twisting by a character.  (Of course,
when we have multiple tensor factors, we can combine the characters to a
single global choice.)

Now, suppose that $A\cong E^n$ with $E$ a curve in characteristic 0 with
complex multiplication, and suppose that $l$ splits in $\End(E)$.  Then $E$
admits an endomorphism $\psi$ of degree $l$, and $A[l]$ splits
(equivariantly) as $\ker\psi^n\oplus \ker\bar\psi^n$.  If we use that
splitting to rigidify the action of $\G_m.A[l]$ on the relevant tensor
factor of $\Gamma(l\Theta)$ (assuming that $l$ is prime to the degree of
the invariant polarization $\Theta$), then $\G_m.A[l].G$ becomes a
semidirect product in which the splitting of $G$ acts as permutations of
the coordinates (in the same way that it acts on $\ker\psi^n$).  This
agrees up to a twist with the above splitting, and thus the character of
$G$ on the tensor factor has the form (assuming $l$ is also a good prime)
\[
\chi_l(g) =
|\{x\in \ker\psi|gx=x\}|
=
|\ker(g|_{\ker\psi}-1)|
=
l^{\dim(\ker(g\otimes\F_l-1))}
=
l^{\dim(\ker(g-1))},
\]
possibly multiplied by a 1-dimensional character of $l$-power order.  (This
also works when $E$ has endomorphism ring $\Z$, in that any splitting of
$E[l]$ extends to a $\GL_n(\Z)$-equivariant splitting of $E^n[l]$ which
continues to work on isogenous varieties as long as the isogeny has degree
prime to $l$.)

Thus in the complex and real cases, if $l_1$,\dots,$l_m$ are (split) primes
not dividing twice the degree of the polarization, the order of $G$, or (in
the real case) the degree of any relevant isogeny, then the character of
$G$ on $\Gamma(A;l_1\cdots l_m\Theta)$ has the form
\[
\Bigl(\prod_{1\le i\le m} l_i\Bigr)^{\dim(\ker(g)-1)} \chi_\Theta(g)
\]
where $\chi_\Theta$ is an unknown character of degree $\deg(\Theta)$.  In
fact, this character is independent of the chosen set of primes!  The key
observation is that $\G_m.A[l_m].G$ has a natural subgroup
$\mu_{l_m}.A[l_m].G$ and the splitting of $A[l_m]$ induces a subgroup
$\mu_{l_m}.\ker\psi_m.G$.  The abelian group $\mu_{l_m}.\ker\psi_m$ can be
simultaneously diagonalized, and each eigenvector is stabilized by a
subgroup $\ker\psi_m\rtimes G$.  The homogeneous coordinate ring of the
quotient $A/\ker\psi_m$ is nothing other than the $\ker\psi_m$-invariant
subring of the homogeneous coordinate ring of $A$, but $A/\ker\psi_m\cong
A$, so this recovers the original ring.  The different choices of
eigenvector correspond to different choices of line bundle on the quotient,
and thus there is a canonical choice corresponding to the pullback of
$l_1\cdots l_{m-1}\Theta$.  This represents
$\Gamma(A;l_1\cdots l_{m-1}\Theta)$ as a $G$-invariant subspace of
$\Gamma(A;l_1\cdots l_m\Theta)$ in a way that respects the tensor product
decomposition, and thus by induction the subspace corresponding to
$\Gamma(A;\Theta)$ is isomorphic to the corresponding tensor factor.

We thus see that in the complex case, to determine the dimension in degrees
that are products of split odd primes, it suffices to determine the
character $\chi_\Theta$ of $G$ on $\Gamma(A;\Theta)$, since then
\[
\dim\Gamma(A;l_1\cdots l_m\Theta)^G
=
\frac{1}{|G|}\sum_{g\in G} (\prod_{1\le i\le m} l_i)^{\dim(\ker(g)-1)}
\chi_\Theta(g).
\]

One natural approach to computing $\chi_\Theta$ is to directly compute the
action of $G$ on $\Gamma(A;\Theta)$, or better yet on the reduction to some
good characteristic.  Of course, we have the usual problems with computing
$\Gamma(A;\Theta)$ in an equivariant way, but given any representation of
$\Gamma(A;\Theta)$, we can use a variation on the random points method to
compute the action of $G$.  To be precise, the corresponding rational map
$\phi:A\ratto \P^{n-1}$ will be defined on a random point with high
probability (as long as we work over a sufficiently large field),
and if $x_1,\dots,x_{d+1}$ are random points, we expect with high
probability that $\phi(x_1),\dots,\phi(x_{d+1})$ will form a projective
coordinate frame.  But then for any $g\in G$, we can solve for the unique
element of $\PGL_d$ taking $\phi(x_1),\dots,\phi(x_{d+1})$ to
$\phi(gx_1),\dots,\phi(gx_{d+1})$; doing this for generators of $G$ gives
the desired map $G\to \PGL_d$, and thus the desired
representation.

There are two remaining issues: we need to be able to compute $\phi$, and
we need to resolve the residual 1-dimensional character freedom.  Of
course, the latter choice simply corresponds to a choice of equivariant
structure on the bundle, so really the question is which of the equivariant
structures can potentially give polynomial invariants.  But by degree
considerations, there is a relatively small set of possibilities for the
Hilbert series of a polynomial invariant ring, and in practice the
``wrong'' values of $\chi_\Theta$ can be ruled out by observing that none
of the possible Hilbert series is compatible with the given partial Hilbert
function.  (This could also be done by observing that
\cite[Lem.~2.35]{elldaha} allows us to compute the character of any strong
reflection by reducing to a calculation on the root curve.)

We are thus left with computing $\phi$, or equivalently finding sections of
$\Gamma(A;\Theta)$.  It turns out that bootstrapping tends to work well for
this question as well: most reflection groups act trivially on the global
sections of the equivariant line bundle!  So if the subgroup $H$ acts
crystallographically on $A$, the ability to compute invariants for the
subgroup immediately gives us the ability to compute sections of
$\Gamma(A;\Theta)$, as they are nothing other than the $H$-invariants of
degree 1!  If the crystallographic lattice $A'$ is only isogenous to $A$,
then $\Gamma(A;\Theta)$ is the space of invariants in $\Gamma(A';\Theta)$
for the corresponding translation subgroup.  But the translation subgroup
is \'etale, so we can use random points to determine how it acts on
$\Gamma(A';\Theta)$.  (We could also use the kernel of the polarization in
place of the random points, as the image of the kernel of the polarization
will always suffice to rigidify the projective space; this would work in
characteristic 0, but will not be needed below.)

As we already noted for $ST_{29}$, the dimension in finite characteristic
is bounded below by the dimension in characteristic 0, giving us the
(presumably tight) lower bound we need.  Of course, for quaternionic cases,
this apparently fails, for the simple reason that the problem doesn't come
from characteristic 0.  This is only an apparent issue, however: although
the abelian variety does not lift, the group {\em does}.  The point is that
the above representation theory description works for any prime other than
the characteristic.  (That is, the group $\G_m.\ker(\psi)$ always acts
irreducibly, the tensor product decomposition always works, and the group
acting on any prime-to-$p$ tensor factor is \'etale.  The $p$-power factor
is well-behaved when the abelian variety has full $p$-rank, but this never
happens in the quaternionic cases.)  In particular, the representation of
$\G_m.l^{2n}.\Sp_{2n}(\F_l)$ coming from the monodromy of the moduli stack
of abelian varieties with full level $l$ structure {\em is} the reduction
to characteristic $p$ of the corresponding complex representation.  The
1-dimensional character freedom remains, but tends to be helpful: when the
$1$-dimensional character is trivial mod $p$, twisting by the character
gives a different lift to characteristic 0, and {\em any} lift gives us a
lower bound.

Of course, we only need to do this calculation for the few remaining cases
in which we have not yet proved the main theorems.  The needed results for
those groups are summarized in the following table (noting that in the
quaternionic cases, we only obtain a lower bound).  Note that for $\pm
\SU_5(\F_2)$, the restriction from $\Sp_{10}(\F_{23})$ has no invariants in
characteristic 2, but the twist by the sign character has 14 invariants as
stated.
\begin{table}[H]
  \begin{center}
\begin{tabular}{c|c|c}
  $G$& $\deg$ & $\dim$ \\
\hline
  $ST_{33}$ & 7 & 32\\
  $ST_{34}$ & 13 & 27\\
  $2_-^{1+6}.3^3\rtimes S_4$ & 17 & $\ge 15$\\
  $2_-^{1+6}.\Omega^-_6(\F_2)$ & 41 & $\ge 14$\\
  $\pm \SU_5(\F_2)$ & 23 & $\ge 14$
\end{tabular}
  \end{center}
\end{table}

There is one group ($2_-^{1+6}.3^{1+2}.Z_2\subset \GL_4(Q_2)$) not in the
above list, as its crystallographic lattice has minimal polarization of
degree 2, making it difficult to control the action on the 2-torsion
factor.  However, we can resolve this by noting that there is a larger
group (with no additional reflections) acting on the same abelian variety,
namely $2_-^{1+6}.\GU_3(\F_2)$.  (This is the normalizer inside
$2_-^{1+6}.\Omega^-_6(\F_2)$, noting that $\Omega^-_6(\F_2)$ acts on the
kernel of Frobenius as $\SU_4(\F_2)$.)  The group $2_-^{1+6}.\GU_3(\F_2)$
has no 2-dimensional irreps in characteristic 2, and thus {\em its} action
on the 2-dimensional tensor factor must be an extension of 1-dimensional
irreps.  In particular, the restriction to $2_-^{1+6}.\SU_3(\F_2)$ must be
a self-extension of the trivial representation, and thus factors through an
elementary abelian 2-group quotient.  Since our reflection group is the
derived subgroup of $2_-^{1+6}.\SU_3(\F_2)$, we conclude that it must act
trivially on the 2-dimensional tensor factor!  This lets us apply the above
discussion to see, for instance, that it has $\ge 16$ invariants of degree
7.

\subsection{$2_-^{1+6}.\Omega^-_6(\F_2)$ and $2_-^{1+6}.G_4(Z_3,1)$}
\label{ss:omega6}

Since the method as we have described it so far relies on random points, we
see that it is best adapted to finite characteristic.  (In characteristic
0, there is no uniform distribution for generating ``random'' points, and
elliptic curve arithmetic tends to greatly increase coefficient sizes.)  As
a result, before dealing with the missing complex cases, we will first deal
with some quaternionic cases.

The quaternionic group $2_-^{1+6}.\Omega^-_6(\F_2)$ and its subgroup
$2_-^{1+6}.G_4(Z_3,1)$ both act on a principally polarized abelian 4-fold of
characteristic 2, and in fact we have a chain
\[
C_4\subset F_4\subset 2_-^{1+6}.G_4(Z_3,1)\subset 2_-^{1+6}.\Omega^-_6(\F_2)
\]
of subgroups with all but $C_4$ acting crystallographically.  The variety
$A'$ with polarization of degree 2 on which $C_4$ acts is originally
constructed as the quotient of $E^4$ by the subgroup of $\alpha_4^4$ with
sum in $\alpha_2$, but we can quotient by $\alpha_2^4$ to instead write it
as the quotient of $E^4$ by the sum zero subgroup of $\alpha_2^4$.  In that
form, we see that $A$ is the quotient by the residual diagonal action of
$\alpha_4$.  The resulting descripton of $\Gamma(A';\Theta')$ is simple
enough that we can explicitly compute the action of $\alpha_4$ and in
particular find the unique section of $\Gamma(A;\Theta)$.  (The reflections
all have order 2, and thus their actions on the $1$-dimensional space of
sections are necessarily trivial.)  In particular, since we have an
invariant of degree 1, we can perform bootstrapping.

We may thus proceed as follows: (a) Using random points, find the
$C_4$-invariants of degree 4 such that $f/i_1^4$ is invariant under some
chosen generator of $F_4$ over $C_4$.  (In the extremely unlikely event
that the random point algorithm leaves more than 8 survivors, redo the
calculation from scratch.)  Use these to find the generating invariants of
$F_4$.  (b) Using random points, find the polynomials of degree 17 in the
$F_4$ invariants which (after dividing by $i_1^{17}$) are invariant under
some chosen generator of $2_-^{1+6}.G_4(Z_3,1)$ over $F_4$, and check that
we get precisely 15 survivors, giving rise to putative generating
invariants of degrees 1,4,6,9,16.  Since the product of degrees is correct
and the putative invariants have no common intersection on $A/F_4$, the
invariant ring is polynomial as required.  (c) Similarly, we may bootstrap
from this group to get 14 degree 41 invariants for
$2_-^{1+6}.\Omega^-_6(\F_2)$ and conclude that we have generators of
degrees 1,9,16,24,40.

\subsection{$2_-^{1+6}.3^{1+2}.Z_2$}
\label{ss:badomega6}

For the remaining primitive reflection subgroup of
$2_-^{1+6}.\Omega^-_6(\F_2)$, the crystallographic lattice is different, and
has polarization of degree 2.  Here, the main technical issue is computing
the invariant sections of degree 1; the maximal reflection subgroups are
all $D_4$, but the kernel of the isogeny is particularly complicated to
express in terms of the (non-crystallographic) lattice we usually use
when computing $D_4$-invariants.  There is an alternate lattice we may use,
however: there is an isogeny $E^4\to \Lambda_{D_4}\otimes E$
such that the pullback is generated by a single reflection over
$[\mu_2]^4$, with matrix
\[
\frac{1}{2}
\begin{pmatrix}
  1&\hphantom{-}1&\hphantom{-}1&\hphantom{-}1\\
  1&\hphantom{-}1&-1&-1\\
  1&-1&\hphantom{-}1&-1\\
  1&-1&-1&\hphantom{-}1
\end{pmatrix}.
\]
The quotient by $[\mu_2]^4$ is $(\P^1)^4$ and bootstrapping to get
invariants of $D_4$ requires only that we identify some invariant of degree
1.  The kernel is the diagonal action of $\alpha_4$, which acts on $\P^1$
as $(x,w)\mapsto ((1+\epsilon^3)x+\epsilon w,\epsilon^2 x+w)$, and thus
it is straightforward to find the invariants of degree 1 for $D_4$ and thus
to bootstrap to find the additional invariant of degree 2 in this form.

As we noted above when discussing dimensions, $G$ in this case has
nontrivial normalizer, and it turns out that there are elements of the
normalizer that act diagonally on $E^4$ but do not preserve the chosen
$D_4$.  This allows us to easily compute the invariants of a {\em
  different} copy of $D_4$, and thus by taking intersections to find the
two invariants of degree 1 for $G$.  It is then straightforward to
bootstrap to get the 16 invariants of degree 7.  We find that through
degree 7, the ring is generated by elements of degrees $1,1,4,6,6$.  If
these were algebraically independent, they would generate the invariant
ring, but they have nontrivial intersection on $A/D_4$, so cannot actually
generate!  We thus conclude that the invariant ring is not polynomial in
this case.

\subsection{$ST_{33}$}
\label{ss:ST33}

Since the final quaternionic case most naturally boostraps from $ST_{33}$,
we turn to that group next.  The lattice in this case has polarization of
degree 2.  The only irreducible reflection subgroup of $ST_{33}$ is $A_5$,
which is related by an isogeny of degree 3, with the two invariants under
the kernel of the isogeny both being invariant under $ST_{33}$.  For any
given finite characteristic, it is straightforward to bootstrap from $A_5$
(by computing the 32-dimensional space of degree 7 invariants), but we
require an additional idea to deal with {\em all} finite characteristics.

To resolve this, we need to compute generating invariants in characteristic
0.  The problem, of course, is that we no longer have a reasonable notion
of random points to use for bootstrapping.  On the other hand, any point
gives a relation, so we can still hope that any reasonably large set of
points will work.  Since we need to perform arithmetic on the points, we
will encounter coefficient explosion unless the points have height 0, i.e.,
are torsion.  This suggests working with $A[N]$ for some $N$, which for
$\gcd(N,3)=1$ is naturally identified with $E[N]^5$.  In that case,
although the representation of $G$ as a group of rational endomorphisms of
$E^5$ has denominators, those denominators are prime to $N$ and thus $G$
acts naturally on $E[N]^5$.  A big caveat is that the equations
$\phi(x)=\phi(gx)$ are far from independent, and thus one should instead
structure the equations as saying that $\phi$ is constant on $G$-orbits.
Since it is already constant on $H$-orbits, a $G$-orbit which splits into
$m$ $H$-orbits will give $m-1$ equations.  (Of course, there is also an
action of Galois in general, and we should only work with one $G$-orbit
from each Galois cycle, giving $(m-1)d$ equations where $d$ is the degree
of the relevant field extension.)  We thus see that $2$-torsion can give at
most $15$ equations (out of a needed $120$), suggesting that we use the
$4$-torsion.

Since this is a geometric question, we have some choice in the model of the
curve we take, and in particular may take the curve to be $y^2=x^3-27$,
which has full 4-torsion over the extension field
$\Q(\zeta_{12},(-3)^{1/4})$, with the action of $G$ defined over that same
field.  Note that the images of the $4$-torsion points in $A/S_5\times
[\mu_2]$ are actually defined over $\Q(\zeta_{12})$, since $[-1]$ acts on
the $4$-torsion in the same way as the generator of the Galois group.
Since we know how $E[\sqrt{-3}]\ltimes [\mu_2]$ acts on $E^4/S_5$, it is
easy to find the $S_5\times [\mu_2]$-invariant sections of $7\Theta$.  It
is also straightforward to compute orbit representatives of $G\rtimes \Gal$
on the $4$-torsion (one can first compute them on $2$-torsion and then
lift), and to find the $H$-orbits in each resulting $G$-orbit.  This gives
significantly more equations than expected, so one can simply use the $5$
largest $G$-orbits to get enough equations.

Given the space of invariants of degree 7, we can immediately find
candidate generating invariants with coefficients in $\Z$ (after clearing
denominators).  Verifying that these generate is a Gr\"obner basis
calculation (showing that they have trivial intersection in $\P^5$) and
keeping track of the divisions occuring in the process gives us a
collection of bad primes, where we can check things directly.  This deals
with everything except for the two primes (2 and 3) where the model of
$E_0$ we used has bad reduction, but as already mentioned it is easy to
check those cases by bootstrapping using random points.  (The full
calculation for $ST_{33}$, including the two small bad primes, takes less
than 7 seconds.)

\subsection{$\pm \SU_5(\F_2)$}
\label{ss:SU5}

For $\pm \SU_5(\F_2)$, we bootstrap from $ST_{33}$ (i.e., bootstrap along the
chain $A_5\subset ST_{33}\subset \pm \SU_5(\F_2)$) to find the invariants of
degree 23.  We then find that the invariant ring is generated through
degree $23$ by invariants of degrees $1,6,9,12,16,22$.  Again, degree
considerations tell us that the invariant ring is polynomial iff these
invariants have trivial intersection, and thus we see that this is a (final)
counterexample to the main theorems.

\subsection{$ST_{34}$}
\label{ss:ST34}

Finally, for $ST_{34}$, we proceed as in $ST_{33}$, now bootstrapping from
$G_6([\mu_3],1)$ to compute the 27 invariants of degree 13.  Here we first
verify that the main theorem holds in characteristic 5 (and thus in
characteristic 0 since the group we are bootstrapping from has index prime
to 5, so the relative Reynolds operator argument tells us that module of
invariants is flat for reduction mod 5), which suffices to tell us the full
Hilbert series in characteristic 0.  (The generators have degrees
$1,3,4,6,7,9,12$.)  In particular, once we have done this, we can work in
degree 12 rather than 13, which significantly reduces the number of
candidate functions in later steps.  It is also easy to verify the result
in characteristics 2 and 3; indeed, the verification in characteristics
$2,3,5$, and thus $0$ takes less than 2 seconds!

For the characteristic 0 calculation, we again use $4$-torsion (using the
same model of $E_0$ with bad reduction over $2$ and $3$) to find the
generating invariants and then keep track of divisions in the Gr\"obner
basis calculation that verifies that they have trivial intersection.  Here
we encounter a complication: the denominators that appear are composites,
and it would actually take a significant amount of time to factor them!
However, we can largely resolve this by computing Gr\"obner bases with
respect to two different orderings on the $G_6([\mu_3],1)$-invariants and
taking the pairwise $\gcd$s of the two sequences of composites.  This
eliminates enough large prime factors to let us factor the remaining
numbers in a reasonable amount of time, at which point we can finish off as
usual.  The total calculation takes around 23 seconds, about half of which
is consumed by evaluating the candidate invariants at $4$-torsion points.

\section{Calabi-Yau quotients}
\label{sec:derived}

By Proposition \ref{prop:CM_and_CY} above, for any subgroup sandwiched
between a crystallographic real or complex reflection group and its derived
subgroup, the invariant ring is Cohen-Macaulay in characteristic 0, and if
the determinant character vanishes, the quotient is Calabi-Yau.  This
suggests that we should in particular look at the invariant ring of the
derived subgroup.  It turns out that the structure is particularly nice if
we look at the {\em crystallographic} derived subgroup.  That is, rather
than take the derived subgroup of $G$ itself, we consider the derived
subgroup of the extension $\Lambda\rtimes G$ that acts on $\C^n$.  In more
geometric terms, this is equivalent to the action of the usual derived
subgroup $G'$ on the natural covering abelian variety $A^{++}$.  Similarly,
the natural choice of equivariant line bundle on $A^{++}$ comes from the
derived subgroup of $\pi_1(T)\rtimes G$ where $T$ is the original
$\G_m$-torsor, and is nothing other than the natural $G'$-equivariant
structure on the ample generator ${\cal L}^{++}$ of the group of line
bundles pulled back from $A$.  (Since the $G$-equivariant structure on that
bundle form a torsor under the character group of $G$, there is a {\em
  canonical} $G'$-equivariant structure!)

If $D$ is a $G$-invariant effective divisor on $A$, then the pullback of
$D$ to $A^{++}$ is $G$-invariant and thus represents a power of ${\cal
  L}^{++}$.  Since the corresponding section is uniquely determined up to
scalar multiples, its failure to be $G$-invariant is measured by a
character, so that it {\em will} be $G'$-invariant.  In particular, we may
obtain $G'$-invariants from {\em any} divisor on $A$ by first summing over
the $G$-orbit and then taking the corresponding invariant on $A^{++}$.  The
most important cases come from the orbits of integral reflection
hypersurfaces (i.e., connected components of the fixed subschemes of
reflections).

Consider in particular the $G$-invariant divisors $D_i$ associated to
integral strong reflection hypersurfaces (i.e., for which the reflection
has eigenvalue 1 on the fiber over the generic point).  There is one such
divisor for each orbit of such hypersurfaces, and thus we obtain a
corresponding collection of elements $\delta_i$ of the ring of
$G'$-invariants in powers of ${\cal L}^{++}$.  Note that each such element
is $H_0(G;\Lambda)\rtimes G$-invariant up to scalars, so determines a
character $\chi_i$ of $G^{++}$ (the relevant reflection subgroup of
$\G_m.H_0(G;\Lambda)\rtimes G$).

\begin{prop}
  Let $r$ be a reflection in $G^{++}$ and let $\delta$ be one of the above
  natural invariants, with associated divisor $D$ on $A$.  Then
  $r\delta=\delta$ unless $r$ fixes a component of $D$ with eigenvalue 1,
  in which case $r\delta = \det(r)\delta$.
\end{prop}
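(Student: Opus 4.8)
The plan is to exploit that $\delta$ spans a one--dimensional eigenspace for $r$ and then to read off the eigenvalue from the local behaviour of $\delta$ along the reflection hypersurface of $r$. Write $q\colon A^{++}\to A$ for the covering isogeny, so that by construction $\operatorname{div}(\delta)=q^{*}D$. Since $D$ is $G$--invariant and $r$ lies over an element of $G$, the divisor $q^{*}D$ is $r$--invariant; hence $r\delta$ has the same divisor as $\delta$, and as $\delta$ is determined up to a scalar by its divisor we obtain $r\delta=\lambda\delta$ for a well--defined scalar $\lambda$. The entire problem is to compute $\lambda$.

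First I would localize at a generic point $p$ of the reflection hypersurface $H_r$ of $r$. The cyclic pointwise stabilizer of $H_r$ acts on the fibre of ${\cal L}^{++}$ over $p$ through a character and on the conormal line to $H_r$ through the faithful character sending $r$ to $\det(r)$ (the unique nontrivial eigenvalue of a reflection, dualized on the conormal line to $\det(r)^{-1}$). Comparing leading terms of $r\delta=\lambda\delta$ along $H_r$ then factors $\lambda$ as the product of (a) the scalar $\mu_p$ by which $r$ acts on the fibre of ${\cal L}^{++}$ over $H_r$, and (b) a conormal contribution $\det(r)^{-m}$, where $m$ is the order of vanishing of $\delta$ along $H_r$; the second factor is present precisely when $H_r$ is a component of $\operatorname{div}(\delta)$. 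The hypothesis that $r$ is a reflection fixing $H_r$ with eigenvalue $1$ on the fibre (a strong reflection) is exactly what forces $\mu_p=1$.

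This gives the two cases of the statement. If $r$ does not fix a component of $D$, then $H_r$ is not a component of $q^{*}D=\operatorname{div}(\delta)$, so $\delta$ does not vanish identically on $H_r$; choosing $p\in H_r$ generic with $\delta(p)\neq 0$ and evaluating $r\delta=\lambda\delta$ at the fixed point $p$ gives $\lambda=\mu_p=1$, i.e.\ $r\delta=\delta$. If instead $r$ fixes a component $C$ of $D$ with eigenvalue $1$, then $H_r=q^{-1}(C)$ is a component of $\operatorname{div}(\delta)$; because $D$ is reduced (a sum of distinct integral hypersurfaces) and $q$ is separable, $\delta$ vanishes to order exactly $m=1$ along $H_r$, so the conormal factor contributes $\det(r)^{-1}$. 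Combining this with $\mu_p=1$ yields $\lambda=\det(r)$ in the conventions of the paper.

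The main obstacle is bookkeeping that is genuinely delicate rather than routine: pinning down the sign of the conormal contribution so that it emerges as $\det(r)$ rather than $\det(r)^{-1}$. This depends on the convention for the $G^{++}$--action on sections and on the identification of the determinant character with the action on the (co)normal line; the relevant twist is the same dualization already noted in Subsection~\ref{ss:linsys}, where the residual action on the quotient is by the contragredient representation. A secondary point is to confirm that the order of vanishing is exactly $1$: this uses integrality of the reflection hypersurfaces together with separability of $q$, and in characteristics dividing the degree of $q$ one should instead invoke the integrality of the divisor $q^{*}D$ directly (so that its reduced structure is preserved) rather than naive \'etaleness. Everything else reduces to the direct local computation at the generic point of $H_r$ sketched above.
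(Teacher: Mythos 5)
Your strategy coincides with the paper's at the conceptual level: both reduce $r\delta=\lambda\delta$ to evaluating a character at the generic (associated) point of the strong reflection hypersurface, use the strong-reflection condition to kill the contribution of the fiber of ${\cal L}^{++}$, and extract the rest from a (co)normal computation. The genuine gap is \emph{where} you perform that computation. You work upstairs on $A^{++}$, so your $\lambda$ involves the order of vanishing $m$ of $\delta$ along $H_r$ and the conormal eigenvalue of $r$ on $A^{++}$, and you need $m=1$ together with the identification of that eigenvalue with $\det(r)$. Both rest on separability of $q\colon A^{++}\to A$, which fails inside the stated scope of the proposition: the kernel of $q$ (dual to $\Pic^0(\hat{A})^G$, i.e.\ to $H_0(G;\Lambda)$ in characteristic $0$) has infinitesimal components in several of the cases Section~\ref{sec:derived} is designed to cover --- for instance the sporadic $ST_5$ lattice in characteristic $2$, which the paper explicitly says is obtained ``by an inseparable isogeny'' and for which the Lie-algebra determinant characters on the two sides of the isogeny do not agree, or the $j=0$ lattices in characteristic $3$. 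When $q$ is inseparable, $q^*D$ is genuinely non-reduced (the paper itself allows ``possibly nonreduced'' components of $\div(\delta')$ in the proof of the index-two proposition later in the section), so $m>1$; simultaneously the tangent action of $r$ on $A^{++}$ need not match that of $\bar r$ on $A$. These two deviations do not visibly cancel, and your proposed repair --- ``invoke the integrality of the divisor $q^{*}D$ directly (so that its reduced structure is preserved)'' --- is vacuous: non-reducedness of $q^*D$ is exactly what inseparability produces, and there is nothing to invoke.

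The fix is the paper's actual proof: keep all divisor geometry on $A$. Since $\delta=q^*\bar\delta$ with $\bar\delta$ the canonical section of ${\cal L}(D)$, invariant for the natural $G$-equivariant structure, $\lambda$ is the value of the character comparing two equivariant structures on $q^*{\cal L}(D)\cong({\cal L}^{++})^k$; evaluating at an associated point $x$ of the strong reflection hypersurface trivializes the $({\cal L}^{++})^k$ contribution, and what remains is how $\bar r$ acts on the fiber of ${\cal L}(D)$ at $q(x)\in A$, where $D$ is reduced \emph{by construction} and the normal-bundle computation gives $1$ or the nontrivial tangent eigenvalue of $\bar r$, uniformly in every characteristic (including characteristic $2$, where an order-$2$ reflection acts unipotently and both sides are $1$). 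So if you want to salvage your argument, transfer the local computation to $A$ before touching the divisor; as written, your proof is only valid when $q$ is separable (e.g.\ in characteristic $0$). Your closing worry about $\det(r)$ versus $\det(r)^{-1}$, by contrast, is genuinely a matter of mutually inverse conventions (tangent versus cotangent determinant); none of the subsequent corollaries distinguish them, so that part is not a gap.
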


\begin{proof}
  By construction, $\delta$ is the pullback from $A$ of the canonical
  section $\bar\delta\in {\cal L}(D)$.  The image $\bar{r}\in G$ acts
  trivially on $\bar\delta$ relative to the natural $G$-equivariant
  structure on ${\cal L}(D)$, so this reduces to understanding the
  difference between the induced $G^{++}$-equivariant structure and the one
  coming from the relevant power of ${\cal L}^{++}$.

  Let $x$ be an associated point of the strong reflection hypersurface for
  $r$.  Since $r$ acts trivially on the fiber of ${\cal L}^{++}$ over $x$,
  it remains to consider how it acts on that fiber of the pullback of
  ${\cal L}(D)$, or equivalently how the image of $\bar{r}\in G$ acts on
  the corresponding fiber of ${\cal L}(D)$ itself.  But this reduces to a
  local calculation along the image of $x$.  If $D$ does not contain the
  image of $x$, then $\bar{r}$ acts trivially and thus $r\delta=\delta$ as
  required.  Otherwise, we can recover the action of $\bar{r}$ from the
  action on the normal bundle; since $\det(r)$ is the only nontrivial
  eigenvalue of $\bar{r}$ on the tangent bundle, the claim follows.
\end{proof}

\begin{cor}
  The character group of $G^{++}$ is naturally isomorphic
  to $\prod_i \langle \chi_i\rangle$.
\end{cor}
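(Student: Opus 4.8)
The plan is to compare the character group $\Hom(G^{++},\G_m)$ with the product $\prod_i \mu_{e_i}$ of roots of unity, where for each orbit $i$ of strong reflection hypersurfaces (equivalently, each $D_i$, and hence each $\delta_i$) the integer $e_i$ records the order of the cyclic stabilizer of a representative hypersurface, and then to let the preceding Proposition identify the $\chi_i$ as the coordinate generators. Working in characteristic zero as in the rest of this section, I would first record the relevant reflection structure of $G^{++}$ acting on the torsor $T^{++}$. Fix a representative hypersurface $H_i$ in the $i$-th orbit. The strong reflections of $G^{++}$ fixing $H_i$ pointwise, together with the identity, form a finite cyclic group $C_i$: each acts trivially on the fiber of ${\cal L}^{++}$ and through its sole nontrivial eigenvalue $\det(\cdot)$ on the one-dimensional normal space, so the homomorphism to $\G_m$ recording that eigenvalue is an embedding (an element acting trivially on both the fiber and the normal space fixes the tangent space at a fixed point and is therefore trivial). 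Writing $e_i=|C_i|$, a generator $r_i$ then has $\det(r_i)$ a primitive $e_i$-th root of unity. Finally, every reflection of $G^{++}$ fixes a hypersurface conjugate to a unique $H_i$, hence is conjugate to a power of the corresponding $r_i$.

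Next I would use these facts to build an injection and then realize its image. A linear character $\chi$ of $G^{++}$ is a class function that kills the derived subgroup; since $G^{++}$ is generated by its strong reflections and each of these is conjugate to a power of some $r_i$, such a $\chi$ is determined by the tuple $(\chi(r_i))_i$. As $\chi(r_i)^{e_i}=\chi(r_i^{e_i})=1$, this gives an injective homomorphism
\[
\Phi\colon \Hom(G^{++},\G_m)\hookrightarrow \prod_i \mu_{e_i},\qquad \chi\mapsto(\chi(r_i))_i.
\]
The Proposition now computes the images of the $\chi_i$: applied to $\delta=\delta_i$ and $r=r_j$, whose fixed hypersurface lies in orbit $j$, it gives $r_i\,\delta_i=\det(r_i)\,\delta_i$ while $r_j\,\delta_i=\delta_i$ for $j\neq i$ (the latter because then $r_j$ fixes no component of $D_i$). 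Thus $\Phi(\chi_i)$ has $i$-th coordinate the generator $\det(r_i)$ of $\mu_{e_i}$ and all other coordinates trivial.

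Consequently the elements $\Phi(\chi_i)$ generate $\prod_i\mu_{e_i}$, so $\Phi$ is surjective, hence an isomorphism, and it carries each $\langle\chi_i\rangle$ isomorphically onto the $i$-th factor $\mu_{e_i}$. This simultaneously shows that the $\chi_i$ generate $\Hom(G^{++},\G_m)$ and that the internal product $\prod_i\langle\chi_i\rangle$ is direct (a relation $\prod_i\chi_i^{a_i}=1$ maps under $\Phi$ to $(\det(r_i)^{a_i})_i$, which is trivial only if every $\chi_i^{a_i}=1$), which is exactly the asserted natural isomorphism. The step demanding genuine care, and which I expect to be the main obstacle, is the structural input of the first paragraph: one must verify that the strong-reflection description of $G^{++}$ on $T^{++}$ really does yield cyclic hypersurface stabilizers with primitive-eigenvalue generators, reconciling the reflections on $T^{++}$ with those on $A^{++}$ and accounting for any scalars in $\G_m$ that $G^{++}$ happens to contain (these are products of reflections, so they pose no difficulty for characters). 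Once this is in place the remainder is formal, with the Proposition doing all the real work of producing the characters that make $\Phi$ surjective.
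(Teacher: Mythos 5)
Your proposal is correct and is essentially the paper's own proof: the paper likewise restricts characters to the cyclic pointwise stabilizers $\langle r_i\rangle$ of representative hypersurfaces (injective because the conjugates of the $r_i$ generate $G^{++}$), and then uses the Proposition to see that the $\chi_i$ restrict to generators of the individual factors $\Hom(\langle r_i\rangle,\G_m)$, forcing the restriction map to be an isomorphism onto $\prod_i\langle\chi_i\rangle$. Your write-up simply makes explicit some points the paper leaves implicit, namely the cyclicity of the stabilizers (via the determinant embedding) and the vanishing $\chi_i(r_j)=1$ for $i\neq j$.
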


\begin{proof}
  Choose a representative of each orbit of strong reflection hypersurfaces,
  and for each representative, choose a generator $r_i$ of the stabilizer
  of its generic point.  Then the conjugacy classes of the $r_i$ clearly
  generate $G^{++}$, so that the natural map from the character group of
  $G^{++}$ to the character group of $\prod_i \langle r_i\rangle$ is
  injective.  But the Proposition tells us that the latter character group
  is $\prod_i \langle \chi_i\rangle$ and thus the map is a retract, so an
  isomorphism (being a retract of groups).
\end{proof}

\begin{cor}
  Let $\chi=\prod_i \chi_i^{m_i}$ (with $0\le m_i<\ord(\chi_i)$) be a
  character of $G^{++}$, and let $f$ be an integral
  eigen-invariant with character $\chi$.  Then $f/\prod_i \delta_i^{m_i}$
  is an integral $G^{++}$-invariant.
\end{cor}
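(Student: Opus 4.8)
The plan is to reduce the statement to a divisor inequality on $A^{++}$ and to establish that inequality by a local eigenvalue computation at each reflection hypersurface, generalizing the preceding Proposition from vanishing order $1$ to arbitrary order. Write $\Delta := \prod_i \delta_i^{m_i}$. Since each $\delta_i$ is an eigen-invariant with character $\chi_i$, the product $\Delta$ is an eigen-invariant with character $\prod_i \chi_i^{m_i} = \chi$, so the rational section $f/\Delta$ transforms trivially under $G^{++}$ and is therefore $G^{++}$-invariant wherever it is defined. Because the homogeneous coordinate ring of $A^{++}$ is integrally closed, a rational section that is regular in codimension one is integral; so it suffices to show $f/\Delta$ has no pole in codimension one, i.e. that $\div(f) \ge \div(\Delta)$. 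As $f$ is integral and each $\delta_i$ is a section whose divisor is supported on the reflection hypersurfaces of its orbit $O_i$, the only prime divisors where a pole could occur are these reflection hypersurfaces.

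Fix a representative hypersurface $H_i$ of the $i$-th orbit, with generic point $x$ and stabilizer generated by the reflection $r_i$; set $d = \ord(r_i) = \ord(\chi_i)$ and $\zeta = \det(r_i)$, a primitive $d$-th root of unity. First I would record the local lemma: for any integral eigen-invariant $h$ with character $\psi$ one has $\psi(r_i) = \zeta^{\,\ord_{H_i}(h)}$. To prove it, work in the DVR $\mathcal{O}_{A^{++},x}$ with uniformizer $\pi$; since $r_i$ is a strong reflection it acts with eigenvalue $1$ on the fiber of $\mathcal{L}^{++}$ over $x$, so a suitable local frame identifies $h$ with a function $\tilde h = \pi^k u$, where $k = \ord_{H_i}(h)$ and $u$ is a unit, and the eigenrelation becomes $r_i \tilde h = \psi(r_i)\tilde h$. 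As $r_i$ fixes $H_i$ pointwise it acts trivially on the residue field, so $r_i u \equiv u$ there, while $r_i$ scales $\pi$ by $\zeta$ modulo higher order (the normal eigenvalue being $\det(r_i)$); comparing leading terms gives $\psi(r_i) = \zeta^{k}$, with the sign convention pinned down by the Proposition above (the case $h = \delta_i$, $k = 1$, which recovers $\chi_i(r_i) = \det(r_i)$).

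Granting the lemma, I would compute the two relevant orders. Applying it to each $\delta_j$ and using that the orbits are disjoint (so $\ord_{H_i}(\delta_j)$ equals $1$ when $j = i$ and $0$ otherwise) yields $\chi_j(r_i) = \zeta$ when $j = i$ and $\chi_j(r_i) = 1$ otherwise, hence $\chi(r_i) = \prod_j \chi_j(r_i)^{m_j} = \zeta^{m_i}$; in particular $\ord_{H_i}(\Delta) = m_i$. Applying the lemma to $f$ gives $\zeta^{\,\ord_{H_i}(f)} = \chi(r_i) = \zeta^{m_i}$, so $\ord_{H_i}(f) \equiv m_i \pmod d$. Since $f$ is integral we have $\ord_{H_i}(f) \ge 0$, and since $0 \le m_i < d = \ord(\chi_i)$ this congruence forces $\ord_{H_i}(f) \ge m_i$. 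Finally, $f$ being an eigen-invariant, each $g \in G^{++}$ carries $f$ to a scalar multiple of itself and maps $H_i$ to a hypersurface in the same orbit $O_i$, so $\ord_{H}(f) \ge m_i$ for every component $H$ of $\div(\delta_i)$; thus $\div(f) \ge \sum_i m_i \div(\delta_i) = \div(\Delta)$, and $f/\Delta$ is an integral $G^{++}$-invariant as claimed.

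The main obstacle is the local lemma of the middle paragraph: one must check that the strong-reflection condition genuinely trivializes the action on the line-bundle fiber (so the computation takes place in the structure sheaf, untwisted by an extra scalar) and that the normal-eigenvalue bookkeeping produces exactly the power of $\det(r_i)$ dictated by the Proposition rather than its inverse. A secondary point to verify is that $\div(\delta_i)$ is reduced along each reflection hypersurface on $A^{++}$, so that $\ord_{H_i}(\delta_j)$ is $1$ or $0$ as above; this is immediate when $A^{++} \to A$ is separable and requires a brief check (or can be absorbed into the normalization of the $\delta_i$) in the inseparable case. Once these are in hand, the remainder is the formal divisor comparison above.
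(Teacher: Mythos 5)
Your proof is correct, and it arrives at the same underlying mechanism as the paper---a valuation bound along each strong reflection hypersurface, obtained by playing the character of the eigen-invariant against the eigenvalue-$1$ structure on the fiber of ${\cal L}^{++}$ and the normal eigenvalue $\det(r_i)$---but it packages that mechanism differently. The paper argues by induction on $j$: for $0\le j<m_i$ the section $f/\delta_i^j$ is an eigen-invariant whose character is nontrivial on $r_i$, hence (restricting to the fixed subscheme, where $r_i$ acts trivially both on points and on the fiber) it vanishes there; conjugation-invariance then makes $\div(f/\delta_i^j)-\div(\delta_i)$ effective, and one divides by $\delta_i$ and repeats. This is ``soft'': it never trivializes the bundle or computes a leading term, which is exactly the bookkeeping you flag as the delicate point of your local lemma. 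Your route instead proves the stronger one-shot statement $\psi(r_i)=\det(r_i)^{\ord_{H_i}(h)}$, from which the congruence $\ord_{H_i}(f)\equiv m_i \pmod{\ord(\chi_i)}$ and nonnegativity give $\ord_{H_i}(f)\ge m_i$ immediately; in effect your lemma compresses all of the paper's inductive steps into one leading-term computation, and as a bonus it shows that the character of an eigen-invariant determines all its vanishing orders along reflection hypersurfaces mod $\ord(\chi_i)$ (in particular it re-derives the values $\chi_j(r_i)=\det(r_i)^{\delta_{ij}}$ rather than quoting the Proposition for them). The two caveats you raise are real but are equally implicit in the paper's own proof: the argument lives in the tame setting where the preceding Proposition's linearization is valid (Section 8's finite-characteristic statements are handled by separate flatness/semicontinuity arguments, not by this Corollary), and both proofs need $\div(\delta_i)$ to be the \emph{reduced} sum of the hypersurfaces in its orbit on $A^{++}$ (the paper's step ``vanishes on the fixed subscheme $\Rightarrow$ $\div(f/\delta_i^j)-\div(\delta_i)$ is effective'' uses this just as your $\ord_{H_i}(\delta_i)=1$ does). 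So: correct, same strategy, with a sharper key lemma traded against a softer inductive argument.
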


\begin{proof}
  It suffices to consider the valuation of $f/\prod_i \delta_i^{m_i}$ along
  a strong reflection hypersurface, say a component of $\delta_i$, with
  associated reflection $r_i$.  For $0\le j<m_i$, suppose we have already
  shown that $f/\delta_i^j$ is integral.  This is an eigen-invariant with
  nontrivial character for $r_i$, and thus must vanish on the fixed
  subscheme of $r_i$.  Since this holds for the entire conjugacy class of
  $r_i$, we conclude that $\div(f/\delta_i^j)-\div(\delta_i)$ is effective
  and thus that $f/\delta_i^{j+1}$ is again integral.  Thus by induction
  (with trivial base case $j=0$) we conclude that $f/\delta_i^{m_i}$ is
  integral for each $i$, and thus so is $f/\prod_i \delta_i^{m_i}$.
\end{proof}
  
\begin{prop}
  The ring of $G'$-invariants in $\bigoplus_d \Gamma(A^{++};({\cal L}^{++})^d)$
  is the free module over the ring of $G$-invariants in $\bigoplus_d
  \Gamma(A;{\cal L}^d)$ with basis $\prod_i \delta_i^{j_i}$, $0\le
  j_i<\ord(\chi_i)$.
\end{prop}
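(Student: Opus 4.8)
Throughout write $R := \bigoplus_{d\ge 0}\Gamma(A;{\cal L}^d)$ and $R^{++} := \bigoplus_{d\ge 0}\Gamma(A^{++};({\cal L}^{++})^d)$, so that the claim is that $(R^{++})^{G'}$ is $R^G$-free on the monomials $\prod_i\delta_i^{j_i}$ with $0\le j_i<\ord(\chi_i)$. The plan is to exhibit the ring of $G'$-invariants as the direct sum of the $G^{++}$-semi-invariant weight spaces, and then to strip the appropriate monomial in the $\delta_i$ off each weight space using the corollary above. First I would record the two structural inputs. The homomorphism $G^{++}\to\prod_i\G_m$, $g\mapsto(\chi_i(g))_i$, has kernel exactly $G'$ (its kernel is the common kernel of all characters of $G^{++}$, which by the corollary is the derived subgroup), so it identifies $G^{++}/G'$ with a closed subgroup scheme of a torus; in particular $G^{++}/G'$ is diagonalizable, hence linearly reductive in every characteristic. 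Second, taking $\ker(A^{++}\to A)$-invariants and then $G$-invariants identifies $(R^{++})^{G^{++}}$ with $R^G$: the trivial-character part of the $G^{++}$-action on $R^{++}$ consists precisely of the $G$-invariant sections of powers of ${\cal L}$ pulled back from $A$ (up to the regrading by which the pullback of ${\cal L}$ is a power of ${\cal L}^{++}$).

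Because $G^{++}/G'$ is diagonalizable, the representation $(R^{++})^{G'}$ decomposes canonically as $\bigoplus_{\chi}(R^{++})_\chi$, the sum running over characters $\chi=\prod_i\chi_i^{m_i}$ with $0\le m_i<\ord(\chi_i)$, where $(R^{++})_\chi$ is the space of eigen-invariants of character $\chi$. This weight decomposition is given by projectors lying in the action of the group scheme on the module $R^{++}$, so it preserves the grading and sends regular sections to regular sections; hence each homogeneous $G'$-invariant $f$ splits into integral eigen-components $f_\chi$. For $\chi=\prod_i\chi_i^{m_i}$ the corollary above shows that $f_\chi/\prod_i\delta_i^{m_i}$ is an integral $G^{++}$-invariant, i.e.\ lies in $(R^{++})^{G^{++}}=R^G$; thus $f_\chi\in R^G\cdot\prod_i\delta_i^{m_i}$ and $f$ lies in $\sum_{(j_i)}R^G\cdot\prod_i\delta_i^{j_i}$. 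This shows both that the claimed monomials span $(R^{++})^{G'}$ over $R^G$ and that $(R^{++})_\chi=R^G\cdot\prod_i\delta_i^{m_i}$.

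For freeness, suppose $\sum_{(j_i)}g_{(j_i)}\prod_i\delta_i^{j_i}=0$ with $g_{(j_i)}\in R^G$ and the sum over $0\le j_i<\ord(\chi_i)$. Each term $g_{(j_i)}\prod_i\delta_i^{j_i}$ is a semi-invariant of character $\prod_i\chi_i^{j_i}$, the factor $g_{(j_i)}$ being $G^{++}$-invariant. The corollary that $\widehat{G^{++}}\cong\prod_i\langle\chi_i\rangle$ is exactly the assertion that these characters are pairwise distinct as $(j_i)$ ranges over the box, so the terms lie in distinct weight spaces and must vanish individually. Since $R^{++}$ is the homogeneous coordinate ring of the integral variety $A^{++}$ it is a domain, and the $\delta_i$ are nonzero, so $g_{(j_i)}=0$ for every $(j_i)$. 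This gives the asserted free-module structure.

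The main obstacle is the bookkeeping in the first paragraph: one must check that the weight decomposition of $(R^{++})^{G'}$ is genuinely indexed by the finite box $\{(j_i):0\le j_i<\ord(\chi_i)\}$ rather than by a larger or smaller character group, and that it is stable under reduction to finite characteristic. Both points reduce to the diagonalizability of $G^{++}/G'$ together with the isomorphism $\widehat{G^{++}}\cong\prod_i\langle\chi_i\rangle$ already established; once those two structural inputs are in hand the remaining steps are formal.
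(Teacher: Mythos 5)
Your argument is, in substance, the same as the paper's: grade $(R^{++})^{G'}$ by characters of $G^{++}/G'$, use the preceding Corollary to identify each weight space as a free rank-one module over the $G^{++}$-invariants (which are the $G$-invariants on $A$, up to regrading), and get freeness because distinct monomials $\prod_i\delta_i^{j_i}$ carry distinct characters and $R^{++}$ is a domain. Read as a characteristic-$0$ statement---which is the setting of this part of the section, as the surrounding text makes clear---your write-up is correct and simply fills in details the paper leaves implicit.

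The genuine error is your claim that the argument works \emph{in every characteristic}. The step that fails is the identification $\ker\bigl(g\mapsto(\chi_i(g))_i\bigr)=G'$. The Corollary computes the character group of $G^{++}$; it does not say that the common kernel of all characters equals the derived subgroup, and for the group schemes at hand that statement is false in characteristic $p$. (Even in characteristic $0$ it needs the extra observation, implicit in the paper's construction of the canonical equivariant structure, that $G'$ with that structure \emph{is} the derived subgroup of $G^{++}$, i.e.\ the image of the derived subgroup of $\pi_1(T).G$.) The quotient $G^{++}/G'$ is built from $\mu_d$, $\ker(A^{++}\to A)$, and $G/G'$; in characteristic $p$ the latter two can have \'etale $p$-parts or unipotent parts (e.g.\ $\alpha_p$), which admit no nontrivial characters, so the common kernel of the $\chi_i$ is strictly larger than $G'$, the quotient $G^{++}/G'$ is neither diagonalizable nor linearly reductive, and the eigenspace decomposition no longer exhausts $(R^{++})^{G'}$. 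Moreover the conclusion itself fails there, not just the proof: as the paper notes later in this same section, in characteristic $2$ the relevant product $\delta'$ of the $\delta_i$ can reduce to a $G^{++}$-\emph{invariant} section, hence lies in $R^G$ and cannot serve as a module generator; one must instead produce a new generator $\iota$ by semicontinuity, which is precisely the content of the index-$2$ and index-$3$ propositions that follow. So you should delete the claims of characteristic independence in your first and last paragraphs; with that restriction to characteristic $0$ (equivalently, to the case where $G^{++}/G'$ is linearly reductive), the proof stands.
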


\begin{proof}
  The ring of $G'$-invariants is naturally graded by $(G^{++}/G')^\vee$, and the
  Corollary tells us that each homogeneous subspace is the free module of
  rank 1 generated by $\prod_i \delta_i^{j_i}$ over the $G^{++}$-invariants.
\end{proof}

In other words, the ring of $G'$-invariants is obtained from the ring of
$G$-invariants (the same as the ring of $G^{++}$-invariants) by adjoining
elements $\delta_i$, each of which is an $\ord(\chi_i)$-th root of a
corresponding $G$-invariant.  In particular, it is a complete intersection.

The fact that this is Calabi-Yau tells us the following.

\begin{prop}\label{prop:canclass}
  Let the ring of $G$-invariants have degrees $d_0,\dots,d_n$.  Then
  \[
  \sum_i d_i
  =
  \sum_i (\ord(\chi_i)-1)\deg(\delta_i).
  \]
\end{prop}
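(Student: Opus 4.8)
The plan is to read the identity off the complete intersection presentation of the $G'$-invariant ring established in the proposition immediately above, combined with the Calabi--Yau property. Write $R = \bigoplus_d \Gamma(A;{\cal L}^d)^G$, which by the main theorem is a polynomial ring on generators of degrees $d_0,\dots,d_n$, and set $S = \bigoplus_d \Gamma(A^{++};({\cal L}^{++})^d)^{G'}$. The preceding proposition exhibits $S$ as $R[\delta_0,\dots,\delta_m]/(\delta_i^{\ord(\chi_i)} - f_i)$, where $f_i\in R$ is the $G$-invariant whose $\ord(\chi_i)$-th root is $\delta_i$. Hence $S$ is the quotient of the weighted polynomial ring on generators of degrees $d_0,\dots,d_n,\deg(\delta_0),\dots,\deg(\delta_m)$ by the regular sequence $\delta_i^{\ord(\chi_i)} - f_i$, whose members have degrees $\ord(\chi_i)\deg(\delta_i)$; in particular $S$ is a graded Gorenstein complete intersection of Krull dimension $n+1$, so $\Proj(S)=A^{++}/G'$.

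The key step is to compute the graded canonical module $\omega_S$ in two ways. From the complete intersection presentation, the standard shift formula for the canonical module of a quotient by a regular sequence gives $\omega_S \cong S(a)$ with $a = \sum_i \ord(\chi_i)\deg(\delta_i) - \sum_j d_j - \sum_i \deg(\delta_i)$. On the other hand, $A^{++}/G'$ is Calabi--Yau by Proposition \ref{prop:CM_and_CY} (and its corollary for lattices isogenous to $E^n$, since the derived subgroup $G'$ lies in the kernel of the determinant), so its dualizing sheaf is honestly trivial; because the same results force $S$ to be Cohen--Macaulay with no intermediate cohomology, $\omega_S$ is the section module $\bigoplus_m \Gamma(A^{++}/G';\,\omega_{A^{++}/G'}\otimes\sO(m))\cong S$, with \emph{zero} shift, the trivialization being the degree-preserving one descended from a $G'$-invariant generator of $\omega_{A^{++}}$.

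Equating the two computations forces $a=0$, that is $\sum_i \ord(\chi_i)\deg(\delta_i) = \sum_j d_j + \sum_i \deg(\delta_i)$, which rearranges to the asserted $\sum_j d_j = \sum_i(\ord(\chi_i)-1)\deg(\delta_i)$. One could equally phrase the geometric input as the adjunction formula $\omega_{\Proj S}\cong \sO\!\left(\sum e_i - \sum w\right)$ for a complete intersection of multidegrees $(e_i)$ in $\P^{[w]}$ and then invoke Calabi--Yau to kill the twist; but the canonical-module formulation is the one I would carry out, since it avoids having to exclude $\sO(a)\cong\sO$ for some nonzero $a$ on a quotient that may be singular.

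The main obstacle I anticipate is pinning the shift in $\omega_S\cong S(a)$ down \emph{exactly}, rather than merely up to a line bundle that happens to be trivial. This is precisely where the graded viewpoint is essential: for a graded domain with $S_0=k$ and $S_d=0$ for $d<0$, any graded isomorphism $S(a)\xrightarrow{\sim}S$ must send the degree $-a$ generator to a unit, and units live in $S_0$, forcing $a=0$ on the nose. The only hypothesis this relies on is that $S$ be Cohen--Macaulay, so that $\omega_S$ genuinely is the section module of $\omega_{A^{++}/G'}$ --- and that is exactly the Cohen--Macaulayness already established for quotients sandwiched between $G'$ and $G$, so no new input beyond the results of Subsection 2.4 is needed.
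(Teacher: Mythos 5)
Your proposal is correct and follows essentially the same route as the paper: the complete intersection presentation of the $G'$-invariant ring from the preceding proposition, the shift/adjunction formula, and the Calabi--Yau property of the quotient to force the twist to vanish. The only difference is cosmetic --- the paper phrases the computation as the adjunction formula for the canonical sheaf of a complete intersection in a weighted projective space, while you phrase it via the graded canonical module, which has the small merit of making explicit the final step (a graded isomorphism $S(a)\cong S$ forces $a=0$) that the paper leaves implicit.
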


\begin{proof}
  From the standard adjunction formula for the canonical class of a
  complete intersection in a weighted projective space, we find
  \[
  \omega \cong \sO(-\sum_i d_i -\sum_i \deg\delta_i + \sum_i \ord(\chi_i)\deg\delta_i).
  \]
  But the quotient is Calabi-Yau, and thus its canonical class is trivial.
\end{proof}

For any subgroup $G'\subset H\subset G^{++}$, the ring of $H$-invariants has
a similar description, given in terms of the $(G^{++}/G')^\vee$-grading by
taking those homogeneous components corresponding to the annihilator of
$H/G'$.  We may in particular apply this to the subgroup $G_0$ of elements
with trivial determinant.

\begin{prop}
  Suppose that $G$ is generated by reflections of order 2.  Then
  the ring of $G_0$-invariants is obtained by adjoining the square
  root of an invariant of order $2\sum_i d_i$.
\end{prop}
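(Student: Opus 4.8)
The plan is to read off the $G_0$-invariants directly from the $(G^{++}/G')^\vee$-graded decomposition of the $G'$-invariant ring established above. Recall that the component of character $\chi=\prod_i\chi_i^{m_i}$ (with $0\le m_i<\ord(\chi_i)$) is the rank-one free module over the $G$-invariant ring $k[f_0,\dots,f_n]$ generated by $\prod_i\delta_i^{m_i}$. Since $G_0$ is by definition the kernel of the determinant, the general description of $H$-invariants for $G'\subset H\subset G^{++}$ says that the $G_0$-invariants are exactly the sum of those components whose character is trivial on $G_0$, i.e.\ the components indexed by the subgroup of $(G^{++}/G')^\vee$ generated by $\det$. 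So the whole argument reduces to understanding $\det$ inside $(G^{++}/G')^\vee\cong\prod_i\langle\chi_i\rangle$.

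First I would pin down this subgroup and check that every $\chi_i$ has order $2$. By the Proposition describing the action of reflections on the $\delta_i$, each $\chi_i$ sends every chosen generator $r_j$ either to $\det(r_j)=-1$ (all reflections of $G$ having order $2$) or to $1$; since these values lie in $\{\pm1\}$ and the conjugates of the $r_j$ generate $G^{++}$, each $\chi_i$ has order $2$. In particular every $\delta_i$ is a \emph{square} root of a $G$-invariant. The same computation applied to $\det$ itself shows $\det$ has order $2$, so $G^{++}/G_0\cong\Z/2\Z$ and the only characters trivial on $G_0$ are $1$ and $\det$. Hence the $G_0$-invariants are the free module of rank $2$ over $k[f_0,\dots,f_n]$ with basis $\{1,\theta\}$, where $\theta$ is a generator of the $\det$-component.

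The key step is the identification of $\theta$. Writing $\det=\prod_i\chi_i^{m_i}$, I would evaluate both sides on the generators. By the Proposition, $\chi_i(r_j)=\det(r_j)$ when $r_j$ fixes a component of $D_i$ with eigenvalue $1$, and $\chi_i(r_j)=1$ otherwise. Because an order-$2$ reflection fixes with eigenvalue $1$ exactly its own reflection hypersurface, and the $r_i$ are chosen one per orbit of strong reflection hypersurfaces, this evaluation is ``diagonal'': $\chi_i(r_j)=-1$ if $i=j$ and $\chi_i(r_j)=1$ otherwise. Consequently $\det$ and $\prod_i\chi_i$ agree on each $r_j$ (both equal $-1$), and since the conjugacy classes of the $r_i$ generate $G^{++}$ the two characters coincide. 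Therefore $\theta=\prod_i\delta_i$, whose square $\theta^2=\prod_i\delta_i^2$ is a genuine $G$-invariant, and the $G_0$-invariant ring is obtained from $k[f_0,\dots,f_n]$ by adjoining a square root of $\prod_i\delta_i^2$.

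Finally, the degree of this invariant is $\sum_i 2\deg(\delta_i)$, which by Proposition \ref{prop:canclass} (with every $\ord(\chi_i)=2$) equals $2\sum_i(\ord(\chi_i)-1)\deg(\delta_i)=2\sum_i d_i$, as claimed. I expect the main obstacle to be precisely the identification $\det=\prod_i\chi_i$: the order-$2$ bookkeeping and the degree count are routine once the graded structure is in hand, but this identification rests on the combinatorial input that, for order-$2$ reflections, the eigenvalue-$1$ fixed locus singles out exactly one orbit of hypersurfaces, giving the diagonal evaluation matrix.
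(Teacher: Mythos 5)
Your route is the paper's own: the statement is a direct consequence of the $(G^{++}/G')^\vee$-graded structure of the $G'$-invariants, the only substantive point being the identification $\det=\prod_i\chi_i$ (equivalently, that the $\det$-isotypic component is generated by $\prod_i\delta_i$), after which Proposition \ref{prop:canclass} converts $2\sum_i\deg(\delta_i)$ into $2\sum_i d_i$. Your reduction to this identification, the order-$2$ bookkeeping, and the degree count are all correct.

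However, your justification of the diagonal evaluation has a genuine gap: the claim that an order-$2$ reflection ``fixes with eigenvalue $1$ exactly its own reflection hypersurface'' is false in this setting. Unlike the linear case, the fixed subscheme of a reflection of an abelian variety is a coset of a subgroup scheme and is typically disconnected, and several of its components can be strong. Already for $G=[\mu_2]$ acting on $E$ with ${\cal L}={\cal L}(2[0])$ (acting as functions), the reflection $[-1]$ acts with eigenvalue $1$ on the fibers over all four points of $E[2]$, and these four strong reflection hypersurfaces form four distinct $G$-orbits; similarly the rank-one reflections of the imprimitive groups fix several parallel hypersurfaces lying in distinct orbits (whence table entries such as $[1][1][1][1][2n-2]$). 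So connectivity of the eigenvalue-$1$ locus cannot give $\chi_i(r_j)=1$ for $i\ne j$. The repair is to use the \emph{proof}, not just the statement, of the Proposition describing $r\delta$: the scalar $\chi_i(r_j)=r_j\delta_i/\delta_i$ is well defined, and that proof computes it at any single associated point $x$ of the strong locus of $r_j$ on $A^{++}$, giving $1$ if the image of $x$ in $A$ avoids $D_i$ and $\det(r_j)$ otherwise. Taking $x$ to be the generic point of (a component of the preimage of) the chosen representative hypersurface $H_j$ --- which lies in $D_j$ and, being the generic point of a hypersurface in orbit $j$, lies in no $D_i$ with $i\ne j$ --- yields $\chi_j(r_j)=\det(r_j)=-1$ and $\chi_i(r_j)=1$ for $i\ne j$. (A posteriori this also shows that the strong locus of $r_j\in G^{++}$ maps into the single orbit divisor $D_j$, which is the correct form of your claim; note it concerns reflections of $G^{++}$ acting on $A^{++}$, not reflections of $G$ on $A$, where the example above shows it fails.) With this substitution, the identification $\det=\prod_i\chi_i$ holds and the rest of your argument goes through.
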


\begin{prop}
  Suppose that $G$ is generated by reflections of order 3.  Then
  the ring of $G_0$-invariants is obtained by adjoining the cube
  root of an invariant of order $(3/2)\sum_i d_i$.
\end{prop}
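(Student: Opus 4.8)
The plan is to mirror the argument just carried out for reflections of order $2$, replacing the square root by a cube root throughout. The key structural input is the $(G^{++}/G')^\vee$-grading of the ring of $G'$-invariants, under which the homogeneous component indexed by $\chi=\prod_i\chi_i^{m_i}$ (with $0\le m_i<\ord(\chi_i)$) is the free rank-one module generated by $\prod_i\delta_i^{m_i}$ over the ring of $G^{++}$-invariants. For any intermediate subgroup $G'\subset H\subset G^{++}$, the $H$-invariants form the sum of those components whose character is trivial on $H$, so everything reduces to identifying the characters of $G^{++}$ that are trivial on $G_0=\ker(\det)$ and reading off the corresponding generators.

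First I would pin down the determinant character. Since $G$ is generated by reflections of order $3$, each generating reflection $r_i$ (a generator of the stabilizer of the generic point of a strong reflection hypersurface) has $\det(r_i)$ a primitive cube root of unity, and choosing every $r_i$ so that $\det(r_i)=\zeta_3$ is harmless. By the Proposition computing the action of reflections on the $\delta_i$, the associated character satisfies $\chi_i(r_j)=\det(r_j)$ when $j=i$ and $\chi_i(r_j)=1$ otherwise; in particular $\ord(\chi_i)=3$, and evaluating on the generating conjugacy classes gives $\det=\prod_i\chi_i$. Because $\det$ then takes values in $\mu_3$ and hits $\zeta_3$, its image is exactly $\mu_3$, so $G^{++}/G_0\cong\Z/3$ and the characters trivial on $G_0$ are precisely $1$, $\det=\prod_i\chi_i$, and $\det^2=\prod_i\chi_i^2$.

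It then follows at once that the ring of $G_0$-invariants is the direct sum, over these three characters, of the corresponding graded components, i.e.\ the free module over the $G$-invariants with basis $1$, $\delta:=\prod_i\delta_i$, and $\delta^2$. Setting $f:=\delta^3=\prod_i\delta_i^3$, each $\delta_i^3$ is a $G$-invariant, so $f$ is a $G$-invariant and $\delta$ is a cube root of it; thus the $G_0$-invariant ring is obtained from the $G$-invariant ring by adjoining this cube root. Finally I would compute the order (i.e.\ degree): since $\deg(\delta)=\sum_i\deg(\delta_i)$, Proposition \ref{prop:canclass} gives $\sum_i d_i=\sum_i(\ord(\chi_i)-1)\deg(\delta_i)=2\sum_i\deg(\delta_i)$, whence $\deg(f)=3\sum_i\deg(\delta_i)=\tfrac{3}{2}\sum_i d_i$, as claimed.

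The only step requiring care—and the nearest thing to an obstacle—is the identity $\det=\prod_i\chi_i$, which depends on normalizing the chosen generators so that their determinants are uniformly the single root $\zeta_3$ rather than a mixture of $\zeta_3$ and $\zeta_3^2$. Once this normalization is fixed, the identity holds on the generating conjugacy classes (which generate $G^{++}$ by the Corollary identifying its character group) and hence everywhere, since both sides are characters. Everything else is a formal consequence of the module description of the $G'$-invariants together with the degree bookkeeping of Proposition \ref{prop:canclass}.
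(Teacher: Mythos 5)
Your proof is correct and coincides with the argument the paper leaves implicit: the proposition appears there without proof, as an immediate consequence of the free-module decomposition of the $G'$-invariants by characters of $G^{++}/G'$ together with the degree identity of Proposition \ref{prop:canclass}, which is exactly the route you take. One small simplification: the normalization $\det(r_i)=\zeta_3$ is not actually needed, since the Proposition on reflections acting on the $\delta_i$ gives $\chi_i(r_j)=\det(r_j)$ when $j=i$ and $\chi_i(r_j)=1$ otherwise for \emph{any} choice of generator of each stabilizer, so $\det=\prod_i\chi_i$ holds on a generating set without any choices being made.
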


Something similar applies in the general case in which $G$ has no
reflections of order $>3$, except that now the ring of $G_0$-invariants is
generated by a square root and a cube root, both of which are products
$\prod_i \delta_i$ (over invariants associated to order 2 and order 3
reflections respectively).  (The corresponding contributions to the degree
of the canonical class can be read off the tables in Section
\ref{sec:classify} above.)

The only {\em primitive} complex reflection group with reflections of order
$>3$ is $ST_8$.  Here there are actually two cases, since there are two
choices of strongly crystallographic line bundle.  For the smaller bundle
(with degrees 2,3,4), the kernel of the determinant character agrees with
the crystallographic derived subgroup and thus we see that the invariant
ring is obtained by adjoining the fourth root of an invariant of degree 12.
For the other bundle (with invariants of degrees $2,4,6$), the structure is
more complicated.  The invariant ring of the crystallographic derived
subgroup is then generated by elements $i_6^{1/2}$, $i_{12}^{1/4}$ and thus
the invariant ring of $(ST_8)_0$ has basis
\[
1,i_6^{1/2}i_{12}^{1/4},i_{12}^{1/2},i_6^{1/2}i_{12}^{3/4},
\]
so is a quadratic extension of a quadratic extension.

For the {\em imprimitive} groups, the only cases left to consider are
$G_n([\mu_4],[\mu_4])$ and $G_n([\mu_6],[\mu_6])$.  Since the respective
subgroups $G_n([\mu_4],[\mu_2])$ and $G_n([\mu_6],[\mu_2])$ are generated
by reflections of order $2$ and have the same determinant 1 subgroups, the
structures in those cases are straightforward.

\medskip

Given that polynomiality of invariants persists in finite characteristic,
it is natural to ask whether we can say anything nice about the rings of
$G'$ or $G_0$-invariants in finite characteristic.  For reductions of
characteristic 0 cases, this indeed turns out to be well-behaved, and we
will see that they remain Calabi-Yau complete intersections in general.

The best result involves index 2 subgroups.

\begin{prop}
  Let $H\subset G^{++}$ be an index 2 subgroup.  Then there is a degree $d'$
  such that in any characteristic the ring of $H$-invariants is a
  hypersurface of degree $2d'$ in a weighted affine space of degrees
  $d_0,\dots,d_n,d'$.
\end{prop}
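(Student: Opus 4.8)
The plan is to realize $H$ as the kernel of an order-two character and then to exploit the fact that the resulting quotient is of \emph{multiplicative} type. Since $H$ has index $2$ in $G^{++}$, it is the kernel of a homomorphism of group schemes $\chi\colon G^{++}\to \mu_2\subset\G_m$, so that $H=\ker(\chi)$ is a subgroup scheme with $G^{++}/H\cong\mu_2$ (this is the correct reading of ``index $2$'' in all characteristics, including the inseparable one). The Corollary identifying the character group of $G^{++}$ with $\prod_i\langle\chi_i\rangle$ lets me write $\chi=\prod_{i\in I}\chi_i^{\ord(\chi_i)/2}$, where $I$ indexes the orbits with $\ord(\chi_i)$ even occurring in $\chi$, and I set
\[
\delta:=\prod_{i\in I}\delta_i^{\ord(\chi_i)/2},\qquad d':=\deg\delta=\sum_{i\in I}\tfrac{\ord(\chi_i)}{2}\deg\delta_i,
\]
using the natural invariants $\delta_i$ attached to the orbits of strong reflection hypersurfaces. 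By the preceding Proposition $\delta$ is an eigen-invariant of weight $\chi$, hence $H$-invariant, and $\delta^2=\prod_{i\in I}\delta_i^{\ord(\chi_i)}=:f$ lies in the ring $R:=B^{G^{++}}$, where $B:=\bigoplus_d\Gamma(A^{++};({\cal L}^{++})^d)$. By the main theorem $R$ is a graded polynomial ring with generators of degrees $d_0,\dots,d_n$ in \emph{every} characteristic, so $R[t]/(t^2-f)$ is already the asserted hypersurface of degree $2d'$; the whole content is to prove $S:=B^H=R[\delta]$.

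The key structural point is that $G^{++}/H\cong\mu_2$ is of multiplicative type, hence linearly reductive in \emph{all} characteristics (this is exactly the feature that fails for larger quotients, and is why index $2$ is the favourable case). The residual $\mu_2$-action on $S$ therefore splits it into weight spaces, and as the only characters of $\mu_2$ are the trivial one and $\chi$,
\[
S=R\oplus S_\chi,\qquad R=S^{\mu_2}=B^{G^{++}},\qquad S_\chi\cdot S_\chi\subseteq R,
\]
with $\delta\in S_\chi$. So it remains to show $S_\chi=R\delta$.

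For this I would argue by normality. The section ring $B$ of the ample bundle ${\cal L}^{++}$ on the smooth variety $A^{++}$ is integrally closed, and taking invariants under the finite group scheme $H$ preserves this, so $S$ is a normal domain, finite over $R$ with $[\operatorname{Frac}(S):\operatorname{Frac}(R)]=2$. Being normal, $S$ is reflexive as an $R$-module, hence so is its direct summand $S_\chi$, which has rank $1$. Since $R$ is a polynomial ring, it is a unique factorization domain, its divisor class group vanishes, and every graded reflexive module of rank $1$ is free: $S_\chi=R\delta'$ for a homogeneous generator $\delta'$ of some degree. To identify $\delta'$, I use that each orbit of reflection hypersurfaces maps to a reduced irreducible divisor on $A/G=\Proj(R)$, so each $\delta_i^{\ord(\chi_i)}$ is, up to a unit, a prime of $R$ and $f=\prod_{i\in I}\delta_i^{\ord(\chi_i)}$ is squarefree. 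Writing $\delta=c\,\delta'$ with $c\in R$ and $(\delta')^2\in S_\chi\cdot S_\chi\subseteq R$ gives $f=c^2(\delta')^2$; squarefreeness forces $c$ to be a unit, whence the generator degree is $d'=\deg\delta$ and $\delta$ itself generates $S_\chi$. Thus $S=R\oplus R\delta\cong R[t]/(t^2-f)$, a hypersurface of degree $2d'$ in the weighted affine space of degrees $d_0,\dots,d_n,d'$, uniformly in the characteristic.

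The main obstacle is genuinely the characteristic $2$ case, and the argument above is organized entirely around it. The naive expectation there is a separable $\Z/2$-cover, which in characteristic $2$ is Artin--Schreier and hence incompatible with a homogeneous relation $t^2=f$; the resolution is precisely that the relevant quotient is the \emph{infinitesimal} group scheme $\mu_2$, giving a purely inseparable cover whose coordinate ring is the homogeneous hypersurface $t^2=f$. Accordingly the two places where characteristic $2$ must be checked are (a) that $\mu_2$ remains linearly reductive, so the weight decomposition $S=R\oplus S_\chi$ persists --- which holds for any group scheme of multiplicative type --- and (b) that $f$ stays squarefree on $A/G$, for which one needs the orbits of reflection hypersurfaces to remain reduced and distinct after reduction (the ramification index along each is still $2$, even where the ramification becomes wild, so the scheme-theoretic character $\chi_i\colon G^{++}\to\mu_2$ remains nontrivial despite $\det(r)$ reducing to $1$ on closed points). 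Both points hold, so no further case analysis is needed.
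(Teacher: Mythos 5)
Your reduction to the case of odd characteristic is fine (and there your reflexive-module argument over the UFD $R$ is a legitimate alternative to the paper's one-line remark), but the proof collapses in characteristic $2$, which is the only case where the Proposition has real content. The fatal step is the first one: an index $2$ subgroup $H\subset G^{++}$ is \emph{not} in general the kernel of a character $G^{++}\to\mu_2$ with $G^{++}/H\cong\mu_2$ in characteristic $2$. In the principal applications (e.g.\ $H$ lying over $\ker(\det)$ for a real reflection group such as $W(E_8)$), the quotient $G^{++}/H$ is generated by the image of an \emph{\'etale} reflection $r$ of order $2$; in characteristic $2$ this quotient is the constant group scheme $\Z/2\Z$, which is unipotent rather than multiplicative (it embeds in $\G_a$, and the only homomorphism $\Z/2\Z\to\mu_2$ over a field of characteristic $2$ is trivial, since $\mu_2(k)=\{1\}$). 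So $G^{++}/H$ is not linearly reductive, and the weight-space decomposition $S=R\oplus S_\chi$ on which your whole argument rests does not exist: the character $\chi$ with kernel $H$ literally reduces to the trivial character, which is exactly why $\delta'$ reduces to a $G^{++}$-\emph{invariant} element. Your parenthetical rescue attempt (``the scheme-theoretic character $\chi_i$ remains nontrivial despite $\det(r)$ reducing to $1$ on closed points'') has no content for \'etale reflections: a character evaluated at an \'etale point $r$ is just the scalar $\det(r)$, and in characteristic $2$ that scalar is $1$.

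Moreover your conclusion is not just unproved but false: in characteristic $2$ the extension cannot have the purely inseparable form $t^2=f$. By Artin's lemma, $k(A^{++})^{H}/k(A^{++})^{G^{++}}$ is Galois with group $\Z/2\Z$, hence separable; concretely, the paper's proof produces (by semicontinuity) a \emph{new} invariant $\iota$ of degree $d'$, shows via the operator $Lf:=(1-g)f/\delta'$ that $B^H=R\oplus R\iota$, and the relation then has the Artin--Schreier-like form $\iota^2=\delta'\iota+a$ with $\delta'\ne 0$ (indeed $L(\iota^2)=((1-g)\iota)^2/\delta'=\delta'$ in characteristic $2$); no change of generator $t=\alpha\iota+\beta$ can kill the linear term, since $t^2\in R$ would force $\alpha^2\delta'=0$. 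The classical model is $\Alt_n\subset S_n$ in characteristic $2$: the Vandermonde product becomes $S_n$-invariant, and the ring of $\Alt_n$-invariants is generated over $k[e_1,\dots,e_n]$ by an orbit sum satisfying a quadratic with nonzero linear term, not by a square root. So the genuinely new idea needed here --- the divided-difference operator $L$, whose well-definedness comes from vanishing of $(1-g)f$ on the fixed subschemes of the reflections making up $\delta'$, together with semicontinuity to produce $\iota$ --- is missing from your proposal, and cannot be replaced by a linear-reductivity argument.
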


\begin{proof}
  In characteristic 0, this follows from the structure of the ring of
  $G'$-invariants, and one finds that there is an invariant $\delta'$ (a
  product of the form $\prod_{i\in S}\delta_i$) of degree $d'$ such that
  any $H$-invariant which has nontrivial eigenvalue for $G^{++}$ is a
  multiple of $\delta'$.  This persists in any {\em odd} characteristic by
  the same argument, and thus the claim is immediate away from
  characteristic 2.  In characteristic 2, the difficulty is that $\delta'$
  reduces to an {\em invariant} element, so cannot be used to generate the
  module of $H$-invariants.  However, semicontinuity tells us that there is
  {\em some} additional $H$-invariant $\iota$ of degree $d'$.

  In general, there is an operation $L$ on the ring of $H$-invariants given
  by $Lf:=(1-g)f/\delta'$ where $g\in G^{++}\setminus H$ is any coset
  representative.  In odd characteristic, $(1-g)f$ is an anti-invariant, so
  a multiple of $\delta'$, and this remains true in characteristic 2: for
  each irreducible (but possibly nonreduced) component of $\delta'$, we may
  take $g$ to be the corresponding reflection, so that $(1-g)f$ vanishes on
  the fixed subscheme of $g$ (and thus on the chosen component of
  $\delta'$).  We thus conclude that $L$ is a map from the module of
  $H$-invariants to the ring of $G^{++}$-invariants with kernel the ring of
  $G^{++}$-invariants.  In particular, $L\iota$ is nonzero (since $\iota$
  is not $G^{++}$-invariant) and a scalar by degree considerations, so that
  we may rescale $\iota$ so that $L\iota=1$.  Since $L$ is linear over the
  ring of $G^{++}$-invariants, $L(f - (Lf)\iota) = Lf(1-L\iota)=0$ and thus
  $f-(Lf)\iota$ is $G^{++}$-invariant.

  But then for any $H$-invariant, we have an expression $f = (f-(Lf)\iota)
  + (Lf)\iota$ of $f$ as a linear combination of $1$ and $\iota$ with
  $G^{++}$-invariant coefficients.  It follows that the ring of
  $H$-invariants is the free module generated by $1$ and $\iota$ over the
  ring of $G^{++}$-invariants, and is thus a hypersurface of degree $2d'$
  in the polynomial ring generated over the ring of $G^{++}$-invariants by
  $\iota$.
\end{proof}

A similar argument gives the following.

\begin{prop}
  Suppose that $H\subset G^{++}$ is a normal subgroup of index 3 such that
  in characteristic 0 the ring of $H$-invariants is generated by an element
  $\delta'$ of the form $\prod_{i\in S} \delta_i$.  Then in any
  characteristic, the ring of $H$-invariants is a hypersurface of degree
  $3\deg\delta'$ in a weighted affine space of degrees
  $d_0,\dots,d_n,\deg\delta'$.
\end{prop}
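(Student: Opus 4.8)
The plan is to mirror the proof of the preceding index-$2$ proposition, splitting into the cases $\ch(\bar k)\ne 3$ and $\ch(\bar k)=3$. Write $R$ for the ring of $G^{++}$-invariants (a polynomial ring with generators of degrees $d_0,\dots,d_n$), write $S^H$ for the ring of $H$-invariants, let $\bar g$ generate $G^{++}/H\cong\Z/3$, and set $d'=\deg(\delta')$. Away from characteristic $3$ I would argue as in characteristic $0$: the character $\chi=\prod_{i\in S}\chi_i$ generating $(G^{++}/H)^\vee$ still takes values in a genuine $\mu_3$, so the $G^{++}/H$-grading splits $S^H$ as $R\oplus(S^H)_\chi\oplus(S^H)_{\chi^2}$, and the Corollary above on eigen-invariants (applied to $\chi$ and to $\chi^2$) identifies these eigenspaces with the free rank-one modules $R\delta'$ and $R(\delta')^2$. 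Hence $S^H=R[\delta']/((\delta')^3-c)$ with $c=(\delta')^3\in R$ of degree $3d'$, which is exactly a hypersurface of degree $3d'$; nothing here uses more than invertibility of $3$.

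The substance will be in characteristic $3$, where $\delta'$ reduces to a $G^{++}$-invariant (each $\chi_i$ has order $3$, hence is trivial on $\bar k$-points) and so can no longer serve as a module generator. As in the index-$2$ case, I would use semicontinuity of $\dim_{\bar k}(S^H)_{d'}$ against its known characteristic-$0$ value to produce an extra invariant $\iota\in(S^H)_{d'}\setminus R$. The key technical point, again paralleling the index-$2$ proof, is a divisibility statement: for every $f\in S^H$ the element $Nf:=(\bar g-1)f$ is divisible by $\delta'$. Here one uses that $(1-g)f$ is independent of the choice of coset representative $g\in\bar gH$ (by normality of $H$, $gf$ is again $H$-invariant), so that for each component $D$ of $\div(\delta')$ one may take $g$ to be whichever of the two order-$3$ reflections fixing $D$ lies in the coset $\bar gH$; then $Nf=(g-1)f$ vanishes on $D$, and since the $D$ are the distinct simple components of $\delta'$ this gives $\delta'\mid Nf$.

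Granting this, I would set $L:=N/\delta'$, an $R$-linear endomorphism of $S^H$ with $\ker L=\ker N=R$. Since $\delta'\in R$ one checks $N^kf=(\delta')^kL^kf$, and because $(\bar g-1)^3=\bar g^3-1=0$ in characteristic $3$ while $S^H$ is a domain and $\delta'\ne 0$, this forces $L^3=0$. Rescaling $\iota$ so that $N\iota=\delta'$ (possible as $N\iota=\lambda\delta'$ with $\lambda\in(S^H)_0=\bar k$ nonzero), one gets $\bar g\iota=\iota+\delta'$, whence $L\iota=1$ and $L^2(\iota^2)=-1$. The nilpotence of $L$ then peels off coordinates: for $f\in S^H$, successively $c:=-L^2f\in R$, then $b:=L(f-c\iota^2)\in R$, then $a:=f-c\iota^2-b\iota\in R$ give $f=a+b\iota+c\iota^2$, while applying $L$ and $L^2$ to a putative relation shows $1,\iota,\iota^2$ are $R$-independent. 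Thus $S^H$ is free of rank $3$ over $R$ on $1,\iota,\iota^2$. Finally, a direct computation using $\bar g\iota=\iota+\delta'$ shows $\iota^3-(\delta')^2\iota$ is $G^{++}$-invariant, so $\iota$ satisfies the monic relation $\iota^3-(\delta')^2\iota-c'=0$ with $c'\in R$; as every term has degree $3d'$, this exhibits $S^H$ as a hypersurface of degree $3d'=3\deg(\delta')$ in the weighted affine space of degrees $d_0,\dots,d_n,d'$.

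I expect the main obstacle to be the characteristic-$3$ analysis, and in particular identifying the correct relation: unlike the index-$2$ case, the shift $\bar g\colon\iota\mapsto\iota+\delta'$ makes $\iota^3$ itself non-invariant (one computes $\bar g(\iota^3)=\iota^3+(\delta')^3$), and it is the corrected Artin--Schreier-type combination $\iota^3-(\delta')^2\iota$ that produces the degree-$3d'$ relation. Establishing the divisibility $\delta'\mid Nf$ with the right multiplicities, and upgrading the free submodule $R\oplus R\iota\oplus R\iota^2$ (which is all that semicontinuity alone supplies) to the full ring $S^H$ via the operator $L$, are the places where care will be required.
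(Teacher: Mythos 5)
Your proof is correct and follows essentially the same route as the paper's: reduce to characteristic $3$, use semicontinuity to produce the extra invariant $\iota$, and use the operator $L=(1-\bar g)(\cdot)/\delta'$ (linear over the $G^{++}$-invariants, with kernel exactly those invariants, nilpotent of order $3$) to show that $1,\iota,\iota^2$ is a free basis over the ring of $G^{++}$-invariants. Your additions---deriving $L^3=0$ from $(\bar g-1)^3=0$ in characteristic $3$ and writing out the Artin--Schreier-type relation $\iota^3-(\delta')^2\iota-c'=0$ of degree $3\deg\delta'$---are details the paper leaves implicit, not a different argument.
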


\begin{proof}
  In this case, characteristic 3 is the only possible characteristic where
  things could fail, and semicontinuity again gives an additional invariant
  $\iota$ of degree $\delta'$.  Let $gH$ be the coset of $G^{++}/H$ that
  has eigenvalue $\zeta_3$ on $\delta'$.  Then $L:f\mapsto (1-g)f/\delta'$
  is again linear over $G^{++}$-invariants, has kernel the
  $G^{++}$-invariants, and preserves integrality.  Moreover, $L^3=0$ and
  thus $L^2f$ is always $G^{++}$-invariant.  We may again normalize $\iota$
  so that $L\iota=1$, implying $L^2\iota^2=1+\zeta_3$.  In particular, if
  $f=a+b\iota+c\iota^2$, with $a,b,c$ $G^{++}$-invariant, then
  $L^2f=c(1+\zeta_3)$ and $Lf = b + c(2\iota-\delta')$ so that one can
  recover the coefficients from $f$, $Lf$, and $L^2f$.  This gives rise to
  an expansion $f=Af+(Bf)\iota+(Cf)\iota^2$ for general $f$ in which $A$,
  $B$, $C$ are operators satisfying $LA=LB=LC=0$, showing that the ring of
  $H$-invariants is a free module with basis $(1,\iota,\iota^2)$ over the
  ring of $G^{++}$-invariants as required.
\end{proof}

\begin{rem}
Note that the structure of the
invariant ring is different if some $\delta_i$ appear with exponent $2$;
this can cause the invariant ring to fail to be flat in
characteristic 3.
\end{rem}

\begin{prop}
  Suppose that $G$ has no reflections of order $>3$ and $H\subset G^{++}$
  is of the form $H=H_2\cap H_3$ where $H_2$ has index 2 and $H_3$ has
  index 3 and satisfies the additional hypothesis of the preceding
  Proposition.  Then the ring of $H$-invariants has the same Hilbert series
  in any characteristic, and is always a complete intersection.
\end{prop}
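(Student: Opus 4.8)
The plan is to deduce the statement from the two preceding Propositions, the point being that $H_2$ and $H_3$ have coprime indices. First I would record the group theory: an index $2$ subgroup is automatically normal and $H_3$ is normal by hypothesis, so $H=H_2\cap H_3$ is normal of index $6$ with $G^{++}/H\cong(G^{++}/H_2)\times(G^{++}/H_3)\cong Z_2\times Z_3$; equivalently $H$ has index $3$ in $H_2$ and index $2$ in $H_3$, and $H_2H_3=G^{++}$. Write $R^{K}$ for the ring of $K$-invariants in $\bigoplus_d\Gamma(A^{++};({\cal L}^{++})^d)$, so that $R^{G^{++}}$ is the polynomial ring of $G$-invariants supplied (in its strong form) by Theorem~\ref{thm:main_theorem}, with degrees $d_0,\dots,d_n$. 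Since $G$ has only reflections of orders $2$ and $3$, the preceding Propositions apply: the index $2$ Proposition gives, in every characteristic, a generator $\iota_2$ of degree $d_2'$ with $\iota_2^2\in R^{G^{++}}$ and $R^{H_2}=R^{G^{++}}\oplus R^{G^{++}}\iota_2$, while the index $3$ Proposition (using the additional hypothesis on $H_3$, that its characteristic $0$ generator is a product $\prod_{i\in S}\delta_i$) gives a generator $\iota_3$ of degree $d_3'$ with $\iota_3^3\in R^{G^{++}}$ and $R^{H_3}=R^{G^{++}}\oplus R^{G^{++}}\iota_3\oplus R^{G^{++}}\iota_3^2$.

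Next I would pass from these two one-step results to $R^H$ by climbing a tower, using the key observation that the characteristic divides at most one of $2,3$. In characteristic $\neq 3$ I would use $R^{G^{++}}\subset R^{H_3}\subset R^{H}$: the first inclusion is the index $3$ Proposition, and the second is an index $2$ extension ($H_3/H\cong Z_2$) lying in good characteristic for order $2$, where $\iota_2=\delta_2'$ behaves classically. In characteristic $\neq 2$ I would instead use $R^{G^{++}}\subset R^{H_2}\subset R^H$, the second inclusion now being an index $3$ extension ($H_2/H\cong Z_3$) in good characteristic for order $3$. For the inner step one must run the relevant Proposition one level up, with $G^{++}$ replaced by $H_3$ (resp.\ $H_2$) and the base $R^{G^{++}}$ replaced by $R^{H_3}$ (resp.\ $R^{H_2}$). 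The $L$-operator construction and the semicontinuity input of those proofs are formal and relativize without change, the only substantive point being that the order $2$ (resp.\ order $3$) reflections generating $H_3/H$ (resp.\ $H_2/H$) still act as reflections on $A^{++}$ and that $\delta_2'$ (resp.\ $\delta_3'$) still divides every anti-invariant (resp.\ $\zeta_3$-eigen-invariant) over the relativized base; this last divisibility is exactly the Corollary above, applied to these reflections.

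Granting the inner step, each tower exhibits $R^H$ as free over $R^{G^{++}}$ of rank $6$ with basis $\{\iota_2^a\iota_3^b:0\le a\le 1,\ 0\le b\le 2\}$; here freeness and $Z_6$-homogeneity are automatic because the six basis monomials carry the six distinct characters of $G^{++}/H\cong Z_2\times Z_3$, so each graded piece is free of rank $1$. The basis elements sit in degrees $a\,d_2'+b\,d_3'$, independent of the characteristic, whence the Hilbert series of $R^H$ is $(1+t^{d_2'})(1+t^{d_3'}+t^{2d_3'})$ times that of the polynomial ring $R^{G^{++}}$, and is therefore the same in every characteristic. Finally, the two relations $\iota_2^2=a_2$ and $\iota_3^3=a_3$ with $a_2,a_3\in R^{G^{++}}$ present $R^H$ as $R^{G^{++}}[\iota_2,\iota_3]/(\iota_2^2-a_2,\ \iota_3^3-a_3)$, a codimension $2$ complete intersection in the weighted affine space with generator degrees $d_0,\dots,d_n,d_2',d_3'$; since $R^{G^{++}}$ is itself polynomial this is a genuine complete intersection.

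I expect the main obstacle to be precisely the relativization in the second paragraph: the two preceding Propositions are phrased for subgroups of $G^{++}$ over the base $R^{G^{++}}$, and although their proofs are built to survive reduction modulo the bad prime, one must verify that the $L$-operator and semicontinuity machinery uses only the reflection-divisibility of the Corollary and never the maximality of $G^{++}$. Once that is confirmed, coprimality of the two indices does the rest, forcing the bad-characteristic degeneration into exactly one of the two tower steps.
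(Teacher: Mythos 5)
Your overall architecture---exploit coprimality of the two indices and climb a two-step tower, confining the bad prime to a single step---is exactly the paper's strategy, but you have attached the towers to the wrong characteristics, and this inversion is fatal to the proof as written. In characteristic $2$ your prescription is $R^{G^{++}}\subset R^{H_3}\subset R^{H}$ with the index $2$ extension $H_3/H\cong Z_2$ done second, which you assert is ``in good characteristic for order $2$, where $\iota_2=\delta_2'$ behaves classically.'' Characteristic $2$ is precisely the \emph{bad} characteristic for an order $2$ extension: there $\delta_2'$ reduces to an element that is already $G^{++}$-invariant (this is the whole difficulty that the $L$-operator and semicontinuity argument of the index $2$ Proposition exists to overcome), so it cannot generate the module of $H$-invariants, and $\Hom(Z_2,k^*)$ is trivial in characteristic $2$, so the eigencharacter argument for freeness in your third paragraph collapses as well. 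The same inversion occurs in characteristic $3$ with your other tower. The correct assignment is the opposite one: in characteristic $2$ use $R^{G^{++}}\subset R^{H_2}\subset R^H$, where the first (bad) step is exactly the index $2$ Proposition over the polynomial base $R^{G^{++}}$, and the second step $H_2/H\cong Z_3$ is tame, so the classical eigenspace decomposition together with the divisibility Corollary shows $R^H$ is free over $R^{H_2}$ with basis $(1,\delta_3',(\delta_3')^2)$; in characteristic $3$, symmetrically, $R^H$ is free over $R^{H_3}$ with basis $(1,\delta_2')$. This is the paper's proof, and it also dissolves what you call the main obstacle: the hard ($L$-operator/semicontinuity) machinery is never relativized over a non-polynomial base, because the bad-prime step always comes first, where the preceding Propositions apply verbatim.

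Two further consequences of the same misconception: your presentation $R^{G^{++}}[\iota_2,\iota_3]/(\iota_2^2-a_2,\,\iota_3^3-a_3)$ is not correct in bad characteristic---in characteristic $2$, for instance, the new degree-$d_2'$ generator $\iota$ produced by semicontinuity satisfies $(1-g)\iota=\delta_2'$, so the relation it obeys is of Artin--Schreier type, $\iota^2+\delta_2'\iota\in R^{G^{++}}$, not a pure square---and the monomials $\iota_2^a\iota_3^b$ do not carry six distinct characters of $G^{++}/H$ there. Neither point affects the truth of the statement (each step contributes one generator and one relation of characteristic-independent degree, so one still gets a complete intersection with the characteristic-$0$ Hilbert series), but the justification must go through the module-theoretic arguments of the preceding Propositions rather than character theory.
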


\begin{proof}
  Again, in characteristic prime to $6$, the characteristic 0 structure
  carries over.  We have shown that the ring of $H_2$-invariants remains a
  hypersurface in characteristic 2, and the usual argument shows that any
  $H$-invariant which is an eigen-invariant of $H_2$ is a multiple of the
  appropriate generating eigen-invariant or its square.  But this imples that
  the ring of $H$-invariants is a free module over the ring of
  $H_2$-invariants with basis of the form $(1,\delta'_3,(\delta'_3)^2)$, so
  that it has a single additional generator and relation and thus remains a
  complete intersection.  Similarly, in characteristic 3, the ring of
  $H$-invariants is a free module over the ring of $H_3$-invariants with
  basis $(1,\delta'_2)$ and is itself a complete intersection.
\end{proof}

These Propositions cover every $G_0$ case for which $G$ has no reflection
of order $>3$, and in particular show that those cases have Calabi-Yau
quotient.  We have already seen that the trivial determinant subgroups of
$G_n([\mu_4],[\mu_4])$ and $G_n([\mu_6],[\mu_6])$ reduce to such cases, and
thus it remains only to consider $ST_8$.  This automatically behaves well
in odd characteristics, so it remains only to consider characteristic 2.
The normal subgroup of $ST_8$ generated by reflections of order 2
($G_2([\mu_4],[\mu_2])$) meets the determinant 1 subgroup of $ST_8$ in {\em
  its} determinant 1 subgroup, which certainly has well-behaved invariants
in characteristic 2.  These subgroups do not agree, but the one is normal
in the other with quotient of order 3.  In particular, the ring of
$G_2([\mu_4],[\mu_2])\cap ST'_8$-invariants has a natural mod 3 grading
away from characteristic 3, and thus flatness for it implies flatness for
$ST'_8$-invariants.  Similarly, the ring of $ST'_8$-invariants is an
invariant subring of a Cohen-Macaulay ring in good characteristic (for the
quotient group), so is itself Cohen-Macaulay.  We thus conclude that the
ring of $ST'_8$-invariants remains a free module over the ring of
$ST_8$-invariants in characteristic 2, with generators of the same degrees.
(Recall that the Hilbert series is either
$(1+t^3+t^6+t^9)/(1-t^2)(1-t^3)(1-t^4)$ or
$(1+2t^6+t^{12})/(1-t^2)(1-t^4)(1-t^6)$, depending on the choice of
equivariant line bundle.)  We similarly conclude that the quotient by
$ST'_8$ remains Calabi-Yau in characteristic 2.  (It is unclear how to use
this approach to conclude that it is a complete intersection, but this can
be verified by a direct calculation.  For the minimal line bundle, the
$G_2([\mu_4],[\mu_2])\cap ST'_8$-invariants have generators of degrees
$1,1,4,6$ with a relation of degree $12$.  Rationality of the $\mu_3$
action implies that the invariants of degree 4 and 6 must be
$ST'_8$-invariant and that the two degree 1 invariants can be chosen to
have opposite eigenvalues, from which we conclude that the invariant ring
has a presentation with generators of degrees $2,3,3,4,6$ and relations of
degrees $6$ and $12$.)

For general normal subgroups of $G^{++}$ with abelian quotient, it can be
difficult to control the reduction to bad characteristic (more precisely to
characteristics which are bad for the abelian quotient).  There is,
however, another natural family for which we can show that the reduction is
Calabi-Yau, namely $G'$ itself (acting on $(A^{++},{\cal L}^{++})$).  For
nonsporadic imprimitive groups, this immediately reduces to
$G_n([\mu_d],1)$, and thus reduces to the determinant 1 case.  For
primitive groups, the only case with $G'\ne G_0$ is $ST_5$, but this still
has Calabi-Yau quotient since $ST'_5\cong ST'_4$.  It thus remains only to
consider the sporadic lattice for $G_2([\mu_6],[\mu_2])$, but again we have
$G_2([\mu_6],[\mu_2])'=G_2([\mu_6],1)'$ and $G_2([\mu_6],1\cong W(G_2))$ is
still crystallographic (this being the lattice associated to the
$3$-isogeny $\sqrt{-3}$).

The above arguments fail in cases that do not lift to characteristic 0.  In
the one {\em complex} case that does not lift (the sporadic lattice for
$ST_5$ in characteristic 2), the characteristic is good for the
abelianization, and thus we automatically get the desired free module (and
complete intersection!) structure.  In particular, we find that it has a
presentation with generators of degrees $1,2,2,3,4$ and relations of
degrees $6,6$, so is a Calabi-Yau complete intersection.  Similarly, the
appropriate subgroup of index 3 has quotient a Calabi-Yau hypersurface, with
generators $1,3,4,4$ and relation of degree $12$.  (One caveat: since the
sporadic lattice is obtained from the nonsporadic lattice by an inseparable
isogeny, the characters given by determinants of the action on the Lie
algebra do not agree, and in particular this index 3 subgroup actually
contains the center, in contrast to the non-sporadic case.)

Note that for the fully quaternionic cases, this tends to fail, and indeed
some quaternionic reflection groups (e.g., $2_-^{1+4}.\!\Alt_5$ and
$2_-^{1+6}.\Omega^-_6(\F_2)$) are perfect, so that the derived subgroup
still has polynomial invariants!  Similarly, for $2_-^{1+4}.S_3$, the
derived subgroup has index $2$ in the normal subgroup $ST_8$ generated by
order 4 reflections, so is a non-Calabi-Yau hypersurface.  Thus the fact
that the derived subgroup of $\SL_2(\F_3)\otimes A_2$ {\em does} have
Calabi-Yau invariants (coming from the index 3 subgroup of $ST_5$) appears
to be largely coincidental.

\bibliographystyle{plain}

\begin{thebibliography}{10}

\bibitem{BeauvilleA:2003}
A.~Beauville.
\newblock The {C}oble hypersurfaces.
\newblock {\em C. R. Math. Acad. Sci. Paris}, 337(3):189--194, 2003.

\bibitem{BernsteinJ/SchwarzmanO:2006}
J.~Bernstein and O.~Schwarzman.
\newblock Chevalley's theorem for the complex crystallographic groups.
\newblock {\em J. Nonlinear Math. Phys.}, 13(3):323--351, 2006.

\bibitem{CobleAB:1961}
A.~B. Coble.
\newblock {\em Algebraic geometry and theta functions}.
\newblock American Mathematical Society Colloquium Publications, Vol. X.
  American Mathematical Society, Providence, R.I., 1961.
\newblock Revised printing.

\bibitem{CohenAM:1980}
A.~M. Cohen.
\newblock Finite quaternionic reflection groups.
\newblock {\em J. Algebra}, 64(2):293--324, 1980.

\bibitem{GerardinP:1977}
P.~G\'{e}rardin.
\newblock Weil representations associated to finite fields.
\newblock {\em J. Algebra}, 46(1):54--101, 1977.

\bibitem{GoryunovV/ManSH:2006}
V.~Goryunov and S.~H. Man.
\newblock The complex crystallographic groups and symmetries of {$J_{10}$}.
\newblock In {\em Singularity theory and its applications}, volume~43 of {\em
  Adv. Stud. Pure Math.}, pages 55--72. Math. Soc. Japan, Tokyo, 2006.

\bibitem{HarbourneB:1997}
B.~Harbourne.
\newblock Anticanonical rational surfaces.
\newblock {\em Trans. Amer. Math. Soc.}, 349(3):1191--1208, 1997.

\bibitem{KacVG/PetersonDH:1984}
V.~G. Kac and D.~H. Peterson.
\newblock Infinite-dimensional {L}ie algebras, theta functions and modular
  forms.
\newblock {\em Adv. in Math.}, 53(2):125--264, 1984.

\bibitem{KemperG:1996}
G.~Kemper.
\newblock Calculating invariant rings of finite groups over arbitrary fields.
\newblock {\em J. Symbolic Comput.}, 21(3):351--366, 1996.

\bibitem{KoziarzV/RitoC/RoulleauX:2021}
V.~Koziarz, C.~Rito, and X.~Roulleau.
\newblock The {B}olza curve and some orbifold ball quotient surfaces.
\newblock {\em Michigan Math. J.}, 70(2):423--448, 2021.

\bibitem{LooijengaE:1976}
E.~Looijenga.
\newblock Root systems and elliptic curves.
\newblock {\em Invent. Math.}, 38(1):17--32, 1976/77.

\bibitem{MarkushevichD/MoreauA:2022a}
D.~Markushevich and A.~Moreau.
\newblock Action of the automorphism group on the {J}acobian of {K}lein's
  quartic curve.
\newblock arXiv:2107.03745v2.

\bibitem{MarkushevichD/MoreauA:2022b}
D.~Markushevich and A.~Moreau.
\newblock Action of the automorphism group on the {J}acobian of {K}lein's
  quartic curve {II}: {I}nvariant theta-functions.
\newblock arXiv:2208.08737v2.

\bibitem{NebeG:1998}
G.~Nebe.
\newblock Finite quaternionic matrix groups.
\newblock {\em Represent. Theory}, 2:106--223, 1998.

\bibitem{PR}
B.~Poonen and E.~Rains.
\newblock Self cup products and the theta characteristic torsor.
\newblock {\em Math. Res. Lett.}, 18(6):1305--1318, 2011.

\bibitem{PopovVL:1982}
V.~L. Popov.
\newblock {\em Discrete complex reflection groups}, volume~15 of {\em
  Communications of the Mathematical Institute, Rijksuniversiteit Utrecht}.
\newblock Rijksuniversiteit Utrecht, Mathematical Institute, Utrecht, 1982.

\bibitem{PuenteP:2023}
P.~Puente.
\newblock Steinberg's fixed point theorem for crystallographic complex
  reflection groups.
\newblock {\em J. Algebra}, 619:505--516, 2023.

\bibitem{elldaha}
E.~M. Rains.
\newblock Elliptic double affine {H}ecke algebras.
\newblock {\em SIGMA Symmetry Integrability Geom. Methods Appl.}, 16:Paper No.
  111, 133, 2020.

\bibitem{fildefs}
E.~M. Rains.
\newblock Filtered deformations of elliptic algebras.
\newblock In {\em Hypergeometry, integrability and {L}ie theory}, volume 780 of
  {\em Contemp. Math.}, pages 95--154. Amer. Math. Soc., Providence, RI, 2022.

\bibitem{SaitoK:1985}
K.~Saito.
\newblock Extended affine root systems. {I}. {C}oxeter transformations.
\newblock {\em Publ. Res. Inst. Math. Sci.}, 21(1):75--179, 1985.

\bibitem{SaitoK:1990}
K.~Saito.
\newblock Extended affine root systems. {II}. {F}lat invariants.
\newblock {\em Publ. Res. Inst. Math. Sci.}, 26(1):15--78, 1990.

\bibitem{SasakiR:1981}
R.~Sasaki.
\newblock Bounds on the degree of the equations defining {K}ummer varieties.
\newblock {\em J. Math. Soc. Japan}, 33(2):323--333, 1981.

\bibitem{ShephardGC/ToddJA:1954}
G.~C. Shephard and J.~A. Todd.
\newblock Finite unitary reflection groups.
\newblock {\em Canad. J. Math.}, 6:274--304, 1954.

\bibitem{SheplerAV:1999}
A.~V. Shepler.
\newblock Semi-invariants of finite reflection groups.
\newblock {\em J. Algebra}, 220(1):314--326, 1999.

\bibitem{ShvartsmanOV:1979}
O.~V. Shvartsman.
\newblock Chevalley's theorem for complex crystallographic groups generated by
  reflections in the affine space {${\bf C}^{2}$}.
\newblock {\em Russian Math. Surv.}, 34:239--240, 1979.

\bibitem{TokunagaS/YoshidaM:1982}
S.~Tokunaga and M.~Yoshida.
\newblock Complex crystallographic groups. {I}.
\newblock {\em J. Math. Soc. Japan}, 34(4):581--593, 1982.

\bibitem{WeilA:1964}
A.~Weil.
\newblock Sur certains groupes d'op\'{e}rateurs unitaires.
\newblock {\em Acta Math.}, 111:143--211, 1964.

\end{thebibliography}

\end{document}